\newtheorem{lemma}{Lemma}
\newtheorem{lem}{Lemma}[section]
\newtheorem{prop}{Proposition}[section]
\newtheorem{proposition}{Proposition}
\newtheorem{theorem}{Theorem}
\newtheorem{thmi}{Theorem}
\newtheorem{defn}{Definition}[section]
\newtheorem{corollary}{Corollary}
\theoremstyle{definition}
\newtheorem{definition}{Definition}
\newtheorem{construction}{Construction}
\newtheorem{remark}{Remark}
\newcommand{\commentout}[1]{}
\DeclareMathOperator{\reals}{\mathbb R}
\newcommand{\coll}{\;\;\makebox[0pt]{$\bot$}\makebox[0pt]{$\smile$}\;\;}
\newcommand{\xing}[1]{{\Gamma_{\#}({\bf #1})}}
\DeclareMathOperator{\re}{\bf R}
\DeclareMathOperator{\pf}{\mathcal P\mathcal F}
\DeclareMathOperator{\dimension}{dim}
\DeclareMathOperator{\integers}{\mathbb Z}
\DeclareMathOperator{\naturals}{\mathbb N}
\newcommand{\recube}[1]{\mathfrak R(\bf{#1})}
\newcommand{\link}{\mathrm{Lk}}
\begin{document}
\title{On embeddings of CAT(0) cube complexes into products of trees via colouring their hyperplanes}
\maketitle
\thispagestyle{empty}

\vspace{5mm}

\centerline{{\sc Victor Chepoi$^{\small 1}$} and {\sc Mark F. Hagen}$^{\small 2}$ }

\vspace{3mm}

\medskip
\centerline{$^{1}$Laboratoire d'Informatique Fondamentale,}
\centerline{Universit\'e d'Aix-Marseille,}
\centerline{Facult\'e des Sciences de Luminy,} \centerline{F-13288
Marseille Cedex 9, France} \centerline{chepoi@lif.univ-mrs.fr}

\bigskip
\centerline{$^{2}$Department of Mathematics}
\centerline{University of Michigan, Ann Arbor,}
\centerline{Michigan, USA}
\centerline{markfhagen@gmail.com}

\bigskip\bigskip\noindent
{\footnotesize {\bf Abstract.}  We prove that the contact graph of a 2-dimensional CAT(0) cube complex ${\bf X}$ of maximum degree $\Delta$ can be coloured with at most $\epsilon(\Delta)=M\Delta^{26}$ colours, for a fixed constant $M$. This implies that ${\bf X}$ (and the associated median graph) isometrically embeds in the Cartesian product of at most $\epsilon(\Delta)$ trees, and that the event structure whose domain is ${\bf X}$ admits a nice labeling with $\epsilon(\Delta)$ labels. On the other hand, we present an example of a 5-dimensional CAT(0) cube complex with uniformly bounded degrees of 0-cubes which cannot be embedded into a Cartesian product of a finite number of trees. This answers in the negative a question raised independently by F. Haglund, G. Niblo, M. Sageev, and the first author of this paper.}


\section{Introduction}
In his seminal paper \cite{GromovHyperbolic}, among many other results, Gromov gave a nice combinatorial characterization of CAT(0) cube complexes as simply connected cube complexes in which the links of 0-cubes are simplicial flag complexes. Subsequently,  Sageev~\cite{Sageev95} introduced and investigated the concept of (combinatorial) hyperplanes of CAT(0) cube complexes, showing in particular that each hyperplane is itself a CAT(0) cube complex and divides the complex into two CAT(0) cube complexes.

These two results identify CAT(0) cube complexes as the basic objects in a ``high-dimensional Bass-Serre theory'', and CAT(0) and nonpositively-curved cube complexes have thus been studied extensively in geometric group theory.  For instance, many well-known classes of groups are known to act nicely on CAT(0) cube complexes (see, e.g.~\cite{CharneyDavis,CharneyDavis2,Farley,NibloReeves,WiseSmallCancellation}).  Groups acting essentially on CAT(0) cube complexes enjoy a wide variety of properties resulting from such actions -- they do not have Kazhdan's property (T)~\cite{NibloRoller} and many of them admit splittings or virtual splittings related to the hyperplanes (e.g.~\cite{Sageev97,NibloSplittingObstruction}), for example.  CAT(0) cube complexes whose hyperplanes are related to group splittings also lie at the heart of Wise's recent work on groups with a quasiconvex hierarchy~\cite{WiseIsraelHierarchy} and the closely-related recent resolution of the virtual Haken conjecture by Agol~\cite{AgolVirtualHaken}.

On the other hand, \cite{ChepoiMedian,Roller} established that the 1-skeleta of CAT(0) cube complexes are exactly the median graphs, i.e., the graphs in which any triplet of vertices admit a unique median vertex. Median graphs and related median structures have been investigated in several contexts by quite a number of authors for more than half a century. They  have many nice properties and admit numerous characterizations relating them to other discrete structures. Avann \cite{Avann} showed that median graphs and discrete median algebras (i.e., ternary algebras which are subdirect products of the two-element algebra $\{ 0,1\}$ \cite{BandeltHedlikova,Isbell}) constitute the same objects. Bandelt \cite{Bandelt_retracts} proved that median graphs are exactly the retracts of the hypercubes.  Barth\'elemy and Constantin \cite{Barthelemy_Constantin} showed that pointed median graphs are exactly the domains of event structures with binary conflict (investigated  in computer science in concurrency theory \cite{NiPlWi,WiNi, RoTh}), while Schaefer \cite{Scha} proved that median-stable subsets of Boolean algebras are exactly the solution sets of instances of the 2-SAT problem (a well-known problem in theoretical computer science). Mulder and Schrijver  \cite{MulderSchrijver} characterized the split systems (bipartitions) arising from halfspaces of median graphs as the Helly copair hypergraphs, thus extending the bijection of Buneman between trees and pairwise laminar (compatible) split systems. Due to this bijection for trees, Dress, Huber, and Moulton \cite{DrHuMo}
called {\it Buneman complexes} the median closures of arbitrary collections of splits of a finite set.  For other results and characterizations, see the books \cite{Feder,ImKl,Mulder_book,vandeVel_book} and  the
surveys \cite{BandeltChepoi_survey,BandeltHedlikova}.

All CAT(0) cube complexes ${\bf X}$ and median graphs -- the 1-skeleta $G({\bf X})$ of $\bf X$ -- are intimately related to hypercubes: they are constituted of cubes and themselves embed isometrically into hypercubes.  The minimum dimension of a hypercube into which $G({\bf X})$ (or $\bf X$) isometrically embeds equals the number of hyperplanes of ${\bf X}$, or, equivalently, the number of equivalence classes of the transitive closure of the ``opposite" relation of edges of $G({\bf X})$ on 2-cubes of ${\bf X}$, or, equivalently, the number of convex splits of $G({\bf X})$. While the dimension of the smallest hypercube into which the
median graph $G({\bf X})$ embeds is easy to determine, the problem of determining the least number $\tau({\bf X})=\tau(G({\bf X}))$ of tree factors necessary for an isometric embedding of the 1-skeleton of $\bf X$ into a Cartesian product of trees is hard.

There is a canonical construction of median graphs and CAT(0) cube complexes beginning from arbitrary graphs $G$: namely, for a graph $G$ the {\it simplex graph} $\kappa (G)$ has the simplices (the complete subgraphs) of $G$ as its vertices and pairs of (comparable) simplices differing in exactly one  vertex as its edges.  The graph  $\kappa (G)$ is median, moreover it was shown in \cite{BavdV_embedding} that $\kappa (G)$ can be isometrically embedded into the Cartesian product of at most $k$ trees if and only if the chromatic number $\chi(G)$ of $G$ is at most $k.$ In particular, it is NP-complete, even for $k=3$, to decide whether $\tau({\bf X})\le k$ for a 2-dimensional CAT(0) cube complex (i.e. if a 3-cube-free cube complex embeds into the Cartesian product of $k$ trees) \cite{BavdV_embedding}. Departing from triangle-free Mycielski graphs $G$ (i.e., graphs with arbitrarily high chromatic numbers), one gets, via the simplex-graph construction, 3-cube-free median graphs $\kappa (G)$ with arbitrarily large $\tau(\kappa (G)).$

For arbitrary CAT(0) cube complexes $\bf X$, the value $\tau({\bf X})$ is closely related to the chromatic number of the so-called {\it incompatibility} or {\it crossing graph} $\Gamma_{\#}({\bf X})$ of $\bf X$. $\Gamma_{\#}({\bf X})$ can be viewed as the intersection graph of the hyperplanes of $\bf X$: its vertices are the hyperplanes of $\bf X$ sensu \cite{Sageev95} and two hyperplanes are adjacent in $\Gamma_{\#}({\bf X})$ if and only if they cross (or, equivalently, they intersect). The crossing graph of the CAT(0) cube complex derived from the simplex graph $\kappa(G)$ of $G$ coincides with $G$ (see, e.g.~\cite{HagenQuasiArb,Roller}).

Extending the fact that $\tau(\kappa(G))=\chi(G),$ it was formally stated in \cite{BandeltChepoiEppstein} (and seems to be known to other people working in the field) that the equality  $\tau({\bf X})=\chi(\Gamma_{\#}({\bf X}))$ holds for all CAT(0) cube complexes $\bf X$. Since an arbitrary simplicial graph can be realized as the crossing graph of a CAT(0) cube complex $\bf X$, in order to better capture the structure of $\bf X$ and some graph-parameters of its 1-skeleton $G({\bf X}),$ the second author of this paper introduced in \cite{HagenQuasiArb} the concept of the {\it contact graph} $\Gamma({\bf X})$ of $\bf X$: the vertices of $\Gamma({\bf X})$
are the hyperplanes of $\bf X$ and two hyperplanes are adjacent in $\Gamma({\bf X})$ if and only if they cross or osculate (i.e., their carriers  touch each other). $\Gamma({\bf X})$ can be also viewed as  the intersection graph of the carriers of the hyperplanes of $\bf X$. The clique number $\omega(\Gamma({\bf X}))$ of the contact graph of $\bf X$ is exactly the maximum degree in $G({\bf X})$ of a 0-cube of $\bf X$, i.e., to the maximum number of 1-cubes incident to a 0-cube of $\bf X$. The contact graph $\Gamma({\bf X})$ always contains the crossing graph $\Gamma_{\#}({\bf X})$ as a spanning subgraph. $\Gamma({\bf X})$ also hosts the {\it pointed contact graph} $\Gamma_{\alpha}({\bf X})$ of the 1-skeleton $G_{\alpha}({\bf X})$ of $\bf X$ pointed at arbitrary vertex $\alpha$. The graph $\Gamma_{\alpha}({\bf X})$ has hyperplanes of $\bf X$ as vertices and two hyperplanes $H,H'$ are adjacent in $\Gamma_{\alpha}({\bf X})$ if and only if they are adjacent in $\Gamma({\bf X})$ and two incident 1-cubes, one crossed by $H$ and another crossed by $H'$, are directed away from the common origin.

Pairwise-independently, F. Haglund, G. Niblo, M. Sageev, and the first author of this paper asked the following question:

\medskip\noindent
{\bf Question 1.} {\it Is it true that all CAT(0) cube complexes $\bf X$ with uniformly bounded degrees can be isometrically embedded into a finite number of trees, or, equivalently, if there exists a function $\epsilon: \mathbb{N} \mapsto \mathbb{N}$ such that $\tau({\bf X})\le \epsilon(\Delta)$ for any CAT(0) cube complex $\bf X$ of degree $\Delta$?}

\medskip\noindent
This question is closely related with the conjecture of Rozoy and Thiagarajan \cite{RoTh} (also called the \emph{nice labeling problem}) asserting that:

\medskip\noindent
{\bf Question 2.} {\it Any event structure with finite (out)degree admits a labeling with a finite number of labels.}

\medskip\noindent
As noted above, pointed  median graphs are exactly the domains of event structures with binary conflict \cite{Barthelemy_Constantin}. Then, in view of the bijection between median graphs and 1-skeleta of CAT(0) cube complexes, the nice labeling problem for such event structures can be equivalently viewed as the colouring problem of the pointed contact graph $\Gamma_{\alpha}({\bf X})$ of the CAT(0) cube complex $\bf X$ associated to the domain of the event structure.  Since $\chi(\Gamma_{\alpha}({\bf X}))\le \chi(\Gamma({\bf X}))$ and $\chi(\Gamma_{\#}({\bf X}))\le \chi(\Gamma({\bf X})),$ in relation with Questions 1 and 2, the following question is natural:

\medskip\noindent
{\bf Question 3.} {\it Is it true that the chromatic number $\chi(\Gamma({\bf X}))$ of the contact graph of a CAT(0) cube complex $\bf X$ of degree $\Delta$ can be bounded by a function $\epsilon$ of $\Delta$?}

\medskip\noindent
Since $\omega(\Gamma({\bf X}))=\Delta$ and $\Gamma_{\#}({\bf X}),\Gamma_{\alpha}({\bf X})$ are subgraphs of $\Gamma({\bf X}),$ all three questions can be reformulated, namely: which of the classes of graphs $\Gamma_{\#}({\bf X}), \Gamma_{\alpha}({\bf X}),$ and $\Gamma({\bf X})$ are $\chi$-bounded? A class $\mathcal C$ of graphs is called $\chi$-{\it bounded} if there exists a function $f$ such that $\chi(G)\le f(\omega(G))$ for any graph $G$ of $\mathcal C$.  For example, Asplund and Gr\"{u}nbaum \cite{AsplundGrunbaum} proved that the intersection graphs of axis-parallel rectangles of ${\mathbb R}^2$ are $\chi$-bounded (we will review below some other classes of $\chi$-bounded intersection graphs).

Via a series of nontrivial examples, Burling \cite{Burling} showed that the class of intersection graphs of axis-parallel boxes of ${\mathbb R}^3$ is not $\chi$-bounded. Based on Burling's examples, it was recently shown in \cite{ChepoiNiceLabeling} that for CAT(0) cube complexes  the classes of graphs $\Gamma({\bf X})$ and $\Gamma_{\alpha}({\bf X})$ are not $\chi$-bounded, thus disproving the nice labeling conjecture of \cite{RoTh}. In this paper, we adapt this counterexample
by using the recubulation technique from~\cite{HagenQuasiArb} to show that the class of crossing graphs $\Gamma_{\#}({\bf X})$ of CAT(0) cube complexes is also not $\chi$-bounded, thus answering in the negative the first open question.

On the other hand, and this is the main contribution of our paper, we show that in the case of  2-dimensional CAT(0) cube complexes $\bf X$ the contact graphs ${\Gamma}({\bf X})$ (and therefore the crossing and the pointed contact graphs) are $\chi$-bounded by a polynomial function in $\omega({\Gamma}({\bf X}))=\Delta$, thus showing that in the 2-dimensional case the three questions have positive answers; this is the content of our main result:

\begin{theorem} \label{theorem1} Let ${\bf X}$ be a 2-dimensional CAT(0) cube complex such that the degrees of all its vertices are uniformly bounded by $\Delta.$ Then there exists $M<\infty$, independent of $\bf X$, such that $\chi(\Gamma({\bf X}))\le \epsilon(\Delta)=M\Delta^{26}.$ In particular, $\tau({\bf X})\le \epsilon(\Delta),$ i.e. the 1-skeleton of $\bf X$ isometrically embeds into the Cartesian product of at most $\epsilon(\Delta)$ trees. Finally, all event structures of (out)degree $\Delta_0$ whose domains are 2-dimensional cube complexes admit a nice labeling with at most $\epsilon(\Delta_0+2)$ labels.
\end{theorem}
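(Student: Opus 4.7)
The plan is to exploit the $2$-dimensional hypothesis structurally: because each hyperplane in a $2$-dimensional CAT(0) cube complex ${\bf X}$ is itself a $1$-dimensional CAT(0) cube complex, every hyperplane is a tree, and its carrier is a ``thickened tree'' assembled from squares. Once the contact graph $\Gamma({\bf X})$ has been coloured with $\epsilon(\Delta)$ colours, the isometric embedding into $\epsilon(\Delta)$ trees is immediate from the equality $\tau({\bf X})=\chi(\Gamma_{\#}({\bf X}))$ and the inclusion $\Gamma_{\#}({\bf X})\subseteq\Gamma({\bf X})$ recalled in the introduction, while the nice-labeling statement follows because pointed median graphs are exactly the domains of event structures with binary conflict and $\Gamma_{\alpha}({\bf X})\subseteq\Gamma({\bf X})$.

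My central aim is therefore to prove that $\Gamma({\bf X})$ is $M\Delta^{26}$-degenerate, i.e.\ that every induced subgraph of it contains a vertex of degree at most $M\Delta^{26}$; a greedy colouring along any degeneracy ordering then yields $\chi(\Gamma({\bf X}))\le M\Delta^{26}+1$. Induced subgraphs of $\Gamma({\bf X})$ arise from convex subcomplexes of ${\bf X}$ obtained by deleting hyperplanes (and these remain $2$-dimensional and of degree at most $\Delta$), so it suffices to locate, in an arbitrary such ${\bf X}$, a single hyperplane of low $\Gamma$-degree.

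To produce such a hyperplane, I would pick an ``extremal'' one: fix a vertex $v$ of ${\bf X}$ and choose $H$ so that the halfspace of ${\bf X}\setminus H$ not containing $v$ is minimal for inclusion. Extremality should force this halfspace to be very rigid, so that every neighbour of $H$ in $\Gamma({\bf X})$ either crosses $H$ or osculates the carrier of $H$ on a controlled side. Crossings are then easy to count: they correspond to edges of the tree $H$ that sit in squares meeting the chosen halfspace, and the degree bound $\Delta$ together with the tree structure of $H$ yields an $O(\Delta^{c_1})$ estimate for the crossings. Osculations are the hard case; here the plan is to project each osculating hyperplane $H'$ onto the subtree of $H$ along which it touches the carrier of $H$, reducing the count of osculations to a $\chi$-bounding estimate for intersection patterns of subtrees inside a tree, in the spirit of the Asplund--Gr\"unbaum theorem on rectangles.

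The main obstacle is precisely this osculation step, which must use $2$-dimensionality in an essential way: Burling's construction (the basis for the $5$-dimensional negative example later in the paper) shows that osculation patterns in higher dimensions can encode arbitrary axis-parallel box intersection graphs in $\reals^{3}$ and so are not $\chi$-bounded. The exponent $26$ presumably arises from iterating a local counting lemma through several independent stages -- ordering osculating hyperplanes along each arc of the tree $H$, grouping them into blocks on which the degree bound controls clique-like interactions, and running an induction across these blocks -- each stage costing a small constant power of $\Delta$. I expect the most delicate point to be the interplay between osculations touching $H$ near a common $0$-cube, where the degree bound $\Delta$ applies directly, and osculations touching $H$ at $0$-cubes far apart but forced into contact through long chains of crossing hyperplanes, where the global tree geometry of $H$ has to be exploited.
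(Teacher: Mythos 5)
Your plan hinges on showing that $\Gamma(\mathbf X)$ is $O(\Delta^{26})$-degenerate, but this is false, and the failure is not a matter of tightening constants. Take $\mathbf X$ to be the $n\times n$ grid $[0,n]\times[0,n]$. This is a $2$-dimensional CAT(0) cube complex with $\Delta=4$, yet every hyperplane crosses all $n$ hyperplanes of the other orientation, so every vertex of $\Gamma(\mathbf X)$ has degree at least $n$; the degeneracy of $\Gamma(\mathbf X)$ therefore tends to infinity while $\Delta$ stays fixed. In particular your proposed ``extremal'' hyperplane $H$ (with minimal halfspace away from a basepoint $v$) still has $\Gamma$-degree at least $n$ in this example, so no choice of $H$ yields a low-degree vertex, and a greedy/degeneracy colouring cannot be carried out. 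The auxiliary claim you rely on is also incorrect: induced subgraphs of $\Gamma(\mathbf X)$ do not all arise as contact graphs of convex subcomplexes obtained by removing hyperplanes. Passing to a convex subcomplex only reproduces the induced subgraph on an \emph{inseparable} set of hyperplanes, while collapsing a hyperplane is a quotient (not a subcomplex) that can create new contacts between surviving hyperplanes and can increase vertex degrees beyond $\Delta$.

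The actual proof must therefore be global, not local. The paper grades $\mathcal H$ by distance in $\Gamma(\mathbf X)$ from a fixed base hyperplane $H_0$, colours even and odd spheres with disjoint palettes, and within a sphere colours each cluster (a component of the sphere through hyperplanes of grade $\geq r$) separately. Two structural inputs replace your degeneracy step: a \emph{weak combing} property (Proposition~\ref{prop:weakcombing} and Corollary~\ref{cor:weakcombing}), showing that grandfathers of contacting same-grade hyperplanes along canonical least-weight paths in $\Gamma$ coincide or contact, which caps cluster diameter at $5$; and a recursion $q(r)\leq q(r-1)\cdot q(r-2)\cdot(2\Delta)\cdot(2\Delta^2)+q(r-1)$, where the factor $2\Delta$ comes from colouring footprints/imprints of hyperplanes on their fathers (Proposition~\ref{lem:footprintscolours}, the subtree-of-a-tree chordal-graph bound you correctly anticipated) and the factor $2\Delta^2$ comes from the graph $\Upsilon(U)$ of hyperplanes sharing grandfather $U$, decomposed into a bipartite part $\Upsilon_0$ and two $\Delta$-colourable parts $\Upsilon_1,\Upsilon_2$. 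Your instinct that tree geometry and Helly-type colouring of subtree families should enter is right, but they enter as ingredients inside a combing-and-recursion scheme, not as a route to a low-degree vertex.
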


We actually obtain the following bound: $\chi(\Gamma({\bf X}))\leq\epsilon(\Delta)=1165226\Delta^{26}$, or, simply $M=1165226$.

The second assertion  of Theorem \ref{theorem1} follows from the first assertion because ${\Gamma_{\#}}({\bf X})$ is a subgraph of
${\Gamma}({\bf X})$  and because of the equality $\tau({\bf X})=\chi({\Gamma_{\#}}({\bf X}))$. The third assertion is a consequence of the fact that the number of labels in a nice labeling is equal to $\chi(\Gamma_{\alpha}({\bf X})),$ and because $\Gamma_{\alpha}({\bf X})$ is a subgraph of $\Gamma({\bf X})$ and $\Delta\le \Delta_0+n$ holds for all $n$-dimensional CAT(0) cube complexes.

The main focus of our paper is thus to prove the first assertion of Theorem \ref{theorem1}. To show that the chromatic number $\chi(\Gamma({\bf X}))$ of the contact graph $\Gamma({\bf X})$ is polynomially bounded in $\Delta$, we show that the edges of $\Gamma({\bf X})$ can be distributed over six spanning subgraphs of $\Gamma({\bf X})$, such that the chromatic numbers of each of these subgraphs can be polynomially bounded. As a result, each vertex of $\Gamma({\bf X})$ (hyperplane of $\bf X$) receives a sextuple of colours, each colour corresponding to the colour received by this vertex in the colouring of the corresponding subgraph. Since each edge of $\Gamma({\bf X})$  is present in at least one spanning subgraph, the sextuple-colouring of the hyperplanes of ${\bf X}$ is a correct colouring of the contact graph  $\Gamma({\bf X})$. The number of colours is the product of the six numbers of colours used to colour the spanning subgraphs, whence it is polynomial in $\Delta$. In Sections~4-6, one after another, we will define and colour the six spanning subgraphs. For this, we will study the geometrical and the combinatorial properties of contact graphs of 2-dimensional CAT(0) cube complexes.

We conclude with the formulation of the second principal result:

\begin{theorem} \label{theorem2} For any $n>0$, there exists a finite CAT(0) cube complex ${\bf X}_n$ with constant  maximum degree  such that any colouring of the crossing graph of  ${\bf X}_n$ requires more than $n$ colours, i.e., any isometric embedding of ${\bf X}_n$ into a Cartesian product of trees requires $>n$ trees. There exists an infinite CAT(0) cube complex ${\bf X}$ with constant maximum degree  which cannot be isometrically embedded into a Cartesian product of a finite number of trees, i.e., the chromatic number of its crossing graph  is infinite.
\end{theorem}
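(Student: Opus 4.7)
The plan is to adapt the construction from \cite{ChepoiNiceLabeling}, which uses Burling's examples \cite{Burling} of axis-parallel box intersection graphs in $\mathbb{R}^3$ to produce CAT(0) cube complexes of uniformly bounded degree whose \emph{contact} graphs have arbitrarily large chromatic number. The key additional step is to then apply the recubulation technique of \cite{HagenQuasiArb} to convert osculations into crossings, so that the resulting crossing graph inherits the large chromatic number. Concretely, I would proceed as follows.

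\textbf{Step 1 (Input from the contact-graph counterexample).} For each $n \geq 1$, take the finite CAT(0) cube complex $\mathbf{Y}_n$ produced in \cite{ChepoiNiceLabeling} with maximum degree bounded by a universal constant $\Delta_0$ and with $\chi(\Gamma(\mathbf{Y}_n)) > n$. Since the contact graph encodes both crossings and osculations of hyperplanes, much of the chromatic complexity typically comes from osculations, so $\Gamma_{\#}(\mathbf{Y}_n)$ may still be easy to colour.

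\textbf{Step 2 (Recubulation).} Apply the recubulation procedure of \cite{HagenQuasiArb} locally at each osculating pair of hyperplanes of $\mathbf{Y}_n$. This produces a CAT(0) cube complex $\mathbf{X}_n$ with the same hyperplane set as $\mathbf{Y}_n$ such that two hyperplanes of $\mathbf{X}_n$ cross whenever the corresponding hyperplanes of $\mathbf{Y}_n$ are in contact. Hence $\Gamma(\mathbf{Y}_n)$ is a subgraph (in fact equal, after identification) of $\Gamma_{\#}(\mathbf{X}_n)$, giving $\chi(\Gamma_{\#}(\mathbf{X}_n))>n$. Since the 1-skeleton of $\mathbf{X}_n$ embeds isometrically into a product of $\tau(\mathbf{X}_n)=\chi(\Gamma_{\#}(\mathbf{X}_n))$ trees and no fewer, this also gives the embedding statement.

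\textbf{Step 3 (Uniform degree bound and the infinite example).} Check that the recubulation only inserts a bounded number of new cubes in a neighbourhood of each $0$-cube: since each $0$-cube $v \in \mathbf{Y}_n$ meets at most $\Delta_0$ hyperplanes and hence at most $\binom{\Delta_0}{2}$ osculating pairs, the local modifications near $v$ involve at most a function of $\Delta_0$ many new cubes, so $\mathbf{X}_n$ still has maximum degree bounded by some universal constant $\Delta_1$, independent of $n$. To produce the infinite example $\mathbf{X}$, glue the sequence $(\mathbf{X}_n)_{n \geq 1}$ along a common combinatorial geodesic ray (or take a suitable direct limit), with the gluings chosen so that all links remain flag simplicial complexes; the resulting infinite complex is CAT(0), has maximum degree bounded by $\Delta_1$ plus a small constant for the ray-gluings, and its crossing graph contains each $\Gamma_{\#}(\mathbf{X}_n)$ as a subgraph, so its chromatic number is infinite.

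The main obstacle is Step 3: ensuring that recubulation truly can be performed locally and that the bound on inserted cubes depends only on $\Delta_0$, not on global data of $\mathbf{Y}_n$. This will rely on the local description of recubulation in \cite{HagenQuasiArb}, together with the fact that each osculation is witnessed by a pair of edges sharing a $0$-cube whose degree is already controlled; the remaining verification, that the gluing in the infinite case preserves the CAT(0) condition, is then a routine check of the flag condition on links at the glued vertices.
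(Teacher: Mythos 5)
Your proposal follows essentially the same route as the paper's proof: start from the finite CAT(0) cube complexes of \cite{ChepoiNiceLabeling} built from Burling's boxes, whose contact graphs have unbounded chromatic number but bounded clique number; apply the recubulation of \cite{HagenQuasiArb} to turn contacts into crossings, so that the crossing graph of the new complex inherits the large chromatic number; check that the degree stays bounded; and wedge the finite examples into an infinite one.

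Two places where your write-up is looser than the paper's argument deserve attention. First, the paper recubulates not the full contact graph $\Gamma(\mathbf{X}'_n)$ (clique number $8$) but the pointed contact graph $\Gamma_{n,\alpha}$ (clique number $5$); this is what yields the advertised dimension bound of $5$ rather than $8$. For Theorem~2 exactly as stated, your coarser choice still works, but the sharper dimension is a genuine feature of the paper's version. Second, and more importantly, the construction of the infinite example is imprecise: ``glue the sequence along a common combinatorial geodesic ray'' does not specify what is identified with what, and if infinitely many blocks $\mathbf{X}_n$ share any $0$-cube the degree bound is destroyed. The paper instead wedges $\mathbf{X}_n$ to $\mathbf{X}_{n+1}$ by identifying a single pair of $0$-cubes $b_n \sim a_{n+1}$, the images of opposite corners $\beta_n,\alpha_{n+1}$ of the ambient box, so that each $0$-cube lies in at most two blocks and these wedge points themselves have degree at most $3$. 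Relatedly, your heuristic degree bound via counting osculating pairs $\binom{\Delta_0}{2}$ near a $0$-cube is not quite the right quantity: the recubulated complex is defined as the cube complex dual to a wallspace, so new $0$-cubes and higher cubes appear which are not simply glued-on $2$-cubes, and the paper's Proposition \ref{prop:recube} needs a Helly-property argument on maximal cubes to establish the bound $\Delta^2+\Delta$ on the degree. You should replace the heuristic with that (or an equivalent) argument before the claim is justified.
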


\section{Related results}

Our counterexample in Theorem 2 and some steps of the proof of Theorem 1 are based on the fact that there exist classes of geometric intersection graphs that are not $\chi$-bounded, and also classes that are $\chi$-bounded. Therefore, we continue with a brief review of such classes. Given a family of sets $\mathcal F$ with the ground-set $S$, the {\it intersection graph} of $\mathcal F$ has the sets of $\mathcal F$ as the vertex-set and two sets $F,F'$ define an edge of the intersection graph if and only if $F\cap F'\ne\emptyset$.  With some abuse of notation, we will denote by $\chi({\mathcal F})$ and $\omega({\mathcal F})$, respectively, the {\it chromatic number} and the {\it clique number} of the intersection graph of $\mathcal F$. The {\it degree} $\delta({\mathcal F})$ of $\mathcal F$ is the maximum number of sets of $\mathcal F$  to which an element of $S$ belongs.
It is evident and well-known that the equality $\delta({\mathcal F})=\omega({\mathcal F})$ holds if $\mathcal F$ satisfies the {\it Helly property}, i.e., any subfamily ${\mathcal F}'$ of $\mathcal F$ has a nonempty intersection whenever any two sets of ${\mathcal F}'$ intersect.

Gallai established (in unpublished work; see \cite{Golumbic_book,Gyarfas}) that $\chi({\mathcal I})=\omega({\mathcal I})$ for families $\mathcal I$  of intervals of the real line (whose intersection graphs are called interval graphs). This founding result has numerous generalizations, among which we recall only a few of them. First, it is well known that the equality
\[\chi({\mathcal T})=\omega({\mathcal T})=\delta({\mathcal T})\]
holds for families $\mathcal T$ of subtrees of a tree \cite{Golumbic_book,Gyarfas} (the intersection graphs of subtrees are the so-called {\it chordal graphs} which are known to be a subclass of perfect graphs).

On the other hand, Asplund and Grunbaum \cite{AsplundGrunbaum} showed that $\chi({\mathcal R})\le 4\omega^2({\mathcal R})-4\omega({\mathcal R})$ for any family ${\mathcal R}$ of axis-parallel rectangles of ${\mathbb R}^2$. Burling \cite{Burling} presented a series ${\mathcal B}_n$ of axis-parallel boxes of ${\mathbb R}^3$ with $\omega({\mathcal B}_n)=2$ and $\chi({\mathcal B}_n)>n$ (see \cite{Gyarfas} for a description of Burling's construction). Gy\'arf\'as \cite{GyarfasChromatic} showed that $\chi({\mathcal I}_t)\le 2t\omega({\mathcal I}_t)$ for families of sets, each set consisting of $t$ intervals of the line. Gy\'arf\'as \cite{GyarfasChromatic} and Kostochka \cite{Kostochka} showed that the class of intersection graphs of chords of a circle is $\chi$-bounded by $2^{\omega}\omega(\omega+2)$; there are known examples  showing only that $\chi\ge \omega\log \omega$ (similar kinds of bounds were proved in \cite{KostochkaKratochvil} for polygon-circle graphs). On the other hand, Kostochka \cite{Kostochka}  proved that the chromatic number of any triangle-free intersection graph of chords is at most 5 (and this bound is known to be sharp). It was conjectured in \cite{GyarfasLehel} that the class of intersection graphs of ``L'' shapes in the plane is $\chi$-bounded and McGuinness \cite{McGuinness} established this conjecture in the case of L-shapes whose vertical stem is infinite.  Recently, this conjecture, as well as the famous conjecture of Erd\"os that triangle-free intersection graphs of segments in the plane can be coloured with a constant number of colours, were disproved in \cite{Pawlik_et_al,Pawlik_et_al_Erdos}.  Notably, these counterexamples involve a construction very similar to the above-mentioned construction of Burling, which plays a major role in our proof of Theorem~\ref{theorem2}.

We conclude with a few known results about the three questions in case of CAT(0) cube complexes. Using the result of Kostochka about the triangle-free graphs of chords and the ``stretchability'' of hyperplanes of plane 2-dimensional CAT(0) cube complexes (called squaregraphs), it was shown in \cite{BandeltChepoiEppstein_squaregraph} that 1-skeleta of such graphs can be embedded into Cartesian products of at most 5 trees. In \cite{BandeltChepoiEppstein} the CAT(0) cube complexes which can be embedded in Cartesian products of two trees were characterized in a local-to-global way as the 2-dimensional CAT(0) cube complexes in which the links of vertices are bipartite graphs.  In unpublished work, Sageev has shown that Gromov-hyperbolic CAT(0) cube complexes (and in particular their 1-skeleta) isometrically embed in the product of finitely many trees, and this was proved independently by Haglund in~\cite{HaglundHabilitation}.  Sageev and Dru\c{t}u have extended this to certain CAT(0) cube complexes that are universal covers of nonpositively-curved cube complexes with relatively hyperbolic fundamental group (personal communication from M. Sageev).

Likewise, the 1-skeleton of an acyclic CAT(0) cube complex of dimension $d$ admits an isometric embedding in the product of at most $d$ trees~\cite{BandeltChepoi_acyclic} (a cube complex $\bf X$ is \emph{acyclic} if its cubes define a hypergraph which is $\alpha$-acyclic in the sense of Berge~\cite{Berge_book}).  The same paper also introduces the notion of a {\it perfect} CAT(0) cube complex as a CAT(0) cube complex $\bf X$ whose crossing graph $\Gamma_{\#}({\bf X})$ is perfect (i.e., the chromatic number of $\Gamma_{\#}({\bf X})$ and of any of its induced subgraphs equals the clique number) and conjectures that a CAT(0) cube complex $\bf X$ is perfect if and only if the CAT(0) cube complexes which correspond to simplex graphs obtained via median homomorphisms from $G({\bf X})$ are perfect and shows that the {\it Strong Perfect Graph Conjecture} implies this conjecture. The Strong Perfect Graph Conjecture has since been proved~\cite{CRST}, and thus the conjectured characterization of perfect CAT(0) cube complexes is also true.

Relatedly, Ballman and \'{S}wi\c{a}tkowski, in~\cite{BallmanSwiatkowski}, showed that CAT(0) cube complexes with some additional structure -- \emph{foldable cubical chamber complexes} -- admit bi-Lipschitz embeddings into the product of $d$ trees, where $d$ is the dimension of the cube complex.  Moreover, Bowditch has, in~\cite{BowditchMedianTrees}, recently given conditions on metric median algebras ensuring that they admit a bilipschitz embedding in the direct product of finitely many $\reals$-trees, providing a non-discrete analogue of Theorem~\ref{theorem1}.

On the other hand, there are known to be several classes of event structures for which the nice labeling conjecture is true. Assous et al. \cite{AsBouChaRo} proved that the event structures of degree 2 admit   nice labelings with 2 labels and noticed that Dilworth's theorem implies that the conflict-free event structures of degree $n$ have nice labelings with $n$ labels. Recently, Santocanale \cite{Santocanale} proved that all event structures of degree  3 and with tree-like partial orders  have nice labelings with 3 labels.

\section{Preliminaries}
This section is devoted to definitions and basic properties of the objects used throughout the paper.  We begin with a brief review of graph colouring, and then discuss the basic properties of CAT(0) cube complexes (following the discussion in~\cite{HagenQuasiArb}) and median graphs (following the discussion in~\cite{BandeltChepoi_survey}).  We then define the crossing graph (see e.g.~\cite{Roller,NibloSplittingObstruction}) and the contact graph (see~\cite{HagenQuasiArb}) of a CAT(0) cube complex, and the pointed contact graph (see~\cite{ChepoiNiceLabeling}) of a pointed CAT(0) complex or a pointed median graph.  This is then related to the nice labeling problem for event structures.  We discuss disc diagrams in CAT(0) cube complexes, which are used throughout the paper, and then relate the crossing and contact graphs to isometric and convex embeddings of CAT(0) cube complexes.  Finally, we define and prove basic properties of hyperplane-distance, footprints, and imprints, all of which are used in our colouring of contact graphs.

\subsection{Graph colouring}
Let $G$ be a connected graph, with vertex set $\mathcal V(G)$.  An edge of $G$ joining $x,y\in\mathcal V(G)$ is denoted $xy$.  A \emph{graph homomorphism} $\phi:G\rightarrow H$ is a map from $\mathcal V(G)$ to $\mathcal V(H)$ such that, if $xy$ is an edge of $G$, then $\phi(x)\phi(y)$ is an edge of $H$.  A \emph{colouring} of $G$ by a set $\mathcal K$ of colours is a graph homomorphism $c:G\rightarrow K(\mathcal K)$, where $K(\mathcal K)=K_n$ is the complete graph with vertex set $\mathcal K$ of cardinality $n$.  Equivalently, $c$ is an assignment of an element of $\mathcal K$ -- a colour -- to each vertex of $G$ in such a way that, for each edge $xy$, we have $c(x)\neq c(y)$.  The \emph{chromatic number} $\chi(G)$ of $G$ is the cardinality of a smallest set $\mathcal K$ for which there exists a $\mathcal K$-colouring of $G$.  Note that if $H\subseteq G$ is a subgraph, then $\chi(H)\leq\chi(G)$.  Also, it was shown by de Bruijn and Erd\H{o}s~\cite{deBruijnErdos} that, for any graph $G$, we have $\chi(G)\leq n$ if and only if $\chi(H)\leq n$ for each finite subgraph $H$ of $G$.  Hence, to $\mathcal K$-colour $G$, it suffices to fix an arbitrary vertex $v$ and to $\mathcal K$-colour the ball $B_n(v)$ for each $n\geq 0$.

\subsection{CAT(0) cube complexes and hyperplanes}
For $d\in\naturals\cup\{0\}$, a \emph{$d$-cube} is a copy of $[-\frac{1}{2},\frac{1}{2}]^d$ endowed with the $\ell_1$ metric.  A \emph{cube complex} is a CW-complex $\bf X$ whose cells are cubes of various dimensions, attached in the expected way: any two cubes of $\bf X$ that have nonempty intersection intersect in a common face, i.e. the attaching map of each cube restricts to a combinatorial isometry on its faces.  For $i\in\naturals\cup\{0\}$, and any complex $Z$, we denote by $Z^{(i)}$ the $i$-skeleton of $Z$.

The \emph{link} of a 0-cube $x\in\bf X$ is the complex built of simplices, with a $(d-1)$-simplex for each $d$-cube containing $x$, with simplices attached according to the attachments of the corresponding cubes.  The simply-connected cube complex $\bf X$ is \emph{CAT(0)} if the link $Lk(x)$ of each 0-cube $x$ is a \emph{flag (simplicial) complex}, i.e. if each $(d+1)$-clique in $Lk(x)$ spans an $d$-simplex.  The \emph{dimension} of the CAT(0) cube complex $\bf X$ is the largest value of $d$ for which $\bf X$ contains a $d$-cube, and the \emph{degree} $\Delta$ of $\bf X$ is the degree of a highest-degree 0-cube.

A \emph{midcube} of the $d$-cube $c$, with $d\geq 1$, is the isometric subspace obtained by restricting exactly one of the coordinates of $d$ to 0.  Note that a midcube is a $(d-1)$-cube.  The midcubes $a$ and $b$ of $\bf X$ are \emph{adjacent} if they have a common face, and a \emph{hyperplane} $H$ of $\bf X$ is a subspace that is a maximal connected union of midcubes such that, if $a,b\subset H$ are midcubes, either $a$ and $b$ are disjoint or they are adjacent.  Equivalently, a hyperplane $H$ is a maximal connected union of midcubes such that, for each cube $c$, either $H\cap c=\emptyset$ or $H\cap c$ is a single midcube of $c$.  In~\cite{Sageev95}, Sageev showed that each hyperplane $H$ is a CAT(0) cube complex of dimension at most $\dimension{\bf X}-1$, and that ${\bf X}-H$ consists of exactly two components, called \emph{halfspaces}.  A 1-cube $c$ is \emph{dual} to the hyperplane $H$ if the 0-cubes of $c$ lie in distinct halfspaces of $H$, i.e. if the midpoint of $c$ is in a midcube contained in $H$.  The relation ``dual to the same hyperplane'' is an equivalence relation on the set of 1-cubes of $\bf X$; denote this relation by $\Theta$ and denote by $\Theta(H)$ the equivalence class consisting of 1-cubes dual to the hyperplane $H$.

In the remainder of this paper, $\bf X$ denotes a CAT(0) cube complex and $\mathcal H$ denotes the set of hyperplanes.  For each $H\in\mathcal H$, let $N(H)$ be the subcomplex of $\bf X$ consisting of all closed $d$-cubes $c$ such that $H\cap c\neq\emptyset$.  The subcomplex $N(H)$ is called the \emph{carrier} of $H$, and it was proved in~\cite{Sageev95} that $N(H)$ is a CAT(0) cube complex isomorphic to $[-\frac{1}{2},\frac{1}{2}]\times H$.  In particular, we shall often use the natural projection $N(H)\rightarrow H\cong\{0\}\times H$ arising from the isomorphism $N(H)\cong[-\frac{1}{2},\frac{1}{2}]\times H$.

Although $\bf X$ admits a CAT(0) metric arising from the $\ell_2$ metric on the constituent cubes~\cite{GromovHyperbolic}, it is more natural to use the \emph{wall-metric} arising from the hyperplanes, discussed, for example, in~\cite{HagenQuasiArb} and, in the context of median complexes, in~\cite{vandeVel_book}.  More precisely, $\bf X$ admits a metric $d$ such that the restriction of $d$ to any cube $c$ of $\bf X$ is the $\ell_1$ metric on $c$ and the restriction of $d$ to the 1-skeleton of $\bf X$ is the standard graph distance.  In particular, a \emph{combinatorial path} $P\rightarrow\bf X$ -- a path in the 1-skeleton of $\bf X$ -- is a geodesic segment in $\bf X$ if and only if $P$ is a geodesic segment of the 1-skeleton.  Equivalently, $P$ is a geodesic segment if and only if $P$ contains at most one 1-cube dual to each hyperplane of $\bf X$.  The length $|P|$ of the path $P$ is equal to the number of hyperplanes (with multiplicity) that cross $P$ in the sense defined below, and $P$ is a geodesic segment if and only if $|P|$ is equal to the number of hyperplanes that separate its endpoints (as defined below).  This $\ell_1$ metric was investigated by Haglund in~\cite{HaglundSemisimple}.

Let $Y\subseteq\bf X$ be a subcomplex.  Let $H$ be a hyperplane, and denote by ${\bf A}(H)$ and ${\bf B}(H)$ the halfspaces of $H$.  Then $H$ \emph{crosses} $Y$ if ${\bf A}(H)\cap Y$ and ${\bf B}(H)\cap Y$ are both nonempty.  In particular, if $H'$ is another hyperplane, then $H$ and $H'$ \emph{cross} if and only if each of the \emph{quarter-spaces} ${\bf A}(H)\cap{\bf A}(H'),\,{\bf A}(H)\cap{\bf B}(H'),\,{\bf B}(H)\cap{\bf A}(H'),\,{\bf B}(H)\cap{\bf B}(H')$ is nonempty.  Equivalently, $H$ and $H'$ cross if and only if there exists a 2-cube $s$ whose boundary path contains a concatenation $cc'$, where $c\in\Theta(H)$ and $c'\in\Theta(H')$.

If $H$ and $H'$ do not cross, but $\bf X$ contains a pair of 1-cubes $c\in\Theta(H),\,c'\in\Theta(H')$ such that $c,c'$ have a common 0-cube, then $H$ and $H'$ \emph{osculate}.  If $H$ and $H'$ either cross or osculate, then they \emph{contact}, denoted $H\coll H'$.  Equivalently, $H\coll H'$ if and only if $N(H)\cap N(H')\neq\emptyset$.

If $Y,Y'$ are convex subcomplexes of $\bf X$, and $H$ is a hyperplane such that $Y\subseteq{\bf A}(H)$ and $Y'\subseteq{\bf B}(H)$, then $H$ \emph{separates} $Y$ from $Y'$.  We see that $H\coll H'$ if and only if no third hyperplane separates $H$ from $H'$.  Relatedly, we say that a subset $\mathcal H'\subseteq\mathcal H$ is \emph{inseparable} if, given any two hyperplanes $H,H'\in\mathcal H'$, every hyperplane $H''\in\mathcal H$ that separates $H$ from $H'$ also belongs to $\mathcal H'$.  Also, the distance between the convex subcomplexes $Y$ and $Y'$ is equal to the number of hyperplanes that separate $Y$ from $Y'$, and this is also the length of a shortest geodesic segment having one endpoint in $Y$ and one endpoint in $Y'$.

Each hyperplane $H$, and its carrier $N(H)$, is convex with respect to the wall-metric, and we give a simple characterization of convexity below.  Sageev~\cite{Sageev95} also showed that $H$ is convex with respect to the CAT(0) metric.

The property of being a \emph{Cartesian product} is characterized as follows for CAT(0) cube complexes.  The Cartesian product ${\bf X}={\bf X}_1\times{\bf X}_2$ of two CAT(0) cube complexes ${\bf X}_1,{\bf X}_2$ is again a CAT(0) cube complex.  Let $\mathcal H_1$ and $\mathcal H_2$ denote the sets of hyperplanes of ${\bf X}_1$ and ${\bf X}_2$ respectively.  Then each hyperplane of $\bf X$ is of the form $H\times{\bf X}_2$, with $H\in\mathcal H_1$, or ${\bf X}_1\times H$, where $H\in\mathcal H_2$, and each hyperplane of the former form crosses each hyperplane of the latter form.  Conversely, if the set $\mathcal H$ of hyperplanes of $\bf X$ decomposes as a disjoint union $\mathcal H=\mathcal H_1\sqcup\mathcal H_2$ such that each element of $\mathcal H_1$ crosses each element of $\mathcal H_2$, then ${\bf X}\cong{\bf X}_1\times{\bf X}_2$, where for $i\in\{1,2\}$, the complex ${\bf X}_i$ is a convex, and thus CAT(0),  subcomplex of $\bf X$ that is crossed by each hyperplane in $\mathcal H_i$ and no others.  The wall-metric on $\bf X$ is identical to the metric defined by $d((x_1,x_2),(y_1,y_2))=d_1(x_1,y_1)+d_2(x_2,y_2)$, where $d_i$ is the wall-metric on ${\bf X}_i$ and $x_i,y_i\in{\bf X}_i$.

A major source of CAT(0) cube complexes is the notion of the ``cube complex dual to a wallspace''.  Here, we roughly follow the discussion in~\cite{HruskaWiseAxioms} of the procedure for producing a cube complex from a wallspace.  In the finite case, wallspaces derived from median graphs (and hence from cube complexes) were characterized by Mulder and Schrijver~\cite{MulderSchrijver} as  Helly  copair hypergraphs.  In the context of median graphs, the cubulation procedure described presently was formulated by Barth\'{e}lemy in~\cite{Barthelemy_cubulation}.

A \emph{wallspace} is a set $\mathcal S$, together with a collection $\mathcal H$ of bipartitions of $\mathcal S$, called \emph{walls}; the two sets in each bipartition are called \emph{halfspaces}.  We require that for all $s_1,s_2\in\mathcal S$, there are finitely many walls $H$ such that $s_1$ and $s_2$ lie in different halfspaces associated to $H$.

The dual cube complex $\bf X$ is constructed as follows: a 0-cube $v$ is a choice $v(H)$ of halfspace for each $H\in\mathcal H$, in such a way that $v(H)\cap v(H')\neq\emptyset$ for all $H,H'\in\mathcal H$.  Each $s\in\mathcal S$ determines a \emph{canonical} 0-cube $v_s$ defined by declaring, for each $H\in\mathcal H$, that $v_s(H)$ is the halfspace containing $s$.  The set of 0-cubes of the dual cube complex consists of the set of canonical 0-cubes, together with any 0-cube $v$ that differs from some, and hence any, canonical 0-cube on finitely many walls.

$\bf X$ has a set of hyperplanes corresponding bijectively to $\mathcal H$.  The construction of a dual cube complex from a wallspace appears in a group-theoretic context in the work of Sageev~\cite{Sageev95}; the formal notion of a wallspace is due to Haglund and Paulin~\cite{HaglundPaulin}, and the construction of the cube complex in this more general context appears in~\cite{ChatterjiNiblo} and~\cite{NicaCubulating}.

This construction shows that each 0-cube of $\bf X$ can be thought of as a consistent choice of halfspace for each hyperplane, which is to say that, if $v$ is a 0-cube of $\bf X$ and $H,H'$ are hyperplanes, then $v(H)\cap v(H')\neq\emptyset$, where $v(H)$ denotes the halfspace of $H$ containing $v$.  Moreover, since $\mathbf X$ is connected, any two 0-cubes are separated by finitely many hyperplanes, so that, for 0-cubes $v,v'$, there are finitely many hyperplanes $H$ such that $v(H)\neq v'(H)$.

Hence the \emph{pointed CAT(0) cube complex} ${\bf X}_v$ -- i.e., the cube complex ${\bf X}$ with basepoint $v$ -- is equipped with a natural orientation on the 1-skeleton.  Indeed, the \emph{initial} 0-cube of the 1-cube $c$ dual to the hyperplane $H$ is the 0-cube lying in $v(H)$ and the \emph{terminal} 0-cube of $c$ is the 0-cube lying in ${\bf X}-v(H)$.  The hyperplanes $H,H'$ of ${\bf X}_v$ \emph{contact with respect to $v$}, denoted $H\coll_vH'$, if $H$ and $H'$ are dual to 1-cubes $c,c'$, respectively, such that $c$ and $c'$ have the same initial 0-cube.  Note that $H\coll_vH'$ only if $H\coll H'$.  Also note that if $H$ crosses $H'$ in a 2-cube $s$, then at least one 0-cube of $s$ is initial in both of its incident 1-cubes in $s$, and hence, if $H$ and $H'$ cross, then $H\coll_vH'$.  On the other hand, if $H$ osculates with $H'$, then $H\coll H'$ if and only if neither of $H$ nor $H'$ separates the other from $v$.

\subsection{Median graphs and parallelism of edges}
Let $G$ be a connected graph and let $d$ be the standard path-metric on $G$ (i.e. each edge of $G$ has length 1 and $d(u,v)$ counts the number of edges in a shortest path joining the vertices $u$ and $v$).  The \emph{interval} $I(u,v)$ is the set of all vertices $x\in G$ such that $d(u,v)=d(x,u)+d(x,v)$.  The graph $G$ is a \emph{median graph} if for all triples of vertices $u,v,w\in G$, the set
\[m(u,v,w)=I(u,v)\cap I(u,w)\cap I(v,w)\]
contains exactly one point, also denoted $m(u,v,w)$, called the \emph{median} of $u,v,w$.

The induced subgraph $G(S)$ of $G$ generated by the set $S$ of vertices is \emph{convex} if for all $u,v\in S$, the interval $I(u,v)\subset G(S)$.  The subgraph $G(S)$ is \emph{gated} if for each vertex $v$ of $G$, there exists a unique vertex $v'=g(v)\in S$, called the \emph{gate of $v$ in $S$} such that $d(v,y)=d(v,v')+d(v',y)$ for all $y\in S$.  Each convex set of vertices of $G$ is gated.  A \emph{halfspace} is a convex subset $H$ such that $G-H$ is also convex, and the pair $(H,G-H)$ is a \emph{convex split}.

The relation $\Theta$ is defined on the set of edges of $G$ as follows: $\Theta$ is the \emph{Djokovi\'{c}-Winkler relation (``parallelism'')} defined as follows.  If $uv$ and $xy$ are edges of $G$, then $(uv,xy)\in\Theta$ if and only if
\[d(u,x)+d(v,y)\neq d(u,y)+d(v,x).\]
Equivalently, $\Theta$ is the transitive closure of the ``opposite'' relation: $uv$ and $xy$ are \emph{opposite edges of a 4-cycle} if $uvxy$ is a 4-cycle in $G$ (see~\cite{EppsteinFalmagneOvchinnikov,ImKl}).  The equivalence class $\Theta(uv)$ defines two complementary halfspaces $\mathbf{A}(uv)$ and ${\bf B}(uv)$ obtained by removing from $G$ the edges of $\Theta(uv)$ (but leaving their endpoints).  The class $\Theta(uv)$ therefore determines a convex split $({\bf A}(uv),{\bf B}(uv))$ of $G$~\cite{Mulder_book,vandeVel_book}.  Conversely, for each convex split $({\bf A},{\bf B})$, there exists at least one edge $uv$ such that ${\bf A}={\bf A}(uv)$ and ${\bf B}={\bf B}(uv)$.  The convex splits $({\bf A}_1,{\bf B}_1)$ and $({\bf A}_2,{\bf B}_2)$ are \emph{incompatible} if and only if each of the sets ${\bf A}_1\cap{\bf A}_2,\,{\bf A}_1\cap{\bf B}_2,\,{\bf B}_1\cap{\bf A}_2,\,{\bf B}_1\cap{\bf B}_2$ is nonempty.

The resemblance to the definition of crossing hyperplanes, and the use of the notation $\Theta$ for the set of 1-cubes dual to a hyperplane of a CAT(0) cube complex is not accidental.  For each median graph $G$, there exists a CAT(0) cube complex $\bf X$ whose 1-skeleton is $G$, and the hyperplanes of $\bf X$ correspond bijectively to convex splits of $G$: the equivalence class $\Theta(uv)$ of the edge $uv$ of $G$ is precisely the set of 1-cubes dual to the hyperplane $H$ of $\bf X$ that crosses the 1-cube $uv$.  Conversely, the 1-skeleton $G({\bf X})$ of the CAT(0) cube complex $\bf X$ is a median graph, and the hyperplanes of $\bf X$ correspond in the same way to the convex splits of $G({\bf X})$ (see~\cite{ChepoiMedian}).

There is thus a perfect analogy between the following notions about median graphs and the corresponding notions about CAT(0) cube complexes.  If $S$ is a set of vertices, and $H=({\bf A},{\bf B})$ is a convex split of $G$, then $H$ \emph{crosses} $S$ if there exist $u,v\in S$ with $u\in\bf A$ and $v\in\bf B$.  In particular, the crossing of two hyperplanes $H,H'$ of $\bf X$ corresponds to incompatibility of the corresponding convex splits of $G=G({\bf X})$.  Likewise, separation of subgraphs of $G$ by a convex split corresponds to separation of those subgraphs by the corresponding hyperplane.

Choosing a base vertex $v$ of $G$, we define an orientation of all edges.  Let $xy$ be an edge such that $d(x,v)\leq d(y,v)$.  Then $m=m(v,x,y)=x$ since $d(x,y)=d(m,x)+d(m,y)=1$ and hence $x$ is strictly closer than $y$ to $v$.  Let $x$ be the initial vertex of $xy$ and $y$ the terminal vertex.  Note that if $uv\in\Theta(xy)$, and the terminal vertex $x$ lies in the halfspace ${\bf A}(xy)$ of the corresponding convex split, then the terminal vertex $u$ of the edge $uv$ also lies in ${\bf A}(xy)$, and hence each convex split of $G$ is oriented.  If $\Theta_1$ and $\Theta_2$ are parallelism classes of edges, we write $\Theta_1{{\coll}_v}\Theta_2$ if either the corresponding convex splits are incompatible, or if there exist edges $xy_1\in\Theta_1$ and $xy_2\in\Theta_2$ such that $x$ is the initial vertex of $xy_1$ and $xy_2$.  Note that the hyperplanes $H,H'$ of $\bf X$ satisfy $H{\coll}_v H'$ if and only if $\Theta(H){\coll}_v\Theta(H')$.

\subsection{Contact and crossing graphs}
Let $\bf X$ be a CAT(0) cube complex and let $G({\bf X})={\bf X}^{(1)}$ be the corresponding median graph.  Let $\mathcal H$ be the set of hyperplanes of $\bf X$ or, equivalently, the set of parallelism classes of edges in $G({\bf X})$.  The contact graph of $\bf X$ was defined in~\cite{HagenQuasiArb} as a modification of the ``crossing graph'' -- the intersection graph of the set $\mathcal H$ of hyperplanes in $\bf X$ -- previously studied by Bandelt, Dress, Eppstein, Niblo, Roller, van de Vel and others (see~\cite{Roller} and~\cite{NibloSplittingObstruction}).  While any simplicial graph is the crossing graph of some CAT(0) cube complex, the class of graphs that arise as contact graphs is quite restricted: contact graphs are connected and quasi-isometric to trees~\cite{HagenQuasiArb}.

\begin{definition}[Contact graph, crossing graph]\label{defn:contact}
The \emph{contact graph} $\Gamma(\bf X)$ of $\bf X$ is the graph whose vertex set is the set $\mathcal H$, with an edge joining the vertices $H_1$ and $H_2$ if $H_1\coll H_2$.  We use the same notation for a vertex of $\Gamma(\bf X)$ as for its corresponding hyperplane, and we use the notation $H_1\coll H_2$ for the (closed) edge of $\Gamma(\bf X)$ joining the contacting hyperplanes $H_1$ and $H_2$.

The \emph{crossing graph} $\xing X$ is the subgraph of $\Gamma(\bf X)$ obtained by deleting each open edge corresponding to an osculating pair of hyperplanes, so that $H_1$ and $H_2$ are adjacent in $\xing X$ if and only if they cross.

Given hyperplanes $U,V\in\mathcal H$, we denote by $\rho(U,V)$ the distance in $\Gamma(\bf X)$ from $U$ to $V$.
\end{definition}

Likewise, for a pointed CAT(0) cube complex ${\bf X}_v$ or pointed median graph $G({\bf X})_v$, we define the pointed contact graph, introduced in~\cite{ChepoiNiceLabeling}, as follows.

\begin{definition}[Pointed contact graph]\label{defn:pointedcontact}
The \emph{pointed contact graph} $\Gamma_{v}({\bf X})$ is the subgraph of $\Gamma({\bf X})$ defined as follows: the vertex set of $\Gamma_{v}({\bf X})$ is the set $\mathcal H$ of hyperplanes of $\bf X$, and $H$ and $H'$ are joined by an edge of $\Gamma_{v}({\bf X})$ if and only if $H\coll_vH'$.
\end{definition}

Pointed contact graphs are used in our applications to the nice labeling problem.  Observe that if $H$ and $H'$ cross, then the intersection of their carriers contains a 2-cube $s$, one of whose four 0-cubes must be initial in the incident 1-cubes dual to $H$ and $H'$.  Hence $\xing X\subseteq\Gamma_{v}({\bf X})\subseteq\Gamma({\bf X})$.

\subsection{Event structures, nice labeling and the associated cube complexes}
The following is an informal summary of the relationship between (pointed) contact graphs of CAT(0) cube complexes and median graphs and nice labelings of event structures, following the treatment given in~\cite{ChepoiNiceLabeling}.

An \emph{event structure}\footnote{Also called a \emph{coherent event structure} or \emph{an event structure with binary conflict}.} is a triple $\mathcal E=(E,\leq, \smile)$, where $E$ is a set of \emph{events}, $\leq$ is a partial order on $E$, called \emph{causal dependency}, and $\smile$ is a symmetric, irreflexive binary relation on $E$ called \emph{conflict}.  For all $e,e',e''$, if $e\smile e'$ and $e'\leq e''$, then $e\smile e''$.  Additionally, $\mathcal E$ is \emph{finitary}, which is to say that for all $e,e'\in E$, there exist finitely many $e''\in E$ such that $e\leq e''\leq e'$.

The events $e$ and $e'$ are \emph{concurrent}, denoted $e\frown e'$, if they are incomparable in the partial ordering $\leq$ and $e\not\smile e'$.  A conflict $e\smile e'$ is \emph{minimal} if there does not exist $e''\not\in\{e,e'\}$ such that $e''$ precedes one of $e$ and $e'$ in $\leq$ and is in conflict with the other.  The events $e$ and $e'$ are \emph{independent} (or \emph{orthogonal}) if they are either concurrent or in minimal conflict.  An \emph{independent set} is a set of pairwise independent events in $E$.  The \emph{degree} of $E$ is the maximum cardinality of an independent set in $E$.

In~\cite{RoTh}, Rozoy and Thiagarajan formulated the \emph{nice labeling problem} for event structures.  A \emph{labeling} is a map $\lambda:E\rightarrow\Lambda$, where $\Lambda$ is some alphabet, and $\lambda$ is a \emph{nice labeling} if $\lambda(e)\neq\lambda(e')$ whenever $e$ and $e'$ are independent.  Solving the nice labeling problem for $\mathcal E$ entails constructing a nice labeling $\lambda$ such that $\Lambda$ is finite.  More quantitatively, one asks, given a class of event structures, whether there exists a function $f:\naturals\rightarrow\naturals$ such that, for an event structure $\mathcal E$ of degree $n$ in the given class, there exists a nice labeling of $\mathcal E$ with $|\Lambda|\leq f(n)$.  The first author answered this question in the negative in~\cite{ChepoiNiceLabeling} when the class in question is the class of event structures of degree at least five.  Theorem~\ref{theorem1}, however, asserts that the nice labeling conjecture is true for event structures of finite degree that have as their domain a CAT(0) cube complex of dimension at most 2.

The \emph{domain} $\mathcal D(\mathcal E)$ of the event structure $\mathcal E$ is defined as follows.  A \emph{configuration} $C$ is a subset $C\subseteq E$ of the set of events such that no two elements of $C$ are in conflict, and, if $e\leq e'\in C$ are not in conflict, then $e\in C$.  The domain $\mathcal D(\mathcal E)$ is the set of all such configurations $C$, ordered by inclusion.  This construction naturally gives rise to a median graph and an accompanying CAT(0) cube complex associated to $\mathcal E$.  Indeed, following~\cite{Santocanale}, let $G=G(\mathcal E)$ be the graph whose vertices are the elements of the domain $\mathcal D(\mathcal C)$, with $C$ and $C'$ joined by an edge if and only if $C=C'\cup\{e\}$ for some $e\in E-C$.  In this situation, the edge $C'C$ is directed from $C'$ to $C$.  In other words, an event $e\in E$ is viewed as a minimal change from one configuration to another~\cite{WiNi}.

It can be shown~\cite{Barthelemy_Constantin} that $G$ is a median graph, and thus $G=G({\bf X})$, where ${\bf X}$ is a CAT(0) cube complex; abusing language slightly, we refer to $G({\bf X})$ or to $\bf X$ as the \emph{domain} of $\mathcal E$, since these objects are uniquely determined by $\mathcal E$.  The hyperplanes of $\bf X$ correspond bijectively to the events in $E$.  The events $e$ and $e'$ are concurrent if and only if the corresponding hyperplanes cross.  In the language of CAT(0) cube complexes, this bijection was recently rediscovered by~\cite{AOS}.

Indeed, let $C''$ be a configuration that does not contain $e$ or $e'$, with $e,e'$ not in conflict, but such that $C=C''\cup\{e\}$ and $C'=C''\cup\{e'\}$ are downward-closed.  Then by the definition of concurrency, the configurations $C'',C',C,$ and $C'\cup C$ are the vertices of a 4-cycle in $G$ bounding a 2-cube in $\bf X$ whose crossing dual hyperplanes correspond to $e$ and $e'$.  On the other hand, if $e$ and $e'$ are in minimal conflict, then $C'$ and $C$ are both adjacent to $C''$, and thus $C''$ has two incident 1-cubes in $\bf X$, one dual to each of the hyperplanes $e$ and $e'$, and hence the corresponding hyperplanes osculate.  (The construction of $\bf X$ from the space of configurations of an event structure is highly reminiscent of the notion of a state complex of a metamorphic robot~\cite{AbramsGhrist,GhristPeterson} and of the construction of a cube complex from a wallspace~\cite{ChatterjiNiblo,NicaCubulating}.)

Conversely, each CAT(0) cube complex $\bf X$ (and thus each median graph $G({\bf X})$~\cite{ChepoiMedian}), and any fixed base 0-cube $v\in\bf X$, gives rise to an event structure ${\mathcal E}_v$ whose events are the hyperplanes of $\bf X$~\cite{Barthelemy_Constantin}.  Hyperplanes $H$ and $H'$ define concurrent events if and only if they cross, and $H\leq H'$ if and only if $H=H'$ or $H$ separates $H'$ from $v$.  The events defined by $H$ and $H'$ are in conflict if and only if $H$ and $H'$ do not cross and neither separates the other from $v$.  Thus the events corresponding to $H$ and $H'$ are in minimal conflict if $H$ and $H'$ osculate and neither of $H$ and $H'$ separates the other from $v$.

Already, from the definition of an event structure $\mathcal E$, one defines a graph $\mathcal G(\mathcal E)$ whose vertices are the events, with $e$ and $e'$ joined by an edge if and only if $e$ and $e'$ are independent, i.e. if and only if $e$ and $e'$ are concurrent (analogous to crossing hyperplanes) or in minimal conflict (analogous to osculating hyperplanes).  Hence $\mathcal G(\mathcal E)$ is a spanning subgraph of the contact graph $\Gamma({\bf X})$ that contains the crossing graph $\xing X$.  On the other hand, given a pointed CAT(0) cube complex ${\bf X}_v$, we see that the graph associated to the event structure $\mathcal E_v$ corresponding to $\mathbf X_v$ is precisely the pointed contact graph $\Gamma_v({\bf X})$.  It was noted already in~\cite{Santocanale} that a nice labeling of $\mathcal E$ corresponds to a colouring of the edges of the corresponding median graph in such a way that edges dual to the same hyperplane receive the same colour, and edges $c,c'$ dual to a pair of hyperplanes $H,H'$ that cross or osculate in a ``minimal conflict'' way receive different colours.  Thus we see that $\mathcal E$ admits a nice labeling if the corresponding pointed CAT(0) cube complex has finite chromatic number for its pointed contact graph, and in particular, $\mathcal E$ admits a nice labeling if the corresponding contact graph has finite chromatic number. Conversely, if $\Gamma_v({\bf X})$ has infinite chromatic number, then the corresponding event structure $\mathcal E_v$ does not admit a nice labeling.

\subsection{Disc diagrams}
We shall frequently use the technique of minimal-area disc diagrams in the CAT(0) cube complex $\bf X$.  For a discussion of disc diagrams over cube complexes, using the language and notation closest to that of the present paper, see~\cite{HagenQuasiArb} or~\cite{WiseIsraelHierarchy}.  The results which we shall use are summarized below.

A \emph{disc diagram} $D\rightarrow\bf X$ in the CAT(0) cube complex $\bf X$ is a combinatorial map, where $D$ is a contractible 2-dimensional cube complex, not necessarily CAT(0), such that $D$ is equipped with a specific embedding in $S^2$, so that $S^2=D\cup E$, where $E$ is a 2-cell.  The \emph{boundary path} of $D$ is the combinatorial path $P\rightarrow\bf X$ which is the restriction of $D\rightarrow\mathbf X$ to the attaching map of $E$.

\begin{proposition}[Existence of disc diagrams]\label{prop:discdiagramsexist}
Let $P\rightarrow G({\bf X})$ be a closed path.  Then there exists a disc diagram $D\rightarrow\bf X$ whose boundary path is $P$.
\end{proposition}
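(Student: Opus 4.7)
The plan is to derive this from van Kampen's lemma, using that $\bf X$ is simply connected (being CAT(0)). First, observe that the inclusion ${\bf X}^{(2)} \hookrightarrow \bf X$ is a $\pi_1$-isomorphism, since attaching cells of dimension $\geq 3$ does not affect $\pi_1$; thus ${\bf X}^{(2)}$ is simply connected as well. Hence the closed combinatorial path $P \to G({\bf X}) \subset {\bf X}^{(2)}$ is null-homotopic in the 2-complex ${\bf X}^{(2)}$.

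Next, I would translate this topological null-homotopy into a finite combinatorial sequence of elementary moves reducing $P$ to the constant loop. There are two types of moves: (a) free insertion or deletion of a \emph{spur}, i.e.\ a subpath of the form $ee^{-1}$; and (b) insertion or deletion of the boundary cycle $\partial s$ of a 2-cube $s$ of $\bf X$. The existence of such a sequence, given that $P$ is null-homotopic in a 2-complex, is the standard combinatorial fact underlying the group-theoretic description of $\pi_1$ of a 2-complex in terms of edge loops modulo the boundaries of 2-cells.

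Having such a sequence, I would construct $D$ inductively by reading the sequence of moves \emph{backwards}, starting from the trivial diagram (a single 0-cube representing the constant loop). At each stage one either attaches a 1-cube as a spur along the current boundary (the inverse of move (a)) or attaches a 2-cube along a subarc of the current boundary (the inverse of move (b)). Because attachments are always made along the current boundary path, the resulting 2-complex remains contractible and a planar disc at every step, with a canonical embedding in $S^2$. After processing the entire sequence, the combinatorial map $D \to \bf X$ has boundary path $P$, as required.

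The main subtlety is the planarity bookkeeping in the last step: when $P$ revisits edges or vertices, care is needed so that attachments at the current boundary do not force identifications that destroy the embedded-in-$S^2$ structure. This is handled by always attaching at the outside of the current diagram and only identifying faces of $D$ when the combinatorial map to $\bf X$ forces it, not within $D$ itself; in particular, $D$ is allowed to be singular (non-CAT(0), possibly with cut vertices and cut edges along the boundary) even when ${\bf X}$ is not, which is exactly what the definition of disc diagram above permits. Equivalently, one can cite the standard existence theorem for van Kampen diagrams over presentation 2-complexes (e.g.\ Lyndon--Schupp), applied to the presentation 2-complex ${\bf X}^{(2)}$, to obtain $D$ directly.
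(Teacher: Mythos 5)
The paper itself gives no proof of this proposition; it simply cites the result from~\cite{HagenQuasiArb} and~\cite{WiseIsraelHierarchy}, where it appears as the standard van Kampen lemma for (cube) $2$-complexes. Your argument — reduce $P$ to the constant loop by spur/2-cell moves in the simply connected skeleton ${\bf X}^{(2)}$ and then run the sequence backwards to assemble a planar disc — is exactly that standard argument, and your caveat about planarity bookkeeping identifies the one genuinely delicate point; the proposal is correct and matches the cited approach.
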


Let $D\rightarrow\bf X$ be a disc diagram.  The \emph{area} of $D$ is the number of 2-cubes in $D$.  The disc diagram has \emph{minimal area for its boundary path} $P$ if for all disc diagrams $D'$ with boundary path $P$, the area of $D'$ is at least the area of $D$.  Note that the equivalence relation $\Theta$ on the 1-cubes of $\bf X$ pulls back to an equivalence relation on the 1-cubes of $D$.  Each equivalence class $\Theta(H)$ of 1-cubes in $D$ determines a \emph{dual curve}, defined as follows.  If $s$ is a 2-cube of $D$, and $c,c'$ are opposite 1-cubes of $s$ with $c,c'\in\Theta(H)$, then the \emph{midcube} of $s$ corresponding to $H$ is the $\ell_1$ geodesic segment in $s$ joining the midpoint of $c$ to the midpoint of $c'$.  A \emph{dual curve} $K$ is a maximal concatenation of midcubes of 2-cubes in $D$.  The map $D\rightarrow\bf X$ restricts to a map $K\rightarrow H$, and moreover, the union $N(K)$ of all 2-cubes in $D$ containing constituent midcubes of $K$ -- the \emph{carrier of $K$} -- maps to the carrier $N(H)$.  A dual curve $K$ is an immersed interval or circle in $D$.

If $K$ is a dual curve, then an \emph{end} of $K$ is a midpoint $p$ of a 1-cube $c$ such that $p$ is contained in only one constituent midcube of $K$.  $K$ has at most two ends, and each end of $K$ lies on the boundary path $P$ of $D$.  The following proposition states, in the language of~\cite{WiseIsraelHierarchy}, that a minimal-area disc diagram does not contain a ``nongon'' or a ``monogon'' of dual curves, i.e. dual curves begin and end on the boundary path of $D$, and no dual curve crosses itself.

\begin{proposition}[No nongons or monogons]\label{prop:dualcurvesembed}
Let $D\rightarrow\bf X$ have minimal area for its boundary path $P$.  Then every dual curve $K$ in $D$ is an embedded interval (possibly of length 0), and in particular each 2-cube $s$ of $D$ contains 1-cubes of exactly two distinct equivalence classes in $\Theta$.  If $|K|>0$, then $K$ has exactly two ends on $P$.  If $K$ is a single point, then $K\in P$.
\end{proposition}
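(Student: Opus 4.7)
Several parts of the conclusion require no minimality. Since $\Theta$ on $D$ is the pullback of $\Theta$ on $\mathbf X$ along $\phi:D\to\mathbf X$, and $\phi$ restricts to an isomorphism on each 2-cube, the claim that each 2-cube of $D$ has 1-cubes in exactly two distinct $\Theta$-classes follows from the analogous statement in $\mathbf X$. Since $D\subset S^2$ is 2-dimensional, every 1-cube of $D$ lies in at most two 2-cubes of $D$, so every midpoint of a 1-cube is the endpoint of at most two midcubes, all lying in the same $\Theta$-class. Consequently each dual curve $K$ is a 1-manifold, and is thus either a single point (the midpoint of a spur 1-cube, traversed by $P$ in $S^2$, whence $K\in P$), an embedded interval (with endpoints at midpoints of boundary 1-cubes, hence on $P$), or an embedded circle. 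The first two cases establish the ``ends on $P$'' and ``point on $P$'' assertions; only the circle case must be excluded to complete the proof.

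Suppose for contradiction that some dual curve $K$ in $D$ is a circle. Choose $K$ innermost, minimizing the area $|D_{\text{in}}|$ of the disc subdiagram $D_{\text{in}}\subset D$ bounded by the inner boundary $P_{\text{in}}$ of $K$'s annular carrier $N(K)$. Let $P_{\text{out}}$ be the outer boundary of $N(K)$, let $H\in\mathcal H$ satisfy $\phi(K)\subset H$, and set $D_0:=N(K)\cup D_{\text{in}}$, a disc subdiagram of $D$ with boundary $P_{\text{out}}$. The strategy is to construct a disc subdiagram $D_0'\to\mathbf X$ with the same boundary $P_{\text{out}}$ but strictly smaller area, contradicting the minimality of $D$.

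By innermostness and the observation that the 1-cubes of $P_{\text{in}}$ are dual to hyperplanes distinct from $H$, one shows that no other dual curve of $D$ mapping to $H$ can have midcubes in $D_{\text{in}}$: such a curve cannot cross $P_{\text{in}}$, so it would be confined to $D_{\text{in}}$, violating innermostness (if circular) or the absence of boundary 1-cubes of $D$ in the interior of $D_{\text{in}}$ (if an interval). Hence no 1-cube of $D_{\text{in}}$ is dual to $H$ in $\mathbf X$, and $\phi(D_{\text{in}})$ lies in one closed halfspace $\overline{\mathbf A(H)}$; moreover, $\phi(P_{\text{in}})$ sits in the inner face $H\times\{-\tfrac12\}$ of $N(H)\cong H\times[-\tfrac12,\tfrac12]$ (using the product structure of~\cite{Sageev95}). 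Since $H$ is itself a CAT(0) cube complex (by~\cite{Sageev95}) and therefore simply connected, the closed loop $\phi(P_{\text{in}})$, viewed as a loop in $H$, bounds a minimal-area disc diagram $E\to H$. The gate retraction of $\overline{\mathbf A(H)}$ onto the convex subcomplex $H\times\{-\tfrac12\}\cong H$, composed with $D_{\text{in}}\to\overline{\mathbf A(H)}$, yields a cube-complex map $D_{\text{in}}\to H$ that fills $\phi(P_{\text{in}})$; by minimality of $E$, we have $|E|\leq|D_{\text{in}}|$. Finally, lifting $E$ via the product structure to the outer face $H\times\{+\tfrac12\}\subset N(H)$ produces $D_0'\to\mathbf X$ with boundary $P_{\text{out}}$ (since $\phi(P_{\text{in}})$ and $\phi(P_{\text{out}})$ project to the same loop in $H$) and area $|D_0'|=|E|\leq|D_{\text{in}}|<|D_{\text{in}}|+|N(K)|=|D_0|$, as desired.

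The main technical obstacle is the area estimate $|E|\leq|D_{\text{in}}|$: one must verify that the gate-projection construction produces a bona fide disc diagram $E\to H$ with no increase in area. This ultimately relies on standard properties of gate projections onto convex subcomplexes of CAT(0) cube complexes -- each 2-cube either projects onto a 2-cube or collapses to a lower-dimensional cube, so no new 2-cubes are created. In the two-dimensional setting which is the focus of the paper, $H$ is a tree, any disc diagram in $H$ has zero 2-cubes, and the inequality $|E|=0<|D_0|$ is immediate, making the entire argument substantially simpler.
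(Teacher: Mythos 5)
Your proof is correct and reconstructs, along essentially standard lines, the argument that the paper itself delegates to the references~\cite{HagenQuasiArb} and~\cite{WiseIsraelHierarchy} rather than proving. Your preliminary observation is a genuine sharpening of the way the statement is usually phrased: the 1-manifold structure of dual curves, the two-$\Theta$-classes-per-square claim, and the assertions about ends and isolated points require no minimality at all -- only that $D$ is a 2-complex embedded in $S^2$ (so each 1-cube lies in at most two squares) and that the combinatorial map $D\to\mathbf X$ is nondegenerate on cells, so that each square of $D$ inherits two \emph{distinct} hyperplane classes from its image in $\mathbf X$. Minimality enters solely to rule out circular dual curves, and your innermost-circle/area-reduction argument handles this correctly: the inner disc $D_{\text{in}}$ contains no $H$-dual 1-cube, its boundary maps into $H\times\{-\tfrac12\}$, and replacing $D_0=D_{\text{in}}\cup N(K)$ by a translated filling of $\phi(P_{\text{out}})$ in $H\times\{+\tfrac12\}$ strictly drops the area.

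The one step that deserves more than a nod is the inequality $|E|\le|D_{\text{in}}|$. Composing $D_{\text{in}}\to\overline{\mathbf A(H)}$ with the gate projection onto $H\times\{-\tfrac12\}$ produces a possibly \emph{degenerate} filling of $\phi(P_{\text{in}})$, which is not yet a disc diagram in the paper's sense; converting it to a bona fide combinatorial disc diagram of no greater area is itself a reduction lemma (iterate folding and cancellation moves on the singular diagram, none of which create new 2-cells), not an immediate consequence of ``standard properties of gate projections.'' You flag this yourself, and it is indeed standard, but it warrants an explicit citation or a sketch. You are also right that the issue evaporates when $\dimension{\mathbf X}\le 2$ since then $H$ is a tree and $|E|=0$; however, the paper invokes this proposition in unrestricted dimension as well (e.g.\ in the proofs of Propositions~\ref{prop:weakcombing} and~\ref{prop:isometricembedding}), so the general-dimensional version, with the reduction lemma made explicit, is the right target.
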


If $Q\subseteq P$ is a subpath of the boundary path of $D$, we say that $K$ \emph{emanates from} or \emph{ends on} $Q$ if $Q$ contains a 1-cube whose midpoint lies in $K$.

\begin{proposition}[No bigons]\label{prop:bigon}
If $D$ is a minimal-area disc diagram for its boundary path $P$, and $K_1$ and $K_2$ are distinct dual curves in $D$, then either $K_1\cap K_2=\emptyset$, or $K_1\cap K_2$ consists of a single point.  
In the latter case, $K_1$ and $K_2$ are said to \emph{cross}.  If $K_1$ and $K_2$ cross, then their corresponding hyperplanes also cross, and in particular $K_1$ and $K_2$ map to distinct hyperplanes.
\end{proposition}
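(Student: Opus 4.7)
The plan is to establish the three assertions in sequence. For the no-bigons statement, I would argue by contradiction: assume $K_1\cap K_2$ contains two or more points, and produce from $D$ another disc diagram $D'$ with the same boundary path $P$ but strictly smaller area, contradicting minimality. To set this up I would select a pair of dual curves $(K_1,K_2)$ together with two intersection points $p,q\in K_1\cap K_2$ that are \emph{innermost}, meaning $p$ and $q$ are consecutive along both $K_1$ and $K_2$, so that the two arcs of $K_1$ and $K_2$ running from $p$ to $q$ bound a topological subdisc $B\subset D$, and no strictly smaller bigon is formed by a pair of dual curves contained in $B$.

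The main obstacle is carrying out the surgery that eliminates the bigon. By Proposition~\ref{prop:dualcurvesembed} every dual curve has both ends on $P$, so each dual curve $J$ meeting the interior of $B$ must enter and leave $\partial B$ without terminating inside. By the innermost choice, $J$ cannot meet $K_1$ or $K_2$ more than once, so $J$ crosses each of $K_1,K_2$ exactly once in $B$. The corner 2-cubes $s_p,s_q$ of $D$ containing the midcubes of $K_1$ and $K_2$ at $p$ and $q$ have their four 1-cubes split into two opposite pairs belonging to $\Theta(H_1)$ and $\Theta(H_2)$, where $H_1,H_2$ are the hyperplanes carrying $K_1,K_2$. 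A standard bigon-reduction surgery (see, e.g., \cite{Sageev95, WiseIsraelHierarchy, HagenQuasiArb}) then excises $B$ together with $s_p,s_q$ and reglues the boundary along matching 1-cubes of $\Theta(H_1)$ and $\Theta(H_2)$, producing a new combinatorial disc diagram $D'\to\mathbf{X}$ with the same boundary path $P$ but with at least two fewer 2-cubes. This contradicts the minimality of the area of $D$.

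For the second assertion, suppose $K_1$ and $K_2$ cross at a single point $p$. By definition $p$ lies in the interior of a 2-cube $s\subset D$ whose two midcubes are precisely the diagonals contributed by $K_1$ and $K_2$. The four 1-cubes of $s$ therefore split into two opposite pairs lying in $\Theta(H_1)$ and $\Theta(H_2)$ respectively. The image of $s$ under $D\to\mathbf{X}$ is a 2-cube of $\mathbf{X}$ whose boundary path contains a concatenation $cc'$ with $c\in\Theta(H_1)$ and $c'\in\Theta(H_2)$, and by the definition of crossing given earlier this means that $H_1$ and $H_2$ cross in $\mathbf{X}$. Finally, if $K_1$ and $K_2$ mapped to the same hyperplane $H$, then both midcubes of $s$ would come from $\Theta(H)$, forcing all four 1-cubes of $s$ into the single equivalence class $\Theta(H)$; this contradicts the conclusion of Proposition~\ref{prop:dualcurvesembed} that each 2-cube in a minimal-area disc diagram contains 1-cubes of exactly two distinct $\Theta$-classes. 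Hence $K_1$ and $K_2$ must map to distinct hyperplanes, completing the proof.
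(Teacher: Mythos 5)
The paper does not actually prove Proposition~\ref{prop:bigon}: together with Propositions~\ref{prop:discdiagramsexist} and~\ref{prop:dualcurvesembed} it is stated as a standard background fact about disc diagrams in CAT(0) cube complexes, with a pointer to~\cite{HagenQuasiArb} and~\cite{WiseIsraelHierarchy}. So there is no in-paper argument to compare against, and your attempt supplies a sketch where the authors supply only a citation.

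Your arguments for the second and third assertions are correct and complete: a crossing point lies in a 2-cube whose two midcubes belong to $K_1$ and $K_2$, so the image 2-cube in $\mathbf X$ exhibits a corner $cc'$ with $c\in\Theta(H_1)$, $c'\in\Theta(H_2)$, which is exactly the paper's definition of crossing hyperplanes; and Proposition~\ref{prop:dualcurvesembed} rules out both midcubes being dual to the same hyperplane.

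For the first assertion your strategy (pass to an innermost bigon and produce a lower-area diagram) is the right one, and your preliminary observations about dual curves crossing $B$ are sound, but the surgery step is described too loosely to count as a proof. ``Excise $B$ together with $s_p,s_q$ and reglue the boundary along matching 1-cubes of $\Theta(H_1)$ and $\Theta(H_2)$'' does not describe a well-defined operation: after removing the bigon the two arcs of hole-boundary are paths in $N(K_1)$ and $N(K_2)$, and there is no a priori edge-for-edge matching between them that would let you reglue and still have a diagram mapping to $\mathbf X$. The argument actually carried out in the references you cite is more delicate -- one either pushes the two crossings of $K_1,K_2$ toward each other by iterated hexagon moves (area-preserving) until a cancellable pair of squares appears, or one argues directly about the structure of the boundary path of the bigon region and its image in $\mathbf X$. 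You are entitled to defer to those references (as the paper does), but as written the surgery paragraph asserts an area reduction without supplying a mechanism that produces it.
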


Propositions~\ref{prop:dualcurvesembed} and~\ref{prop:bigon} are used implicitly in all of our disc diagram arguments.  The situation concerning trigons of dual curves is somewhat more subtle: a trigon of dual curves along the boundary path of $D$, for the diagrams we consider, in general contradicts minimality of the area of $D$ (except in certain special cases).  However, as in the proof of Theorem~\ref{theorem1}, if $\bf X$ is at most 2-dimensional, then all trigons of dual curves mapping to distinct hyperplanes are impossible, regardless of minimality of area, since the existence of pairwise-crossing triples of hyperplanes guarantees the presence of 3-cubes.

\begin{definition}[Trigon, trigon along the boundary]\label{def:trigon}
Let $D\rightarrow\bf X$ be a disc diagram with boundary path $P$.  If $K_1,K_2,K_3$ are distinct pairwise crossing dual curves in $D$, then they form a \emph{trigon of dual curves}, as at left in Figure~\ref{fig:trigons}.
Let $c_1$ and $c_2$ be distinct 1-cubes of $P$, and let $c_1Qc_2$ be one of the paths on $P$ subtended by $c_1$ and $c_2$.  For $i\in\{1,2\}$, let $K_i$ be the dual curve in $D$ emanating from $c_i$.  Suppose that $K_1$ and $K_2$ cross, and there exists a hyperplane $H$ such that the image of $c_1Qc_2$ lies in $N(H)$, with neither $c_1$ nor $c_2$ dual to $H$.  Then the pair $K_1,K_2$ forms a \emph{trigon along the boundary of $D$}.  See the middle of Figure~\ref{fig:trigons}.
\begin{figure}[h]
  \includegraphics[width=4in]{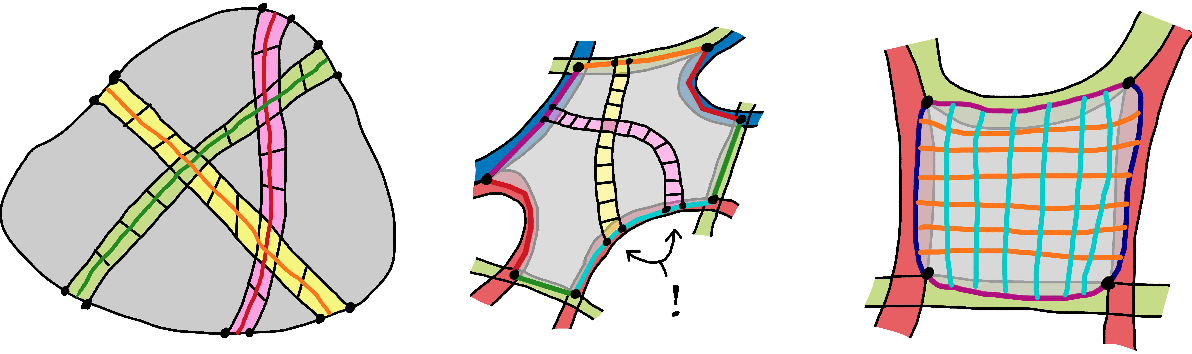}\\
  \caption{At left is a trigon of dual curves, which is in general possible in a minimal-area diagram, but which cannot occur in two dimensions.  In the center is a trigon along the boundary, which is always disallowed by minimality of area in our diagrams.  At right is a grid.}\label{fig:trigons}
\end{figure}
\end{definition}

Using \emph{hexagon moves}, one proves the first assertion in the following proposition.  The second follows from the fact that $\dimension{\bf X}$ is bounded below by the cardinality of any set of pairwise-crossing hyperplanes.

Every disc diagram $D$ in this paper has \emph{fixed carriers} in the sense of~\cite{HagenQuasiArb}.  This means that there is a fixed collection $\{H_i\}_{i=1}^k$ such that $\partial_pD=P_1P_2\ldots P_n$, where $P_i$ is a combinatorial geodesic segment in $N(H_i)$.  The diagram $D$ is minimal if it has minimal area among all diagrams with boundary path $\partial_pD$, and, fixing the collection $\{H_i\}$, the paths $P_i$ are chosen so as to minimize the area of $D$ among all diagrams thus constructed.  Finally, $D$ is chosen among all such minimal-area diagrams in such a way that $|\partial_pD|$ is minimal.

\begin{proposition}\label{prop:boundarytrigons}
Let $D$ be a diagram with fixed carriers that is minimal in the above sense.  Then $D$ contains no trigon $K_1,K_2,c_1Qc_2$ along the boundary.

If $\dimension{\bf X}\leq 2$, then any diagram $D\rightarrow\mathbf X$ that has minimal area for its boundary path contains no trigon of dual curves and no trigon along the boundary.
\end{proposition}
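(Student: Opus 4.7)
The plan is to treat the two assertions separately. The second assertion follows from a direct dimensional obstruction, while the first requires an area-reduction argument via the hexagon move.

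For the second assertion, assume $\dimension{\bf X}\leq 2$ and that $D$ is area-minimal for its boundary. A trigon of dual curves $K_1,K_2,K_3$ projects, by Proposition~\ref{prop:bigon}, to three pairwise-crossing hyperplanes $H_1,H_2,H_3$ of $\bf X$; since a CAT(0) cube complex is characterized by flag links, such a pairwise-crossing triple produces a 3-cube in $\bf X$, contradicting $\dimension{\bf X}\leq 2$. For a trigon $(K_1,K_2,c_1Qc_2)$ along the boundary with the image of $c_1Qc_2$ in $N(H)$, let $H_i$ be the hyperplane image of $K_i$. Because $c_1,c_2\in N(H)$ but $c_1,c_2\notin\Theta(H)$, each $c_i$ is parallel to $H$ in $\bf X$, so $H$ crosses both $H_1$ and $H_2$. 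Together with $H_1\cap H_2\neq\emptyset$, which is forced by $K_1\cap K_2\neq\emptyset$ via Proposition~\ref{prop:bigon}, this again yields three pairwise-crossing hyperplanes and hence a 3-cube, contradicting $\dimension{\bf X}\leq 2$.

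For the first assertion, suppose $D$ has fixed carriers $\{H_i\}_{i=1}^k$ and is minimal in the lexicographic sense of the definition: minimum area among diagrams with boundary $P_1\cdots P_k$ having each $P_i\subseteq N(H_i)$ a combinatorial geodesic, and then minimum $|\partial D|$ among such area-minimizers. Assume, for contradiction, that $D$ contains a trigon along the boundary formed by dual curves $K_1,K_2$ emanating from 1-cubes $c_1,c_2$ on some $P_j$, with $K_1\cap K_2$ a single point in $D$, and with the image of $c_1Qc_2\subseteq P_j$ lying in $N(H)$. The goal is to produce a diagram $D'$ with the same fixed carriers whose area is strictly smaller than that of $D$, or equal in area but with strictly smaller boundary length, contradicting the minimality of $D$.

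The construction is a hexagon move applied to the disc subdiagram $R\subseteq D$ bounded by $K_1$, $K_2$, and the arc $c_1Qc_2$. Since $c_1Qc_2$ maps into $N(H)$, one can attach an $H$-strip of 2-cubes along this boundary arc—replacing $P_j$ by a parallel geodesic $P_j'$ that still lies in $N(H_j)$, thanks to the product structure $N(H_j)\supseteq N(H_j)\cap N(H)$—and then apply the cubical hexagon identity to the three now pairwise-crossing dual curves $K_1$, $K_2$, and the new $H$-dual curve to remove the crossing $K_1\cap K_2$. Counting 2-cubes shows that the 2-cube of $D$ carrying $K_1\cap K_2$ is eliminated and not replaced, so either the area strictly decreases or (after possibly further cancellation) the area is preserved while $|\partial D'|<|\partial D|$. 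The main obstacle is purely combinatorial bookkeeping: verifying that the resulting $D'$ is a valid disc diagram whose boundary decomposes as $P_1\cdots P_{j-1}P_j'P_{j+1}\cdots P_k$ with each subpath a geodesic in the prescribed carrier. This verification is standard in cubical disc diagram theory (see~\cite{HagenQuasiArb,WiseIsraelHierarchy}); once carried out, the existence of $D'$ contradicts the minimality of $D$.
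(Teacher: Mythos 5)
Your proof of the second assertion is correct and matches the paper's one-line justification: a trigon of dual curves, or a trigon along the boundary, yields three pairwise-crossing hyperplanes (by Proposition~\ref{prop:bigon} plus the observation that a non-dual $1$-cube of $N(H)$ is dual to a hyperplane crossing $H$), and a pairwise-crossing triple forces a $3$-cube in a CAT(0) cube complex, contradicting $\dimension \mathbf X\leq 2$.

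For the first assertion, your high-level plan (use hexagon moves to contradict the lexicographic minimality of $D$ with fixed carriers) is the same approach the paper sketches, but the specific mechanism you describe does not work and would need to be replaced. You propose to \emph{attach an $H$-strip of 2-cubes along} $c_1Qc_2$ and then remove $K_1\cap K_2$ by hexagon moves, concluding that the $2$-cube at $K_1\cap K_2$ is ``eliminated and not replaced.'' This bookkeeping cannot be right: attaching a strip along $Q$ strictly increases the area by $|Q|$, and a hexagon move replaces three $2$-cubes by three $2$-cubes, so it is area-preserving. No sequence of those two operations can produce a net area decrease, nor does it eliminate a crossing. The side remark that ``the product structure $N(H_j)\supseteq N(H_j)\cap N(H)$'' guarantees the pushed path $P_j'$ stays in $N(H_j)$ is also unjustified when $H\neq H_j$: the geodesic parallel to $c_1Qc_2$ on the far side of $N(H)$ has no reason to lie in $N(H_j)$. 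The standard argument goes in the other direction: one uses hexagon moves (area-preserving) to push the $2$-cube at $K_1\cap K_2$ toward the boundary path, reducing $|Q|$ until $c_1,c_2$ are adjacent; the $2$-cube at $K_1\cap K_2$ is then incident to two consecutive $1$-cubes of $P_j$ and lies in $N(H_j)$ (by convexity of the carrier), so pushing it out of the diagram produces a fixed-carrier diagram with strictly smaller area, contradicting minimality. Equivalently, since $H_1,H_2,H$ pairwise cross, the Helly property gives a $0$-cube $p\in N(H_1)\cap N(H_2)\cap N(H)$ and a shorter replacement for the boundary segment through $p$, as the paper itself does in the second part of Remark~\ref{rem:discobservations}. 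Either correct mechanism yields the contradiction; yours, as stated, does not.
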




\begin{definition}[Grid]\label{def:trigon}

Denote by $[0,m]$ the tree with $m+1$ vertices and $m$ edges, such that each vertex has valence 1 or 2 (i.e. a subdivided line segment). A disc diagram $D$ is a \emph{grid} if $D\cong [0,m]\times[0,n]$ for some $m,n$.
\end{definition}

\begin{remark}\label{rem:discobservations}
Note that if $\bf X$ is 2-dimensional and $D$ is a minimal-area disc diagram in $\bf X$, then $D$ is itself a CAT(0) cube complex whose hyperplanes are the dual curves, since any triangle in the link of a 0-cube in $D$ would result in a trigon of dual curves, contradicting Proposition~\ref{prop:boundarytrigons}.

Moreover, if $H_1$ and $H_2$ are hyperplanes represented by dual curves $K_1$ and $K_2$ in $D$, and $D$ is minimal for a set of fixed carriers, and a subtended path $c_1Qc_2\subset P$ between $K_1$ and $K_2$, with $c_i$ dual to $K_i$, maps to the carrier of a single hyperplane $H$ whose carrier is one of the fixed carriers of $D$, then $H_1$ contacts $H_2$ if and only if $K_1$ contacts $K_2$.  Indeed, by the Helly property described below, $N(H_1)\cap N(H_2)\cap N(H)$ contains a 0-cube $p$.  Consider a path $R_1\rightarrow N(H_1)$ joining the initial 0-cube of $c_1$ to $p$, and a path $R_2\rightarrow N(H_2)$ joining $p$ to the terminal 1-cube of $c_2$. Then $R_1R_2(c_1Qc_2)^{-1}$ bounds a disc diagram $E$.  By choosing $E$ minimal relative to the fixed carriers $N(H_2),N(H_2),N(H)$, we find that we can replace $Q$ by a single 0-cube in $N(H_1)\cap N(H_2)\cap N(H)$, and thus replace $D$ by a lower-area diagram (with a shorter boundary path) and the same set of fixed carriers.
\end{remark}

\subsection{Isometric embeddings, convexity and the Helly property}
We briefly review some notions about isometric embeddings and convex subcomplexes of CAT(0) cube complexes.  As usual, the combinatorial map ${\bf Y}\rightarrow\bf X$ is an isometric embedding if the distance between any two points $x,y\in\bf Y$ (with respect to the wall-metric) is equal to the distance in $\bf X$ between the images of $x$ and $y$ in $\bf X$.  We have the following characterization of isometrically embedded subcomplexes of $\bf X$.  Recall that a set $\mathcal S$ of hyperplanes is \emph{inseparable} if for all $H_1,H_2\in\mathcal S$, any hyperplane separating $H_1,H_2$ belongs to $\mathcal S$.


\begin{proposition}\label{prop:isometricembedding}
Let ${\bf Y}\subseteq\bf X$ be an isometrically embedded subcomplex, and let $\mathcal H({\bf Y})$ be the set of hyperplanes crossing $\bf Y$.  Then $\mathcal H({\bf Y})$ is an inseparable set, and for all $H\in\mathcal H({\bf Y})$, the intersection $H\cap\bf Y$ is connected, and $N(H)\cap\mathbf Y$ is connected.

Conversely, let $\bf Y$ and $\bf X$ be locally finite CAT(0) cube complexes with hyperplane sets $\mathcal H({\bf Y})$ and $\mathcal H({\bf X})$ respectively.  Suppose there exists an injective graph homomorphism $\phi:\Gamma(\mathbf Y)\rightarrow\Gamma(\mathbf X)$ that is bijective on vertices and sends crossing edges to crossing edges. Suppose moreover that, if $U,V,W$ are hyperplanes of $\mathbf Y$ such that $V$ separates $U$ from $W$, then either $\phi(V)$ separates $\phi(U)$ from $\phi(W)$ or $\phi(V)$ crosses $\phi(U)$ or $\phi(W)$, and that if $V$ does not separate $U,W$, then $\phi(V)$ does not separate $\phi(U),\phi(W)$.  Then there is an isometric embedding $\mathbf Y\rightarrow\mathbf X$.
\end{proposition}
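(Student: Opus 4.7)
For the forward direction, assume $\mathbf{Y} \subseteq \mathbf{X}$ is isometrically embedded, so that any combinatorial geodesic in $\mathbf{Y}$ joining 0-cubes is also a geodesic in $\mathbf{X}$ and therefore crosses exactly the hyperplanes of $\mathbf{X}$ separating its endpoints. For inseparability: given $H_1, H_2 \in \mathcal{H}(\mathbf{Y})$ and a hyperplane $H$ separating them in $\mathbf{X}$, choose 0-cubes $u_1, u_2 \in \mathbf{Y}$ with $u_i$ on the side of $H_i$ facing away from $H_{3-i}$, so that $H$ also separates $u_1$ from $u_2$; the $\mathbf{Y}$-geodesic from $u_1$ to $u_2$ must cross $H$, placing $H \in \mathcal{H}(\mathbf{Y})$. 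For connectedness of $N(H) \cap \mathbf{Y}$: given 1-cubes $c_1, c_2 \in \mathbf{Y}$ dual to $H$ with endpoints $u_1, u_2$ on the same side of $H$, convexity of $N(H)$ forces the $\mathbf{X}$-geodesic from $u_1$ to $u_2$ into $N(H)$, so the isometric $\mathbf{Y}$-geodesic lies in $N(H) \cap \mathbf{Y}$. Connectedness of $H \cap \mathbf{Y}$ follows by projecting the resulting path via $N(H) \cong H \times [-\tfrac{1}{2},\tfrac{1}{2}]$, threading through midcubes.

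For the converse, fix $y_0 \in \mathbf{Y}^{(0)}$, and for each $H \in \mathcal{H}(\mathbf{Y})$ denote by $\mathbf{A}_\mathbf{Y}(H)$ the halfspace containing $y_0$. The plan is to construct a pocset morphism $\tilde{\phi}: \mathcal{P}(\mathbf{Y}) \to \mathcal{P}(\mathbf{X})$ from the halfspace pocset of $\mathbf{Y}$ (with partial order $\subseteq$ and involution $A \mapsto A^c$) to that of $\mathbf{X}$, extending $\phi$ in the sense that $\tilde{\phi}$ maps each halfspace of $H$ to a halfspace of $\phi(H)$. Given such $\tilde{\phi}$, the system $\{\tilde{\phi}(\mathbf{A}_\mathbf{Y}(H))\}_H$ is consistent and determines a 0-cube $x_0 \in \mathbf{X}^{(0)}$ as the common point of these halfspaces. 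For each $y \in \mathbf{Y}^{(0)}$, set $\mathcal{S}(y) = \{H : H \text{ separates } y_0 \text{ from } y\}$, finite by local finiteness of $\mathbf{Y}$, and define $\psi(y) \in \mathbf{X}^{(0)}$ to be the 0-cube lying in $\tilde{\phi}(\mathbf{A}_\mathbf{Y}(H))$ when $H \notin \mathcal{S}(y)$ and in its complement otherwise.

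The main obstacle is constructing $\tilde{\phi}$ and verifying its consistency. For every non-crossing pair $H_1, H_2$ in $\mathbf{Y}$, exactly one of the four halfspace intersections is empty, encoded by an inclusion $\mathbf{A}_\mathbf{Y}(H_1) \subseteq \mathbf{A}_\mathbf{Y}(H_2)$ (or one of its three analogues); preservation of the partial order requires $\tilde{\phi}(\mathbf{A}_\mathbf{Y}(H_1)) \subseteq \tilde{\phi}(\mathbf{A}_\mathbf{Y}(H_2))$ whenever $\phi(H_1), \phi(H_2)$ remain non-crossing in $\mathbf{X}$; when they cross, all four quarter-spaces in $\mathbf{X}$ are nonempty and no constraint is imposed. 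The separation hypothesis is the essential input: if a hyperplane $V$ witnesses $\mathbf{A}_\mathbf{Y}(H_1) \subseteq \mathbf{A}_\mathbf{Y}(H_2)$ by separating $H_1$ from $H_2$ in $\mathbf{Y}$, then either $\phi(V)$ separates $\phi(H_1)$ from $\phi(H_2)$ in $\mathbf{X}$ (transferring the inclusion, with the halfspace choices dictated by the separation direction) or $\phi(V)$ crosses one of them (voiding the inclusion constraint). In the rare case of osculating pairs with no separating witness, the corresponding inclusion is forced by the local halfspace structure combined with $\phi$'s contact preservation. Combining these constraints with the Helly property applied to carriers of hyperplanes in $\mathrm{St}(y_0)$ (whose pairwise carriers meet since $\phi$ sends contacts to contacts), and processing finite subsets of $\mathcal{H}(\mathbf{Y})$ inductively, produces a globally consistent $\tilde{\phi}$; the associated $x_0$ automatically lies in $\bigcap_{H \in \mathrm{St}(y_0)} N(\phi(H))$.

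With $\psi$ well-defined on 0-cubes, isometry follows from the bijectivity of $\phi$ on hyperplanes: $d_\mathbf{X}(\psi(y), \psi(y')) = |\mathcal{S}(\psi(y)) \triangle \mathcal{S}(\psi(y'))| = |\phi(\mathcal{S}(y) \triangle \mathcal{S}(y'))| = |\mathcal{S}(y) \triangle \mathcal{S}(y')| = d_\mathbf{Y}(y, y')$. If $y, y' \in \mathbf{Y}$ are adjacent across the hyperplane $H$, then $\mathcal{S}(\psi(y)) \triangle \mathcal{S}(\psi(y')) = \{\phi(H)\}$, so $\psi(y), \psi(y')$ are adjacent across $\phi(H)$ in $\mathbf{X}$, and $\psi$ is combinatorial on 1-skeleta. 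For a $d$-cube of $\mathbf{Y}$, its $d$ pairwise-crossing dual hyperplanes are sent by $\phi$ to pairwise-crossing hyperplanes of $\mathbf{X}$ by hypothesis, which by flagness of the link of $\psi(y)$ span a $d$-cube. Hence $\psi$ extends to a combinatorial isometric embedding $\mathbf{Y} \to \mathbf{X}$, completing the proof.
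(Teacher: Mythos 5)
Your forward direction is correct and in fact slightly cleaner than the paper's argument: you use convexity of $N(H)$ directly to conclude that a $\mathbf Y$-geodesic between two 0-cubes of $N(H)\cap\mathbf Y$ (being also an $\mathbf X$-geodesic, by isometry of the embedding) must lie in $N(H)$, and hence in $N(H)\cap\mathbf Y$. The paper establishes the same connectedness by a disc-diagram argument showing the $N(H)$-geodesic equals the $\mathbf Y$-geodesic; your route avoids the diagram.

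The converse direction, however, has a genuine gap. Your strategy is to construct a pocset morphism $\tilde\phi$ and then push forward ultrafilters, which is a valid strategy in principle, but the existence of a \emph{consistent} $\tilde\phi$ is exactly where the content of the proposition lies and you do not prove it. Concretely: (a) Knowing that $\phi(V)$ separates $\phi(H_1)$ from $\phi(H_2)$ does not, on its own, "dictate" the nesting direction of $\tilde\phi(\mathbf A_{\mathbf Y}(H_1))$ and $\tilde\phi(\mathbf A_{\mathbf Y}(H_2))$; to extract a nesting direction you would already need to know which halfspace of $\phi(V)$ is $\tilde\phi(\mathbf A_{\mathbf Y}(V))$, which is part of the very object you are trying to build. (b) Your claim that $\phi(V)$ crossing one of $\phi(H_1),\phi(H_2)$ "voids the inclusion constraint" is incorrect: the constraint $\tilde\phi(\mathbf A_{\mathbf Y}(H_1))\subseteq\tilde\phi(\mathbf A_{\mathbf Y}(H_2))$ is imposed whenever $\phi(H_1)$ and $\phi(H_2)$ do not cross each other, and has nothing to do with $\phi(V)$. (c) The sentence dismissing "the rare case of osculating pairs with no separating witness" as "forced by the local halfspace structure" and the phrase "processing finite subsets of $\mathcal H(\mathbf Y)$ inductively" conceal precisely the verification that is needed: one is choosing one bit per hyperplane subject to a system of pairwise correlation constraints, and global satisfiability of such a system (absence of "odd cycles") requires an argument. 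The paper sidesteps all of this by fixing $x_0$ via Helly and then, for each 0-cube $y$ and a geodesic $\gamma$ from $y_0$ to $y$ crossing hyperplanes $V_1,\dots,V_n$, proving directly that $\{\phi(V_i)\}$ contains no facing triple, is inseparable, and has $\phi(V_j)$ separating $\phi(V_i),\phi(V_k)$ only when $V_j$ separates $V_i,V_k$; these three properties (using both separation hypotheses) guarantee the existence of a geodesic $\bar\gamma$ in $\mathbf X$ crossing exactly $\{\phi(V_i)\}$ in order from $x_0$, which both defines $f(y)$ and automatically supplies the consistency you are missing. To salvage your pocset approach you would essentially have to reproduce this analysis; as written, the key lemma (existence of $\tilde\phi$) is asserted, not proved.
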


\begin{proof}
Suppose $H,H'\in\mathcal H({\bf Y})$ and that $H''$ separates $H$ from $H'$.  Then $H''$ must separate $H\cap \mathbf Y$ from $H'\cap \mathbf Y$, and hence each halfspace of $H''$ contains a nonempty subspace of $\mathbf Y$.  Let $y,y'\in H\cap \mathbf Y,H'\cap \mathbf Y$ be 0-cubes in distinct halfspaces of $H''$.  Since $\mathbf Y$ is isometrically embedded, there exists a geodesic segment $P\rightarrow \mathbf Y$ joining $y$ and $y'$, and $P$ must contain a 1-cube dual to $H''$.  Hence $H''$ crosses $\mathbf Y$ and $\mathcal H({\mathbf Y})$ is inseparable.

Suppose now that $H\in\mathcal H({\mathbf Y})$ is a hyperplane such that $N(H)\cap \mathbf Y$ is disconnected, and let $y,y'$ be 0-cubes in distinct components of $H\cap \mathbf Y$.  Let $P\rightarrow \mathbf Y$ be a geodesic segment joining $y$ to $y'$ and let $Q\rightarrow N(H)$ be a geodesic segment joining $y$ to $y'$.  Then $PQ^{-1}$ bounds a minimal-area disc diagram $D\rightarrow\bf X$, and since $|P|=|Q|$, each dual curve in $D$ travels from $P$ to $Q$.  No two dual curves emanating from $Q$ can cross, for otherwise there would be a trigon along the boundary lowering the area of $D$, and thus $P=Q$.  Hence $Q\subset \mathbf Y\cap N(H)$, and thus $y,y'$ actually belong to the same component of $N(H)\cap \mathbf Y$.  Hence $N(H)\cap \mathbf Y$ is connected, and thus $H\cap \mathbf Y$ is also.

We now prove the assertion about homomorphisms of contact graphs.  For a fixed $y_0\in\mathbf Y^{(0)}$, let $\mathcal H_0$ be the set of hyperplanes $H$ of $\mathbf Y$ with $y_0\in N(H)$.  Consider the set $\{\phi(H):H\in\mathcal H_0\}$.  This is a set of hyperplanes whose carriers pairwise-intersect, since $\phi$ is a homormophism of contact graphs.  Hence, since $\mathbf X$ is locally finite, $\cap_{H\in\mathcal H_0}N(\phi(H))$ contains a 0-cube $x_0$, and we let $f(y_0)=x_0$.

Next, if $y\in\mathbf Y$, then let $\gamma$ be a geodesic path joining $y_0$ to $y$.  Let $V_1,\ldots,V_n$ be the hyperplanes crossing $\gamma$, with $N(V_i)$ at distance $i-1$ from $y_0$ along $\gamma$.  Consider the hyperplanes $\phi(V_i)$ of $\mathbf X$.

First, $\{\phi(V_i)\}$ contains no \emph{facing triple}: for all distinct $i,j,k$, if $\phi(V_i),\phi(V_j),\phi(V_k)$ are pairwise non-crossing, then one of them separates the other two.  Indeed, since $\{V_i\}$ is the set of hyperplanes crossing a geodesic, it contains no facing triple.  If any two of $V_i,V_j,V_k$ cross, then their images under $\phi$ do as well, and there is nothing to prove.  Otherwise, we may assume that $V_j$ separates $V_i$ from $V_k$, whence our hypotheses about $\phi$ imply that either $\phi(V_j)$ separates $\phi(V_i)$ and $\phi(V_k)$, or $\phi(V_j)$ crosses $\phi(V_i)$ or $\phi(V_k)$.  Second, if for some $i,j$, the hyperplanes $\phi(V_i)$ and $\phi(V_j)$ are separated by a hyperplane $\phi(H)$, then $H=V_k$ for some $k$ between $i$ and $j$, i.e. $\{\phi(V_i):1\leq i\leq n\}$ is an inseparable set of hyperplanes in $\mathbf X$, by our hypothesis.  Finally, $\phi(V_i)$ and $\phi(V_k)$ are separated by $\phi(V_j)$ only if $V_j$ separates $V_i$ from $V_k$ in $\mathbf Y$.  Hence there is a unique geodesic path $\bar\gamma$ in $\mathbf X$ whose initial point is $y_0$, with the property that the set off hyperplanes crossing $\bar\gamma$ is exactly $\{\phi(V_i)\}$, with $\bar\gamma$ passing through $\phi(V_i)$ before $\phi(V_j)$ exactly when $i<j$.

Define a map $f:\mathbf Y\rightarrow\mathbf X$ by declaring $f(y)$ to be the terminal point of $\bar\gamma$ for each 0-cube $y\in\mathbf Y$.  The above construction shows that $d_{G(\mathbf X)}(f(y),f(y_0))=d_{G(\mathbf Y)}(y,y_0)$ for all $y$, since the left expression is equal to $|\bar\gamma|$ while the right is equal to $|\gamma|$, and the paths $\gamma,\bar\gamma$ are isometric.  Let the geodesic $\sigma$ of $\mathbf X$ join $f(y_1)$ and $f(y_2)$ for $y_1,y_2\in\mathbf Y$.  Then $|\sigma|=|\bar\gamma_1|+|\bar\gamma_2|-|\mathcal B|$, where $\gamma_i$ is a geodesic of $\mathbf Y$ joining $y_i$ to $y_0$ and $\mathcal B$ is the set of hyperplanes separating both $f(y_1)$ and $f(y_2)$ from $f(y_0)$.  But $\mathcal B$ is, by construction, the set of hyperplanes of the form $\phi(H)$, where $H$ separates $y_1,y_2$ from $y_0$.  Hence $|\sigma|=d_{G(\mathbf Y)}(y_1,y_2)$, so that $f$ is the desired isometric embedding.
\end{proof}

Note also that, since $G({\bf X})$ is an isometric subspace of $\bf X$ with the $\ell_1$ metric, an isometric embedding ${\bf X}\rightarrow\bf Y$ restricts to an isometric embedding $G({\bf X})\rightarrow\bf Y$.  Proposition~\ref{prop:isometricembedding} (see also~\cite{BandeltChepoiEppstein_squaregraph}) yields the equality $\tau({\bf X})=\chi(\xing X)$:

\begin{corollary}\label{cor:embeddingintrees}
The CAT(0) cube complex $\bf X$ (and hence $G({\bf X})$) isometrically embeds in a Cartesian product $\bf Y$ of at most $k$ trees ${\bf T}_1,\ldots,{\bf T}_k$ if and only if $\chi(\xing X)\leq k$.
\end{corollary}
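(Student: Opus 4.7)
The plan is to prove each implication separately, using Proposition~\ref{prop:isometricembedding} and the wallspace-to-cube-complex construction reviewed earlier.

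For the forward direction, I would first analyze the hyperplane structure of a product of trees. Each tree $T_i$ is a $1$-dimensional CAT(0) cube complex, so no two of its hyperplanes cross. The hyperplanes of $\mathbf Y = T_1 \times \cdots \times T_k$ partition by tree factor into $k$ classes, and two hyperplanes of $\mathbf Y$ cross if and only if they belong to different factors; therefore labelling each hyperplane of $\mathbf Y$ by its factor index yields a proper $k$-colouring of $\xing Y$. An isometric embedding $\mathbf X \hookrightarrow \mathbf Y$ induces an injection of hyperplane sets sending crossing pairs to crossing pairs, since any $2$-cube in $\mathbf X$ embeds as a $2$-cube in $\mathbf Y$ (this is part of the content of Proposition~\ref{prop:isometricembedding}); pulling back the factor-index colouring yields a proper $k$-colouring of $\xing X$.

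For the reverse direction, let $c : \mathcal H \to \{1,\ldots,k\}$ be a proper colouring of $\xing X$, and set $\mathcal H_i = c^{-1}(i)$. For each $i$, form the wallspace $(\mathbf X^{(0)}, \mathcal H_i)$, where each $H \in \mathcal H_i$ contributes the bipartition of $\mathbf X^{(0)}$ into its two halfspaces. Since any two $0$-cubes of $\mathbf X$ are separated by only finitely many hyperplanes of $\mathbf X$, and \emph{a fortiori} by finitely many walls in this sub-wallspace, the wallspace axioms are satisfied, so the construction of Section~3.2 produces a dual CAT(0) cube complex $T_i$. A $d$-cube in a dual cube complex corresponds to $d$ pairwise crossing walls; by properness of $c$, no two walls in $\mathcal H_i$ cross in $\mathbf X$, so $T_i$ has dimension at most one and is therefore a tree.

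Now I would define $\phi : \mathbf X^{(0)} \to (T_1 \times \cdots \times T_k)^{(0)}$ by sending each $0$-cube $v$ to $(v_1,\ldots,v_k)$, where $v_i$ is the canonical $0$-cube of $T_i$ determined by $v$. Since adjacent $0$-cubes of $\mathbf X$ map to adjacent $0$-cubes of the product (differing only in the factor indexed by the colour of their common dual hyperplane), $\phi$ extends uniquely to a combinatorial map of cube complexes. The core step is then the distance identity: for each $i$ and each pair $u,v \in \mathbf X^{(0)}$, the tree distance $d_{T_i}(u_i,v_i)$ equals the number of walls in $\mathcal H_i$ separating $u$ from $v$, i.e.\ the number of hyperplanes $H \in \mathcal H_i$ separating $u$ from $v$ in $\mathbf X$. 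Summing over $i$ and using additivity of the $\ell_1$ metric on the product gives $d_{\mathbf Y}(\phi(u),\phi(v)) = \sum_i d_{T_i}(u_i,v_i) = d_{\mathbf X}(u,v)$, and hence $\phi$ is isometric. The main conceptual point is precisely that the colouring hypothesis $\chi(\xing X) \le k$ is exactly what forces each sub-wallspace to have a tree as its dual cube complex; once that collapse occurs, the $\ell_1$ geometry of the product tree automatically records all hyperplanes of $\mathbf X$. I do not expect any serious obstacle beyond routine bookkeeping in the wallspace construction.
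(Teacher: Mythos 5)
Your proof is correct, and the forward direction coincides essentially verbatim with the paper's. The reverse direction also begins identically: partition $\mathcal H$ by colour, form the sub-wallspaces $(\mathbf X^{(0)},\mathcal H_i)$, and note each dual complex $T_i$ is a tree because properness of the colouring forbids crossings within a colour class. Where you diverge is the final step: the paper passes to the abstract embedding criterion of Proposition~\ref{prop:isometricembedding} by building a graph homomorphism $\xing X\to\xing Y$ via $H\mapsto\mathbf H(H,c(H))$ and verifying the separation and crossing hypotheses of that proposition, whereas you define the embedding $\phi$ directly on $0$-cubes by sending $v$ to the tuple of its canonical $0$-cubes in each $T_i$, check adjacency is preserved, and verify the isometry by the explicit identity $d_{\mathbf Y}(\phi(u),\phi(v))=\sum_i\#\{H\in\mathcal H_i:\ H\ \text{separates}\ u,v\}=d_{\mathbf X}(u,v)$. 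Your route is more elementary and self-contained: it replaces the somewhat delicate verification of the separation hypotheses of Proposition~\ref{prop:isometricembedding} with a transparent distance count, at the cost of leaving slightly implicit the extension of $\phi$ from the $1$-skeleton to the full cube complex (a routine point, since pairwise-crossing hyperplanes get distinct colours and hence land in distinct product factors). The paper's route, in turn, reuses machinery developed for other purposes in the paper.
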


\begin{proof}
Let ${\bf Y}=\prod_{i=1}^k{\bf T}_i$ be a product of trees $\mathbf T_i$.  Then $\xing Y$ is the join of $k$ totally disconnected graphs $\Gamma_i$, where $\Gamma_i$ is the crossing graph of ${\bf T}_i$.  Suppose there is an isometric embedding $\psi:{\bf X}\rightarrow\bf Y$.  Then there is an induced graph homomorphism $\xing X\rightarrow\xing Y$.  We colour $\xing X$ by assigning to each hyperplane $H$ the colour $i$ corresponding to the unique subgraph $\Gamma_i$ containing the image of $H$ in $\xing Y$.  If $H$ and $H'$ cross, then their images in $\bf Y$ also cross, and hence belong to distinct factors $\Gamma_i$.  Hence $H$ and $H'$ receive distinct colours, and this is thus a correct colouring in $k$ colours, i.e. $\chi(\xing X)\leq k$.

Conversely, let $c:\mathcal H\rightarrow\mathcal K$ be a correct colouring of $\xing X$ in the set $\mathcal K$ of $k$ colours.  For each $i\in\mathcal K$, let $\mathcal H_i=c^{-1}(i)$ be the set of hyperplanes with the colour $i$.  For each $i\in\mathcal K$, let ${\bf T}_i$ be the CAT(0) cube complex dual to the wallspace $({\bf X}^{(0)},\mathcal H_i)$.  Since $c$ is a correct colouring of the crossing graph, no two elements of $\mathcal H_i$ cross, and thus ${\bf T}_i$ is a tree.  Let ${\bf Y}=\prod_{i=1}^k{\bf T}_i$.  Each hyperplane of $\bf Y$ is of the form
\[{\bf H}(H,i)={\bf T}_1\times\ldots\times{\bf T}_{i-1}\times H\times{\bf T}_{i+1}\times\ldots\times{\bf T}_k\]
for some $H\in\mathcal H_i$ with $1\leq i\leq k$.  Moreover, ${\bf H}(H,i)$ and ${\bf H}(H',j)$ are distinct if $H\neq H'$ and cross if and only if $i\neq j$.  Furthermore, each $H\in\mathcal H$ gives rise to a hyperplane of this form, by construction.  Hence the identification $\mathcal H_i\ni H\mapsto {\bf H}(H,i)$ is a bijection yielding a graph homomorphism $\xing X\rightarrow\xing Y$ satisfying the separation hypotheses of Proposition~\ref{prop:isometricembedding}.  Thus, by Proposition~\ref{prop:isometricembedding}, there is an isometric embedding ${\bf X}\rightarrow\bf Y$.
\end{proof}

Convexity of a subcomplex ${\bf Y}\subseteq\bf X$ is characterized as follows: the subcomplex $\bf Y$ is convex if and only if, whenever $H$ and $H'$ are hyperplanes that cross $\bf Y$, either $H$ and $H'$ do not contact or $N(H)\cap N(H')\cap Y\neq\emptyset$ and, if $H$ and $H'$ cross, then $\mathbf Y$ contains a 2-cube representing this crossing.  In particular, the contact graph of $\bf Y$ is an induced subgraph of $\Gamma({\bf X})$ whose vertex set is inseparable.  From the point of view of median graphs, one verifies that, since $\bf Y$ is gated if it is convex, if $\Theta(H)$ and $\Theta(H')$ contain 1-cubes $c$ and $c'$ with a common 0-cube $v$, then the gate of $v$ in $\bf Y$ must lie in $N(H)\cap N(H')$.

Note also that $\bf X$ enjoys the \emph{Helly property}: if $Y_1,Y_2,\ldots,Y_n$ are convex subcomplexes of $\bf X$ such that $Y_i\cap Y_j\neq\emptyset$ for $i\neq j$, then $\bigcap_{i=1}^nY_i\neq\emptyset$.  This follows from the fact that convex subgraphs of $G({\bf X})$ are gated, or from a simple disc diagram argument (\cite{HagenQuasiArb}).

\subsection{Hyperplane-distance}\label{sec:hypdistance}
Let $U$ be a fixed hyperplane of $\mathbf X$.  In our applications, $U$ will be the central hyperplane of a specified sphere in $\Gamma({\bf X})$ of radius 2.

\begin{definition}[Hyperplane-distance]\label{defn:hyperplanedistance}
For each hyperplane $H$, the set nonempty $\mathfrak I=\{V_1,\ldots,V_n\}$ of hyperplanes is a \emph{separating chain for $H$} if
\begin{enumerate}
\item Each $V_i\in\mathfrak I$ separates $H$ from $U$.
\item If $V_i,V_j\in\mathfrak I$ are separated by a hyperplane $V$, then $V\in\mathfrak I$.
\item The hyperplanes in $\mathfrak I$ are pairwise non-crossing.
\item $\mathfrak J$ is not properly contained in a set of hyperplanes satisfying $(1)-(3)$.
\end{enumerate}
These properties ensure that the halfspaces of the $V_i\in\mathfrak I$ can be labeled $\mathbf A$ or $\mathbf B$ so that $\{\mathbf B(V_i)\}_{V_i\in\mathfrak I}$ is totally ordered by inclusion.  Let $\mathbb I(H)$ be the set of all separating chains for $H$.  The \emph{hyperplane-distance} of $H$ (with respect to $U$) is
\[d(H)=\min\{|\mathfrak I|:\mathfrak I\in\mathbb I(H)\}.\]
If $C$ is a collection of hyperplanes (or a subgraph of $\Gamma(\mathbf X))$, we let $D(C)=\sum_{H\in C}d(H)$.

Note that the combinatorial distance $d_{G(\mathbf X)}(N(H),N(U))$ in $G(\mathbf X)=\mathbf X^(1)$ between the carriers of $H$ and $U$ counts the hyperplanes that separate $U$ from $H$, since carriers are convex, and that $d(H)$ is bounded above by this quantity.  Note also that $d(H)=0$ if and only if $H\coll U$.
\end{definition}

\begin{lemma}\label{lem:cycledistance}
Let $H$ be a hyperplane with $\rho(U,H)=2$ and let $F$ be a hyperplane such that $U\coll F\coll H$.  Then there exists a path $P\rightarrow N(F)$ such that $d_{G(\mathbf X)}(N(H),N(U))=|P|$.  
\end{lemma}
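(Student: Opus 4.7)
Set $n=d_{G(\mathbf X)}(N(H),N(U))$. The strategy is to gate (project) both $N(U)$ and $N(H)$ onto $N(F)$ and use the resulting points to produce a geodesic of the desired length inside $N(F)$. The final path will then be a combinatorial geodesic between two $0$-cubes $x \in N(U)\cap N(F)$ and $y \in N(H)\cap N(F)$ of length exactly $n$, which must lie in $N(F)$ by convexity of the carrier.

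The key observation is: for convex subcomplexes $A,B$ of $\mathbf X$ with $A\cap B\neq\emptyset$, the gate projection satisfies $g_B(A)\subseteq A\cap B$. Indeed, for $a\in A$, pick any witness $w\in A\cap B$; the gate characterization gives $d(a,w)=d(a,g_B(a))+d(g_B(a),w)$, so $g_B(a)\in I(a,w)$, and by convexity of $A$ we get $I(a,w)\subseteq A$, whence $g_B(a)\in A\cap B$. Apply this twice. Since $U\coll F$, we have $N(U)\cap N(F)\neq\emptyset$, so $g_{N(F)}(N(U))\subseteq N(U)\cap N(F)$; since $F\coll H$, we have $N(F)\cap N(H)\neq\emptyset$, so $g_{N(F)}(N(H))\subseteq N(F)\cap N(H)$.

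Now let $u\in N(U)^{(0)}$ and $h\in N(H)^{(0)}$ realize $d(u,h)=n$. Since gate maps are $1$-Lipschitz,
\[d\bigl(g_{N(F)}(u),g_{N(F)}(h)\bigr)\leq d(u,h)=n.\]
Setting $x:=g_{N(F)}(u)\in N(U)\cap N(F)$ and $y:=g_{N(F)}(h)\in N(H)\cap N(F)$, the reverse inequality $d(x,y)\geq n$ is automatic from $x\in N(U)$ and $y\in N(H)$. Hence $d(x,y)=n$.

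Finally, let $P$ be any combinatorial geodesic from $x$ to $y$. Since $N(F)$ is a convex subcomplex containing both $x$ and $y$, the interval $I(x,y)$, and hence $P$, lies in $N(F)$, so $P\to N(F)$ is the required path of length $n=d_{G(\mathbf X)}(N(H),N(U))$. The only mildly subtle point is the gate-into-intersection fact; once that is in hand, no further obstacle arises, and the $\rho(U,H)=2$ hypothesis is used only to guarantee the existence of such a mediating $F$.
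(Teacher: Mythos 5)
Your proof is correct, and it takes a genuinely different route from the paper's. The paper proves this lemma with a disc-diagram argument: it constructs a four-sided diagram $D$ whose boundary consists of a shortest path $P$ in $N(F)$ joining $N(F)\cap N(H)$ to $N(F)\cap N(U)$, a path $Q$ realizing $d_{G(\mathbf X)}(N(H),N(U))$, and two connecting geodesics in $N(H)$ and $N(U)$; since each of the three paths other than $Q$ lies in a single carrier, Proposition~\ref{prop:boundarytrigons} forces every dual curve emanating from $P$ to end on $Q$, yielding $|P|\le|Q|$ and hence equality by minimality of $|Q|$. Your argument instead works entirely in the median-graph/gate-projection framework: it uses (i) the observation that for convex subcomplexes $A,B$ with $A\cap B\neq\emptyset$ the gate of any $a\in A$ into $B$ already lies in $A\cap B$ (your interval argument for this is correct), and (ii) the standard fact that gate projections onto gated subsets are nonexpansive, together with convexity of $N(F)$ to keep the resulting geodesic inside $N(F)$. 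Both proofs are sound. The paper's version fits the disc-diagram toolkit used throughout and reuses infrastructure already in place; yours is more elementary, avoids diagrams altogether, and would generalize verbatim to any three pairwise-gated convex subcomplexes with the same intersection pattern, not just carriers of contacting hyperplanes. One small point worth making explicit if you wrote this up: the $1$-Lipschitz property of gate maps in median graphs (equivalently, that a hyperplane separating $g_S(x)$ from $g_S(y)$ must separate $x$ from $y$) should be cited or briefly justified, since it is invoked but not proved.
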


\begin{proof}
Let $P\rightarrow N(F)$ be a shortest combinatorial path joining a 0-cube $a\in N(F)\cap N(H)$ to a 0-cube $b\in N(F)\cap N(U)$.  Let $Q\rightarrow G(\bf X)$ be a path realizing the distance from $N(H)$ to $N(U)$, with endpoints $c\in N(H)$ and $d\in N(U)$.  Let $A\rightarrow N(H)$ be a geodesic segment joining $a$ to $c$ and let $B\rightarrow N(U)$ be a geodesic segment joining $b$ to $d$.  Then there exists a minimal-area disc diagram $D\rightarrow\bf X$ with boundary path $PBQ^{-1}A^{-1}$.  Every dual curve in $D$ emanating from $P$ ends on $Q$, since otherwise there is a trigon of dual curves along the boundary path of $D$, since each of the paths $P,A,B$ maps to the carrier of a hyperplane.  Thus $|P|\leq |Q|$ and hence $|P|=|Q|=d_{G(\mathbf X)}(N(H),N(U))$ by minimality of $|Q|$.
\end{proof}

\begin{remark}
Note $d_{G(\mathbf X)}(N(H),N(U))>0$ if and only if $\rho(U,H)\geq 2$.  In our applications, Lemma~\ref{lem:cycledistance} is applied in such a way that $P$ lies in the \emph{father} of $H$, defined below.
\end{remark}

\subsection{Footprints and imprints}
In this section, we suppose that the CAT(0) cube complex ${\bf X}$ is 2-dimensional and in particular that each hyperplane of  ${\bf X}$  is a tree.  The carrier $N(H)$ of any hyperplane $H$ is bounded by two disjoint subcomplexes $H^+$ and $H^-$ which are both  isomorphic to $H$ and constitute convex and therefore gated subcomplexes of ${\bf X}.$ If $V$ is a hyperplane contacting $H,$  then the \emph{footprint} of $H$ in $V$ is $F(H,V)=N(V)\cap N(H)$. Any footprint is gated as the intersection of two gated subcomplexes. If $V$ and $H$ are osculating, then $F(H,V)$ is completely contained in $H^+$ or in $H^-.$ On the other hand, if $V$ and $H$ are crossing, then $F(V,H)$ contains the union of two isomorphic subcomplexes $F^+(H,V)=N(V)\cap H^+$ and $F^-(H,V)=N(V)\cap H^-$, each of which is a 1-cube, and $F(H,V)$ is a single 2-cube, since ${\bf X}$ is 2-dimensional. We call the projection of the footprint $F(H,V)$ on $V$ the {\it imprint} of $H$ on $V$ and denote it by $J(H,V).$ Denote by ${\mathcal F}(V)$ and ${\mathcal J}(V)$ the set systems consisting of all footprints $F(H,V)$ and  of all imprints of all hyperplanes $H$ such that  $H\coll V$.
We emphasize that, if $H$ and $H'$ are distinct hyperplanes that contact $V$, it may happen that $F(H,V)$ and $F(H',V)$ denote the same subcomplex of $N(V)$, but we regard them as distinct elements of $\mathcal F(V)$.

We begin our discussion of footprints and imprints with a consequence of the Helly property for hyperplanes.

\begin{lemma}\label{lem:contactingfootprints}
Let $H',H'',V$ be hyperplanes such that  $H'\coll V$ and $H''\coll V$.  Then $H'\coll H''$ if and only if $F(H',V)\cap F(H'',V)\neq\emptyset$. If $H'\coll H''$,
then $J(H',V)\cap J(H'',V)\ne\emptyset.$
\end{lemma}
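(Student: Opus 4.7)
The plan is to reduce both claims to the Helly property for hyperplane carriers, which the paper establishes as a consequence of convexity (mentioned at the end of the section on isometric embeddings and convexity: if convex subcomplexes of $\mathbf{X}$ pairwise intersect, their total intersection is nonempty).

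For the ``if and only if'' statement, I would first expand the definitions: the hypotheses $H'\coll V$ and $H''\coll V$ give $N(H')\cap N(V)\neq\emptyset$ and $N(H'')\cap N(V)\neq\emptyset$. The claim $F(H',V)\cap F(H'',V)\neq\emptyset$ is, by definition of the footprint, precisely the statement $N(H')\cap N(H'')\cap N(V)\neq\emptyset$. The backward direction is then immediate, since $N(H')\cap N(H'')\cap N(V)\neq\emptyset$ forces $N(H')\cap N(H'')\neq\emptyset$, i.e.\ $H'\coll H''$. The forward direction is the content of Helly: assuming $H'\coll H''$, the three carriers $N(V),N(H'),N(H'')$ pairwise intersect and are convex, so their triple intersection is nonempty. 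That is the entirety of the argument for the first claim; no two-dimensionality is needed here, although the overall setting of the subsection assumes it.

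For the imprint statement, I would use the product decomposition $N(V)\cong[-\tfrac12,\tfrac12]\times V$ guaranteed by Sageev's theorem (recalled in the preliminaries). This yields a natural projection $\pi\colon N(V)\to V$, and by definition $J(H,V)=\pi(F(H,V))$. Fix any $p\in F(H',V)\cap F(H'',V)$, which exists by the first part of the lemma. Then $\pi(p)\in\pi(F(H',V))=J(H',V)$ and simultaneously $\pi(p)\in\pi(F(H'',V))=J(H'',V)$, so the imprints meet at $\pi(p)$.

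The ``main obstacle'' here is essentially bookkeeping rather than mathematical content: I want to make sure the Helly property is being invoked for the correct three convex subcomplexes (the carriers, not the hyperplanes themselves), and that the footprint/imprint definitions given above \emph{verbatim} match $N(V)\cap N(H)$ and its projection. Once the definitions are unrolled, each direction is a single line. I would also briefly note that the imprint claim does not, on its own, give an ``if and only if,'' since distinct hyperplanes contacting $V$ can have overlapping imprints in $V$ (their footprints being translates of one another through $N(V)\cong[-\tfrac12,\tfrac12]\times V$) without themselves contacting; only the footprint version is an equivalence.
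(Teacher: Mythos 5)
Your proof is correct. The only difference from the paper's argument is cosmetic: for the forward direction the paper does not invoke the Helly property by name, but instead takes a 0-cube $v\in N(H')\cap N(H'')$ and shows that its gate $v_0$ in $N(V)$ also lies in $N(H')\cap N(H'')$ (using that gates lie on intervals together with convexity of the carriers), which is exactly the argument the paper itself gives for why CAT(0) cube complexes satisfy the Helly property. So you are invoking as a black box the lemma the paper chooses to re-derive in this special case; the underlying reasoning and the treatment of the imprints via the projection $N(V)\to V$ are the same.
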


\begin{proof} If $v\in F(H',V)\cap F(H'',V),$ then by definition of footprints we conclude that $v\in N(H')\cap N(H''),$ yielding $H'\coll H''.$ Conversely, if $H'\coll H'',$ then there exists $v\in N(H')\cap N(H'').$ Let $v_0$ be the gate of $v$ in $N(V).$ Pick $x'\in N(V)\cap N(H')$ and $x''\in N(V)\cap N(H'').$ Since the carriers $N(H')$ and $N(H'')$ are convex and $x',v\in N(H'),$ $x'',v\in N(H''),$ and $v_0\in I(v,x')\cap I(v,x''),$ we conclude that $v_0\in N(H')\cap N(H'').$ Since $v_0$ also belongs to $N(V),$ this implies that $v_0\in F(H',V)\cap F(H'',V).$ Finally, the projection $v'_0$ of $v_0$ in $V$ belongs to the imprints $J(H',V)$ and $J(H'',V).$
\end{proof}

\begin{lemma}\label{lem:footprintsdegree} For a hyperplane $V,$ any vertex $v$ of the 1-skeleton of $N(V)$ belongs to at most $\Delta$ footprints from the family ${\mathcal F}(V)$. In particular, $\delta({\mathcal F}(V))\le \Delta$ and $\delta({\mathcal J}(V))\le 2\Delta.$
\end{lemma}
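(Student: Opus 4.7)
The plan is to reduce both inequalities to the elementary observation that, for any 0-cube $v \in \mathbf{X}^{(0)}$, the number of hyperplanes $H$ with $v \in N(H)$ is at most the degree of $v$ in $G(\mathbf{X})$, hence at most $\Delta$. Indeed, $v \in N(H)$ if and only if $v$ is an endpoint of some 1-cube dual to $H$, and the product decomposition $N(H) \cong H \times [-\tfrac12,\tfrac12]$ shows that the 1-cube at $v$ dual to $H$ (when it exists) is unique; since distinct hyperplanes are dual to distinct 1-cubes at a common 0-cube, the map sending such an $H$ to the corresponding 1-cube at $v$ is injective into the at-most-$\Delta$-element set of 1-cubes incident to $v$.

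For the first assertion, fix a 0-cube $v$ of $N(V)$. If $v \in F(H,V) = N(V)\cap N(H)$ for some hyperplane $H$ contacting $V$, then in particular $v \in N(H)$, so the preceding observation bounds the number of footprints in $\mathcal{F}(V)$ containing $v$ by $\Delta$. Taking the maximum over all vertices $v$ of $N(V)$ yields $\delta(\mathcal{F}(V)) \leq \Delta$.

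For the bound on $\mathcal{J}(V)$, I would exploit 2-dimensionality of $\mathbf{X}$: the hyperplane $V$ is a tree, and the projection $\pi \colon N(V) \to V$ arising from $N(V) \cong V \times [-\tfrac12,\tfrac12]$ is two-to-one on 0-cubes. Each vertex $v'$ of $V$ is the midpoint of a unique 1-cube $e$ of $\mathbf{X}$ dual to $V$, and the two endpoints ${v'}^+, {v'}^-$ of $e$ are precisely the preimages of $v'$ under $\pi$. If $v'$ is a vertex of $J(H,V) = \pi(F(H,V))$, then $v'$ is the image of some 0-cube of the subcomplex $F(H,V)$, so at least one of ${v'}^+, {v'}^-$ lies in $F(H,V) \subseteq N(H)$. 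Applying the first bound to ${v'}^+$ and to ${v'}^-$ separately contributes at most $\Delta$ hyperplanes each, so $\delta(\mathcal{J}(V)) \leq 2\Delta$. The only subtlety is bookkeeping the factor of $2$ in the imprint bound, which is exactly the size of the fibres of $\pi$ over vertices of $V$ and relies on the fact that in two dimensions $N(V)$ is the product of $V$ with a single 1-cube.
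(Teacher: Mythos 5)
Your argument is correct and takes essentially the same route as the paper: both proofs bound $\delta(\mathcal F(V))$ by injecting the hyperplanes $H$ with $v\in F(H,V)$ into the set of 1-cubes incident to $v$ (via the unique 1-cube at $v$ dual to $H$), then obtain the imprint bound from the two-to-one projection $N(V)\to V$. Your version is marginally tidier because it avoids the paper's explicit case split between $H$ crossing $V$ and $H$ osculating $V$, relying instead on the single observation that $v\in N(H)$ forces a unique 1-cube at $v$ dual to $H$; the paper's case split is used only to record whether $e_H$ lies in $N(V)$, which plays no role in the count.
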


\begin{proof} The degree of $v$ in $G({\bf X})$ and therefore in the 1-skeleton of $N(V)$ is at most $\Delta$. Consider the set ${\mathcal H}_v$ of all
hyperplanes $H$ such that $v\in F(H,V).$ If $H\in {\mathcal H}_v$ crosses the hyperplane $V$, then the equivalence class $\Theta(H)$ of $H$ contains
an edge $e_H$ incident to the vertex $v$ and belonging to $N(V).$   Analogously, if the hyperplanes $H\in {\mathcal H}_v$ and $V$ osculate, then  any vertex of
$F(H,V),$ in particular $v,$ is incident to an edge $e_H$ of $\Theta(H)$ (in this case $e_H$ does not belong  to $N(V)$). Two edges $e_H,e_{H'}$ defined by two different
hyperplanes $H,H'\in {\mathcal H}_v$ are different because they belong to two different equivalence classes of the relation $\Theta$. Thus $|{\mathcal H}_v|\le \Delta,$ establishing that $\delta({\mathcal F}(V))\le \Delta.$ Since each vertex $v_0$ of the tree $V$ is the image of two vertices of the 1-skeleton of $N(V)$, which belong to at most $\Delta$ footprints each,
$v_0$ belongs to at most $2\Delta$ imprints.
\end{proof}

\begin{proposition}\label{lem:footprintscolours}
$\chi({\mathcal F}(V))\le \chi({\mathcal J}(V))\le 2\Delta,$ where $\chi(\mathcal F(V))$ is the chromatic number of the intersection graph of $\mathcal F(V)$.
\end{proposition}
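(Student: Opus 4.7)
The plan is to prove the two inequalities separately, with the first following immediately from Lemma~\ref{lem:contactingfootprints} and the second reducing to the classical chordal-graph identity $\chi = \omega = \delta$ for families of subtrees of a tree.

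For the inequality $\chi(\mathcal F(V))\le \chi(\mathcal J(V))$, I would observe that $\mathcal F(V)$ and $\mathcal J(V)$ share the same index set, namely the set of hyperplanes $H$ contacting $V$. Suppose $F(H',V)\cap F(H'',V)\neq\emptyset$ for two such hyperplanes $H'$ and $H''$. Then by the first part of Lemma~\ref{lem:contactingfootprints}, $H'\coll H''$, and then by the second part of the same lemma, $J(H',V)\cap J(H'',V)\neq\emptyset$. Hence the intersection graph of $\mathcal F(V)$ is a spanning subgraph of the intersection graph of $\mathcal J(V)$, and any proper colouring of the latter is a proper colouring of the former, giving the desired inequality of chromatic numbers.

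For the inequality $\chi(\mathcal J(V))\le 2\Delta$, I would exploit that $V$ is a tree, since $\mathbf X$ is 2-dimensional. Each imprint $J(H,V)$ is the projection in $V$ of the footprint $F(H,V)$, and since $F(H,V)=N(V)\cap N(H)$ is an intersection of two gated (hence convex and connected) subcomplexes, it is itself gated, and its image in $V$ under the natural projection $N(V)\to V$ is again a connected, gated (hence convex) subcomplex of $V$. In particular $J(H,V)$ is a subtree of the tree $V$. Therefore $\mathcal J(V)$ is a family of subtrees of a tree, and the standard identity recalled in the introduction gives
\[\chi(\mathcal J(V))=\omega(\mathcal J(V))=\delta(\mathcal J(V)).\]

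Finally, combining this with Lemma~\ref{lem:footprintsdegree}, which asserts $\delta(\mathcal J(V))\le 2\Delta$, yields $\chi(\mathcal J(V))\le 2\Delta$ and hence $\chi(\mathcal F(V))\le 2\Delta$. The only point requiring a small verification is that imprints are genuinely connected subtrees of $V$; this is the main (mild) obstacle, but it is essentially immediate once one uses that footprints are gated intersections of convex carriers and that the projection $N(V)\cong[-\tfrac{1}{2},\tfrac{1}{2}]\times V\to V$ preserves connectedness and convexity.
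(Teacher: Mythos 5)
Your proof is correct and takes essentially the same route as the paper: the first inequality via Lemma~\ref{lem:contactingfootprints} (intersecting footprints give intersecting imprints, so the one intersection graph is a spanning subgraph of the other), and the second via the chordal-graph identity $\chi=\omega=\delta$ for subtrees of a tree combined with the degree bound of Lemma~\ref{lem:footprintsdegree}. You merely make explicit what the paper leaves implicit, namely that imprints are genuinely subtrees of $V$ because gated footprints project to convex, connected subcomplexes of the tree $V$.
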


\begin{proof} The inequality $\chi({\mathcal F}(V))\le \chi({\mathcal J}(V))$ is obvious because two intersecting footprints give rise to two intersecting imprints. It is well-known (see, for example, \cite{Golumbic_book},~\cite{Gyarfas}) that if ${\mathcal F}$ is a family of subtrees of a tree $T,$ then $\chi({\mathcal F})=\omega({\mathcal F})$ and, since ${\mathcal F}$ satisfies the Helly property, $\omega({\mathcal F})=\delta({\mathcal F}).$ Since ${\mathcal J}(V)$ is a family of subtrees of the tree $V$ and $\delta({\mathcal J}(V))\le 2\Delta$ by Lemma \ref{lem:footprintsdegree}, we conclude that $\chi({\mathcal J}(V))\le 2\Delta.$
\end{proof}

\section{Canonical paths, grandfathers, and the weak combing property}
Choose, once and for all, an arbitrary but fixed \emph{base hyperplane} $H_0\in\mathcal H$.  For any $H\in\mathcal H$, the \emph{grade} of $H$ is $g(H)=\rho(H,H_0)$.

\begin{definition}[Ball, sphere, cluster]\label{defn:fullsphere}
For each $r\geq 0$, the \emph{(full) ball} $B_r:=B_r(H_0)$ is the full (i.e., induced) subgraph of $\Gamma(\bf X)$ generated by the set of hyperplanes $H$ with $g(H)\leq r$.  The \emph{(full) sphere} $S_r:=S_r(H_0)$ is the full subgraph of $\Gamma(\bf X)$ generated by the set of hyperplanes $H$ with $g(H)=r$.

Let $H,H'\in S_r$ be hyperplanes.  Then $H\sim H'$ if and only if there exists a path $P$ in $\Gamma(\bf X)$ joining $H$ to $H'$ such that every vertex of $P$ corresponds to a hyperplane of grade at least $r$.  This defines an equivalence relation on the grade-$r$ hyperplanes.  An equivalence class of hyperplanes of grade $r$ is called a \emph{grade-$r$ cluster}.
\end{definition}

The notion of a \emph{realization} allows us to translate statements about paths in $\Gamma(\bf X)$ into statements about paths in $\bf X$.  More specifically, note that if $H_0\coll H_1\coll H_2$ is a path in $\Gamma(\bf X)$, then we have a path $c_0Pc_2$ in $\bf X$, where $c_0,c_2$ are 1-cubes dual to $H_0$ and $H_2$, respectively, and $P\rightarrow N(H_1)$ is a combinatorial path joining a 0-cube of $N(H_0)\cap N(H_1)$ to a 0-cube of $N(H_1)\cap N(H_2)$.

\begin{definition}[Realization, canonical path]\label{defn:canonicalpath}
Let $H_0\coll H_1\coll H_2$ be a path in $\Gamma$.  An edge-\emph{realization} of $H_0\coll H_1\coll H_2$ is a combinatorial geodesic $P\rightarrow N(H_1)$ that joins $N(H_0)$ to $N(H_2)$.  If $\gamma=H_0\coll H_1\coll H_2\coll\ldots\coll H_r$ is an embedded path in $\Gamma$, then a \emph{realization} of $\gamma$ is a path $\re(\gamma)=R_1R_2\ldots R_{r-1}$, where each $R_i$ is a realization of the path $H_{i-1}H_iH_{i+1}$.

Let $\gamma=H_0\coll H_1\coll\ldots\coll H_r=H$ be a geodesic path in $\Gamma(\bf X)$.  The \emph{weight} $||\gamma||$ of $\gamma$ is the ordered $r$-tuple $(|R_{r-1}|,|R_{r-2}|,\ldots,|R_1|)$, where $\re(\gamma)=R_1R_2\ldots R_{r-1}$ is a realization such that the previous $r$-tuple is minimal in the lexicographic order as $\re(\gamma)$ varies among realizations of $\gamma$.

A path $\gamma(H)=H_0\coll H_1\coll\ldots\coll H_r=H$ is a \emph{canonical path} for $H$ if $||\gamma(H)||$ is minimal (in the lexicographic order) among all paths $\gamma$ of $\Gamma(\bf X)$ joining $H_0$ to $H$.
The hyperplane
$f^2(H)=H_{r-2}$ is called  the {\it grandfather} of $H$ (with respect to $\gamma(H)$).
\end{definition}

Figure~\ref{fig:generalcombing} contains heuristic pictures of realizations.  Given a grade-$r$ hyperplane $H=H_r$, there are in general many canonical paths joining $H_0$ to $H_r$.  Figure~\ref{fig:realization} shows a grade-3 hyperplane in a CAT(0) cube complex, and two distinct canonical paths, along with their realizations.  Figure~\ref{fig:realization} also shows that, in general, a given path in $\bf X$ may realize many paths in $\Gamma(\bf X)$.
\begin{figure}[h]
  \includegraphics[width=4in]{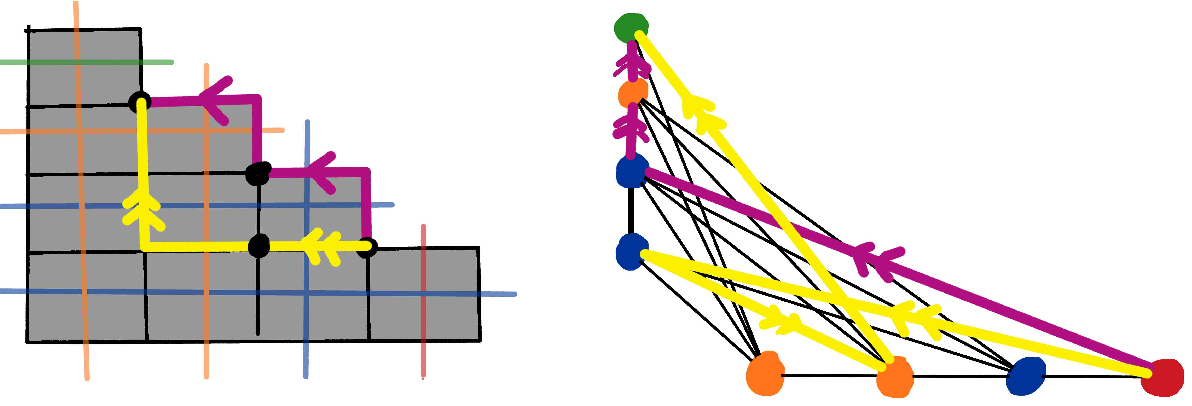}\\
  \caption{$\bf X$ is shown at left and the contact graph $\Gamma(\bf X)$ at right.  Arrowed paths in $\bf X$ are least-weight realizations of the correspondingly-arrowed paths in $\Gamma(\bf X)$.}\label{fig:realization}
\end{figure}

\begin{proposition} 
\label{prop:weakcombing}
Let $H,H'$ be two hyperplanes belonging to a common grade-$r$ cluster of $\Gamma({\bf X}),$ with $r\geq 2$, and let  $\gamma(H),\gamma(H')$ be two canonical paths, respectively joining $H_0$ to $H=H_r$ and to $H'=H'_r.$   Then the grandfathers  $f^2(H),f^2(H')$ of $H$ and $H'$ in $\gamma(H)$ and $\gamma(H')$ either coincide or contact, i.e.,  either $H_{r-2}= H'_{r-2}$  or $H_{r-2}\coll H'_{r-2}$.
\end{proposition}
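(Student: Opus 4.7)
The plan is proof by contradiction: assume $H_{r-2}\neq H'_{r-2}$ and $H_{r-2}\not\coll H'_{r-2}$, and exhibit a length-$r$ path from $H_0$ to $H'$ whose realization has strictly smaller weight than $\|\gamma(H')\|$ in the lexicographic order, contradicting the choice of canonical path.

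The core of the argument handles the case of a direct cluster-contact $H\coll H'$ (i.e., $m=1$ in a cluster chain $H=K_0\coll K_1\coll\cdots\coll K_m=H'$); the general case follows by iterating the argument along the chain, using intermediate $K_i$ of grade exactly $r$ as pivot points and absorbing intermediate $K_j$ of grade $>r$ into short-cuts inside their carriers without disturbing the grandfather analysis at grade $r-2$. For $H\coll H'$, pick weight-minimal realizations $R_\gamma$ of $\gamma(H)$ and $R_{\gamma'}$ of $\gamma(H')$ sharing an initial $0$-cube $u\in N(H_0)$, and a $0$-cube $w\in N(H)\cap N(H')$ provided by the Helly property. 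Concatenating $R_\gamma$, a geodesic in $N(H)$ from the end of $R_\gamma$ to $w$, a geodesic in $N(H')$ from $w$ to the end of $R_{\gamma'}$, and $R_{\gamma'}^{-1}$, yields a closed loop bounding a minimal-area disc diagram $D\to\mathbf{X}$ with fixed carriers $N(H_i),N(H'_j),N(H),N(H')$.

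By Proposition~\ref{prop:boundarytrigons}, $D$ contains no trigons along the boundary, which forces each dual curve emanating from the subpath in $N(H_{r-1})$ to terminate on the subpath in some $N(H'_j)$ or on the short connecting geodesics in $N(H)$ or $N(H')$. Tracking these dual curves carefully produces a hyperplane $W$ contacting both $H_{r-1}$ and $H'_{r-1}$ (or, in the best case, directly contacting both grandfathers), and one then constructs the alternative candidate canonical path $H_0\coll H_1\coll\cdots\coll H_{r-2}\coll W\coll H'$ of length $r$. Its realization can be made to agree with $R_{\gamma'}$ on its early segments but to shorten the final ones: the assumption $H'_{r-2}\not\coll H_{r-2}$ rules out the possibility that $R_{\gamma'}$ already achieved this, so the new realization must beat $\|\gamma(H')\|$ in the lex-order, the desired contradiction. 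The hardest step is the lexicographic weight comparison itself: the modification must decrease a coordinate at the highest possible index of the weight tuple while leaving all subsequent (larger-index) coordinates unchanged. Verifying this requires detailed dual-curve bookkeeping in $D$ together with hexagon-move adjustments to expose the precise sub-realization being shortened, and this will be the main technical obstacle.
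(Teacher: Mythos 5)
Your overall plan (disc diagram over the two canonical paths, contradiction from a strictly lexicographically smaller weight) is the same as the paper's, but there are two genuine gaps in your execution, one structural and one technical.

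The structural gap is the reduction of the general cluster case to the $H\coll H'$ case by ``iterating along the chain.'' The relation ``grandfathers coincide or contact'' is not transitive --- if $f^2(H)\coll f^2(K_1)$ and $f^2(K_1)\coll f^2(K_2)$, nothing forces $f^2(H)\coll f^2(K_2)$. Worse, the intermediate vertices $K_j$ of grade $>r$ do not have grandfathers of grade $r-2$ at all, so there is no grade-$(r-2)$ analysis to ``not disturb.'' The paper sidesteps this entirely by building a single disc diagram: the concatenated path $Q=Q_0Q_1\cdots Q_k$ runs through the carriers of \emph{all} hyperplanes on the cluster chain (whatever their grade $\geq r$), and the key Step 3 of the argument is exactly that a dual curve crossing $P$ cannot end on $Q$, because it would produce a hyperplane of small grade adjacent in $\Gamma(\mathbf X)$ to something of grade $\geq r$, contradicting geodesicity. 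Your proposal never obtains this, and as written the iteration produces only a chain of contacting grandfathers, not the contact $f^2(H)\coll f^2(H')$.

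The technical gap is that you defer the lexicographic weight comparison --- which you correctly flag as ``the main technical obstacle'' --- and never carry it out. This is the heart of the paper's proof. Concretely, the paper identifies the possible destinations of the dual curve $L$ crossing the separating path $P$ (only $R_{r-3}$ to $R_{r-1}$, or $R_{r-3}$ to $R'_{r-1}$, after the other possibilities are killed), and in the first case produces a same-weight replacement for $H_{r-2}$ using $|S|=|R_{r-2}|$ (a grid argument), and in the second produces a strictly shorter first coordinate via $|S'_{r-1}|<|R'_{r-1}|$. You would also need the extraction of $P$ itself (the paper's Lemma on the existence of a path from $R_{r-2}$ to $R'_{r-2}$ with no dual curve crossing it twice and none emanating from $R_{r-2}$ or $R'_{r-2}$); without it, the dual-curve bookkeeping you invoke cannot get started. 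Relatedly, your alternative path $H_0\coll\cdots\coll H_{r-2}\coll W\coll H'$ does not quite type-check from what you constructed: a $W$ contacting $H_{r-1}$ and $H'_{r-1}$ gives neither $W\coll H_{r-2}$ nor $W\coll H'$. The paper's replacement path is $H_0\coll\cdots\coll H_{r-3}\coll W\coll H'_{r-1}\coll H'$ with $W$ coming from a dual curve that travels from $R_{r-3}$ to $R'_{r-1}$, and the contacts there are exactly what the diagram supplies.
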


\begin{proof}
First, note that the claim is obviously true for $r=2$, since in that case $H_{r-2}=H'_{r-2}=H_0$, so assume $r\geq 3$ and assume that $H_{r-2}\neq H'_{r-2}$.

\textbf{The disc diagram $D$:}  Let $\re(\gamma(H))=R_1R_2\ldots R_{r-1}$ and $\re(\gamma(H'))=R'_1R'_2\ldots R'_{r-1}$ be least-weight realizations of $\gamma(H)$ and $\gamma(H')$ respectively, so that $R_i\rightarrow N(H_i)$ and $R'_i\rightarrow N(H'_i)$ are combinatorial geodesics for each $i$.  Let $P_0\rightarrow N(H_0)$ be a combinatorial geodesic joining the initial 0-cubes of $R_1$ and $R'_1$.

Since $H$ and $H'$ belong to the same grade-$r$ cluster, then by definition there exists a shortest path $H=V^0\coll V^1\coll V^2\ldots\coll V^k=H'$ joining $H$ to $H'$, and $g(V_i)\geq r$ for $0\leq i\leq k$.  Hence there is a concatenation $Q=Q_0Q_1\ldots Q_k$, where $Q_i\mapsto N(V_i)$ is a combinatorial geodesic, joining the terminal 0-cube of $R_{r-1}$ to that of $R'_{r-1}$.  Hence we have a closed, piecewise-geodesic path
\[A=\left(\prod_{i=1}^{r-1}R_i\right)Q\left(\prod_{i=1}^{r-1}R'_i\right)^{-1}P_0^{-1}\rightarrow G(\bf X).\]
Let $D\rightarrow\bf X$ be a minimal-area disc diagram with boundary path $A$.  This notation is illustrated in Figure~\ref{fig:generalcombing}.
\begin{figure}[h]
  \includegraphics[width=4in]{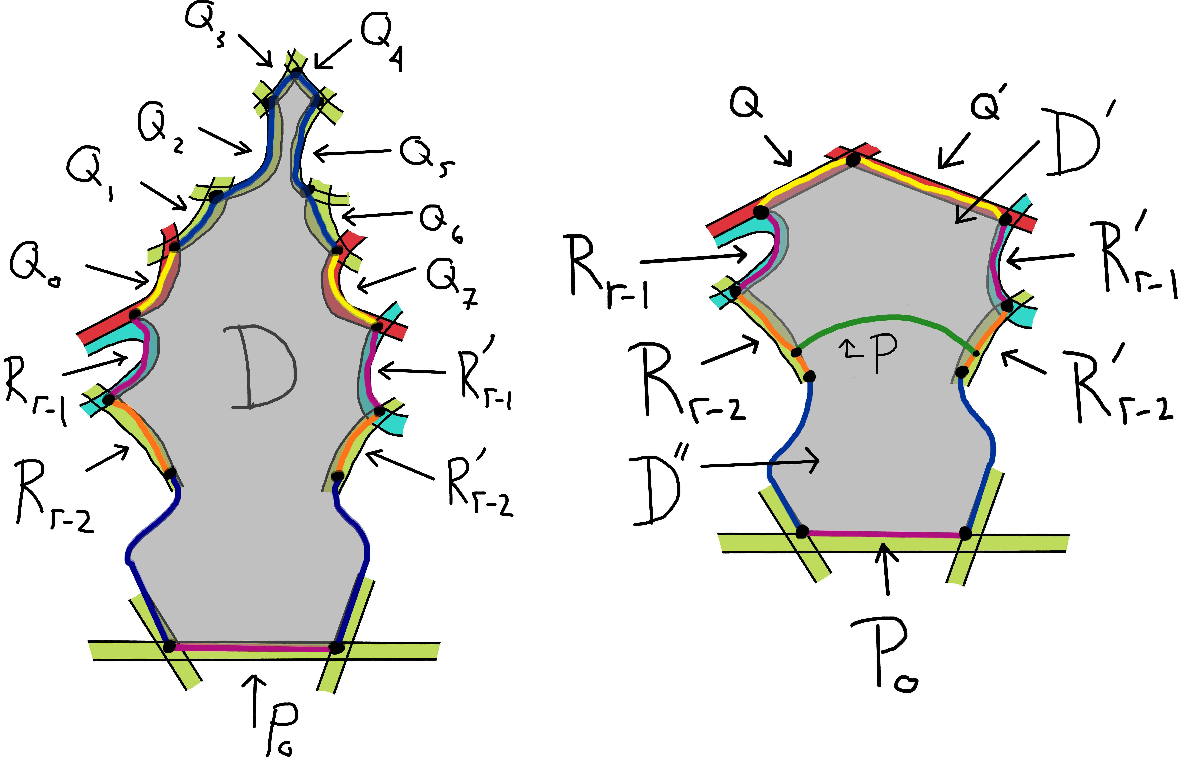}
  \caption{The disc diagram $D$ in the proof of Proposition \ref{prop:weakcombing} is shown at left.  The hyperplane carriers containing the various named subpaths of the boundary path of $D$ are shown.  At right is the same diagram, drawn for simplicity in the case where $H$ and $H'$ contact, showing the path $P$ and the resulting subdiagrams $D'$ and $D''$.}\label{fig:generalcombing}
\end{figure}

\textbf{The path $P$ of $G(\bf X)$ and the subdiagrams $D'$ and $D''$:}  By Lemma~\ref{lem:supportingE} below, there exists a combinatorial path $P\hookrightarrow D$ whose endpoints lie on $R_{r-2}$ and $R'_{r-2}$, with the property that every dual curve in $D$ crosses $P$ at most once, and no dual curve that crosses $P$ emanates from $R_{r-2}$ or $R'_{r-2}$.  Note that $P$ separates $D$ into two disc diagrams, i.e. $D=D'\cup_PD''$, where $D'$ is the subdiagram containing $Q$ and $D''$ is the subdiagram containing $P_0$, as shown at right in Figure~\ref{fig:generalcombing}.

\textbf{Analysis of $D'$:}  Let $K$ be a dual curve in $D'$ emanating from $P$ and mapping to a hyperplane $W$.  Then there is a dual curve $L$ in $D$ such that $L\cap D'=K$.  Since no dual curve crosses more than one 1-cube of $P$, and no dual curve crossing $P$ ends on $R_{r-2}$ or $R'_{r-2}$, the dual curve $L$ has exactly one end on the boundary path of $D'$, i.e. on $R_{r-1},R'_{r-1}$ or $Q$, and one end on the boundary path of $D''$, on $P_0$ or $R_i$ or $R'_i$, with $i\leq r-3$.

Since $r\geq 3$, the end of $L$ on the boundary path of $D''$ cannot be on $P_0$ or on $R_i$ or $R'_i$ with $i<r-3$, for otherwise $W$ would contact $H_i$ or $H'_i$, with $i<r-3$, and also contact $H_{r-1}$ or $H'_{r-1}$ or $V^i$, contradicting the fact that canonical paths are geodesics in $\Gamma(\bf X)$.  Similarly, $L$ cannot end on $Q$, and hence $L$ travels from $R_{r-3}$ to $R'_{r-1}$ or to $R_{r-1}$, or,  when $r=3$, from $P_0$ to $R_{r-1}$ or $R'_{r-1}$, as shown in Figure~\ref{fig:weakcombing2}.
\begin{figure}[h]
  \includegraphics[width=1.25in]{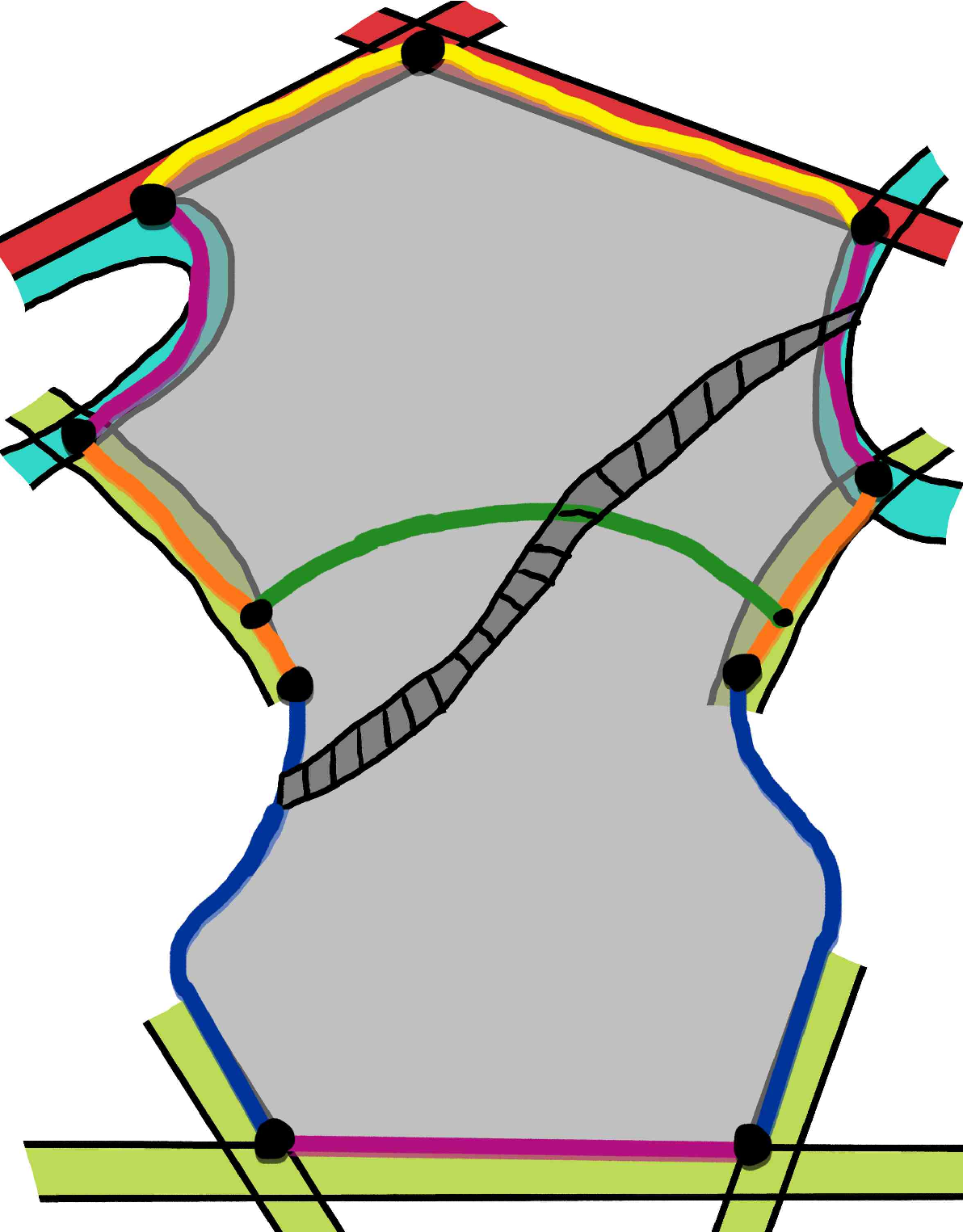}\\
  \caption{The path $P$ and the carrier of a dual curve $L$ that crosses $P$.}\label{fig:weakcombing2}
\end{figure}

Note that, were $L$ to travel from $R_{r-3}$ to $R_{r-1}$, then, as in Figure~\ref{fig:weakcombing3}, we would have $|S|=|R_{r-2}|$ and thus $W$ could replace $H_{r-2}$ in $\phi(H)$, leading to a lower-weight path, contradicting the fact that $\gamma(H)$ is canonical. Indeed, the subdiagram between $S,R_{r-2}$ and the subtended parts of $R_{r-3}$ and $R_{r-1}$ is a grid, since $D$ is of minimal area, and thus $|S|=|R_{r-2}|$.  Hence we may assume that $L$ travels from $R_{r-3}$ to $R'_{r-1}$.

Let $S$ be the path on the carrier $N(L)$ of $L$ that is isomorphic to $L$ and is separated from $R'_{r-3}$ by $L$.  Note that the 1-cube of $R'_{r-1}$ dual to $L$ cannot be the terminal 1-cube of $R_{r-1}$.  Indeed, the hyperplane $W$ has grade at most $r-2$ since $L$ emanates from $R_{r-3}$, and hence $W$ cannot contact the grade-$r$ hyperplane $V'$.  On the other hand, the 0-cube of $S$ on $R'_{r-1}$ is the terminal 0-cube of a 1-cube contained in $R'_{r-1}$, and hence the subpath $S'_{r-1}\subset R'_{r-1}$ subtended by $S$ and $Q_k$ satisfies $|S'_{r-1}|<|R'_{r-1}|$.  We thus have a path $H_0\coll H_1\coll\ldots\coll H_{r-3}\coll W\coll H'_{r-1}\coll H'$.  This path has weight at most
\begin{eqnarray*}
(|S'_{r-1}|,|S|,\ldots)&<&(|R'_{r-1}|,|R'_{r-2}|,|R'_{r-3}|,\ldots)\\
&=&||\re(\gamma(H'))||
\end{eqnarray*}
since $|S'_{r-1}|<|R'_{r-1}|$.  This contradicts that $\phi(H')$ is a canonical path.

\begin{figure}[h]
  \includegraphics[width=1.5in]{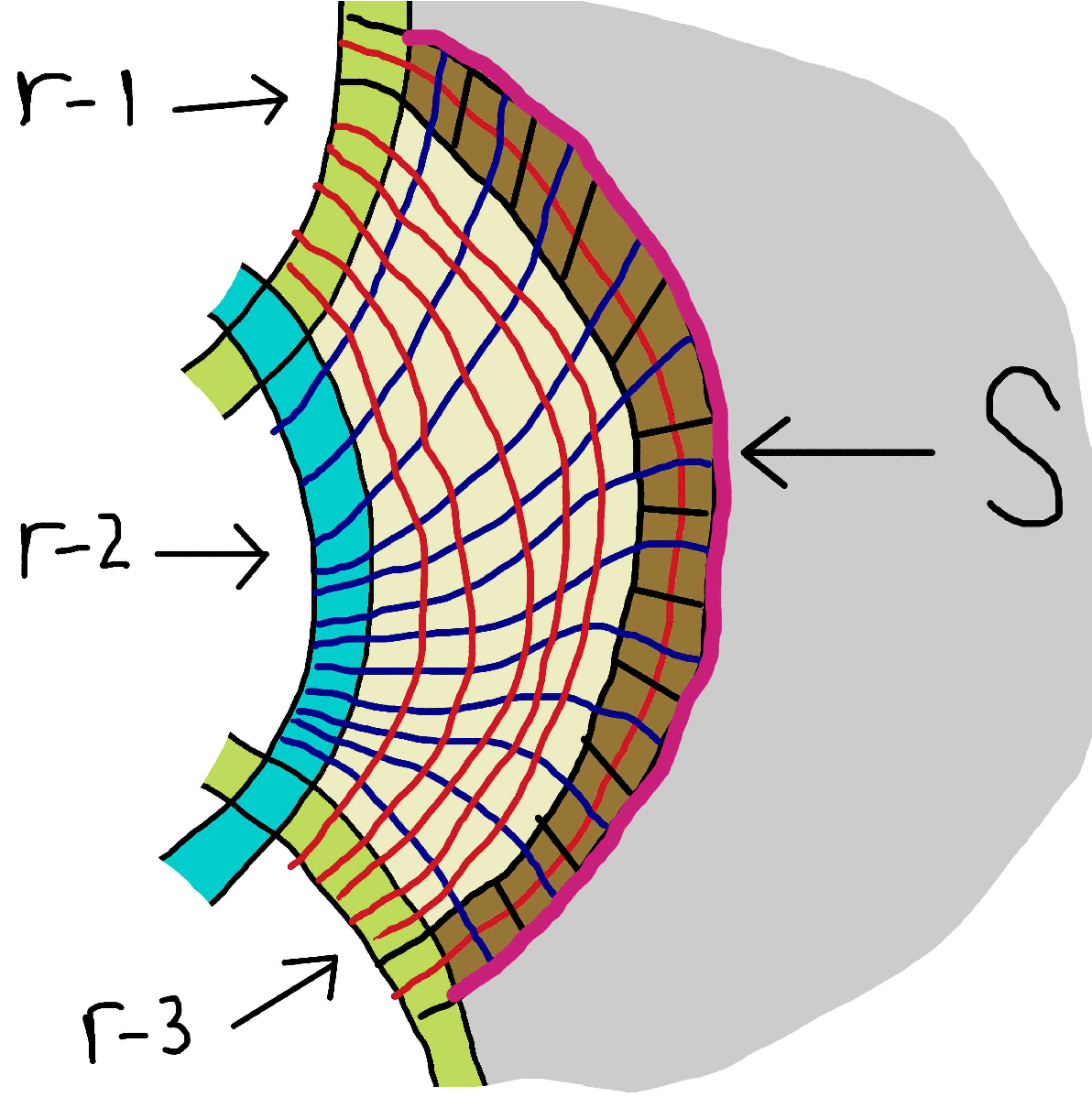}\\
  \caption{When $L$ travels from $R_{r-3}$ to $R_{r-1}$, we obtain a lower-weight path.}\label{fig:weakcombing3}
\end{figure}

\textbf{Conclusion:}  Since any dual curve in $D'$ emanating from $P$ leads to a contradiction either of minimality of the area of $D$ or of the fact that $\gamma(H)$ or $\gamma(H')$ is canonical, we conclude that $|P|=0$, and hence that $H_{r-2}\coll H'_{r-2}$.  This contact is in fact visible in the diagram $D$ -- see Figure~\ref{fig:weakcombing2}.
\end{proof}

\begin{lemma}\label{lem:supportingE}
Using the notation of Proposition~\ref{prop:weakcombing}, there exists a path $P\rightarrow D\rightarrow \bf X$ such that
\begin{enumerate}
\item $P$ joins a 0-cube of $R_{r-2}$ to a 0-cube of $R'_{r-2}$.
\item Each dual curve in $D$ is dual to at most a single 1-cube of $P$.
\item No dual curve in $D$ that crosses $P$ has an end on $R_{r-2}$ or $R'_{r-2}$.
\end{enumerate}
\end{lemma}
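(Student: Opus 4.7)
The plan is to construct $P$ by a minimality argument and to verify the three properties using minimality together with the embedded nature of dual curves in minimal disc diagrams (Propositions~\ref{prop:dualcurvesembed} and~\ref{prop:bigon}).

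Let $u := R_{r-2} \cap R_{r-1}$ and $u' := R'_{r-2} \cap R'_{r-1}$. Every combinatorial arc $P\colon u \to u'$ in $D$ separates $D$ into two subdiagrams; denote by $E(P)$ the one whose complementary boundary is $R_{r-1} Q (R'_{r-1})^{-1}$. I would choose $P$ so as to minimize the pair $(\operatorname{area}(E(P)), |P|)$ in the lexicographic order among all such arcs. Property~(1) is then immediate from the location of the endpoints $u \in R_{r-2}$ and $u' \in R'_{r-2}$.

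For property~(2), if a dual curve $L$ of $D$ is dual to distinct 1-cubes $e_1, e_2$ of $P$, then the $P$-subpath between $e_1, e_2$ together with a corresponding subarc of the carrier $N(L) \cap D$ bounds a subdiagram $F$ of $D$. A hexagon (``Whitney'') move across $F$ replaces the middle segment of $P$ by the opposite side of $N(L)$, producing a combinatorial path $P^{\mathrm{new}}\colon u \to u'$. One checks that the direction of push can be chosen so that the lexicographic pair $(\operatorname{area}(E(P^{\mathrm{new}})), |P^{\mathrm{new}}|)$ is strictly smaller than $(\operatorname{area}(E(P)), |P|)$: when $F \subseteq E(P)$ the push removes $F$ from $E$ and strictly shrinks the area, while when $F \subseteq D \setminus E(P)$ the push still strictly shortens $|P|$ without increasing area. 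Either way, the choice of $P$ is contradicted.

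For property~(3), suppose that some dual curve $L$ of $D$ crosses $P$ at a 1-cube $e$ and has its other end on a 1-cube $c \subseteq R_{r-2}$ (the case where $L$ ends on $R'_{r-2}$ is symmetric). Since $c$ lies on the complementary boundary of $E(P)$, the segment of $L$ between $c$ and $e$ sits outside $E(P)$, while the opposite segment sits inside $E(P)$; by Proposition~\ref{prop:dualcurvesembed} it is an embedded arc, and by Proposition~\ref{prop:bigon} it cannot cross $P$ a second time, so it terminates at a 1-cube $c'$ of $R_{r-1} \cup Q \cup (R'_{r-1})^{-1}$. The carrier $N(L) \cap E(P)$ is then a nontrivial strip bounded by $e$, $c'$, and two side arcs $\sigma_1, \sigma_2$. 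Replacing $e$ in $P$ by $\sigma_1 \cdot c' \cdot \sigma_2^{-1}$ yields a new arc $P^{\mathrm{new}}\colon u \to u'$ whose associated subdiagram satisfies $\operatorname{area}(E(P^{\mathrm{new}})) = \operatorname{area}(E(P)) - \operatorname{area}(N(L) \cap E(P))$, strictly less than $\operatorname{area}(E(P))$, contradicting the minimal choice of $P$.

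The main obstacle will be the hexagon-move analysis in property~(2): one must control both possible directions of the push so as to guarantee a strict decrease of the lexicographic pair, which requires a case analysis based on the location of the bounded subdiagram $F$ relative to $E(P)$ and may use the fact that the dual curves in a minimal-area disc diagram are embedded and pairwise cross at most once. A secondary, essentially routine issue is to check that each modified path remains a simple arc in $D$ from $u$ to $u'$, so that it still separates $D$ into two subdiagrams; this follows from the disc structure of $D$ and the local nature of the modifications.
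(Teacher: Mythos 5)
Your strategy diverges from the paper's in two ways: you fix the endpoints of $P$ to be $u=R_{r-2}\cap R_{r-1}$ and $u'=R'_{r-2}\cap R'_{r-1}$, and you try to get all three properties at once from minimality of the lexicographic pair $\bigl(\operatorname{area}(E(P)),|P|\bigr)$. The paper instead starts with a \emph{shortest} path from $R_{r-2}$ to $R'_{r-2}$ (with freely varying endpoints), repairs property~(2) by an explicit iterative procedure that tracks the number of double-crossing dual curves, and then verifies~(3) directly from shortness together with a trigon argument. These are genuinely different approaches.

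The gap you flagged in property~(2) is real, and I do not see how to close it with your minimizer. If a dual curve $L$ is dual to $e_1,e_2\in P$, then the strip of $L$ between those 1-cubes, and hence the subdiagram $F$ bounded by that strip and $e_1P'e_2$, lies entirely on one fixed side of $P$ -- but that side can be either $E(P)$ or $D\setminus E(P)$. The only hexagon move that removes the double crossing is to push $e_1P'e_2$ across $N(L)$ to its far side $T$. When $F\subseteq E(P)$ this shrinks $E(P)$, as you say. But when $F\subseteq D\setminus E(P)$ the pushed arc $P^{\mathrm{new}}$ now lies beyond the strip, so $E(P^{\mathrm{new}})=E(P)\cup F$ and $\operatorname{area}(E(P^{\mathrm{new}}))$ \emph{strictly increases} (since $F$ contains at least one 2-cube). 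Your claim that ``the push still strictly shortens $|P|$ without increasing area'' is therefore false in the first coordinate, and the length change $|T|-|e_1P'e_2|$ is not reliably negative either. No choice of ``direction of push'' helps: the near-side replacement (swapping $P'$ for the near side $\tau_1$ of the strip) has the matching endpoints but keeps $e_1,e_2$ in the path, so $L$ is still dual to both and the double crossing survives. Since the first lexicographic coordinate can go up under the only available move, the minimizer argument for~(2) does not terminate. This is exactly why the paper does \emph{not} minimize area: it proves that each innermost double-crossing can be removed \emph{without increasing $|P|$} and that the count of double-crossing dual curves strictly drops, then deduces~(3) from the shortest-length property together with the fact that a dual curve from $T$ to $S$ in the triangular region $F$ would produce a trigon along the boundary.

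Two smaller points. In your argument for~(3) you invoke Proposition~\ref{prop:bigon} to say $L$ cannot cross $P$ twice; that proposition concerns two \emph{dual curves}, and $P$ is not a dual curve -- the correct justification is property~(2) established beforehand. Also, the detour $\sigma_1\cdot c'\cdot\sigma_2^{-1}$ used in~(3) runs through a 1-cube $c'$ of $\partial D$, so $P^{\mathrm{new}}$ meets $\partial D$ in its interior and one needs a little care to say that $E(P^{\mathrm{new}})$ is still well-defined and that $P^{\mathrm{new}}$ separates $D$; the paper sidesteps this entirely by comparing lengths of paths in $D^{(1)}$ rather than areas of subdiagrams.
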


\begin{proof}
Choose a shortest path $P\rightarrow D^{(1)}$ in the 1-skeleton $D^{(1)}$ of $D$ that joins a 0-cube of $R_{r-2}$ to a 0-cube of $R'_{r-2}$.  We first modify $P$, without affecting its endpoints, so that (2) is satisfied.  We then show that $P$ satisfies (3).

\textbf{Modifying $P$ to satisfy (2):}  Let $K$ be a dual curve in $D$ that is dual to two distinct 1-cubes $c,c'$ of $P$.  Moreover, suppose that $K$ is an innermost such dual curve, in the sense that no two 1-cubes between $c$ and $c'$ on $P$ are dual to the same dual curve.  Consider the path $T$ on $N(K)$ traveling from the initial 0-cube of $c$ to the terminal 0-cube of $c'$.  Then $T$ and $cP'c'$ bound a subdiagram $E$, where $P'$ is the subtended part of $P$; see the left picture in Figure~\ref{fig:supportingE}.  Since $K$ is innermost, every dual curve in $E$ travels from $P'$ to $T$.  Indeed, the only other possibility is a dual curve $L$ dual to at least two distinct 1-cubes of $T$, but that would lead to a bigon between $K$ and $L$, contradicting minimality of the area of $D$.  Hence $|T|=|cP'c'|$, and we replace $P$ by a new path, with the same endpoints, in which $cP'c'$ is replaced by $T$.  This lowers the number of dual curves that cross $P$ in more than one way, and thus in finitely many such steps we arrive at a choice of $P$ satisfying (2).

\textbf{Verifying (3):}  Let $C$ be a dual curve in $D$ that emanates from $R_{r-2}$ and crosses $P$, as at right in Figure~\ref{fig:supportingE}.  Let $P=P'cP''$, where $c$ is the 1-cube of $P$ dual to $C$ and $P'$ is the subpath of $P$ joining the initial 0-cube of $P$ to the initial 0-cube of $c$.  Let $Tc'$ be the subpath of $R_{r-2}$ between the initial 0-cube of $P$ and the 1-cube $c'$ of $R_{r-2}$ dual to $C$.  Let $F$ be the subdiagram of $D$ bounded by $Tc',P'c$, and $S$, where $S$ is the shortest path on the carrier of $C$ that joins the endpoints of $c$ and $c'$.

No dual curve in $F$ emanating from $T$ can end on $S$, since that would lead to a trigon of dual curves along the boundary path of $D$ and a consequent reduction in area.  Hence, as shown in Figure~\ref{fig:supportingE}, dual curves in $F$ travel from $S$ to $P'$ or from $T$ to $P'$, or from $c$ to $c'$.  The former type shows that $|S|\leq |P'|$, with equality if and only if $|T|=0$.  We thus have that $|SP''|\leq|P'|+|P''|<|P'cP''|=|P|$, contradicting the assumption that $P$ was a shortest path joining $R_{r-2}$ to $R'_{r-2}$.  Indeed, $SP''$ has its endpoints on $R_{r-2}$ and $R'_{r-2}$ since $c'$ is a 1-cube of $R_{r-2}$ and $P''$ is the terminal subpath of $P$.
\begin{figure}[h]
  \includegraphics[width=3in]{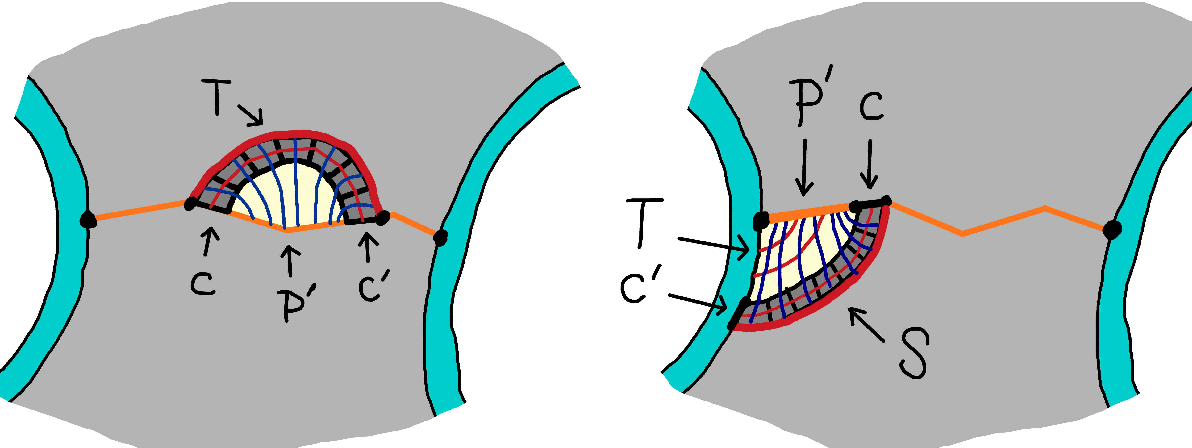}\\
  \caption{Left to right: the subdiagrams $E$ and $F$ of $D$.}\label{fig:supportingE}
\end{figure}
\end{proof}

\begin{remark}\label{rem:comment13}
In the case where $\bf X$ is 2-dimensional, the second part of the proof of Lemma~\ref{lem:supportingE} can be simplified slightly, using the fact that a minimal-area disc diagram in a 2-dimensional CAT(0) cube complex is itself a CAT(0) cube complex.  Although this ceases to be true in higher dimensions, the proof given above works for arbitrary CAT(0) cube complexes, and indeed the weak combing property established by Proposition~\ref{prop:weakcombing} and Corollary~\ref{cor:weakcombing}, as well as the bound on the diameters of clusters in $\Gamma(\mathbf X)$ established in Corollary~\ref{cor:diameter}, holds for any CAT(0) cube complex.
\end{remark}

Applying  Proposition \ref{prop:weakcombing} to a pair $H,H'$ of contacting hyperplanes of the same grade, we obtain the following property of grandfathers, which is used in colouring $\Gamma$:

\begin{corollary} [Weak combing]\label{cor:weakcombing} The grandfathers in canonical paths of two contacting hyperplanes of the same grade either coincide or contact.
\end{corollary}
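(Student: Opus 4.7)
The plan is to deduce the corollary as a direct application of Proposition~\ref{prop:weakcombing}. All the work has been packed into that proposition; what remains is to observe that its hypothesis is automatically satisfied whenever the two hyperplanes in question contact and share a common grade.

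More precisely, suppose $H \coll H'$ and $g(H) = g(H') = r$; assume $r \geq 2$, since otherwise canonical paths have fewer than two edges and the statement is vacuous. To invoke Proposition~\ref{prop:weakcombing}, I need $H$ and $H'$ to lie in a common grade-$r$ cluster of $\Gamma(\mathbf X)$. But the single edge $H \coll H'$ is itself a path in $\Gamma(\mathbf X)$ whose two vertices both have grade exactly $r$, hence grade $\geq r$. By Definition~\ref{defn:fullsphere}, this means $H \sim H'$, so they belong to the same grade-$r$ cluster.

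With this established, Proposition~\ref{prop:weakcombing} applies verbatim to the pair $H, H'$ together with their canonical paths $\gamma(H), \gamma(H')$, and yields that $f^2(H) = H_{r-2}$ and $f^2(H') = H'_{r-2}$ either coincide or contact. This is exactly the conclusion of the corollary.

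There is essentially no obstacle: the cluster-definition is tailor-made so that adjacency in $\Gamma(\mathbf X)$ within a single sphere is sufficient for cluster-membership. The only thing to keep in mind is to flag the degenerate small-$r$ cases (where $H$ or $H'$ would equal $H_0$, or their canonical paths would be too short to admit a grandfather) and dismiss them as vacuous, so that the proof reduces to a one-line invocation of Proposition~\ref{prop:weakcombing}.
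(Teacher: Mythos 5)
Your proof is correct and follows exactly the route the paper takes: the paper simply notes that Proposition~\ref{prop:weakcombing} applies to a pair of contacting hyperplanes of the same grade, which you justify by observing that the edge $H\coll H'$ itself witnesses cluster-membership per Definition~\ref{defn:fullsphere}. The extra care you take with the $r<2$ cases is a reasonable precaution but does not change the substance of the argument.
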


Since the distance in $\Gamma({\bf X})$ from a hyperplane to its grandfather is 2, from Proposition \ref{prop:weakcombing} we also immediately obtain:

\begin{corollary} [Diameter of clusters] \label{cor:diameter} The diameter of each cluster in the contact graph  $\Gamma(\bf X)$ is at most 5.
\end{corollary}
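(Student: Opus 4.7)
The plan is to leverage Proposition~\ref{prop:weakcombing} directly, which asserts that any two hyperplanes $H,H'$ in a common grade-$r$ cluster (for $r\geq 2$) have grandfathers $f^2(H),f^2(H')$ (taken along any choice of canonical paths) that either coincide or contact in $\Gamma(\mathbf X)$. Given that $\rho$ measures distance in $\Gamma(\mathbf X)$, the corollary will then fall out of a triangle inequality, provided we also handle the small-grade cases where Proposition~\ref{prop:weakcombing} does not apply.

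First I would dispose of the low-grade cases. For $r=0$, the unique grade-$0$ hyperplane is $H_0$, so the only grade-$0$ cluster is $\{H_0\}$, of diameter $0$. For $r=1$, every grade-$1$ hyperplane contacts $H_0$, so for any two elements $H,H'$ of a grade-$1$ cluster the path $H\coll H_0\coll H'$ has length $2$ in $\Gamma(\mathbf X)$, giving diameter at most $2$.

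The main case is $r\geq 2$. Fix a grade-$r$ cluster $C$ and pick any $H,H'\in C$. Choose canonical paths $\gamma(H)=H_0\coll H_1\coll\ldots\coll H_r=H$ and $\gamma(H')=H_0\coll H'_1\coll\ldots\coll H'_r=H'$, with grandfathers $f^2(H)=H_{r-2}$ and $f^2(H')=H'_{r-2}$. Along $\gamma(H)$ we have $H\coll f(H)\coll f^2(H)$, hence $\rho(H,f^2(H))\leq 2$, and similarly $\rho(H',f^2(H'))\leq 2$. Proposition~\ref{prop:weakcombing} yields $\rho(f^2(H),f^2(H'))\leq 1$ (either the grandfathers coincide, contributing $0$, or they contact, contributing $1$). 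The triangle inequality in $\Gamma(\mathbf X)$ then gives
\[
\rho(H,H')\leq \rho(H,f^2(H))+\rho(f^2(H),f^2(H'))+\rho(f^2(H'),H')\leq 2+1+2=5.
\]
Since $H,H'\in C$ were arbitrary, the diameter of $C$ is at most $5$.

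I do not anticipate a genuine obstacle: the corollary is essentially a one-line triangle-inequality consequence of Proposition~\ref{prop:weakcombing}, with the $r\in\{0,1\}$ cases handled by inspection. The only care needed is to verify that Proposition~\ref{prop:weakcombing} is applied correctly (in particular that the bound $\rho(H,f^2(H))\leq 2$ is guaranteed by the very definition of the grandfather as the entry two steps back along the canonical path, which is a path in $\Gamma(\mathbf X)$).
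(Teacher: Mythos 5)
Your proof is correct and takes essentially the same route as the paper: the paper notes that the distance in $\Gamma(\mathbf X)$ from a hyperplane to its grandfather is $2$ and then invokes Proposition~\ref{prop:weakcombing} together with the triangle inequality, exactly as you do. Your extra handling of the $r\in\{0,1\}$ cases is a minor but reasonable addition.
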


\begin{remark} There are finite, 2-dimensional CAT(0) cube complexes whose contact graphs contain clusters of diameter 5; thus Corollary \ref{cor:diameter} is sharp.
\end{remark}


\section{Potential fathers, iterated footprints and imprints}
For a hyperplane $U$  of grade $r-2$, and a fixed cluster $\mathcal C$ of grade $r$, denote by $\mathcal R(U)={\mathcal R}(U,\mathcal C)$ the set of all hyperplanes $H$ in $\mathcal C$ such that $U$ is the grandfather of $H$ in a fixed canonical path $\gamma(H),$ i.e., $f^2(H)=U.$ As before, $U^+$ and $U^-$ denote the two copies of $U$ bounding the carrier $N(U).$ For a hyperplane $H\in {\mathcal R}(U),$ denote by $PF(H)$ the set of all hyperplanes $V$ which contact at the same time $H$ and its grandfather $U$ and call any such hyperplane $V$ a {\it potential father} of $H.$ Let $\pf(H)$ denote the union of carriers $N(V)$, where $V$ varies over the set of potential fathers of $H$. The \emph{iterated footprint} of $H$ on its grandfather $U=f^2(H)$ is the subcomplex $IF(H,U)=\pf(H)\cap N(U)$. Analogously, the {\it iterated imprint} $IJ(H,U)$ of $H$ on $U$ is the projection of $IF(H,U)$. Denote by ${\mathcal F}(U)$ and ${\mathcal J}(U)$ the set families consisting of all iterated footprints $IF(H,U)$ and imprints $IJ(H,U)$ taken over all hyperplanes $H$ having $U$ as the grandfather.  
When the grandfather $U=f^2(H)$ is a fixed hyperplane, we use the notation $IF(H,U)=IF(H)$ for the iterated footprint of $H$ on $U$.

\begin{lemma}\label{lem:potentialpatherconnected}
Let $H\in {\mathcal R}(U).$
Then the subcomplex $\pf(H)$ is connected and hence the iterated footprint $IF(H)$ is a connected subcomplex of $N(f^2(H))$. In particular, the iterated imprint $IJ(H)$ is a
subtree of $U.$

Finally, if $H,H'\in\mathcal R(U)$ contact, then $IF(H)\cap IF(H')\neq\emptyset$, and hence the iterated imprints of $H$ and $H'$ in $U$ intersect in a subtree.
\end{lemma}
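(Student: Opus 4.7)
My plan is to reduce the first assertion to the connectedness of the subgraph of $\Gamma(\mathbf{X})$ spanned by the set $PF(H)$ of potential fathers, and then to establish that connectedness by a minimal-area disc diagram argument. Observe first that $\pf(H)=\bigcup_{V\in PF(H)}N(V)$ is connected if and only if the induced subgraph of $\Gamma(\mathbf X)$ on $PF(H)$ is connected, since each carrier is connected and two carriers meet precisely when the hyperplanes contact. Granting this, the Helly property for convex subcomplexes yields connectedness of $IF(H)=\pf(H)\cap N(U)$: for consecutive contacting potential fathers $V,V'$ in such a chain, the pairwise-intersecting carriers $N(V),N(V'),N(U)$ share a $0$-cube, so the connected footprints $F(V,U)$ and $F(V',U)$ meet. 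Projecting the resulting connected subcomplex $IF(H)\subseteq N(U)$ to the tree $U$ exhibits $IJ(H)$ as a subtree.

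To show connectedness of $PF(H)$, I would fix $V,V'\in PF(H)$ and choose corner $0$-cubes $a\in N(V)\cap N(H)$, $a'\in N(V')\cap N(H)$, $b\in N(V)\cap N(U)$, $b'\in N(V')\cap N(U)$. Let $P_H\subset N(H)$ and $P_U\subset N(U)$ be geodesics from $a$ to $a'$ and $b$ to $b'$, and $Q_V\subset N(V)$, $Q_{V'}\subset N(V')$ be geodesics from $b$ to $a$ and $b'$ to $a'$. I would then take a disc diagram $D\to\mathbf X$ with boundary loop $Q_VP_HQ_{V'}^{-1}P_U^{-1}$, fixed carriers $N(V),N(H),N(V'),N(U)$, minimal area, and then minimal $|\partial_pD|$ as in Proposition~\ref{prop:boundarytrigons}. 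By that proposition and the hypothesis $\dimension\mathbf X\leq 2$, $D$ contains no trigon along the boundary and no trigon of dual curves. The crucial claim is that no dual curve emanating from $P_H$ ends on $Q_V$ or $Q_{V'}$: if such a dual curve $K$ existed with hyperplane $W$, then $W\coll H$ and $W\coll V$, and Helly applied to the pairwise-contacting $N(V),N(H),N(W)$ furnishes a $0$-cube $c$ in their triple intersection; rerouting the initial subpaths of $P_H$ and $Q_V$ to pass through $c$ (in the sense of Remark~\ref{rem:discobservations}) strictly reduces $|\partial_pD|$ without increasing area, contradicting minimality. Hence every dual curve from $P_H$ ends on $P_U$, and its hyperplane is a potential father. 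Ordering these along $P_H$ gives a sequence $W_1,\ldots,W_m\in PF(H)$; consecutive $W_i,W_{i+1}$ contact via the $0$-cube of $P_H$ shared by their dual $1$-cubes, and $V\coll W_1$, $W_m\coll V'$ hold because $a\in N(V)\cap N(W_1)$ and $a'\in N(V')\cap N(W_m)$. In the degenerate case $|P_H|=0$, one has $a=a'\in N(V)\cap N(V')$, so $V\coll V'$ directly.

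For the second assertion, given $H\coll H'$ in $\mathcal R(U)$, pick $p\in N(H)\cap N(H')$ and potential fathers $V\in PF(H)$, $V'\in PF(H')$, together with geodesics $P_V\subset N(V)$ and $P_{V'}\subset N(V')$ realizing $d_{G(\mathbf X)}(N(H),N(U))$ and $d_{G(\mathbf X)}(N(H'),N(U))$ supplied by Lemma~\ref{lem:cycledistance}. Short geodesics in $N(H)$ and $N(H')$ from the $V$- and $V'$-endpoints to $p$, together with a closing geodesic in $N(U)$, produce a five-sided closed loop bounding a disc diagram $D'$, which I would choose minimal area and minimal boundary with the five fixed carriers. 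Applying the same Helly-and-rerouting analysis to dual curves emanating from the $N(H)$- and $N(H')$-arcs, any dual curve not ending on the $N(U)$-arc or crossing directly between the two $N(H^{\scriptscriptstyle(\prime)})$-arcs can be eliminated, forcing the diagram to exhibit either a common potential father $W\in PF(H)\cap PF(H')$ (from a dual curve crossing from $N(H)$ to $N(H')$ whose hyperplane also reaches $N(U)$), or a contacting pair $W\in PF(H)$, $W'\in PF(H')$ (from consecutive dual curves on the $N(U)$-arc). In either case Helly applied to $N(W),N(W'),N(U)$ yields a $0$-cube lying in $F(W,U)\cap F(W',U)\subseteq IF(H)\cap IF(H')$, and the image of this intersection under projection to $U$ is an intersection of two subtrees of a tree, hence a subtree.

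The main obstacle will be the rerouting step in the minimality argument: one must verify carefully that shortcutting the boundary through a Helly point $c$ strictly decreases $|\partial_pD|$ while not increasing the area. The two-dimensionality of $\mathbf X$ is decisive, both in excluding trigons of dual curves and in ensuring (via Remark~\ref{rem:discobservations}) that subdiagrams cut off by dual curves are themselves CAT($0$), so the standard Helly-based replacement of a subtended path by a single $0$-cube applies cleanly within the excised subdiagram.
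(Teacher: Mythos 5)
For the first two assertions (connectedness of $\pf(H)$ and $IF(H)$, and that $IJ(H)$ is a subtree), your approach is essentially the paper's: both set up a four‑sided disc diagram with sides in $N(V)$, $N(H)$, $N(V')$, $N(U)$ and then classify the dual curves. You repackage the conclusion by first establishing that $PF(H)$ is connected as a subgraph of $\Gamma(\mathbf X)$ and then passing to $IF(H)$ via Helly, whereas the paper shows directly that the path $P\subset N(U)$ lies in $IF(H)$; this is the same argument in different clothing. Two minor remarks: (i) the paper also records that $PF(H)$ is inseparable, which you omit (it is not essential to the conclusion); (ii) your invocation of Remark~\ref{rem:discobservations} to rule out dual curves ending on $Q_V$ or $Q_{V'}$ is not literally applicable, since the subtended path from $P_H$ to $Q_V$ runs through the corner $a$ and thus straddles the two carriers $N(H)$ and $N(V)$ rather than lying in a single fixed carrier as the remark requires. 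The paper's own ``by minimality of area'' claim at the corresponding step is equally terse, so this is a shared informality rather than a unique defect of yours.

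For the final assertion, the two proofs genuinely diverge, and your sketch has real gaps. The paper settles the matter in one line by quoting Lemma~3.5 of~\cite{HagenQuasiArb}, which states precisely that contacting hyperplanes at grade $2$ over $U$ either share a potential father or possess contacting potential fathers. You instead try to derive this dichotomy from a five‑sided diagram, but the analysis does not go through as written. First, the claim that ``any dual curve not ending on the $N(U)$‑arc or crossing directly between the two $N(H^{(\prime)})$‑arcs can be eliminated'' is not substantiated: a dual curve from the $N(H)$‑arc to the $N(V')$‑arc, or from $P_V$ to $P_{V'}$, subtends a path crossing \emph{two} corners and is not removable by the corner‑rerouting trick that works for single‑corner crossings. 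Second, and more seriously, a dual curve from the $N(H)$‑arc to the $N(H')$‑arc maps to a hyperplane $W$ that crosses both $H$ and $H'$ but has no reason whatsoever to contact $U$; your parenthetical ``whose hyperplane also reaches $N(U)$'' presupposes exactly what you need to prove. Third, the case in which every dual curve from $P_U$ ends on (say) the $N(H)$‑arc alone, so that no ``consecutive'' pair with one end in each of $PF(H)$ and $PF(H')$ exists, is not addressed. In short, extracting the common‑father‑or‑contacting‑fathers dichotomy directly from the five‑sided diagram would require a substantially more careful case analysis (and is essentially a re‑proof of the cited lemma); as sketched, the argument does not establish $IF(H)\cap IF(H')\neq\emptyset$.
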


\begin{proof}
We first show that the set $PF(H)$ is inseparable, i.e. that if $V,V'\in PF(H)$ and $V''$ separates $V$ from $V'$, then $V''\in PF(H)$.  Indeed, let $P\rightarrow N(U)$ be a geodesic joining a closest pair of 0-cubes of $F(V,U)$ and $F(V',U)$, let $Q\rightarrow N(H)$ be a shortest geodesic of $N(H)$ joining $F(V,H)$ to $F(V',H)$.  Note that $P$ and $Q$ are necessarily disjoint since $U$ and $H$ do not contact.  Hence the shortest paths $R,R'\rightarrow N(V),N(V')$ joining the initial and terminal 0-cubes of $P,Q$, respectively, have length at least 1.  Likewise, since $V''$ separates $V$ and $V'$, it must separate $F(V,U)$ and $F(V',U)$ and also $F(V,H)$ and $F(V',H)$, and hence $V''$ crosses $P$ and $Q$ and hence crosses $H$ and $U$, and thus $V''\in PF(H)$.

Let $D$ be a minimal-area disc diagram with boundary path $RQ(R')^{-1}P^{-1}$.  By minimality of area, dual curves in $D$ travel from $R$ to $R'$ or from $P$ to $Q$.  If $C$ is a dual curve traveling from $P$ to $Q$, then $C$ maps to a hyperplane $V''$ that crosses $U$ and $H$, and hence $V''\in PF(H)$.  Thus the 1-cube $c\subset P$ dual to $C$ lies in $N(V'')\subset IF(H)$, and hence $P\subset IF(H)$.  If there is no such dual curve $C$, then $|P|=0$ and $N(V)\cap N(V')\neq\emptyset$.  Thus $IF(H)$ is connected.  The projection $N(U)\rightarrow U$ preserves connectedness, and hence $IJ(H)$ is a connected subtree of $U$.

Finally, if $H,H'$ contact, then by Lemma~3.5 of~\cite{HagenQuasiArb}, either $H$ and $H'$ have a common potential father $V$, or there exist potential fathers $V,V'$ of $H$ and $H'$ respectively such that $V\coll V'$.  In the first case, $F(V,U)$ belongs to the iterated footprint of both $H$ and $H'$, and in the second case, $F(V,U)\cap F(V',U)\neq\emptyset$ since $U$ is convex.
\end{proof}

For a hyperplane $U,$  fix once and for all a vertex $b^*$ of $U$ as a root of the tree $U.$ Among the potential fathers of a hyperplane $H\in {\mathcal R}(U),$ pick a hyperplane $V$ whose imprint $J(V,U)$ is closest to $b^*$, i.e., $d(b^*,IJ(H,U))=d(b^*,J(V,U))=\min \{ d(b^*,J(V',U)): V'\in PF(H)\}$  (the distance $d(b^*,J(V,U))$ is measured according to the usual combinatorial distance in a tree $U$ between a vertex and a subtree of $U$). Additionally, if there exist several potential fathers of $H$ whose imprints have the same minimal distance to $U,$ then let $V$ be that potential father for which the
imprint $J(H,V)$ is closest to $J(V,U).$  If there are several such hyperplanes $V$, choose one arbitrarily. Set $f(H)=V$ and call it the {\it father} of $H.$ The vertex $b_H$ of $IJ(H,U)$ realizing the distance $d(b^*,IJ(H,U))$ is called the {\it root} of the iterated imprint $IJ(H,U).$
(Note that the path  $\gamma^*(H)=H_0\coll H_1\coll\ldots\coll U=H_{r-2}\coll f(H)\coll H$  obtained from $\gamma(H)$ by replacing the hyperplane $H_{r-1}$  by the father $f(H)$ is a geodesic between $H_0$ and $H$ in $\Gamma({\bf X})$ but is not necessarily a canonical path.) On ${\mathcal R}(U)$ we define a partial order $\prec$  by setting $H\prec H'$ if and only if $b_{H}\ne b_{H'}$ and $b_{H}$ belongs to the unique path of $U$ between $b^*$ and $b_{H'}$ (in this case we will also write $b_{H'}\prec b_{H}$) and breaking ties arbitrarily when  $b_{H}=b_{H'}$.

\begin{remark}\label{rem:welldefinedfathers}
We briefly review the logic of the choice of fathers.  Recall that we have fixed a base hyperplane $H_0$ and graded $\Gamma(\mathbf X)$ with respect to $H_0$.  We then chose, for each hyperplane $H$, a canonical path $\gamma(H)$ joining $H_0$ to $H$.  This choice uniquely determines a grandfather $f^2(H)$ for each hyperplane $H$ of grade at least 2.  For any hyperplane $U$, there is therefore a well-defined set $\mathcal R(U)$ containing those hyperplanes $H$ for which, with respect to our fixed choice of canonical paths, $U=f^2(H)$.

We then focus on a single hyperplane $U$, and fix a base vertex $b^*$ in the tree $U$.  The \emph{father} $f(H)$ of $H\in\mathcal R(U)$ is a hyperplane $V=f(H)$ such that $V\coll U,V\coll H$, and no hyperplane $V'$ satisfying these criteria has imprint on $U$ closer to $b^*$ than does $V$.  Note that there could be more than one hyperplane $V$ satisfying these criteria.  In this case, we choose \emph{the} father of $H$ arbitrarily among all hyperplanes $V$ with the desired properties.  In practice, this arbitrary choice is justified since we shall only use the three given properties of $f(H)$.  Having chosen the father of each $H\in\mathcal R(U)$, we see that $\prec$ partially orders $\mathcal R(U)$.
\end{remark}

\begin{lemma}\label{lem:contactingfathers}
If $H,H'\in {\mathcal R}(U)$  and $H\coll H'$, then one of the following holds:
\begin{enumerate}
\item $f(H)=f(H').$
\item $f(H)\coll f(H').$
\item $H\prec H'$ and $f(H')$ contacts a potential father $W$ of $H$ such that $W$ crosses $U$.
\item $H'\prec H$ and $f(H)$ contacts a potential father $W'$ of $H'$ such that $W'$ crosses $U$.
\end{enumerate}
\end{lemma}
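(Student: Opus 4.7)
The proof is a case analysis based on whether the footprints $F(f(H),U)$ and $F(f(H'),U)$ share a vertex of $N(U)$, combined with an analysis of the relative position of the roots $b_H, b_{H'}$ in the tree $U$.

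Because $H \coll H'$, the final assertion of Lemma~\ref{lem:potentialpatherconnected} gives $IJ(H,U) \cap IJ(H',U) \neq \emptyset$, and since both iterated imprints are subtrees of the tree $U$, the unique $U$-geodesic from $b^*$ to any point of their intersection passes through $b_H$ and $b_{H'}$ in some order. Hence $b_H$ and $b_{H'}$ are comparable in the $b^*$-centered order on $U$; after exchanging $H$ and $H'$ if necessary (and choosing the tie-breaking appropriately when $b_H = b_{H'}$), I may assume $H \prec H'$. If the footprints $F(f(H),U)$ and $F(f(H'),U)$ meet in $N(U)$, then Lemma~\ref{lem:contactingfootprints} applied to $f(H), f(H'), U$ gives $f(H) = f(H')$ or $f(H) \coll f(H')$, establishing (1) or (2).

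Suppose instead that the footprints are disjoint, so $f(H) \not\coll f(H')$ by the same Lemma. I shall produce a hyperplane $W \in PF(H)$ crossing $U$ with $W \coll f(H')$, giving (3). The mechanism is this: any such $W$ satisfying $b_{H'} \in J(W,U)$ automatically contacts $f(H')$, because its 2-cube footprint $F(W,U)$ contains both lifts $b_{H'}^+, b_{H'}^-$ of $b_{H'}$ in $N(U)$, while $F(f(H'),U)$ contains at least one of these lifts (since $b_{H'} \in J(f(H'),U)$), and Lemma~\ref{lem:contactingfootprints} converts the shared lift into contact. To produce such $W$, I use $b_{H'} \in IJ(H,U)$: this holds because $b_{H'}$ lies on the $U$-path from $b_H$ into the intersection $IJ(H,U) \cap IJ(H',U)$, a segment contained in $IJ(H,U)$. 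Hence some $W_0 \in PF(H)$ already has $b_{H'} \in J(W_0,U)$. If $W_0$ crosses $U$, I take $W = W_0$. Otherwise $W_0$ osculates $U$ and $F(W_0, U)$ lies on a single side of $U$; I then construct a minimal-area disc diagram $D$ with fixed carriers, bounded by realizations of the canonical-path segments ending at $H$ and $H'$ concatenated with geodesics in $N(U), N(f(H)), N(H), N(H'), N(f(H'))$. Applying Propositions~\ref{prop:dualcurvesembed} and~\ref{prop:bigon} (no monogons or bigons) together with Proposition~\ref{prop:boundarytrigons} (no trigons, via $\dimension{\bf X} \leq 2$), I extract a dual curve in $D$ whose image is a hyperplane $W$ crossing $U$, contacting $H$ (hence lying in $PF(H)$), and having $b_{H'} \in J(W,U)$. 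The symmetric case $H' \prec H$ gives (4) by interchanging the roles of $H$ and $H'$.

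The main obstacle is producing the crossing $W$ when the natural candidate $W_0$ only osculates $U$; the disc diagram argument replacing $W_0$ with a crossing potential father of $H$ having $b_{H'}$ in its imprint is the technical heart of the proof. The 2-dimensional hypothesis is essential in two complementary ways: Proposition~\ref{prop:boundarytrigons} forbids trigons of dual curves in $D$, which is what permits the extraction of $W$; and a crossing footprint on $U$ is a single 2-cube straddling both sides $U^+$ and $U^-$, which is precisely the geometric feature that forces $W \coll f(H')$ irrespective of which side of $U$ contains $F(f(H'),U)$. This two-sided straddling is unavailable in higher dimensions and is the reason the ``$W$ crosses $U$'' condition appears in the statement of (3) and (4).
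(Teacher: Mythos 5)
Your overall architecture tracks the paper's quite closely: you handle cases (1)--(2) via Lemma~\ref{lem:contactingfootprints}, reduce the rest to producing a potential father of $H$ that crosses $U$, and attempt to find it via a disc diagram bounded by carriers of $U$, the two fathers, and $H,H'$. Two of your observations are genuinely useful and correct: (i) comparability of $b_H,b_{H'}$ follows cleanly from $IJ(H,U)\cap IJ(H',U)\neq\emptyset$ and the gate property of $b_H,b_{H'}$ in their respective subtrees, which is a slicker way to get the ordering than the paper's a posteriori derivation via the gate $g$; and (ii) the ``straddling 2-cube'' mechanism --- that $W$ crossing $U$ with $b_{H'}\in J(W,U)$ automatically contacts $f(H')$ --- is sound, since $F(W,U)$ then contains both preimages $b_{H'}^{\pm}$ while $F(f(H'),U)$ contains at least one (using $b_{H'}\in J(f(H'),U)$, which you correctly could have flagged as needing proof but which does hold because $b_{H'}$ is the gate of $b^*$ in both $IJ(H',U)$ and its subtree $J(f(H'),U)$).

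However, there is a genuine gap where the real work lives. You correctly note that the naive candidate $W_0\in PF(H)$ with $b_{H'}\in J(W_0,U)$ may osculate $U$, and that a disc diagram argument is then needed; but you only assert that such a diagram ``extracts a dual curve'' mapping to a hyperplane $W$ that crosses $U$, contacts $H$, and has $b_{H'}\in J(W,U)$. This extraction is the crux of the proof, and none of the nontrivial steps appear: you must pick the specific 1-cube of $P$ adjacent to the preimage of $b_{H'}$, observe that the dual curve $C$ from it cannot end on $R$ or $R'$ (else a trigon along the boundary), and crucially cannot end on the $N(H')$-side --- because then $W$ would be a potential father of $H'$ whose imprint contains a vertex strictly closer to $b^*$ than $b_{H'}$, contradicting minimality in the choice of $f(H')$. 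It is this last contradiction that forces $C$ to end on the $N(H)$-side, giving $W\in PF(H)$; the crossing with $U$ comes for free because that 1-cube of $P$ is not dual to $U$. Without this case analysis, you have not actually produced the $W$, and your mechanism has nothing to feed on. A secondary issue: the boundary of your proposed disc diagram includes realizations of the full canonical-path segments ending at $H$ and $H'$, which is unnecessary --- the paper's diagram is bounded only by $P\subset N(U)$, $R,R'$ in the two fathers' carriers, and $Q,Q'$ in $N(H),N(H')$ --- and the extra carriers make the no-trigon analysis harder rather than easier.

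A final remark on economy: once you have the disc diagram, the contact $W\coll f(H')$ falls out trivially (the preimage $c$ of $b_{H'}$ lies in $N(W)\cap N(f(H'))$ since $c\in F(f(H'),U)$ and the 1-cube $cc'$ is dual to $W$), so the straddling-2-cube mechanism, while correct, is a longer route to a fact the diagram gives you directly. It is a nice observation about why 2-dimensionality matters, but the paper gets the same conclusion more cheaply.
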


\begin{proof}
\textbf{The disc diagram $D$:}  Let $V=f(H)$ and $V'=f(H')$ and suppose that $V\neq V'$ and that $V$ and $V'$ do not contact.  Following the proof of Proposition~\ref{prop:weakcombing}, construct a disc diagram $D\rightarrow\bf X$ as follows.  Let $P\rightarrow N(U)$ be a combinatorial path joining $a\in F(V,U)$ to $c\in F(V',U)$, where $a$ and $c$ are chosen to be the preimages in $N(U)$ of the roots $b_H$ and $b_{H'}$, respectively, of the iterated imprints $IJ(H,U)$ and $IJ(H',U)$.  Since $V$ does not contact $V'$, we have $a\neq c$ and hence $|P|\geq 1$.

Let $R,R'\rightarrow N(V),N(V')$ join $a$ (respectively, $c$) to a closest 0-cube of $N(H)$ (respectively, $N(H')$).  Let $Q,Q'\rightarrow N(H),N(H')$ be shortest geodesics such that $PR'Q'QR^{-1}$ is a closed path, and let $D$ be a minimal-area disc diagram for that path; see the left side of Figure~\ref{fig:lemma6}.  There is at least one dual curve $C$ emanating from $P$, and $C$ cannot end on $R$ or on $R'$ since that would lead to a trigon of dual curves along the boundary path of $D$, contradicting minimality of area.  Thus $C$ ends on $H$ or on $H'$, and hence maps to a hyperplane $W(C)$ that crosses $U$ and crosses either $H$ or $H'$.

\textbf{Interpretation in $U$:}  Let $P_0$ be the image of $P$ in $U$ under the projection $N(U)\rightarrow U$, so that $P_0$ is the shortest path joining $b_H$ to $b_{H'}$ in the tree $U$.  Let $g\in U$ be the gate of the root $b^*$ in $P_0$, i.e. $g\in P_0$ is the unique point such that for all $p\in P_0$, any geodesic from $p$ to $b^*$ passes through $g$.  Either $g$ is contained in the interior of $P_0$, or $g$ is equal to one of the endpoints, so without loss of generality, suppose that $g\neq b_H$.  This situation is shown in the center of Figure~\ref{fig:lemma6}.  We shall show that in fact every path in $U$ from $b^*$ to $b_H$ must pass through $b_{H'}$ (that is to say, that $g=b_{H'}$) and thus that $H'\prec H$.

\textbf{A potential father of $H'$ contacts $f(H)$:}  Let $a'_0$ be the 0-cube of $P_0$ adjacent to $b_H$, and let $a'\in P$ be the 0-cube mapping to $a'_0$.  Then there is a dual curve $C$ in $D$ emanating from the 1-cube $aa'$ and ending on $Q$ or on $Q'$.  Let $W$ be the hyperplane to which $C$ maps.  If $C$ ends on $Q$, then $W$ is a potential father of $H$.  But $d(a'_0,b^*)<d(b_H,b^*)$ since $a'_0$ is closer than $b_H$ to the gate $g$.  This implies that $W=f(H)$.  But all dual curves emanating from $P$ map to distinct hyperplanes, a contradiction.  Thus $W$ is not a potential father of $H$, and hence $W\in PF(H')$.  On the other hand, $W\coll f(H)$.  It therefore remains to show that $H'\prec H$.

\textbf{$b_{H'}$ is the gate:}  Every other dual curve $C'$ emanating from $P$ must end on $Q'$ and thus map to a potential father of $H'$.  Indeed, no such dual curve can cross $C$ by minimality of the area of $D$.  Hence every path in $U$ from $b^*$ to an interior vertex of $P_0$ must pass through $b_{H'}$, since $V'$ is the father of $H'$, and thus $g=b_{H'}$.  Hence each path from $b^*$ to $b_H$ passes through $b_{H'}$, and thus $H'\prec H$.
\begin{figure}[h]
  \includegraphics[width=4in]{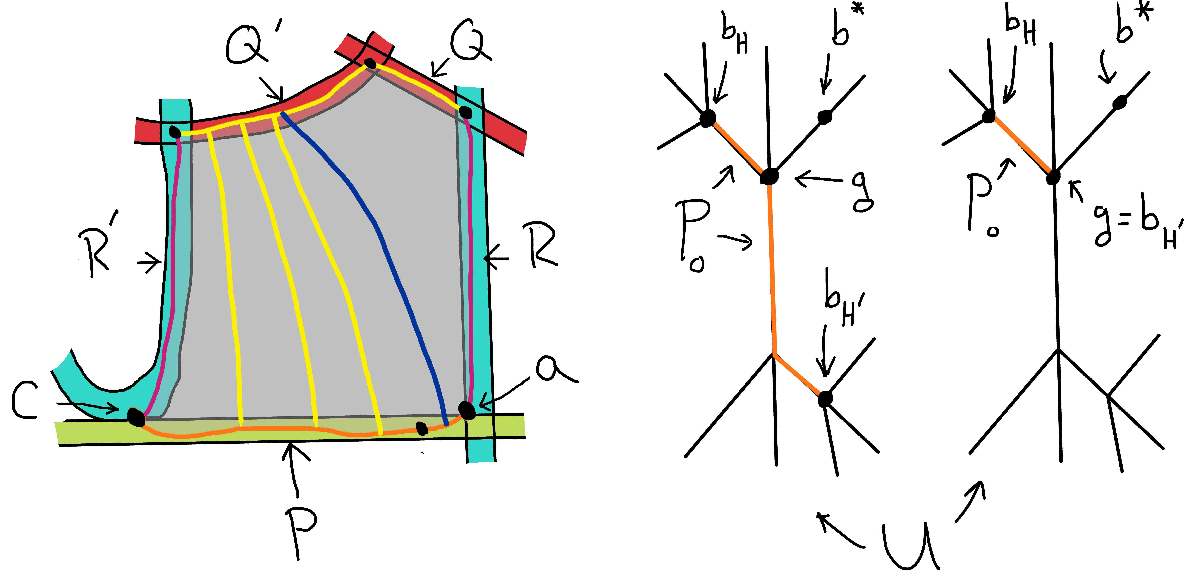}\\
  \caption{At left is the diagram $D$ in the proof of Lemma~\ref{lem:contactingfathers}.  In the center is an a priori picture of the projection of $P$ to $U$; at right is the actual picture.}\label{fig:lemma6}
\end{figure}
\end{proof}

\section{The graph $\Upsilon(U)$}
Now, we define the following subgraph $\Upsilon(U)$ of $\Gamma({\bf X})$: the vertices of $\Upsilon(U)$ are the hyperplanes of ${\mathcal R}(U)$ and two hyperplanes $H$ and $H'$ are adjacent in $\Upsilon(U)$ if and only if $H\coll H',$ the fathers $f(H)$ and $f(H')$ are different, and $f(H)$ and $f(H')$ do not contact. By Lemma \ref{lem:contactingfathers}, if $H$ and $H'$ are adjacent in $\Upsilon(U)$, then either $H\prec H'$ and the father $f(H')$ of $H'$ contacts a potential father of $H$, or $H'\prec H$ and the father $f(H)$ of $H$ contacts a potential father of $H'.$  Note that $\Upsilon(U)$ is a subgraph of the grade-2 cluster $\mathcal C$ centered at $U$. The graph $\Upsilon(U)$ can also be viewed as a subgraph of the intersection graph of iterated imprints of hyperplanes in $\mathcal R(U)$, by Lemma~\ref{lem:potentialpatherconnected}.

Our goal is to colour $\Upsilon(U)$.  Since to colour the whole graph  $\Upsilon(U)$ it is enough to colour each of its connected components, we will assume without loss of generality that $\Upsilon(U)$ is connected.  To colour $\Upsilon(U)$, we will group the edges of  $\Upsilon(U)$ into three spanning subgraphs $\Upsilon_0(U),\Upsilon_1(U),\Upsilon_2(U)$
of $\Upsilon(U)$ and colour each of these graphs separately.

\begin{definition}[Root class, incoming neighbour, outgoing neighbour]\label{defn:ingoingneighbour}
The vertices of $\Upsilon(U)$ can be partitioned into subsets according to their roots: for each vertex $b$ of $U$, let $\mathcal R'_b$ be the set of hyperplanes $H\in\mathcal R(U)$ such that $b_H=b$.  In other words, $\mathcal R'_b$ is the set of hyperplanes $H$ such that the iterated imprint $IJ(H,U)$ is rooted at $b$.  The set $\mathcal R'_b$ is the \emph{root class} associated to the root $b$.

Let $H\in\mathcal R'_b$ and let $V=f(H)$ be its father.  The hyperplane $H'\in\mathcal R(U)$ is an \emph{incoming neighbour} of $H$ if $HH'$ is an edge of $\Upsilon(U)$ (and in particular $H\coll H'$), and the iterated imprint $IJ(H',U)$ contains $b$ (and in particular $H'\prec H$).
More intuitively, $H'$ is an incoming neighbour of $H$ if $HH'$ is an edge of $\Upsilon(U)$ and $H'\prec H$.  Denote by $\mathcal I_b(H)$ the set of incoming neighbours of $H$.

By Lemma~\ref{lem:contactingfathers}, for each $H'\in\mathcal I_b(H)$, we have that $f(H')$ contacts a potential father of $H$ that crosses $U$.  If $H'$ is adjacent to $H$ in $\Upsilon(U)$ and $H'$ is not an incoming neighbour, then by Lemma~\ref{lem:contactingfathers}, $H\prec H'$, and we call $H'$ an \emph{outgoing neighbour} of $H$.
\end{definition}

The incoming neighbours of a fixed hyperplane are totally ordered by $\prec$; while we do not make explicit use of this fact in colouring $\Gamma({\bf X})$, it is a basic property of $\prec$.

\begin{proposition}[Incoming neighbours]\label{prop:neighbourproperties}
For any vertex $b$ of $U$ and any $H\in R'_b$, the set $\mathcal I_b(H)$ of incoming neighbours of $H$ is totally ordered by $\prec$ and there is a hyperplane $W$ such that $W$ contacts $f(H)$, $W$ is not a potential father of $H$, and $W$ is a potential father of $H_i$ for all $H_i\in\mathcal I_b(H)$.
\end{proposition}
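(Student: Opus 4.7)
The plan is to handle the two claims separately, exploiting the tree structure of $U$ and the 2-dimensionality of $\mathbf X$. For the total-ordering claim, first observe that for any $H_i\in\mathcal I_b(H)$, the definitions of incoming neighbour and of $\prec$ force $b_{H_i}$ to lie on the unique path in the tree $U$ from $b^*$ to $b=b_H$, with $b_{H_i}\ne b$. The vertices of this path are linearly ordered by their distance from $b^*$; so if $H_i,H_j\in\mathcal I_b(H)$ have $b_{H_i}\ne b_{H_j}$, then the one nearer $b^*$ lies on the subpath from $b^*$ to the other, yielding a $\prec$-comparison, while if $b_{H_i}=b_{H_j}$ the arbitrary tie-breaking built into $\prec$ supplies one. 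Hence $\prec$ restricts to a total order on $\mathcal I_b(H)$.

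For the existence of the common witness $W$, the claim is nontrivial only when $\mathcal I_b(H)\ne\emptyset$, which forces $b\ne b^*$. Let $b^\circ$ denote the vertex of $U$ adjacent to $b$ on the $b^*$-to-$b$ path and set $e=bb^\circ$. Since $\mathbf X$ is 2-dimensional, the edge $e$ of $U$ is the midcube of a unique 2-cube of $\mathbf X$, whose two dual hyperplanes are $U$ and one other, which I take as $W$. Then $W$ is the unique hyperplane of $\mathbf X$ crossing $U$ at $e$, so $J(W,U)=e$. The main obstacle is to verify that this single $W$ serves as a potential father for \emph{every} $H_i\in\mathcal I_b(H)$ rather than each $H_i$ getting its own witness. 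To this end, apply Lemma~\ref{lem:contactingfathers} to the pair $(H,H_i)$: since $HH_i$ is an edge of $\Upsilon(U)$, cases (1) and (2) are excluded, and since $H_i\prec H$, case (4) yields a potential father $W'_i$ of $H_i$ with $W'_i\coll f(H)$ and $W'_i$ crossing $U$. Re-inspecting the proof of that lemma, $W'_i$ arises as the hyperplane dual to the 1-cube $aa'$ of the auxiliary path $P\to N(U)$ constructed there, where $a$ projects to $b$ and $a'$ to the vertex of $U$ adjacent to $b$ on the projection $P_0$ of $P$, i.e.\ on the path in $U$ from $b$ to $b_{H_i}$. Because $b_{H_i}$ lies on the $b^*$-to-$b$ path, this adjacent vertex equals $b^\circ$ independently of $i$; hence $aa'$ projects to $e$, is dual to $W$, and $W'_i=W$. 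In particular $W\coll f(H)$ and $W$ is a potential father of each $H_i$.

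It remains to show that $W$ is not a potential father of $H$. If it were, then $W\in PF(H)$ would give $e=J(W,U)\subseteq IJ(H,U)$, forcing $b^\circ\in IJ(H,U)$; but $d(b^*,b^\circ)<d(b^*,b)$ contradicts the defining property of $b=b_H$ as the vertex of $IJ(H,U)$ closest to $b^*$.
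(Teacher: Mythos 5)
Your proof is correct, and the underlying mechanism is the same as the paper's: both arguments identify the common witness $W$ as the hyperplane dual to the edge $e=bb^\circ$ of the tree $U$, i.e.\ the hyperplane crossing $U$ in the 2-cube having $e$ as a midcube. Where the paper builds fresh disc diagrams $D_i$ (with paths $P_i$ from a fixed preimage $a$ of $b$ towards $N(V_i)$) and observes that they all share the initial 1-cube $aa'$, you instead re-inspect the disc diagram constructed inside the proof of Lemma~\ref{lem:contactingfathers}: since $P_0$ there is the path in $U$ from $b_H=b$ to $b_{H_i}$, and $b_{H_i}$ sits on the $b^*$-to-$b$ geodesic for every $i$, the 1-cube $aa'$ always projects to $e$ and thus is always dual to the same $W$. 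That re-inspection is valid, though it is slightly less self-contained than the paper's fresh construction and requires the reader to track the diagram in Lemma~\ref{lem:contactingfathers} carefully. Two smaller points in your favour: you make explicit a priori what $W$ is (the paper only produces it as ``the hyperplane to which the dual curve maps''), and your argument that $W\notin\mathcal P\mathcal F(H)$ --- from $J(W,U)=e$ forcing $b^\circ\in IJ(H,U)$, contradicting that $b_H$ is the point of $IJ(H,U)$ closest to $b^*$ --- is cleaner and more direct than the paper's contrapositive phrasing about where $C_i$ could end. One imprecision worth flagging: you state that the definitions ``force $b_{H_i}\ne b$''; this is true, but because $\prec$ includes arbitrary tie-breaking when roots coincide, the claim requires the (easy) side argument that $b_{H_i}=b_H$ would put a common preimage of $b$ into $F(f(H_i),U)\cap F(f(H),U)$, forcing $f(H_i)\coll f(H)$ and contradicting $\Upsilon(U)$-adjacency.
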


\begin{proof}
The roots $b_{H'}$ of all incoming neighbours $H'\in \mathcal I_b(H)$ of $H$ are all different from the root $b_H$ of $H$ and all belong to the unique path of the tree $U$ between $b^*$ and $b_H$. Therefore the trace of the partial order $\prec$ on $\mathcal I_b(H)$ is a total order $\{H_1,H_2,\ldots,H_m\}$ of the incoming neighbours of $H$. 
For each $i$, let $V_i=f(H_i)$ and let $V=f(H)$; by definition, $H_i\prec H$.

Denote by $P_0$ the path in $U$ from $b^*$ to $b$, and let $P$ be the path in $IF(H,U)$ projecting to $P_0$.  Let $a$ be the vertex of $P$ mapping to $b$.  Let $R\rightarrow N(V)$ be a shortest geodesic joining $a$ to $N(V)\cap N(H)$.

For each $i$, let $P_i\rightarrow N(U)$ be a shortest path joining $a$ to $N(V_i)$ and let $\bar P_i$ be the image of $P_i$ in $U$.  Let $R_i\rightarrow N(V_i)$ be a shortest path joining the terminus $a_i$ of $P_i$ to $N(H_i)$, and let $Q_i,Q'_i\rightarrow N(H),N(H_i)$ be a pair of geodesics whose concatenation joins the terminus of $R$ to the terminus of $R_i$.  Let $D_i\rightarrow\bf X$ be a minimal area disc diagram for $P_iR_i(Q'_i)^{-1}Q_i^{-1}R^{-1}$, as in Figure~\ref{fig:incoming1}.
\begin{figure}[h]
  \includegraphics[width=2.5in]{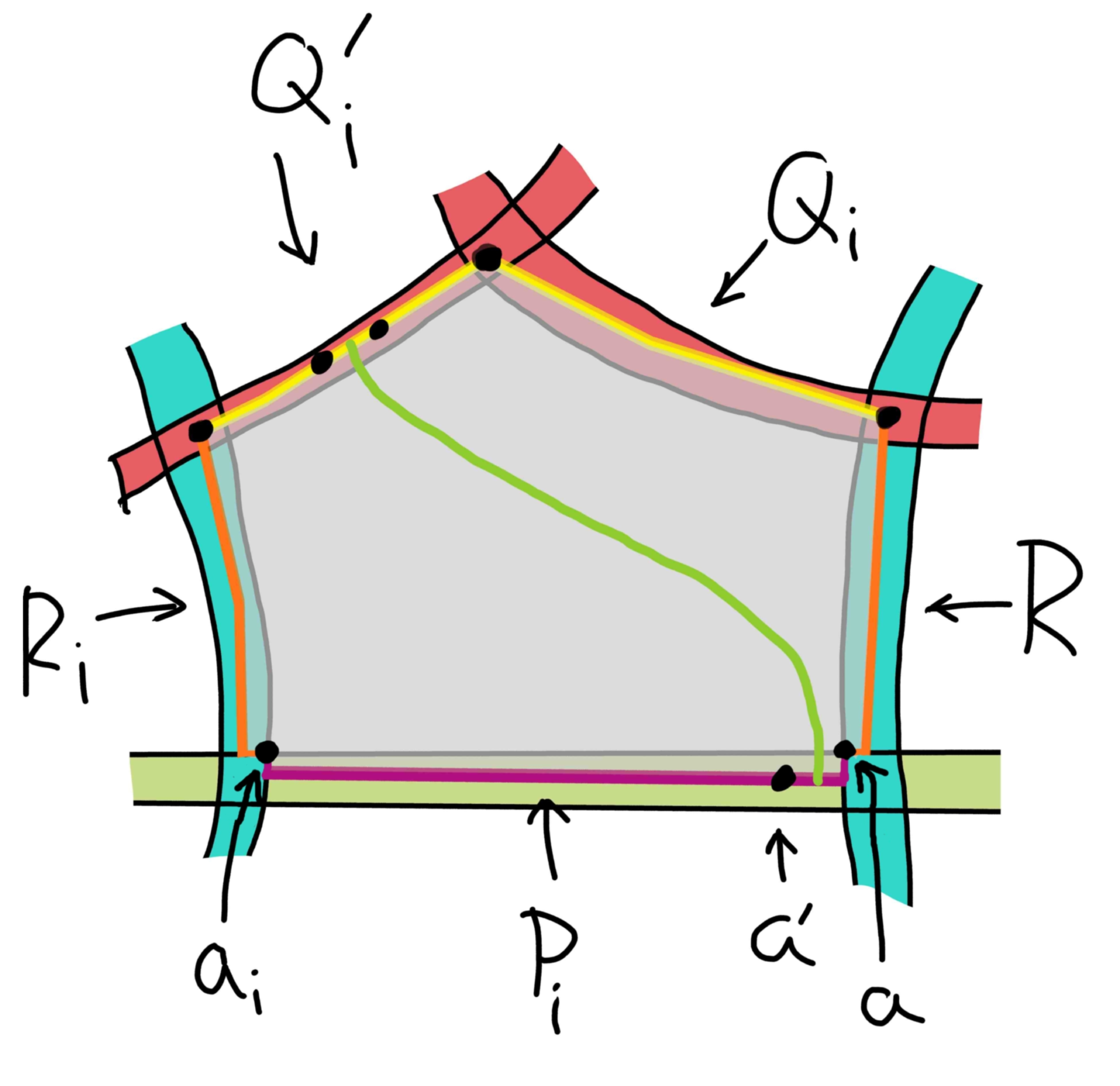}\\
  \caption{The diagram $D_i$.}\label{fig:incoming1}
\end{figure}

Note that for all $i$, we have $\bar P_i\subseteq P_0$, since $\bar a_i$ lies on the path from $b$ to $b^*$ since $H_i\prec H$.  Hence we have that $a_{i+1}$ lies on the path from $a_i$ to $b^*$, and thus $\bar P_1\subseteq \bar P_2\subseteq\ldots\subseteq \bar P_m\subseteq P_0$, i.e. $H_1\prec H_2\prec\ldots\prec H_m\prec H$.

In particular, the initial 1-cube $aa'$ of $P_1$ is contained in $P_i$ for each $i$ ($P_1$ contains at least one 1-cube since $V$ and $V_1$ do not contact, by $\Upsilon(U)$-adjacency of $H$ and $H_1$).  Hence, for all $i$, the dual curve $C_i$ in $D_i$ emanating from $aa'$ maps to the same hyperplane $W$.  Now $C_i$ cannot end on $P_i,R$ or $R_i$ by minimality of the area, and thus $C_i$ ends on $Q_i$ or $Q'_i$.  Moreover, $C_i$ cannot end on $Q_i$.  Indeed, if this were the case, then $W$ would be a potential father of $H$. But since $aa'$ projects to a 1-cube of $P_0$, this would contradict the fact that $V$ is the potential father whose imprint is closest to $b^*$.  Thus $C_i$ ends on $Q'_i$, and moreover $C_i$ ends on a 1-cube of $Q'_i$ that does not contain a 0-cube of $Q_i$.  Hence $W$ is a potential father of $H_i$ for each $i$, and $W$ crosses $U$ and $H_i$.
\end{proof}

\subsection{Diagrams lying over edges in $\Upsilon(U)$}\label{sec:liesover}
Let $H,H'\in\mathcal R(U)$ be hyperplanes such that $H'H$ is an edge of $\Upsilon(U)$ and $H'\prec H$, i.e. let $H'$ be an incoming neighbour of $H$.  Then, as in the proof of Lemma~\ref{lem:contactingfathers}, there is a disc diagram $D\rightarrow\bf X$ associated to the edge $H'H$ as follows.

Let $a',a$ be 0-cubes of $N(U)$ projecting to the roots $b',b$ of $V'=f(H')$ and $V=f(H)$, respectively.  Let $P\rightarrow N(U)$ be a geodesic segment joining $a'$ to $a$, and let $a'$ and $a$ be chosen among the preimage points of $b',b$ in such a way that $|P|$ is minimal.  Let $R'\rightarrow N(V')$ and $R\rightarrow N(V)$ be geodesic segments respectively joining $a'$ and $a$ to closest 0-cubes of $N(H')\cap N(V')$ and $N(H)\cap N(V)$.  Let $Q'\rightarrow N(H')$ and $Q\rightarrow N(H)$ be geodesic segments that have a single common 0-cube in $N(H)\cap N(H')$, so that the concatenation $Q'Q$ joins the terminal 0-cube of $R'$ to the terminal 0-cube of $R$.  Then there is a minimal-area disc diagram $D\rightarrow\bf X$ with boundary path $R'Q'QR^{-1}P^{-1}$; we say that $D$ \emph{lies over} the edge $H'H$, as shown in Figure~\ref{fig:liesover}.
\begin{figure}[h]
  \includegraphics[width=2.25in]{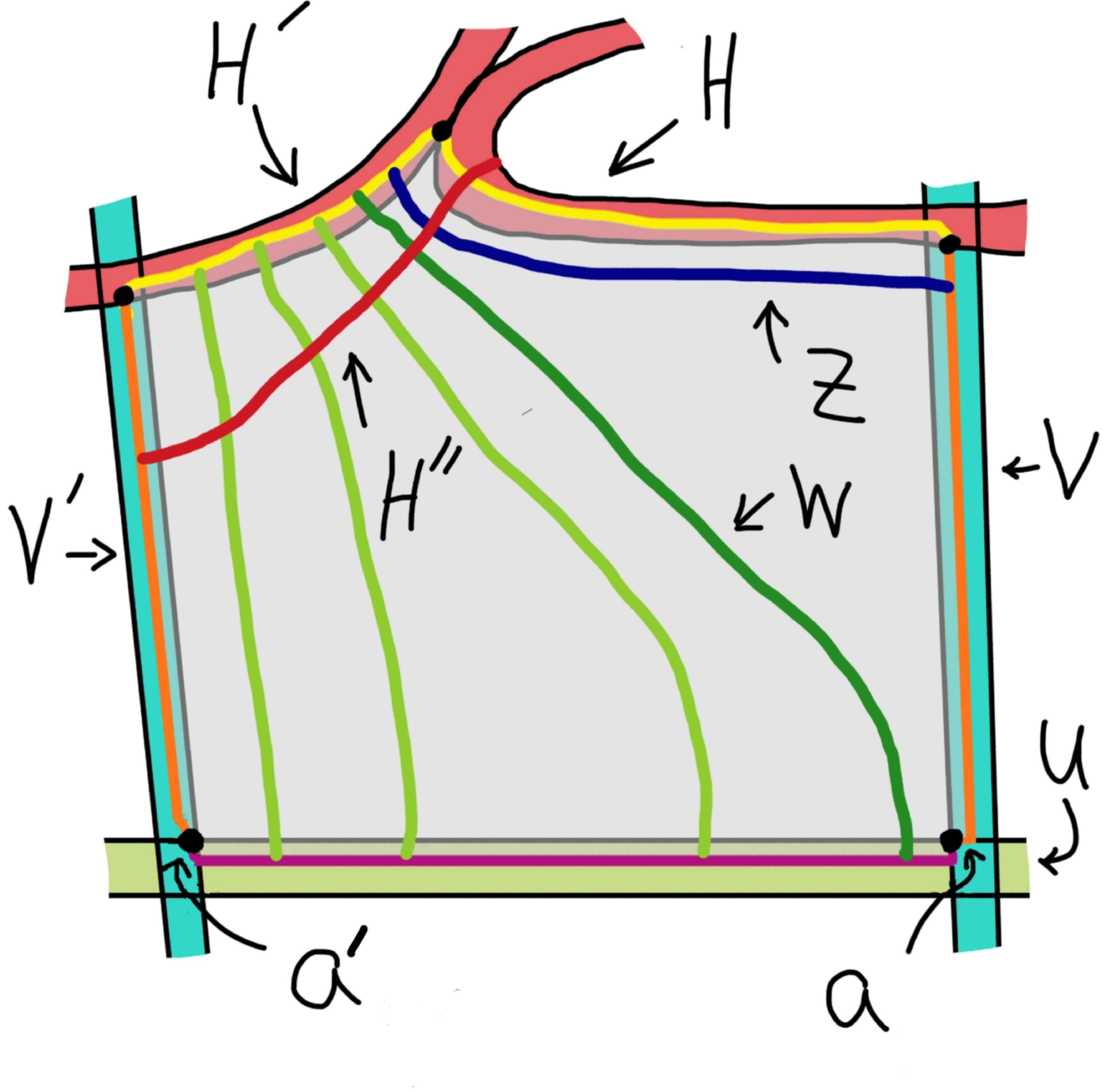}\\
  \caption{A diagram $D$ that lies over the edge $H'H$ of $\Upsilon(U)$, with $H'\prec H$.}\label{fig:liesover}
\end{figure}

Analysis of a diagram $D$ lying over $H'H$ reveals two hyperplanes, denoted $Z=Z(H'H)$ and $W=W(H'H)$ associated to the pair $H'H,$ and the diagram $D$.  Since $H'$ and $H$ have distinct, non-contacting fathers, we see that $|P|>0$, and hence there is a dual curve $L$ emanating from the terminal 1-cube of $P$ (i.e. the 1-cube containing $a$) and mapping to a hyperplane $W$ that crosses $U$ and $H'$ and contacts $V$, as in Lemma~\ref{lem:contactingfathers}.

Now $L$ cannot end on the terminal 1-cube of $Q'$ (i.e. the 1-cube containing the 0-cube $Q'\cap Q$), since $W$ cannot contact $H$.  Hence there is a dual curve $K$ emanating from the terminal 1-cube of $Q'$ and ending on the terminal 1-cube of $R$.  Indeed, $K$ cannot end on $Q'$, on $R'$, or on $Q$ for the usual reasons of minimal area, and $K$ cannot end on $P$, for otherwise the hyperplane $Z$ to which $K$ maps would be a better choice of father for $H$ than $V$.  Thus $K$ ends on $R$, and hence $K$, $Q$ and the subtended part of $R$ bound a triangular subdiagram, which must have area 0.  In particular, $K$ must end on the terminal 1-cube of $R$.  Thus $Z$ crosses $V$ and $H'$ and contacts $H$.  This situation is depicted in Figure~\ref{fig:liesover}.  Note that $Z$ does not, in general, contact $W$ since $K$ and $L$ may be separated by many dual curves traveling from $Q'$ to $R$.

\subsection{Separating  osculators and the graph $\Upsilon_1(U)$}\label{sec:separating}
Denote by $\bf A$ and $\bf B$ the halfspaces associated to $U$.
Recall that since we are colouring $\Upsilon(U)$, and to do so requires only that we colour each component, we have assumed that $\Upsilon(U)$ is connected.
Therefore, all hyperplanes $H$ of $\Upsilon(U)$ belong to one and the same halfspace defined by $U,$ say to the halfspace ${\bf A}$. Let ${\bf A}(H)$ and ${\bf B}(H)$ be the complementary halfspaces  associated to any hyperplane $H$ belonging to $\bf A$, in particular to any hyperplane of $\Upsilon(U).$ Since $U$ and $H$ are not crossing, $U$ belongs to one of these halfspaces, say in the halfspace ${\bf B}(H).$ Then ${\bf A}(H)\subset {\bf A}$ and $H\subset {\bf A}$ for any hyperplane $H\in\mathcal R(U)$.

Let $H\in\mathcal R(U)$.  Then $d(H)\geq 1$, and thus  there exists a hyperplane $W$ such that $U\subset {\bf B}(W)$ and $H\subset {\bf A}(W)$, which is to say that $W$ separates $H$ from $U$.  Since any two convex subspaces of $\bf X$ are separated by finitely many hyperplanes, there exists a hyperplane $S(H)$ such that $S(H)$ osculates with $H$ and separates $H$ from $U$.  Indeed, there must exist $S(H)$ separating $H$ from $U$ such that $S(H)$ is not separated from $H$ by any hyperplane, and thus $S(H)\coll H$.  This contact cannot be a crossing,
for otherwise $S(H)$ would not separate $H$ from $U$, as each of the intersections ${\bf A}(H)\cap{\bf B}(S(H))$, etc., would be nonempty. Accordingly, we define each hyperplane $S(H)$ that osculates with $H$ and separates $H$ from $U$ to be a \emph{separating osculator} of $H$.

\begin{lemma}\label{lem:atmosttwoseparatingosculators}
Let $H\in\mathcal R(U)$.  Then one of the following holds:
\begin{enumerate}
\item If $d(H)\geq 2$, then $H$ has a unique separating osculator $S(H)$ and $f(H)$ crosses $S(H)$.  Moreover, $S(H)\in\mathcal R(U)$.
\item If $d(H)=1$, then $H$ has at most two separating osculators, $S_1(H)$ and $S_2(H)$, and $S_1(H)$ and $S_2(H)$ either cross or coincide.  Moreover, $S_1(H)$ and $S_2(H)$ are potential fathers of $H$, and $f(H)$ either crosses $S_i(H)$ or coincides with $S_i(H)$.
\end{enumerate}
\end{lemma}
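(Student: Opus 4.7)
My plan combines two simple observations: (i) the father $V := f(H)$ is forced to cross any separating osculator $S$ with $V \ne S$; and (ii) in a $2$-dimensional complex $\mathbf X$, no three hyperplanes pairwise cross.

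First I would note the existence of at least one separating osculator. Since $H\in\mathcal R(U)$ forces $d_\Gamma(U,H)=2$, the hyperplanes $U$ and $H$ do not contact, so the collection of hyperplanes separating $H$ from $U$ is nonempty. Picking one, call it $S$, which is innermost with respect to $H$ (no other separator lies between $S$ and $H$), one obtains $S\coll H$; and since no hyperplane that crosses $H$ can separate $H$ from anything, $S$ in fact osculates $H$.

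Next I would establish the key crossing fact. Because $V\coll U$ and $V\coll H$, while $S$ separates them, $V$ cannot lie entirely in the halfspace $\mathbf A(S)$ containing $H$ (else $S$ separates $V$ from $U$, contradicting $V\coll U$) nor entirely in $\mathbf B(S)$ (by symmetry). Hence either $V=S$ or $V$ crosses $S$. I would similarly check that two distinct separating osculators $S_1,S_2$ must cross: otherwise their halfspaces nest, and because neither $S_i$ can be separated from $H$ by $S_j$ (since $S_i\coll H$) nor from $U$ (since $S_i$ would then not contact $H$), both nesting configurations are excluded.

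For case (1), $d(H)\ge 2$, I would first rule out the possibility $V=S$ for any separating osculator $S$. If it occurred, the chain $\{S\}$ would be maximal: any non-crossing separator $W$ would have to lie strictly between $S$ and $H$ or strictly between $S$ and $U$, both forbidden since $S\coll H$ and $S\coll U$; hence $d(H)=1$, contradicting the hypothesis. So $V$ crosses every separating osculator. If there were two distinct separating osculators $S_1,S_2$, they would cross by the previous paragraph, and both be crossed by $V$, producing three pairwise crossing hyperplanes, impossible in $\dimension \mathbf X=2$. This gives uniqueness and $f(H)\coll S(H)$ by crossing. The walk $S(H)\coll V\coll U$ then shows $d_\Gamma(S(H),U)\le 2$, with equality since $S(H)\not\coll U$ (as established above), and a short grade computation together with the fact that $V$ is a valid potential father of $S(H)$ lets one replace the tail of the canonical path of $H$ by $U\coll V\coll S(H)$, yielding $S(H)\in\mathcal R(U)$.

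For case (2), $d(H)=1$, each separating osculator $S$ satisfies $S\coll U$ (and hence is a potential father of $H$): if some $W$ separated $S$ from $U$, then $W$ does not cross $S$, and a short check shows $W$ also separates $H$ from $U$, yielding a non-crossing chain of size two, contradicting $d(H)=1$. Three distinct separating osculators would by the second paragraph be pairwise crossing, hence impossible in dimension $2$, so at most two exist; that $f(H)$ crosses or coincides with each $S_i$ is precisely the dichotomy from (ii). The main technical obstacle I anticipate is the assertion $S(H)\in\mathcal R(U)$ in case (1), since $\mathcal R(U)$ is defined relative to the globally fixed canonical-path assignment; care is needed to argue that this assignment is, or may without loss be taken to be, consistent with $U$ being the grandfather of $S(H)$, which I expect follows from lexicographic minimality and the presence of $V$ as a legitimate penultimate hyperplane in a canonical path for $S(H)$.
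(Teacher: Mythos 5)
Your proposal follows essentially the same approach as the paper: show separating osculators either coincide or pairwise cross, invoke $2$-dimensionality to cap their number at two, observe that the father $f(H)$ is forced either to equal or to cross any separating osculator, and then split on $d(H)$ to decide whether the ``coincide'' option is available. Your treatment of case (1) is a slight rephrasing of the paper's (you argue $f(H)=S$ would force $d(H)=1$; the paper argues $d(H)\geq 2$ prevents any $S_i$ from contacting $U$), but these are equivalent.

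One step deserves a caveat. In case (2) you justify $S\coll U$ by saying that a separator $W$ between $S$ and $U$ would give ``a non-crossing chain of size two, contradicting $d(H)=1$.'' With the paper's definition, $d(H)$ is the \emph{minimum} cardinality over all maximal separating chains, so exhibiting one chain of size $\geq 2$ does not by itself contradict $d(H)=1$: some \emph{other} maximal chain (e.g., one consisting of a single hyperplane that crosses $S$) could still have size $1$. The paper itself only asserts this sub-claim without further argument, so you are not worse off than the source, but the reasoning you supply does not close this step; one would want instead to track the specific size-one maximal chain $\{W_0\}$ witnessing $d(H)=1$, show $W_0$ is itself a separating osculator, and then relate any other separating osculator to $W_0$ and $f(H)$ via the two-dimensionality constraint. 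Your honest flagging of the remaining $S(H)\in\mathcal R(U)$ assertion is also appropriate, since the paper's justification there (``since $f(H)$ crosses $S(H)$, we have $S(H)\in\mathcal R(U)$'') is likewise compressed and depends on the grading and on the fixed choice of canonical paths.
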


\begin{proof}
We first show that if $S_1$ and $S_2$ separate $H$ from $U$ and osculate with $H$, then either $S_1=S_2$ or $S_1$ and $S_2$ cross.  Indeed, suppose that $H\subset{\bf A}(S_1)\cap{\bf A}(S_2)$ and $U\subset{\bf B}(S_1)\cap{\bf B}(S_2)$.  Either ${\bf A}(S_1)={\bf A}(S_2)$, or (say) ${\bf A}(S_1)\subset{\bf A}(S_2)$ or ${\bf A}(S_1)\cap{\bf B}(S_2)\neq\emptyset$.  In the first case, $S_1=S_2$.  In the second case, $S_1$ separates $S_2$ from $H$, a contradiction.  In the third case, $S_1$ and $S_2$ must cross, since each of the quarter-spaces determined by $S_1$ and $S_2$ is nonempty.

Thus the set of separating osculators of $H$ is a set of pairwise-crossing hyperplanes, and hence, since $\dimension{\bf X}=2$, there are at most two separating osculators, $S_1(H)$ and $S_2(H)$.

If $d(H)\geq 2$, then neither $S_1(H)$ nor $S_2(H)$ contacts $U$, and hence $f(H)$ is not a separating osculator of $H$.  Since $N(f(H))\cap N(H)$ and $N(f(H))\cap N(U)$ are both nonempty, $f(H)$ contains points of ${\bf A}(S_i)$ and ${\bf B}(S_i)$ for $i\in\{1,2\}$, i.e. $f(H)$ crosses $S_1(H)$ and $S_2(H)$.  Since $\bf X$ contains no pairwise-crossing triple of hyperplanes, we conclude that $S_1(H)=S_2(H)$, and denote by $S(H)$ the unique separating osculator of $H$.  Moreover, since $H$ is separated from $U$ by at least two hyperplanes, $S(H)$ is separated from $U$ by at least one hyperplane, and hence $d(S(H))\geq 1$.  On the other hand, since $f(H)$ crosses $S(H)$, we have $S(H)\in\mathcal R(U)$.

If $d(H)=1$, then $S_1(H)$ and $S_2(H)$ osculate with $U$ and are thus potential fathers of $H$.  If $F$ is a potential father of $H$ that is distinct from $S_i(H)$, then $F$ crosses both $S_1(H)$ and $S_2(H)$.  Hence, if $S_1(H)\neq
S_2(H)$, then $H$ has exactly two potential fathers, namely $S_1(H)$ and
$S_2(H)$. 
\end{proof}

In summary, if $d(H)>1$, then the unique separating osculator $S(H)$ crosses $f(H)$.  If $d(H)=1$, then since $S_1(H),S_2(H)$ either cross or coincide, we define $S(H)$ to be whichever of $S_1(H)$ or $S_2(H)$ has closer root in $U$ to $b^*$.  In this case, either $S(H)=f(H)$ or $S(H)$ crosses $f(H)$.  If $f(H)=S(H)$, then $H$ is \emph{father-separated} as shown in Figure~\ref{fig:SepOsc}.
\begin{figure}[h]
  \includegraphics[width=5in]{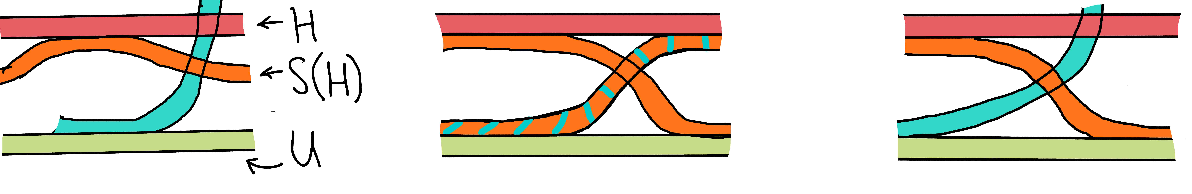}\\
  \caption{At left, $d(H)>1$ and $H$ has a unique separating osculator $S(H)$ that crosses $f(H)$.  In the center, $d(H)=1$ and $H$ has two separating osculators, one of which is the father of $H$; this is the father-separated case.  At right, $d(H)=1$, the father of $H$ crosses $H$, and $H$ has a unique separating osculator.}\label{fig:SepOsc}
\end{figure}

Define the graph $\Upsilon_1(U)$ as follows: $\Upsilon_1(U)$ has ${\mathcal R}(U)$ as the set of vertices and two hyperplanes $H,H'$ are adjacent in $\Upsilon_1(U)$ if and only if $H$ and $H'$ are adjacent in $\Upsilon(U)$ and one of the following conditions holds: either $S(H)=S(H')$ or $S(H)=H'$ or $S(H')=H$.

\begin{proposition}\label{prop:colseposc}
$\chi(\Upsilon_1(U))\le \Delta.$
\end{proposition}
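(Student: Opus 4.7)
The strategy is to colour each $H \in \mathcal{R}(U)$ by a local combinatorial datum on $N(S(H))$, exploiting the degree bound $\Delta$ at 0-cubes. Since every $H \in \mathcal{R}(U)$ has a canonically chosen separating osculator $S(H)$ (unique when $d(H) \ge 2$; otherwise selected by the rule in Lemma~\ref{lem:atmosttwoseparatingosculators}), the footprint $F(H,S(H)) = N(H) \cap N(S(H))$ is a nonempty subcomplex lying on the side of $S(H)$ opposite $U$. My plan is to pick a canonical 0-cube $v_H \in F(H,S(H))$ and colour $H$ by the 1-cube $c_H$ incident to $v_H$ and dual to $H$, enumerated among the at most $\Delta$ edges at $v_H$.

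To select $v_H$ canonically, I use a basepoint $b^*_{S(H)}$ on the tree $S(H)$ determined by the ``attachment'' of $S(H)$ to the rest of the canonical path: when $d(H) \ge 2$, the father $f(H)$ crosses $S(H)$ (Lemma~\ref{lem:atmosttwoseparatingosculators}), so $f(H) \cap S(H)$ singles out a vertex of $S(H)$; when $d(H) = 1$, an analogous rule using the potential fathers contacting $U$ applies. I then take $v_H$ to be the preimage in $N(S(H))$ of the vertex of the subtree $J(H,S(H))$ closest to $b^*_{S(H)}$ in the tree $S(H)$.

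To verify properness, I analyse the three edge types of $\Upsilon_1(U)$. For a sibling edge with $S(H) = S(H') = S$, Lemma~\ref{lem:contactingfootprints} applied to $H \coll H'$ yields $F(H,S) \cap F(H',S) \ne \emptyset$, and the intersection $J(H,S) \cap J(H',S)$ is itself a subtree of $S$; the vertex of this intersection closest to $b^*_S$ must agree with the basepoint-nearest vertex of each individual imprint, so the canonical rule returns a common $v = v_H = v_{H'}$. At this $v$ the 1-cubes $c_H$ and $c_{H'}$ are distinct (they are dual to distinct hyperplanes), and the $\Delta$ degree bound at $v$ shows the local labels differ. For a parent-child edge $S(H) = H'$, the osculation of $H$ with $H' = S(H)$ already places $v_H \in N(H')$; the canonical choice on the $H'$ side can be shown to place $v_{H'}$ at the same 0-cube, and again $c_H,c_{H'}$ are distinct 1-cubes at a common vertex.

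The main obstacle is establishing the consistency statement $v_H = v_{H'}$ for $\Upsilon_1$-adjacent pairs. The sibling case is essentially a Helly argument for subtrees of a tree, which I expect to go through cleanly using Lemma~\ref{lem:contactingfootprints}. The parent-child case is the subtler one: here I anticipate needing a short disc-diagram argument in the spirit of Lemma~\ref{lem:contactingfathers}, showing that any two candidate attachment vertices can be collapsed to one because a dual curve argument in a minimal-area diagram forces the ``osculation region'' to coincide with the basepoint-closest vertex. Once this local consistency is verified, the colouring uses at most $\Delta$ labels by the 0-cube degree bound, giving $\chi(\Upsilon_1(U)) \le \Delta$.
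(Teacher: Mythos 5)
Your approach assigns to each $H \in \mathcal R(U)$ a canonical vertex $v_H \in N(S(H))$ and colours by a local edge-label at $v_H$, so the whole argument hinges on the consistency claim $v_H = v_{H'}$ for $\Upsilon_1$-adjacent pairs. That claim has two genuine gaps.

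First, your basepoint $b^*_{S(H)}$ on the tree $S(H)$ is constructed from $f(H)$ (via $f(H)\cap S(H)$), so it depends on $H$, not only on $S(H)$. For $\Upsilon_1$-adjacent siblings $H,H'$ with $S(H)=S(H')=S$, the definition of $\Upsilon(U)$-adjacency forces $f(H)\neq f(H')$ and $f(H)\not\coll f(H')$, so the two ``attachment'' rules pick out genuinely different vertices of the tree $S$. There is therefore no single basepoint from which your ``closest vertex of the imprint'' rule can even be run consistently for both siblings. Second, and independently of the first issue: even with a common basepoint $b$ on the tree $S$, it is false that the vertex of $J(H,S)\cap J(H',S)$ nearest to $b$ coincides with the nearest vertex of each individual imprint. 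Two intersecting subtrees of a tree can have distinct nearest points to $b$ (e.g. in a path $[0,10]$ with $b=0$, take $J(H,S)=[2,5]$ and $J(H',S)=[4,8]$: the nearest vertices are $2$ and $4$ respectively). So $v_H\neq v_{H'}$ can happen, and then the edge labels carry no information and the colouring need not be proper. Your Helly/Lemma~\ref{lem:contactingfootprints} intuition is the right local input, but it only gives that the footprints \emph{intersect}, not that they share a canonical nearest vertex.

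The paper's proof uses exactly this local input but in a way that avoids canonical vertices: it colours greedily in increasing hyperplane-distance $d(H)$. Once $W$ is coloured, the hyperplanes $H$ with $S(H)=W$ have footprints all lying in the bounding factor $W'\cong W$ of $N(W)$ away from $U$; $\Upsilon_1$-adjacency of siblings corresponds to footprint intersection (Lemma~\ref{lem:contactingfootprints}); the footprint intersection graph is a subtree intersection graph on $W'$, hence chordal/perfect, with clique number at most $\Delta-1$ (each $v'\in W'$ already has a neighbour in $W''$); so these hyperplanes are coloured with $\Delta-1$ colours, plus one reserved disagreement with $W$. This yields $\Delta$ colours without needing any canonical vertex or basepoint. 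If you want to salvage your approach, you would in effect be reproving perfectness of chordal graphs via a careful perfect-elimination-style argument, which is the step the paper simply cites.
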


\begin{proof} We colour the hyperplanes of $\Upsilon_1(U)$ in the increasing hyperplane-distance $d(H)$ starting with the separating osculators contacting $U$ (these hyperplanes do not belong to our graph $\Upsilon_1(U),$ but in order to make the colouring process uniform,  we can suppose that they all received the same colour). Namely, suppose that  $W$ is the current hyperplane, $W$ has been coloured and we want to colour all hyperplanes $H$ such that $S(H)=W.$  Notice that the footprints in $N(W)$ of all such hyperplanes belong to one and the same bounding factor $W'\cong W\times\{\pm\frac{1}{2}\}$ of $N(W)\cong W\times[-\frac{1}{2},\frac{1}{2}]$, isomorphic to $W$ and bounding the carrier $N(W)$ of $W.$ Two hyperplanes $H,H'$ with $S(H)=W=S(H')$ define an edge of $\Upsilon_1(U)$ if and only if $H$ and $H'$ contact. Since $H$ and $H'$ both osculate with $W$, by the Helly property we conclude that the footprints of  $H$ and $H'$ in $W'$ intersect. Therefore, in order to colour the hyperplanes having $W$ as their separating osculator, it is enough to colour the intersection graph of the footprints of such hyperplanes in the tree $W'$ so that all such hyperplanes receive a colour different from the colour of $W$. Any vertex $v'$ of $W'$ can belong to at most $\Delta-1$ footprints of such hyperplanes (because $v'$ has a neighbour $v''$ in the second hyperplane $W''$ bounding $N(W))$. Thus the intersection graph of the footprints on $W'$ has clique number $\Delta-1.$ Since this graph is chordal and therefore perfect, we can colour it with $\Delta-1$ colours. Taking into account the colour of $W$ and extending this colouring process,  we obtain a colouring of $\chi(\Upsilon_1(U))$ with at most $\Delta$ colours.
\end{proof}

\subsection{Father osculators and the graph $\Upsilon_2(U)$}\label{sec:nonseparating}
Let $H'H$ be an edge of $\Upsilon(U)$ such that $H'\prec H,$ and $H'H$ is not an edge of $\Upsilon_1(U)$, and $S(H')$ does not contact $H$.  In other words, $H'\coll H$, the fathers of $H$ and $H'$ are distinct and do not contact, and the separating osculators of $H$ and $H'$ are distinct, and distinct from $H$ and $H'$.

Suppose also that $H$ (respectively, $H'$) does not separate $U$ from $H'$ (respectively, $H$), i.e. suppose that $U,H'\subset {\bf B}(H)$ and $U,H\subset {\bf B}(H')$ (this corresponds to the conflict relation in event structures). Then we say that $H'$ is a {\it father osculator} if the triplet of hyperplanes $H',H,$ and $f(H)$ pairwise contact, and $H'$ osculates with $f(H)$, and $S(H)\ne f(H).$  Let $\Upsilon_2(U)$ be the spanning subgraph of $\Upsilon(U)$ consisting of all edges $H'H$ of $\Upsilon(U)$ such that $H'\prec H$ and $H'$ is a father osculator of $H.$


\begin{proposition}\label{prop:colnonseposc}
$\chi(\Upsilon_2(U))\le\Delta.$
\end{proposition}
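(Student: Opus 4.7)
The plan is to adapt the proof of Proposition \ref{prop:colseposc} for $\Upsilon_1(U)$. We order the hyperplanes $V$ that arise as some $f(H)$ with $H\in\mathcal R(U)$ by the distance in the tree $U$ from $b^*$ to the vertex of $J(V,U)$ closest to $b^*$, breaking ties arbitrarily, and we process them in this order, at each step colouring every $H$ with $f(H)=V$. Since $b_H\in J(f(H),U)$ and $H'\prec H$ forces $b_{H'}$ to lie strictly closer to $b^*$ than $b_H$ along the unique geodesic in $U$, this ordering guarantees that $f(H')$ has been processed strictly before $f(H)$ whenever $H'H$ is an edge of $\Upsilon_2(U)$ with $H'\prec H$ (recall that $\Upsilon(U)$-adjacency forces $f(H)\ne f(H')$ and $f(H)\not\coll f(H')$).

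At step $V$, I colour each $H$ with $f(H)=V$ avoiding the colours already assigned to every incoming $\Upsilon_2(U)$-neighbour $H'\prec H$. By the father-osculator definition, each such $H'$ osculates with $V$ and contacts $H$, so by the Helly property $F(H,V)\cap F(H',V)\ne\emptyset$. The key geometric claim, analogous to the one at the heart of Proposition \ref{prop:colseposc}, is that all such footprints $F(H',V)$, together with the relevant portion of $F(H,V)$, lie in a single common bounding factor $V'$ of $N(V)$. This should follow from the conflict hypothesis $U,H'\subseteq\mathbf B(H)$ and $U,H\subseteq\mathbf B(H')$ combined with $H'\coll H$: since $H'$ and $H$ contact, no hyperplane (in particular $V$) can separate them, so $H'$ must lie in the halfspace of $V$ containing $H$, which places $F(H',V)$ in the bounding factor $V'$ of $N(V)$ facing $H$.

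Granted this, the family of relevant footprints consists of subtrees of the tree $V'$, and by Lemma \ref{lem:footprintsdegree} each vertex of $V'$ lies in at most $\Delta-1$ of them, since one of the at most $\Delta$ incident 1-cubes is dual to $V$ itself. The intersection graph on $V'$ is therefore chordal with clique number at most $\Delta-1$, hence perfect, and admits a proper colouring with $\Delta-1$ colours extending any compatible pre-colouring inherited from earlier steps. Combined with one extra colour to initialise the process for the hyperplanes $H$ with $d(H)=1$ (whose father could itself be a separating osculator contacting $U$, playing the role of the ``colour of $W$'' in Proposition \ref{prop:colseposc}), we obtain $\chi(\Upsilon_2(U))\le\Delta$. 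The main obstacle is verifying the ``single bounding factor'' claim in full generality, in particular when $V$ crosses $H$ and the conflict halfspaces of $H$ and $H'$ no longer immediately pin $H'$ to one side of $V$; I expect this to be settled by a minimal-area disc-diagram argument over a closed path traversing $N(H)$, $N(H')$, $N(V)$, and a geodesic in $N(U)$ joining the imprints, in the spirit of Lemma \ref{lem:contactingfathers} and Proposition \ref{prop:neighbourproperties}, ruling out dual curves on the $U$-side of $V$ by appealing either to the minimality of the father choice $f(H')$ or to the father-osculator hypotheses.
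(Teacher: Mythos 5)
Your proposal tries to adapt the $\Upsilon_1(U)$ argument by partitioning $\mathcal R(U)$ into father classes and colouring footprints on a bounding factor of $N(f(H))$, but the analogy breaks down in a way that leaves a real gap. In $\Upsilon_1(U)$, two hyperplanes with the same separating osculator $W$ can be adjacent, and adjacency is equivalent to their footprints on $W$ intersecting -- so colouring the subtree intersection graph on the bounding factor of $N(W)$ is exactly the right thing. In $\Upsilon_2(U)$, by contrast, two hyperplanes with the same father are \emph{never} adjacent (edges of $\Upsilon(U)$ require distinct non-contacting fathers), so the subtree intersection graph on the tree $V'$ you build inside $N(f(H))$ is not the graph you need to colour; the edges of $\Upsilon_2(U)$ all go between different father classes, and the incoming neighbours $H'$ of $H$ -- the only hyperplanes whose colours you must avoid -- are not vertices of that intersection graph at all. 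Bounding the clique number of that chordal graph by $\Delta-1$ therefore does not, by itself, bound the incoming degree of $H$ in $\Upsilon_2(U)$, which is what the greedy step actually requires.

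The paper's proof goes through a stronger claim that your proposal never reaches: it identifies a single 0-cube $v_H$ (a vertex of the 2-cube $N(f(H))\cap N(S(H))$, on the side of $f(H)$ containing $b^*$) and shows that $v_H\in N(H')$ for \emph{every} father osculator $H'$ of $H$. This is not merely that the footprints $F(H',f(H))$ lie in a common bounding factor -- it is that they all contain a common 0-cube. From that, $H$ has at most $\Delta-1$ father osculators because at most $\Delta$ hyperplanes have carriers through $v_H$ and $H$ itself is one of them; the greedy colouring in $\prec$-order then finishes. The argument for $v_H\in N(H')$ uses the Helly property together with the hypotheses that $H'H\notin\Upsilon_1(U)$ (so $S(H')\ne S(H)$ and $d(S(H))<d(S(H'))$), that $H'$ osculates with $f(H)$, and the ordering $H'\prec H$ to rule out the ``wrong'' vertex of the 2-cube. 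Your proposed disc-diagram route might eventually produce this common-point claim, but as written you only ask for the bounding-factor statement, which is weaker and insufficient; and even granting it, the chordal-graph colouring step does not connect back to colouring $\Upsilon_2(U)$ because of the father-class mismatch above.
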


\begin{proof} Let $H$ be a hyperplane of ${\mathcal R}(U)$ having a father osculator. Let $S(H)$ be the separating osculator of $H.$ Since $S(H)\ne f(H),$ the hyperplanes $S(H)$ and $f(H)$ cross, by Lemma~\ref{lem:atmosttwoseparatingosculators}. Moreover, since $H$ osculates with $S(H)$ and contacts $f(H),$ we conclude that the intersection $c_0$ of the carriers of the hyperplanes $H,S(H),$ and $f(H)$ is a single 0-cube  or a single 1-cube.   Indeed, since $\bf X$ is 2-dimensional, $N(f(H))\cap N(S(H))$ is a single 2-cube $c$, each of whose 1-cubes is dual to $f(H)$ or $S(H)$, and thus $c\cap N(H)=c_0$ is a 0-cube if $H$ and $f(H)$ osculate and a 1-cube dual to $f(H)$  if $H$ and $f(H)$ cross. In the first case, set $c_0:=\{ v_H\}$ and in the second case, set $c_0:=\{ v_H,v'_H\},$ where $v_H$ belongs to the halfspace (denote it by ${\bf A}(f(H))$) of $f(H)$ containing the root $b^*$ of $U$ ($v'_H$ belongs to the complementary halfspace ${\bf B}(f(H))$).

Let $H'$ be a father osculator of $H$. We claim that $v_H\in N(H').$ First we show that $c_0\cap N(H')\ne\emptyset.$ Since $H'$ contacts $H$ and $f(H),$ by the Helly property it suffices to show that $H'$ and $S(H)$ contact.  Since $H\subset {\bf A}(S(H))$ and $H'\coll H,$ it suffices to show that $H'\cap {\bf B}(S(H))\ne \emptyset.$ Suppose not: then $H'\subset {\bf A}(S(H))$ and thus $S(H)$ separates $H'$ from $U.$  Since $H'H$ is not an edge of $\Upsilon_1(U),$ we conclude that $S(H')\ne S(H),$ whence $d(S(H))<d(S(H')).$ Therefore $S(H')$ cannot separate $H$ from $U.$ Since $H'\subset {\bf A}(S(H'))$ and $H'$ contacts $H$, necessarily $S(H)$ crosses $H$, contrary to the definition of edges of $\Upsilon_2(U)$. This shows that the hyperplanes $H',H,f(H),$ and $S(H)$ pairwise contact, and therefore their carriers share a vertex of $c_0$. Suppose that this vertex is $v'_H$ and not $v_H$. Since $H'$ osculates with $f(H)$, $H'$ is contained the halfspace ${\bf B}(f(H))$ of $f(H)$.  But in this case, the root $b_{H'}$ of $H'$ is also contained in ${\bf B}(f(H))$. Since $b_H$ is contained in  ${\bf A}(f(H))$, $b_H$ lies on the unique path of $U$ between $b_{H'}$ and $b^*,$ and we obtain a contradiction with the assumption that $H'H$ is an edge $\Upsilon(U)$ and $H'\prec H.$ This contradictions shows that $v_H\in N(H').$

Now, since $v_H$ belongs to the carrier of any father osculator $H'$ of $H,$ $H$ can have at most $\Delta-1$ father osculators. Thus, in $\Upsilon_2(U)$ the incoming degree of any hyperplane $H$ is at most $\Delta-1$ and therefore $\Upsilon_2(U)$ can be coloured in $\Delta$ colours by the greedy algorithm following the orientation of edges defined by $\prec$.
\end{proof}


\subsection{The graph $\Upsilon_0(U)$}

Let $\Upsilon_0(U)$ be the graph obtained by removing from $\Upsilon(U)$ all edges of the graphs $\Upsilon_1(U)$ and $\Upsilon_2(U),$ i.e., $H'H$ is an edge of $\Upsilon_0(U)$ if and only if $H'H$ is an edge of $\Upsilon(U)$ (i.e., $H'\prec H,$ $H\coll H',$ and $f(H)$ and $f(H')$ are distinct and do not contact), $S(H)$ is different from $H'$ and $S(H')$,  and $H'$ is not a father osculator of $H.$  We will show that $\Upsilon_0(U)$ is bipartite and therefore can be coloured in two colours. Together with Propositions~\ref{prop:colseposc} and~\ref{prop:colnonseposc}, this will show that the graph $\Upsilon(U)$ can be coloured in $2\Delta^2$ colours.
We start with the following classification of edges of $\Upsilon(U).$

\begin{lemma}\label{lem:classification}
Let $H'H$ be an edge of $\Upsilon(U).$ Then one of the following holds:
\begin{enumerate}
\item  $S(H)$ coincides with $S(H')$ or $H'$ (and $H'H$ is an edge of $\Upsilon_1(U)$).
\item $H'$ is a father osculator of $H$ (and $H'H$ is an edge of $\Upsilon_2(U)$).
\item $S(H')$ crosses $H$.
\end{enumerate}
\end{lemma}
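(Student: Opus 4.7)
The plan is to prove the trichotomy by a case analysis on the geometric position of $H$ relative to the separating osculator $S(H')$ of $H'$. Assume we are not in case (1), so $S(H) \ne S(H')$ and $S(H) \ne H'$ (and, via the symmetric condition defining $\Upsilon_1(U)$, $S(H') \ne H$); the goal is to show that (2) or (3) must then hold. Since $H \coll H'$ and $H' \subset \mathbf{A}(S(H'))$, the hyperplane $S(H')$ cannot separate $H$ from $H'$. This leaves exactly two alternatives: either $H$ crosses $S(H')$, which is precisely (3), or $H \subset \mathbf{A}(S(H'))$, i.e., $S(H')$ separates $H$ from $U$.

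In this remaining case I will show that $H'$ is a father osculator of $H$, which gives (2). Four conditions must be verified: (i) neither of $H$, $H'$ separates $U$ from the other; (ii) the triple $H, H', f(H)$ is pairwise in contact; (iii) the contact $H' \coll f(H)$ is an osculation, not a crossing; and (iv) $S(H) \ne f(H)$. For (i), the key observation is that $S(H') \coll H'$ is an osculation, so no hyperplane lies between them, while $S(H')$ now separates $H$ from $U$; any putative separation of $H'$ from $U$ by $H$, or of $H$ from $U$ by $H'$, would then place a separator between $S(H')$ and $H'$, contradicting their osculation. For (ii), the new contact $H' \coll f(H)$ comes from Lemma~\ref{lem:contactingfathers}(4): since $H' \prec H$, $f(H) \ne f(H')$, and $f(H) \not\coll f(H')$, that lemma yields a potential father $W'$ of $H'$ that crosses $U$ and contacts $f(H)$, and then the Helly property applied to $N(H'), N(f(H)), N(W'), N(U)$ forces $N(H') \cap N(f(H)) \ne \emptyset$. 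For (iii), a crossing $H' \pitchfork f(H)$ would make $f(H)$ a potential father of $H'$; combined with $H \subset \mathbf{A}(S(H'))$ and $H' \prec H$, this would force the imprint $J(f(H),U)$ to meet $U$ closer to $b^*$ than $b_{H'}$, contradicting the choice of $f(H')$ as the potential father of $H'$ whose imprint is closest to $b^*$. Condition (iv) follows from Lemma~\ref{lem:atmosttwoseparatingosculators} applied to $H$, together with the standing hypothesis $S(H) \ne H'$.

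I expect the main obstacle to be condition (ii), namely rigorously producing the contact $H' \coll f(H)$. The cleanest execution should build a minimal-area disc diagram whose fixed carriers are $N(U), N(H'), N(W'),$ and $N(f(H))$, in the spirit of Section~5.1 and Proposition~\ref{prop:neighbourproperties}, and then analyze its dual curves to rule out every configuration except one that degenerates to $N(H') \cap N(f(H)) \ne \emptyset$. The combinatorial input $H' \prec H$, which places $b_{H'}$ on the unique path in $U$ from $b^*$ to $b_H$, will be the critical order-theoretic ingredient: it prevents any offending dual curve in such a diagram from mapping to a hyperplane that would violate the chosen father structure on $\mathcal R(U)$, while simultaneously forcing the remaining dual curves to realize the desired osculation between $H'$ and $f(H)$.
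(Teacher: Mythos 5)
Your opening dichotomy is sound: since $H\coll H'$ and $H'\subset\mathbf A(S(H'))$, the hyperplane $S(H')$ cannot separate $H$ from $H'$, so with $S(H')\neq H$ excluded, either $S(H')$ crosses $H$ (case (3)) or $S(H')$ separates $H$ from $U$. This is a genuinely different decomposition from the paper's, which instead builds a single disc diagram over the edge, shows that the dual curve $K'$ emanating from the terminal $1$-cube of $R'$ maps to $S(H')$, and reads the trichotomy off the three possible locations of the other end of $K'$ (on $Q$, on the terminal $1$-cube of $R$, or on an interior $1$-cube of $R$). The paper's single-diagram approach has the advantage that the contact $H'\coll f(H)$, its osculation type, and the fact $S(H)\neq f(H)$ all fall out of the same picture; your approach fragments these into separate verifications (ii)--(iv), and this is where the problems are.

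Concretely, your argument for (ii) does not work as stated: the Helly property needs the four carriers $N(H'),N(f(H)),N(W'),N(U)$ to \emph{pairwise} intersect, but $H'\in\mathcal R(U)$ has $\rho(H',U)=2$, so $N(H')\cap N(U)=\emptyset$; moreover $N(H')\cap N(f(H))\neq\emptyset$ is the very thing you are trying to prove, so it cannot be an input to Helly. Your argument for (iii) also has a gap: if $f(H)$ crossed $H'$ it would indeed be a potential father of $H'$, but since $b_{H'}$ lies strictly between $b^*$ and $b_H$ and the imprint $J(f(H),U)$ has closest point $b_H$ to $b^*$, this does \emph{not} beat $f(H')$ for the minimum, so no contradiction with the choice of $f(H')$ arises. (The paper instead derives the osculation from $2$-dimensionality: $S(H)$ crosses both $f(H)$ and $H'$, so $f(H)$ cannot also cross $H'$.) Finally, (iv) cannot follow from Lemma~\ref{lem:atmosttwoseparatingosculators} together with $S(H)\neq H'$: that lemma explicitly allows $S(H)=f(H)$ when $d(H)=1$, and the paper has to rule this out separately by showing it forces $S(H')$ to cross $H$, i.e.\ it falls into case (3) rather than contradicting anything. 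You correctly flag (ii) as the obstacle, but the disc-diagram argument you defer to would also need to handle (iii) and (iv); as things stand, the proof is incomplete.
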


\begin{proof}
Suppose that $H'H$ is not an edge of $\Upsilon_1(U)$.  Then, by definition, we have that the hyperplanes $S(H),S(H'),H,$ and $H'$ are all distinct.  We shall argue, using a disc diagram lying over the edge $H'H$, that if $S(H')$ fails to cross $H$, then $f(H),H$ and $H'$ pairwise-osculate.  Moreover, in the case that $S(H)=f(H)$, the same diagram shows that $S(H')$ crosses $H$.

Let $V'=f(H'),\,V=f(H),\,S=S(H),$ and $S'=S(H')$.  As in Section~\ref{sec:liesover}, let $P\rightarrow N(U)$ join the preimages of the roots of $V'$ and $V$, let $R',R\rightarrow N(V'),N(V)$ be shortest geodesic segments joining the initial and terminal 0-cube of $P$, respectively, to $N(V')\cap N(H')$ and $N(V)\cap N(H)$, and let $Q'Q\rightarrow N(H')\cup N(H)$ be a shortest piecewise-geodesic segment joining the terminal 0-cubes of $R'$ and $R$, so that the path $R'Q'QR^{-1}P^{-1}$ bounds a minimal-area disc diagram $D\rightarrow\bf X$ lying over the edge $H'H$, as in Figure~\ref{fig:classification1}.
\begin{figure}[h]
  \includegraphics[width=2.5in]{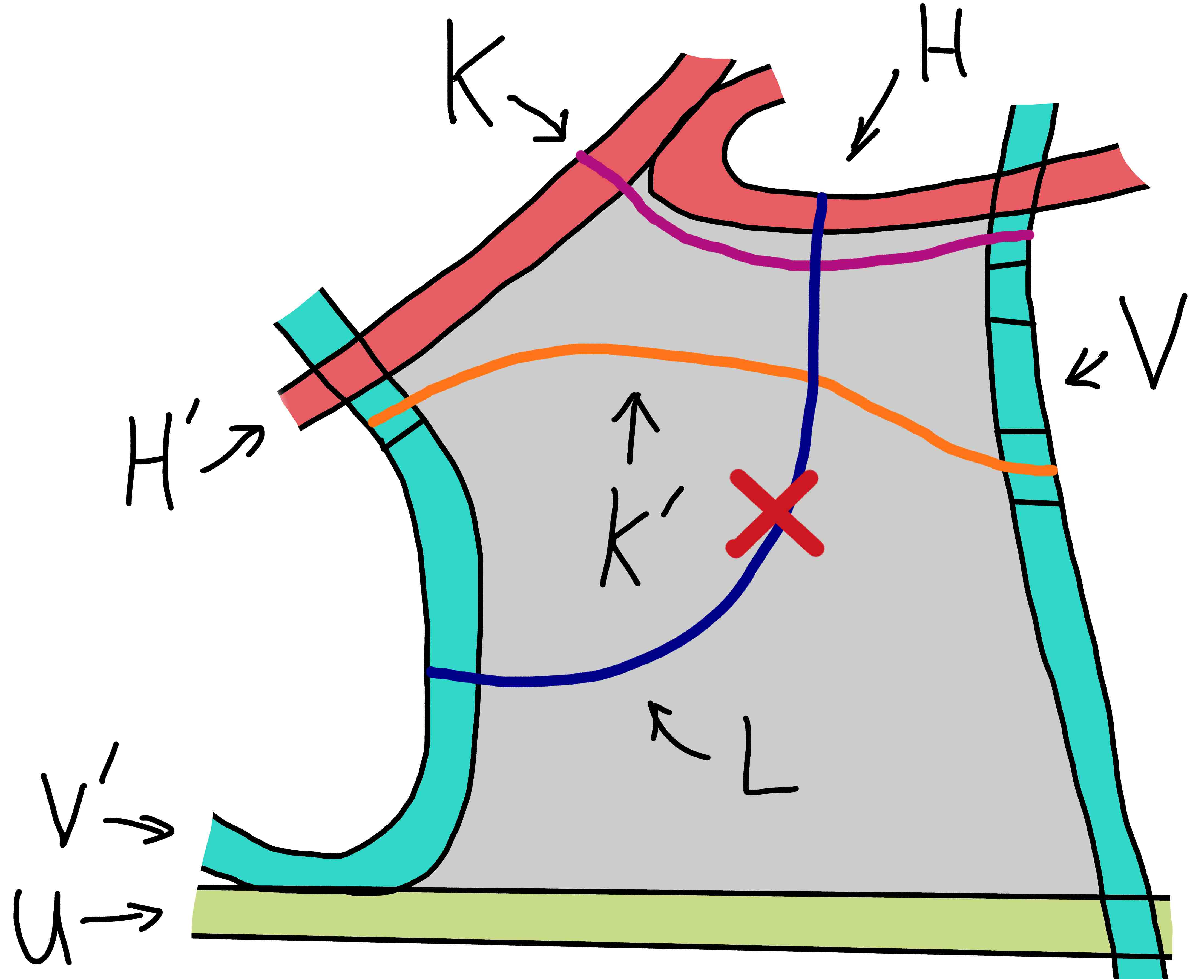}\\
  \caption{The diagram $D$ lying over $H'H$ in the proof of Lemma~\ref{lem:classification}.}\label{fig:classification1}
\end{figure}

Now consider the dual curve $K'$ emanating from the terminal 1-cube of $R'$.  Note that $K'$ either ends on the initial 1-cube of $Q$ or on some 1-cube of $R$, by minimality of the area of $D$.  Moreover, $K'$ maps to $S'$.  Indeed, since $S'$ separates $H'$ from $U$, the geodesic segment $R'$ must contain a 1-cube dual to $S'$.  Let $R'=R''T$, where $R''$ is the subpath joining the initial 0-cube of $R'$ to the initial 0-cube of the 1-cube $c$ dual to $S'$, and $T$ is the subpath, beginning with $c$ and ending at the terminal 1-cube of $R'$.  Let $s\in N(S')\cap N(H')$ be a 0-cube, which must exist since $S'$ and $H'$ osculate.  Let $A\rightarrow N(H')$ join $s$ to the terminal 0-cube of $T$ and let $B\rightarrow N(S')$ join the initial 0-cube of $T$ to $s$.  Then $BAT^{-1}$ is a closed path bounding a minimal-area disc diagram $E$.  Any dual curve in $E$ emanating from $T-c$ crosses $A$ or $B$ and thus leads to a trigon of pairwise-crossing dual curves; we conclude that $T=c$ and that $S'$ crosses $R'$ in its terminal 1-cube.  Hence, since each 1-cube of $\bf X$ is dual to a unique hyperplane, $K'$ maps to $S'$.

If $K'$ ends on $Q$, then $S'$ crosses $H$, and we are done.  Hence $K'$ ends on $R$.  If $K'$ ends on the terminal 1-cube of $R$, then the above argument shows that $K'$ maps to $S$ and hence $S=S'$, and the proof is again complete.

The unique remaining possibility is that $K'$ ends on $R$ at some interior 1-cube, and the dual curve $K$ emanating from the terminal 1-cube of $R$ and mapping to $S$ separates $K'$ from $Q$, as shown in Figure~\ref{fig:classification1}.  Note that $S'$ and $S$ both cross $V$, since $K'$ and $K$ end on 1-cubes of $R$.  Hence $S'$ cannot cross $S$, since otherwise $S',S,$ and $V$ would be a pairwise-crossing triple of hyperplanes, contradicting 2-dimensionality of $\bf X$.  Hence $S'$ separates $S$ from $U$, and therefore $S$ is not a potential father of $H$, and in particular $S\neq V$.

Now suppose that $|Q|>0$, so that there exists a dual curve $L$ emanating from $Q$.  $L$ cannot end on $Q'$ or on $R$, since that would lead to a trigon removal along the boundary path of $D$ and a consequent area reduction.  On the other hand, if $L$ ends on $P$, then there would be a better choice of father for $H$, namely the hyperplane to which $L$ maps, and hence $L$ ends on $Q'$.  But since $K'$ travels from $R'$ to $R$ and emanates from the terminal 1-cube of $R$, the dual curves $L$ and $K'$ must cross, and map to distinct hyperplanes since $R'$ is a geodesic segment.  Hence $S',W,V'$ pairwise-cross, where $W$ is the hyperplane to which $L$ maps, and this contradicts 2-dimensionality of $\bf X$.

Hence $|Q|=0$ and, in particular, $V$ contacts $H'$.  On the other hand, $V$ cannot cross $H'$.  Indeed, since $K$ ends on $Q'$, we see that $S$ crosses $H'$ and that $V$ crosses $S$, and the absence of pairwise-crossing triples ensures that $V$ and $H'$ cannot cross.  Thus $V$ and $H'$ osculate.

In summary, $S(H)\neq f(H)$ and $f(H),H,$ and $H'$ pairwise contact, and $f(H)$ osculates with $H'$.  Hence $H'$ is a father osculator, as shown in Figure~\ref{fig:nonseposc}.

\begin{figure}[h]
  \includegraphics[width=2in]{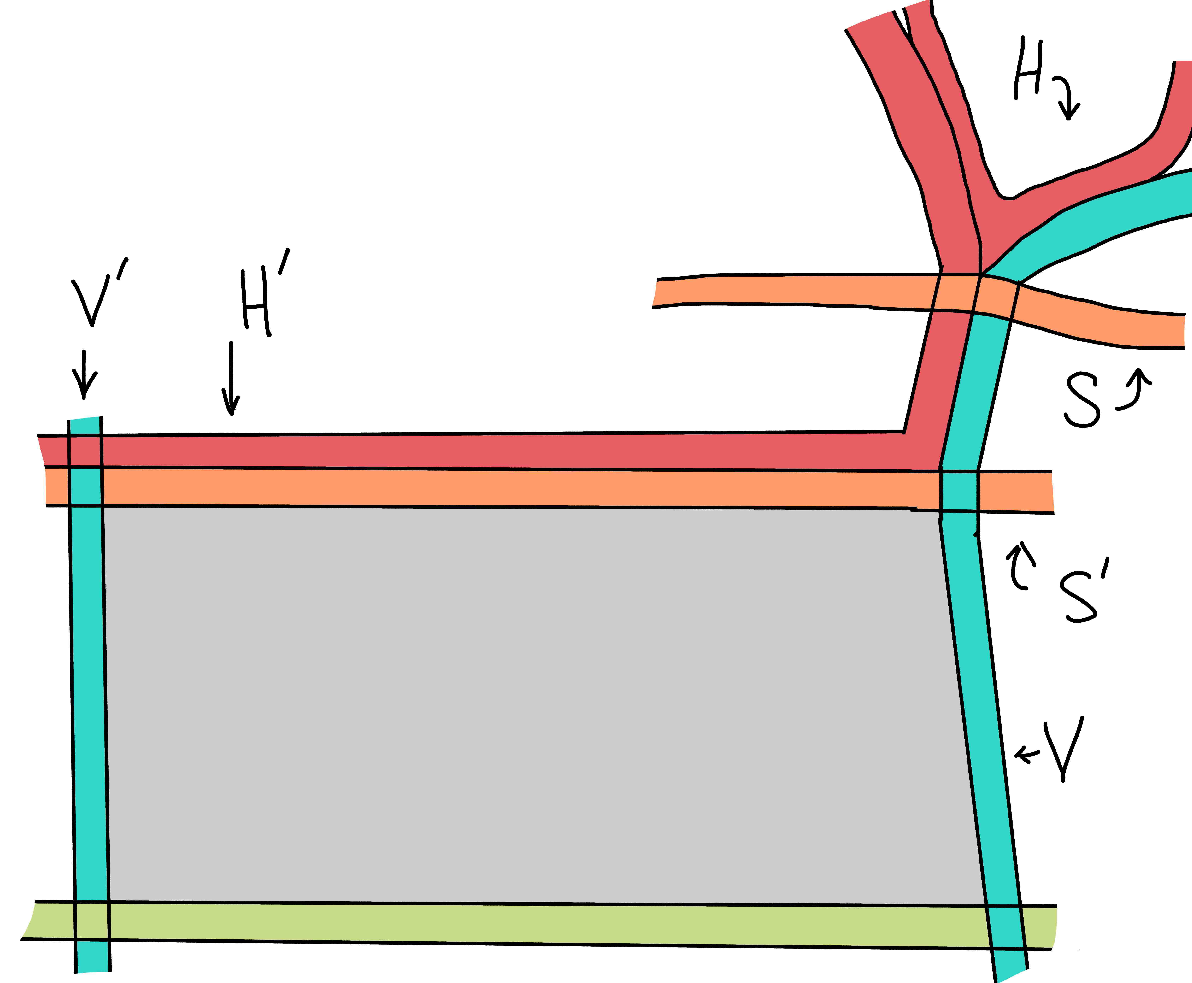}\\
  \caption{When the separating osculators of $H'$ and $H$ are distinct and distinct from $H$ and $H'$, either $H'$ is a father osculator of $H$ (as shown), or $S(H')$ crosses $H$.}\label{fig:nonseposc}
\end{figure}
\end{proof}

The first step in proving that $\Upsilon_0(U)$ is bipartite is to show that it contains no triangles.

\begin{lemma}\label{lem:trianglefree}
The graph $\Upsilon_0(U)$ is triangle-free.
\end{lemma}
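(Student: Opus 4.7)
My plan is to assume $H_1H_2H_3$ is a triangle in $\Upsilon_0(U)$ and derive a contradiction from a combination of the partial order $\prec$, the edge-classification Lemma~\ref{lem:classification}, two-dimensionality of $\mathbf X$, and a disc-diagram argument for the final step.

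First I will totally order the triangle under $\prec$. Each edge of $\Upsilon_0 \subseteq \Upsilon$ satisfies one of the four alternatives of Lemma~\ref{lem:contactingfathers}; alternatives (1) and (2) ($f(H_i)=f(H_j)$ or $f(H_i)\coll f(H_j)$) are excluded by the definition of $\Upsilon_0$, so the third or fourth alternative applies and every pair $H_i,H_j$ is $\prec$-comparable. Transitivity then lets me label so that $H_1\prec H_2\prec H_3$. Next, since no edge lies in $\Upsilon_1(U)$ or $\Upsilon_2(U)$, Lemma~\ref{lem:classification} forces alternative (3) on each edge: whenever $i<j$, $S(H_i)$ crosses $H_j$. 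In particular $S(H_1)$ crosses both $H_2$ and $H_3$, and $S(H_2)$ crosses $H_3$.

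Then I invoke two-dimensionality twice. Since $S(H_1)$ crosses $H_2$ and $H_3$ while $H_2\coll H_3$, the triple $\{S(H_1),H_2,H_3\}$ would be pairwise crossing if $H_2$ crossed $H_3$; as $\mathbf X$ admits no such triple, $H_2$ must osculate with $H_3$. The same argument on $\{S(H_1),S(H_2),H_3\}$ shows $S(H_1),S(H_2)$ do not cross. They are also distinct, since $S(H_1)$ crosses $H_2$ while $S(H_2)$, being a separating osculator of $H_2$, merely osculates $H_2$. I then place these non-crossing hyperplanes by a halfspace computation: the inclusions $U\subset\mathbf B(S(H_1))\cap\mathbf B(S(H_2))$ and $H_2\subset\mathbf A(S(H_2))$, together with $S(H_1)$ crossing $H_2$, exhibit three of the four quarter-spaces cut out by $S(H_1),S(H_2)$ as nonempty; the two being distinct and non-crossing, the remaining quarter-space $\mathbf A(S(H_1))\cap\mathbf B(S(H_2))$ must be empty, so $\mathbf A(S(H_1))\subset\mathbf A(S(H_2))$. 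Hence $S(H_2)$ strictly separates $H_1$ (and $S(H_1)$) from $U$. Lemma~\ref{lem:atmosttwoseparatingosculators} then implies $S(H_2)$ cannot osculate with $H_1$ (else it would be a second separating osculator of $H_1$, contradicting non-crossing of $S(H_1),S(H_2)$), and the strict placement forbids $S(H_2)$ from crossing $H_1$, so $S(H_2)$ does not contact $H_1$ at all. By contrast, $f(H_1)$ meets both $N(U)$ and $N(H_1)$, which lie in opposite halfspaces of $S(H_2)$, forcing $S(H_2)$ to cross $f(H_1)$.

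Finally, I extract the contradiction from a minimal-area disc diagram $D\to\mathbf X$ lying over the edge $H_1H_3$ in the sense of Section~\ref{sec:liesover}. The proof of Lemma~\ref{lem:classification} already identifies the dual curve in $D$ mapping to $S(H_1)$: it emanates from the terminal 1-cube of the $N(f(H_1))$-segment and ends on the $N(H_3)$-segment. Because $S(H_2)$ crosses $f(H_1)$ and $H_3$ yet contacts neither $H_1$ nor $U$, after a suitable choice of the boundary geodesics preserving minimality $S(H_2)$ appears in $D$ as a second dual curve from the $N(f(H_1))$-segment to the $N(H_3)$-segment. Since $S(H_1)$ and $S(H_2)$ are distinct and non-crossing, their dual curves are disjoint and impose a definite order on their endpoints along these boundary segments. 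Tracking these positions in conjunction with Proposition~\ref{prop:boundarytrigons} and the minimality choices built into $D$ yields either a trigon along the boundary (contradicting area-minimality) or a hyperplane whose carrier meets both $N(U)$ and $N(H_1)$ (respectively $N(H_3)$), supplying a potential father with imprint on $U$ strictly closer to $b^*$ than $J(f(H_1),U)$ (respectively $J(f(H_3),U)$), contradicting the canonical choice of fathers.

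The main obstacle is this last disc-diagram step: arranging $D$ so that the co-existence of two parallel, non-crossing dual curves for $S(H_1)$ and $S(H_2)$, together with the carrier geodesics of the fathers, forces a combinatorial contradiction—either a forbidden trigon or an ordering-theoretic violation via a potential father with lower root. The remaining steps are essentially bookkeeping with Lemmas~\ref{lem:contactingfathers}, \ref{lem:atmosttwoseparatingosculators}, and~\ref{lem:classification}, coupled with two-dimensionality.
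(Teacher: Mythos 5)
Your preliminary analysis is careful and correct up to the final step: you correctly totally order the triangle under $\prec$, correctly force alternative (3) of Lemma~\ref{lem:classification} on each edge of a triangle in $\Upsilon_0(U)$, correctly use 2-dimensionality to show $S(H_1)$ and $S(H_2)$ cannot cross, and your halfspace computation establishing $\mathbf A(S(H_1))\subsetneq\mathbf A(S(H_2))$, hence that $S(H_2)$ separates $H_1$ from $U$, does not contact $H_1$, and crosses $f(H_1)$, is valid. This is a genuinely different route from the paper, which does not try to derive a contradiction head-on; instead the paper runs a minimal-counterexample argument on the distance-sum $D(C)=\sum d(H_i)$, replacing $H_1$ (the $\prec$-minimal vertex) by $S(H_1)$ to produce a shorter triangle when $d(H_1)\geq 2$, and handling $d(H_1)=1$ directly via Lemma~\ref{lem:atmosttwoseparatingosculators} and imprint locations. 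Crucially, the paper's proof needs a disc-diagram argument only to rule out the possibility $S(S(H_1))=S(H_0)$ (i.e.\ to check the new triangle stays inside $\Upsilon_0$), a very localised claim.

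The gap in your proposal is precisely the part you flag as the ``main obstacle'': the concluding disc-diagram step is not actually carried out, and it is not obvious that it can be. You assert that, in a diagram lying over $H_1H_3$, the dual curve of $S(H_2)$ runs from the $N(f(H_1))$-side to the $N(H_3)$-side ``after a suitable choice of the boundary geodesics preserving minimality,'' but in such a diagram the ends of that curve are a priori only constrained to avoid the $N(H_1)$- and $N(U)$-segments; whether it reaches the $N(H_3)$-segment rather than the $N(f(H_3))$-segment (and in what position relative to the $S(H_1)$-curve and to the $S(H_3)$-curve) requires further argument that you do not supply. Likewise, the claimed dichotomy ``either a trigon along the boundary, or a better potential father'' is stated but not derived; a dual curve emanating from $R_1$ and ending on $P$ only witnesses a hyperplane crossing $f(H_1)$ and contacting $U$, which is a potential father of $H_1$ only if it also contacts $H_1$, and you have not shown this. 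So while your reduction to a disc-diagram statement is plausible, the proof is incomplete at exactly the step that carries the logical weight, whereas the paper sidesteps this difficulty by its minimal-$D(C)$ induction.
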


\begin{proof}
Let $C=(H_0,H_1,H_2)$ be a 3-cycle in $\Upsilon_0(U)$.  Then without loss of generality, we have $H_1\prec H_0\prec H_2$.  Indeed, one of the three vertices, say $H_2$, does not precede either of the other two, and hence, by Lemma~\ref{lem:contactingfathers}, we have $H_0\prec H_2$ and $H_1\prec H_2$.  But by Lemma~\ref{lem:contactingfathers}, since $H_0$ and $H_1$ are adjacent in $\Upsilon(U)$, they are comparable in the partial ordering $\prec$. Suppose, moreover, that the distance-sum $D(C)=d(H_0)+d(H_1)+d(H_2)$ is minimal among all 3-cycles in $\Upsilon_0(U)$. Denote by $V_i$ the father of $H_i,i=0,1,2.$  By Lemma~\ref{lem:classification}, $S(H_1)$ crosses $H_0$ and $H_2.$

First suppose that $d(H_1)=1$.  If $S(H_1)=V_1$, then $b_{S(H_1)}\prec b_{V_0}\prec b_{V_2}$.  But since $S(H_1)$ crosses $H_0$ and $H_2$, by Lemma~\ref{lem:classification}, we see that $S(H_1)$ is a potential father of $H_0$ and $H_2$.  Hence $S(H_1)=V_0=V_2$ and we reach a contradiction with the fact that $H_1H_0$ and $H_1H_2$ are edges of $\Upsilon(U)$.

The remaining possibility is that in which $S(H_1)$ crosses (and is different from) $V_1$ and $V_1$ crosses at least one of $U$ and $H$, by Lemma~\ref{lem:atmosttwoseparatingosculators}, as shown at right in Figure~\ref{fig:SepOsc}.  If $S(H_1)=V_0$, then we reach a contradiction as above.  Otherwise, the imprint of $V_0$ on $U$ lies between the imprint of $S(H_1)$ and the imprint of $V_1$, i.e. $b_{V_0}$ lies on the unique path in $U$ between $b_{V_1}$ and $b_{S(H_1)}$.  This is because $S(H_1)$ crosses $H_0$, and $S(H_1)$ is not the father of $H_0$, and $H_1\prec H_0$.

However, since $V_1,S(H_1),U$ pairwise-contact, the imprint of $V_1$ on $U$ has nonempty intersection with the imprint of $S(H_1)$ on $U$, as illustrated in Figure~\ref{fig:comment23}.
\begin{figure}[h]
  \includegraphics[width=5in]{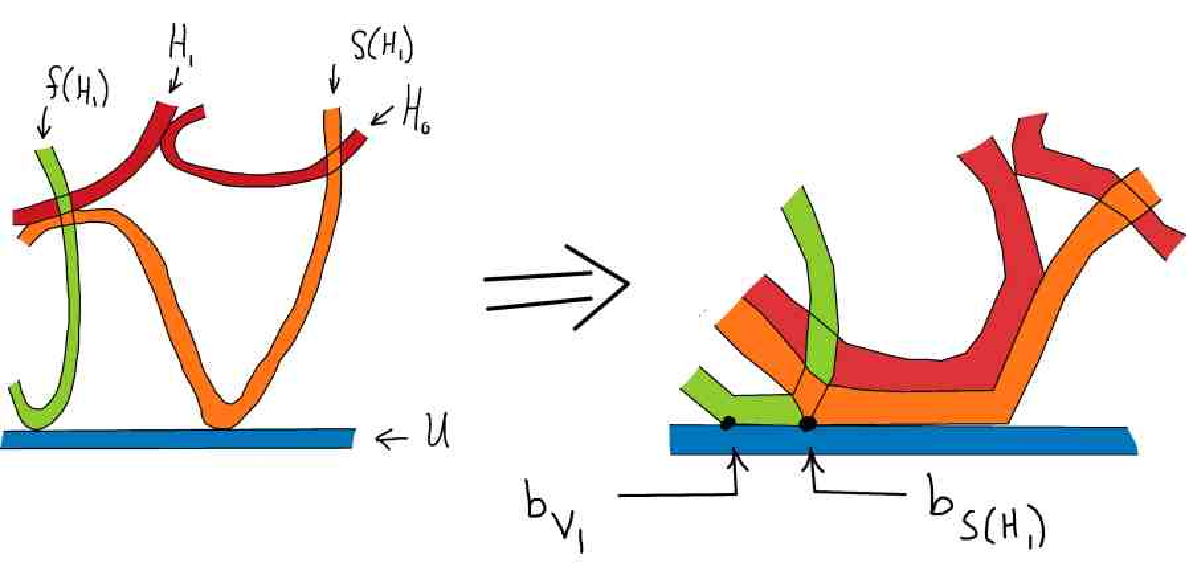}\\
  \caption{A heuristic picture showing that $b_{S(H_1)}$ lies in the imprint of $V_1$.}\label{fig:comment23}
\end{figure}
Since $b_{S(H_1)}$ is the closest point of the imprint of $S(H_1)$ to $b_{V_1}$, we see that $b_{S(H_1)}$ lies in the imprint of $V_1$.  But then $b_{V_0}$ must lie in the imprint of $V_1$, whence $V_0\coll V_1$.  This contradicts the fact that $H_1H_0$ is an edge of $\Upsilon(U)$.  Hence $d(H_1)\geq 2$.


Since $d(H_1)\ge 2$, the hypeprlane $S(H_1)$ is in ${\mathcal R}(U)$ by Lemma~\ref{lem:atmosttwoseparatingosculators}. The father of $S(H_1)$ is either $V_1$ or a hyperplane before $V_1$. Therefore, $S(H_1)\prec H_0\prec H_2$ holds, and $S(H_1)H_0$ and $S(H_1)H_2$ are both edges of $\Upsilon(U).$ Since $S(H_1)$ crosses $H_0$ and $H_2,$ neither of these edges is an edge of $\Upsilon_2(U).$ Now, suppose that $S(H_1)H_0$ is an edge of $\Upsilon_1(U).$ Since $S(H_1)$ and $H_0$ cross, this is possible only if $S(S(H_1))=S(H_0).$  But in this case, the disc diagram lying over the edge $H_1H_0$ will contain a trigon.

Indeed, suppose that $S(S(H_1))=S(H_0)$ and let $D\rightarrow\bf X$ be a diagram lying over the edge $H_1H_0$, as shown in Figure~\ref{fig:inevitabletrigon}.  Then the dual curve $K$ emanating from the terminal 1-cube of $R_1$ and mapping to $S(H_1)$ ends on $Q_0$ at the initial 1-cube.  The subdiagram $D'\subset D$ bounded by $N(K)$, $P$, $R_0$, $R_1,$ and the subtended part of $Q_0$ lies over the edge $S(H_1)H_0$.  The dual curve $L$ emanating from the penultimate 1-cube of $R_1$ maps to $S(S(H_1))$, and by the assumption that $S(S(H_1))=S(H_0)$, we have that $L$ ends on the terminal 1-cube of $R_0$.  As usual, since $V_1$ does not contact $V_0$, there is a dual curve $M$ in $D$ traveling from $P$ to $Q_1$, and $M$ cannot end on the terminal 1-cube of $Q_1$, for otherwise $M$ would map to a hyperplane providing a better father for $H_0$.  Hence there must exist a dual curve $N$ emanating from the terminal 1-cube of $Q_1$ and ending on $R_0$.  But $N$ cannot end on the terminal 1-cube of $R_0$, since that 1-cube is already the origin of $L$, and hence $N$ must cross $L$.  But the hyperplanes to which $N$ and $L$ map both cross $V_0$, and thus cannot cross.  Hence $S(S(H_1))\neq S(H_0)$.
\begin{figure}[h]
  \includegraphics[width=2.5in]{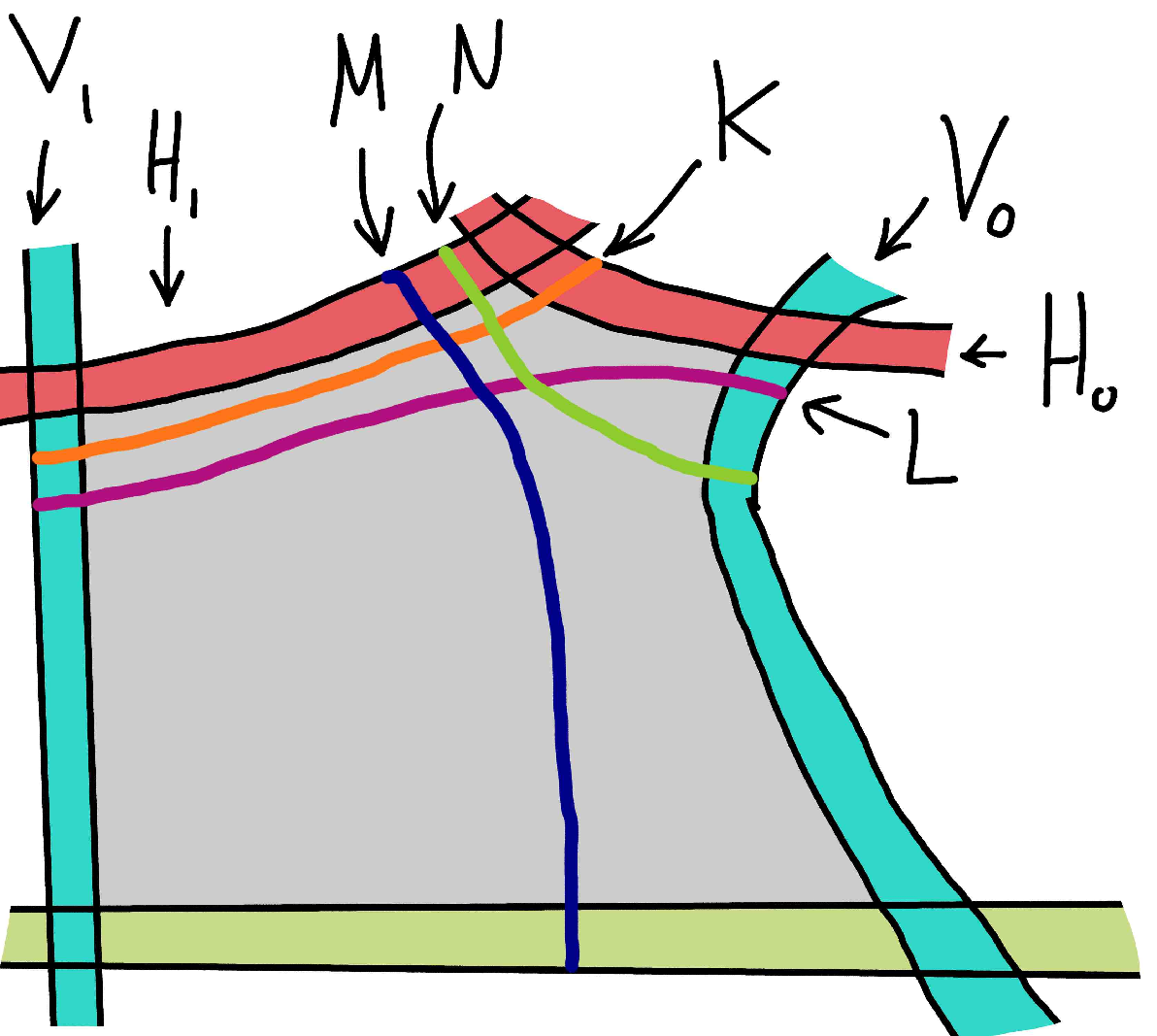}\\
  \caption{The diagram $D$ lying over $H_1H_0$ contains a contradictory trigon when $S(S(H_1))=S(H_0)$.}\label{fig:inevitabletrigon}
\end{figure}

Hence $S(H_1)H_0$ and $S(H_1)H_2$ are not edges  of $\Upsilon_1(U),$ showing that $C'=(H_0,S(H_1),H_2)$ is a 3-cycle of $\Upsilon_0(U).$
Since $D(C')<D(C),$ our choice of $C$ could not have been minimal, a contradiction.
\end{proof}

We now analyze cycles in $\Upsilon_0(U)$, with the goal of showing that there are no cycles of odd length.  Let $C=(H_0,H_1\ldots ,H_{n-1})$ be a simple $n$-cycle in $\Upsilon_0(U)$.  $C$ is \emph{induced} if for all $i\in\mathbb Z_n$ and $j\neq i\pm 1$, the hyperplanes $H_i$ and $H_j$ are not adjacent in $\Upsilon_0(U)$, i.e. either $H_i$ and $H_j$ do not contact, or their fathers contact or coincide, or they have a common separating osculator.  Notice that if $\Upsilon_0(U)$ contains an odd cycle, then each of its odd cycles of minimum length $n=2k+1$ is an induced cycle. Indeed, if $H_iH_j$ is an edge of $\Upsilon_0(U)$ that does not belong to $C$, then at least one of the paths of $C$ connecting  $H_i,H_j$ has even length, and hence $\Upsilon_0(U)$ contains an odd-length cycle that is shorter than $C$.

For a cycle $C=(H_0,H_1,\ldots ,H_{n-1})$  of  $\Upsilon_0(U)$,   let $V_i=f(H_i)$ and let $b_i\in U$ be the root of the imprint of $H_i$ on $U$, for each $i\in\mathbb Z_n$.
Let ${\bf A}_i$ and ${\bf B}_i$ be the two halfspaces of ${\bf X}$ defined by $H_i$ so that $U\subset {\bf B}_i.$
Cycles of $\Upsilon_0(U)$ have the following simple properties.

\begin{lemma}\label{lem:separate} If $(H_0,H_1,\ldots,H_n)$ is an induced path in $\Upsilon_0(U)$ and the hyperplane $H_j$ is contained in the halfspace ${\bf A}_i$ defined by the hyperplane $H_i,$ then either $V_i=V_j$ or $H_i\prec H_j$.
\end{lemma}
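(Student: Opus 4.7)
The plan is to show that the father $V_j=f(H_j)$ is itself a potential father of $H_i$, and then to exploit the tree-gate structure of iterated imprints on $U$.

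First I will verify that $V_j\in PF(H_i)$. By definition of the father, $V_j\coll U$ and $V_j\coll H_j$; in particular $N(V_j)$ meets $N(U)$ and $N(H_j)$. Since $H_i\in\mathcal R(U)$, its grandfather is $U$, so $H_i$ does not contact $U$, whereas $V_j$ does; therefore $V_j\neq H_i$. Now $N(U)\subseteq\mathbf B_i\cup H_i$ while, by hypothesis, $N(H_j)\subseteq\mathbf A_i\cup H_i$, so the connected subcomplex $N(V_j)$ contains $0$-cubes on both sides of $H_i$. Hence $V_j$ crosses $H_i$, and in particular $V_j\coll H_i$. Together with $V_j\coll U$, this is exactly the statement $V_j\in PF(H_i)$.

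Next I pass to the tree $U$. Because $V_j\in PF(H_i)$, we have $J(V_j,U)\subseteq IJ(H_i,U)$, and by Lemma~\ref{lem:potentialpatherconnected} the set $IJ(H_i,U)$ is a subtree of $U$. The vertex $b_j$ is by construction a point of $J(V_j,U)$, hence of $IJ(H_i,U)$. The vertex $b_i$ is defined as the vertex of the subtree $IJ(H_i,U)$ realizing $d(b^*,IJ(H_i,U))$, i.e.\ as the gate of $b^*$ in this subtree. By the standard gate property in trees, any geodesic of $U$ from $b^*$ to any point of $IJ(H_i,U)$ passes through $b_i$; in particular, $b_i$ lies on the unique path in $U$ from $b^*$ to $b_j$.

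Two cases arise. If $b_i\neq b_j$, then $b_i$ lies strictly between $b^*$ and $b_j$, so $H_i\prec H_j$ by definition of the order $\prec$ on $\mathcal R(U)$, and the lemma holds. If $b_i=b_j$, then $V_j$ is a potential father of $H_i$ with $d(b^*,J(V_j,U))=d(b^*,J(V_i,U))$, i.e.\ $V_j$ is among the candidates surviving the first criterion in the selection of $f(H_i)$; then, by making the tie-breaking choices in the definition of $f(H_i)$ consistently with the choice already made for $f(H_j)$, one may take $V_i=V_j$.

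The main obstacle is not the existence of the potential-father argument, which is a clean halfspace/connectedness observation, but rather the case $b_i=b_j$, where the conclusion $V_i=V_j$ rests on the arbitrary resolution of ties in the definition of father; the induced-path hypothesis is presumably what ensures this case behaves well, either by ruling it out or by letting us make the tie-breaking canonical within a given induced path. Everything else is routine given the earlier work, in particular Lemma~\ref{lem:potentialpatherconnected} (so $IJ(H_i,U)$ is a subtree) and the definitions of $b_H$ and $\prec$.
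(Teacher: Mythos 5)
Your proof follows essentially the same route as the paper's: show that $V_j=f(H_j)$, being wedged between $N(U)\subset\mathbf B_i$ and $N(H_j)\subset\overline{\mathbf A_i}$, must cross $H_i$ and hence lies in $PF(H_i)$, and then read off the conclusion from the minimality defining $f(H_i)$ together with the gate structure on the tree $U$. The paper compresses the second half to a single phrase (``from the definition of a father we conclude\ldots''), whereas you correctly unpack it: $J(V_j,U)\subseteq IJ(H_i,U)$, $b_j$ is the gate of $b^*$ in $J(V_j,U)$, $b_i$ is the gate of $b^*$ in the larger subtree $IJ(H_i,U)$, hence $b_i$ lies on the geodesic from $b^*$ to $b_j$. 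The tie case you flag ($b_i=b_j$, $V_i\neq V_j$) is indeed glossed over by the paper's one-line justification; however, your proposed fix -- retroactively coordinating the tie-breaking choices in the definitions of $f(H_i)$ and $f(H_j)$ -- is not available in the paper's framework, since fathers and the tie-broken order $\prec$ are fixed once and for all before this lemma is invoked (see the remark after the definition of $f$). In practice what your argument (and the paper's) actually proves is the slightly weaker statement that $b_i$ lies on the $U$-geodesic from $b^*$ to $b_j$, and one can check this weaker form suffices in the two places the lemma is used; so the gap is real but harmless. Aside from that, the argument is the paper's.
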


\begin{proof} Since $H_i$ separates $H_j$ from $U$, the father $V_j$ of $H_j$ crosses $H_i,$ and thus $V_j$ is a potential father of $H_i.$ From the definition of a father we conclude that either $V_i=V_j$ or $H_i\prec H_j$.
\end{proof}

The hyperplane $H_i$ of $C$ is \emph{normal} if exactly one of $H_{i-1}$ and $H_{i+1}$ is an incoming neighbour, and the other neighbour of $H_i$ in $C$ is outgoing.  By Lemma~\ref{lem:contactingfathers}, if $H_i$ is not normal, then $H_{i-1}$ and $H_{i+1}$  are both incoming or both outgoing neighbours of $H_i$.

\begin{lemma}\label{lem:normal}
Any induced cycle $C$ of $\Upsilon_0(U)$ of odd length $n=2k+1$ contains at least one normal hyperplane $H_i$.
\end{lemma}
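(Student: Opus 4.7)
The plan is to argue by contradiction via a standard parity/orientation argument on the cycle. Suppose $C=(H_0,H_1,\ldots,H_{n-1})$ is an induced odd cycle in $\Upsilon_0(U)$ with no normal vertex. I orient every edge $H_iH_{i+1}$ of $C$ (indices mod $n$) according to the partial order $\prec$: write $H_i\to H_{i+1}$ if $H_i\prec H_{i+1}$, and $H_{i+1}\to H_i$ otherwise. This orientation is well defined because $H_iH_{i+1}$ is an edge of $\Upsilon(U)$, so the fathers of $H_i$ and $H_{i+1}$ are distinct and non-contacting; hence cases (1)--(2) of Lemma~\ref{lem:contactingfathers} are ruled out and exactly one of $H_i\prec H_{i+1}$ or $H_{i+1}\prec H_i$ holds.

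With this orientation, the vertex $H_i$ is normal precisely when one of its two incident edges on $C$ is incoming and the other is outgoing. By assumption, no $H_i$ is normal, so at each vertex the two incident edges of $C$ are either both incoming (call $H_i$ a \emph{sink}) or both outgoing ($H_i$ a \emph{source}). The crucial observation is that sources and sinks must alternate along $C$: if $H_i$ is a sink, then the edge $H_iH_{i+1}$ is oriented $H_{i+1}\to H_i$, which is outgoing from $H_{i+1}$; since $H_{i+1}$ is non-normal, its other incident edge $H_{i+1}H_{i+2}$ must also be outgoing from $H_{i+1}$, making $H_{i+1}$ a source. Symmetrically, a source is followed by a sink.

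Traversing $C=(H_0,H_1,\ldots,H_{n-1},H_0)$, the labels sink/source therefore alternate, so any closed traversal returning to $H_0$ with the same label forces $n$ to be even. This contradicts $n=2k+1$, and hence at least one vertex of $C$ must be normal. The main (and essentially only) subtlety was verifying that $\prec$ gives a well defined orientation on every edge of $\Upsilon_0(U)\subseteq\Upsilon(U)$, which is handled by Lemma~\ref{lem:contactingfathers} together with the definition of $\Upsilon(U)$; once that is in hand, the parity argument completes the proof.
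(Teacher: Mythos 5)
Your proof is correct and follows essentially the same route as the paper's: the paper partitions the non-normal vertices of $C$ into the set $\mathcal I$ of vertices with both $C$-neighbours incoming and the set $\mathcal O$ of vertices with both $C$-neighbours outgoing, observes that every edge of $C$ joins $\mathcal I$ to $\mathcal O$, and concludes that $C$ is bipartite and hence of even length — exactly your source/sink alternation argument. Your additional justification that $\prec$ yields a well-defined orientation on each edge of $\Upsilon(U)$ (via the exclusion of cases (1)--(2) of Lemma~\ref{lem:contactingfathers}) makes explicit a step the paper leaves implicit, so the write-up is, if anything, a little more careful.
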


\begin{proof} By Lemma \ref{lem:trianglefree}, $n>3.$ Suppose that no hyperplane in $C$ is normal.  Then the hyperplanes of $C$ can be partitioned into two sets $\mathcal I$ and $\mathcal O$, where $\mathcal I$ is the set of $H_i$ for which both neighbours in $C$ are incoming, and $\mathcal O$ is the set of $H_i$ for which both neighbours in $C$ are outgoing.  By definition, every edge of $C$ joins an element of $\mathcal I$ to an element of $\mathcal O$.  Hence $C$ is bipartite, and in particular has even length.
\end{proof}

\begin{lemma}\label{lem:badcasenormal} Any cycle  of $\Upsilon_0(U)$ does not contain normal hyperplanes.
\end{lemma}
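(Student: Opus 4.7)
The plan is to argue by contradiction. Assume some cycle $C$ of $\Upsilon_0(U)$ contains a normal hyperplane, and pick such a cycle $C=(H_0,\ldots,H_{n-1})$ minimizing first the length $n$ and then the sum $D(C)=\sum_{j}d(H_j)$. By the discussion preceding Lemma~\ref{lem:normal}, any length-minimal cycle is induced, so we may take $C$ to be induced. Let $H_i$ be the normal hyperplane and, relabelling if necessary, assume $H_{i-1}\prec H_i\prec H_{i+1}$, so that $H_{i-1}$ is an incoming and $H_{i+1}$ an outgoing neighbour of $H_i$. Since both edges $H_{i-1}H_i$ and $H_iH_{i+1}$ lie in $\Upsilon_0(U)$, they avoid the configurations (1) and (2) of Lemma~\ref{lem:classification}, so the lemma forces $S(H_{i-1})$ to cross $H_i$ and $S(H_i)$ to cross $H_{i+1}$.

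Next I would unite the two diagrams lying over $H_{i-1}H_i$ and $H_iH_{i+1}$ (constructed as in Section on diagrams lying over edges of $\Upsilon(U)$) into a single minimal-area diagram $D\to\mathbf X$ with fixed carriers $N(V_{i-1}), N(H_{i-1}), N(H_i), N(H_{i+1}), N(V_{i+1}), N(U)$, whose boundary traces a geodesic segment on $N(U)$ joining the preimages in $N(U)$ of the roots $b_{H_{i-1}}$ and $b_{H_{i+1}}$. Using Lemma~\ref{lem:atmosttwoseparatingosculators}, I would locate inside $D$ the dual curve representing $S(H_{i-1})$ and the dual curve representing $S(H_i)$. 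These dual curves exist because $S(H_{i-1})$ separates $H_{i-1}$ from $U$ and crosses $H_i$, and similarly for $S(H_i)$; the non-contact of $V_{i-1}$ with $V_i$ and of $V_i$ with $V_{i+1}$ guarantees that there is ``room'' on the $N(U)$-side of $D$ for the corresponding dual curves, and Proposition~\ref{prop:boundarytrigons} together with 2-dimensionality of $\mathbf X$ rules out trigons of dual curves.

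From the geometry of $D$ I would then extract a hyperplane $H^{\star}$ — built either from $S(H_{i-1})$ or from an auxiliary dual curve visible in $D$ — that contacts both $H_{i-1}$ and $H_{i+1}$, with $b_{H^{\star}}$ strictly closer to $b^{\ast}$ than $b_{H_i}$ and with $f(H^{\star})$ distinct from, and not contacting, either of $f(H_{i-1})$ or $f(H_{i+1})$. A direct check using Lemma~\ref{lem:contactingfathers} and the defining conditions of the subgraphs $\Upsilon_1(U)$ and $\Upsilon_2(U)$ will show that both edges $H_{i-1}H^{\star}$ and $H^{\star}H_{i+1}$ belong to $\Upsilon_0(U)$. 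Replacing $H_i$ by $H^{\star}$ in $C$ then yields another cycle $C'$ of $\Upsilon_0(U)$ of the same length $n$ in which $H^{\star}$ is again normal with $H_{i-1}\prec H^{\star}\prec H_{i+1}$, but with $d(H^{\star})<d(H_i)$; hence $D(C')<D(C)$, contradicting minimality. In the degenerate case $H^{\star}\in\{H_{i-1},H_{i+1}\}$ one obtains a direct edge $H_{i-1}H_{i+1}$, contradicting that $C$ is induced.

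The main obstacle is Step~3: verifying that the replacement hyperplane $H^{\star}$, extracted from the separating osculator $S(H_{i-1})$ (or an adjacent dual curve), really makes $H_{i-1}H^{\star}$ and $H^{\star}H_{i+1}$ edges of $\Upsilon_0(U)$ rather than of $\Upsilon_1(U)$ or $\Upsilon_2(U)$. The delicate cases are (a) when $d(H_{i-1})=1$, so that $S(H_{i-1})$ is itself a potential father of $H_{i-1}$ and may conflict with the definition of $\prec$; and (b) when $f(H^{\star})$ accidentally coincides with $f(H_{i-1})$ or $f(H_{i+1})$. These cases will be handled by exploiting the 2-dimensionality of $\mathbf X$ — which forbids pairwise-crossing triples — exactly as in the final part of the proof of Lemma~\ref{lem:classification}, and by invoking the minimality of $D(C)$ to rule out configurations in which the resulting cycle fails to satisfy the hypotheses of Lemma~\ref{lem:normal}.
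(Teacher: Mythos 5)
Your overall shape — take a counterexample minimal first in length, then in distance-sum $D(C)$, and replace one vertex by a hyperplane with smaller hyperplane-distance to produce a cycle with smaller $D$ — matches the paper's strategy. But you have chosen the wrong vertex to replace, and this is a genuine gap, not a detail to be patched.

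You replace the normal hyperplane $H_i$ itself, and you need a substitute $H^\ast$ satisfying $H_{i-1}\prec H^\ast\prec H_{i+1}$ (to keep $H^\ast$ normal) and adjacent in $\Upsilon_0(U)$ to both neighbours. No natural candidate does this. If you take $H^\ast$ built from $S(H_{i-1})$, then $S(H_{i-1})\prec H_{i-1}$, which is the reverse of the inequality you need; if you take $H^\ast$ built from $S(H_i)$, then $S(H_i)\prec H_i$, but since $\prec$ is only a partial order on roots of the tree $U$, there is no reason $H_{i-1}\prec S(H_i)$ should hold — the roots $b_{S(H_i)}$ and $b_{H_{i-1}}$ could sit on different branches of $U$, or one could have $S(H_i)\prec H_{i-1}$. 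Moreover your claim $d(H^\ast)<d(H_i)$ has no justification if $H^\ast$ comes from $S(H_{i-1})$: the closest-root hyperplane in a cycle need not have the smallest hyperplane-distance, so $d(S(H_{i-1}))<d(H_{i-1})$ does not give $d(S(H_{i-1}))<d(H_i)$.

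The paper's resolution is to pick a different vertex to replace: let $H_1$ be the hyperplane of $C$ whose root $b_1$ is \emph{closest to $b^\ast$}. Because $\Upsilon(U)$-adjacent vertices are $\prec$-comparable (Lemma~\ref{lem:contactingfathers}) and $b_1$ is minimal, both neighbours $H_0,H_2$ of $H_1$ in $C$ are outgoing, so $H_1$ is \emph{not} normal — the normal hyperplane of $C$ sits elsewhere in the cycle. Replacing $H_1$ by its separating osculator $S(H_1)$ (using the same disc-diagram analysis as in Lemma~\ref{lem:trianglefree}: if $d(H_1)=1$ one gets an immediate contradiction, and if $d(H_1)\geq 2$ then $S(H_1)\in\mathcal R(U)$, $S(H_1)\prec H_0$, $S(H_1)\prec H_2$, and both new edges lie in $\Upsilon_0(U)$) yields a cycle $C'$ of the same length with $D(C')<D(C)$. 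Crucially, since $S(H_1)\prec H_1\prec H_0,H_2$, the $\prec$-comparisons at every other vertex of the cycle are unchanged, so the normal hyperplane of $C$ is still normal in $C'$. This is what makes the descent go through, and it is exactly the step your proposal cannot supply because you have targeted the normal vertex instead of the $\prec$-minimal one.
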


\begin{proof} We proceed by way of contradiction. Suppose that $n$ is the smallest value for which there exists a cycle  of length $n$ containing a normal hyperplane. Now, among all minimal cycles of $\Upsilon_0(U)$ of length $n$ containing normal hyperplanes, let $C=(H_0,H_1,\ldots ,H_{n-1})$ have minimal distance sum $D(C).$   Let $H_1$ be a hyperplane of $C$ having the closest root $b_1$ to $b^*$.

Then the neighbours $H_0$ and $H_2$ of $H_1$ in $C$ are both outgoing neighbours of $H_1$ in $\Upsilon(U),$ and thus $H_1$ is not a normal hyperplane of $C$. We proceed as in the proof of Lemma \ref{lem:trianglefree}. Let $S(H_1)$ be the separating osculator of $H_1.$ By Lemma \ref{lem:classification}, $S(H_1)$ crosses $H_0$ and $H_2.$ If $d(H_1)=1,$ then, as in the proof of Lemma~\ref{lem:trianglefree}, we reach the contradictory conclusion that the fathers of $H_1$ and $H_0$ coincide or contact.


Now suppose that $d(H_1)\ge 2.$ Then $S(H_1)$ is a hyperplane of ${\mathcal R}(U)$ by Lemma~\ref{lem:atmosttwoseparatingosculators}. The father of $S(H_1)$ is either $V_1$ or a hyperplane before $V_1$, by Lemma~\ref{lem:separate}. Therefore, $S(H_1)\prec H_0\prec H_2$ holds, and $S(H_1)H_0$ and $S(H_1)H_2$ are both edges of $\Upsilon(U).$ Since $S(H_1)$ crosses $H_0$ and $H_2,$ neither of these edges is an edge of $\Upsilon_2(U).$ Now, suppose that $S(H_1)H_0$ is an edge of $\Upsilon_1(U).$ Since $S(H_1)$ and $H_0$ cross, this is possible only if $S(S(H_1))=S(H_0).$  But in this case, the disc diagram over the edge $H_1H_0$ will contain a trigon, as in the proof of Lemma~\ref{lem:trianglefree}. Hence $S(H_1)H_0$ and $S(H_1)H_2$ are not edges  of $\Upsilon_1(U),$ and therefore they are edges of $\Upsilon_0(U),$ showing that $C'=(H_0,S(H_1),H_2,\ldots,H_{n-1})$ is a cycle of the graph $\Upsilon_0(U).$ Since $S(H_1)\prec H_1,$ the choice of $H_1$ in $C$ implies that if $H_i$ is a normal hyperplane of $C,$ then $i\ne 1$ and $H_i$ is a normal hyperplane of $C'.$
Since $D(C')<D(C),$ we obtain a contradiction with the minimality choice of $C.$ This contradiction shows that no cycle of $\Upsilon_0(U)$ contains normal hyperplanes.
\end{proof}

\begin{proposition}\label{prop:upsilonbipartite}
The graph $\Upsilon_0(U)$ is bipartite; therefore $\chi(\Upsilon_0(U))=2$.
\end{proposition}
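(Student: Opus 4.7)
The plan is to deduce Proposition~\ref{prop:upsilonbipartite} directly from the two preceding lemmas by a short minimality argument. Since a graph is bipartite if and only if it contains no odd cycles, it suffices to rule out odd cycles in $\Upsilon_0(U)$.

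First I would suppose for contradiction that $\Upsilon_0(U)$ contains an odd cycle, and let $C=(H_0,H_1,\ldots,H_{n-1})$ be an odd cycle of minimum length $n=2k+1$. The key observation is that this minimum-length odd cycle must be induced: if some edge $H_iH_j$ of $\Upsilon_0(U)$ with $j\neq i\pm1$ were a chord of $C$, then this chord would split $C$ into two shorter cycles whose lengths sum to $n+2$, so one of them would be odd and strictly shorter than $C$, contradicting minimality. Hence $C$ is an induced odd cycle; note that $n\geq 5$ by the triangle-freeness of $\Upsilon_0(U)$ established in Lemma~\ref{lem:trianglefree}.

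Next I would invoke Lemma~\ref{lem:normal}, which asserts that any induced odd cycle of $\Upsilon_0(U)$ contains at least one normal hyperplane, i.e., a vertex $H_i$ for which exactly one of its two neighbours in $C$ is an incoming neighbour and the other is outgoing. This produces some normal vertex $H_i\in C$. But this directly contradicts Lemma~\ref{lem:badcasenormal}, which states that no cycle of $\Upsilon_0(U)$ contains a normal hyperplane. This contradiction shows that no odd cycle exists, so $\Upsilon_0(U)$ is bipartite, and in particular $\chi(\Upsilon_0(U))=2$ (the chromatic number is exactly $2$ rather than $1$ whenever $\Upsilon_0(U)$ contains at least one edge; if it has no edges the bound $\chi\leq2$ still holds trivially).

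There is essentially no obstacle here beyond checking the minimality argument: the content has already been carried out in Lemmas~\ref{lem:trianglefree}, \ref{lem:normal}, and~\ref{lem:badcasenormal}. The only subtlety worth stating explicitly in the write-up is the reduction from ``no odd cycle'' to ``no induced odd cycle,'' which uses the parity of the two arcs cut off by a chord of a shortest odd cycle.
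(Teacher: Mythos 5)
Your proof is correct and follows essentially the same route as the paper's: reduce to a minimum-length odd cycle, observe it must be induced, then derive a contradiction from Lemma~\ref{lem:normal} (any induced odd cycle contains a normal hyperplane) and Lemma~\ref{lem:badcasenormal} (no cycle contains one). The paper places the ``a shortest odd cycle is induced'' observation just before Lemma~\ref{lem:normal} rather than in the proof of the proposition itself, but the argument is the same chord-parity reduction you give.
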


\begin{proof} From Lemma~\ref{lem:normal}  we know that any induced odd cycle of $\Upsilon_0(U)$ must contain a normal hyperplane. On the other hand, Lemma~\ref{lem:badcasenormal} asserts
that no cycle of $\Upsilon_0(U)$ can contain a normal hyperplane. Since any graph containing odd cycles also contain induced odd cycles, we conclude that $\Upsilon_0(U)$  cannot contain any odd cycle, i.e. $\Upsilon_0(U)$ is bipartite.
\end{proof}

Now, we are ready to prove the main result of this section:

\begin{proposition}\label{prop:colupsilon} $\chi(\Upsilon(U))\le 2\Delta^2.$
\end{proposition}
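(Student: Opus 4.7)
The plan is to combine the three subgraph colourings via a product colouring. The key preliminary step, already essentially packaged in Lemma~\ref{lem:classification}, is to observe that the edge set of $\Upsilon(U)$ is covered by the edge sets of the three spanning subgraphs $\Upsilon_0(U),\Upsilon_1(U),\Upsilon_2(U)$: cases (1) and (2) of that lemma place an edge of $\Upsilon(U)$ into $\Upsilon_1(U)$ or $\Upsilon_2(U)$ respectively, while in case (3) the very definition of $\Upsilon_0(U)$ -- as the subgraph consisting of the edges of $\Upsilon(U)$ that remain after removing those of $\Upsilon_1(U)\cup\Upsilon_2(U)$ -- forces the edge into $\Upsilon_0(U)$. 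Thus every edge of $\Upsilon(U)$ lies in at least one of the three subgraphs.

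Given this edge cover, I would fix correct colourings $c_0,c_1,c_2$ of $\Upsilon_0(U),\Upsilon_1(U),\Upsilon_2(U)$ supplied respectively by Proposition~\ref{prop:upsilonbipartite}, Proposition~\ref{prop:colseposc}, and Proposition~\ref{prop:colnonseposc}, using palettes $\mathcal K_0,\mathcal K_1,\mathcal K_2$ of sizes $2,\Delta,\Delta$. I then define the product colouring $c\colon\mathcal R(U)\to\mathcal K_0\times\mathcal K_1\times\mathcal K_2$ by $c(H)=(c_0(H),c_1(H),c_2(H))$. The codomain has cardinality $2\Delta^2$. For any edge $HH'$ of $\Upsilon(U)$, the edge-cover observation yields some $i\in\{0,1,2\}$ with $HH'\in\Upsilon_i(U)$, so that $c_i(H)\ne c_i(H')$ and consequently the triples $c(H),c(H')$ differ in their $i$-th coordinate. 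Hence $c$ is a correct colouring of $\Upsilon(U)$, and the inequality $\chi(\Upsilon(U))\le 2\Delta^2$ follows.

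There is no genuine obstacle in this last step, since all the substantive content has already been carried out: the two factors of $\Delta$ come from the Helly-chordal colouring of footprints on the bounding tree of each separating osculator's carrier (Proposition~\ref{prop:colseposc}) and from the bounded in-degree greedy colouring of the $\prec$-orientation (Proposition~\ref{prop:colnonseposc}), while the factor of $2$ records the bipartite structure of $\Upsilon_0(U)$ (Proposition~\ref{prop:upsilonbipartite}). The only thing to verify to conclude is the edge-cover claim read off from Lemma~\ref{lem:classification}, which is the point of having stated that lemma in the above trichotomous form.
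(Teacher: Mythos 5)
Your argument is correct and is essentially identical to the paper's: both take the product colouring $c(H)=(c_0(H),c_1(H),c_2(H))$ by the palettes of Propositions~\ref{prop:upsilonbipartite}, \ref{prop:colseposc}, and \ref{prop:colnonseposc}, and both rest on the observation that every edge of $\Upsilon(U)$ lies in at least one of $\Upsilon_0(U),\Upsilon_1(U),\Upsilon_2(U)$. The only superficial difference is that you invoke Lemma~\ref{lem:classification} to justify the edge cover, whereas (as you yourself point out) this already follows trivially from the definition of $\Upsilon_0(U)$ as the complement of $\Upsilon_1(U)\cup\Upsilon_2(U)$ inside $\Upsilon(U)$.
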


\begin{proof}
To show that $\chi(\Upsilon(U))\le 2\Delta^2,$ associate to each hyperplane $H$ of $\Upsilon(U)$ the three colours of $H$ in the colourings of the graphs $\Upsilon_0(U),\Upsilon_1(U),$ and $\Upsilon_2(U)$ provided by Propositions \ref{prop:colseposc},\ref{prop:colnonseposc} and \ref{prop:upsilonbipartite}. Since $\chi(\Upsilon_0(U))=2$ and $\chi(\Upsilon_1(U))\le \Delta, \chi(\Upsilon_2(U))\le \Delta,$ the hyperplanes  of $\Upsilon(U)$ will be coloured with at most $2\Delta^2$ colours. Since each edge $H'H$ of $\Upsilon(U)$ is contained in at least one of the graphs $\Upsilon_0(U),\,\,\Upsilon_1(U),$ and $\Upsilon_2(U),$ the triplets of $H'$ and $H$ differ in at least one coordinate; thus the resulting triplet-colouring is a correct colouring in at most $2\Delta^2$ colours of each connected component of  $\Upsilon(U),$ and therefore of the whole graph  $\Upsilon(U)$.
\end{proof}

%
%

\section{Proof of Theorem 1}

\subsection{Colouring the contact graph $\Gamma ({\bf X})$}
We now colour the contact graph $\Gamma({\bf X})$, proving the first assertion of Theorem~\ref{theorem1}. The proof
is divided into several steps.



\medskip\textbf{Strategy:}  To show that $\chi(\Gamma({\bf X}))$ is bounded by a function of the maximum degree $\Delta$ of $G({\bf X}),$ we take an arbitrary but fixed base hyperplane $H_0$ and partition the contact  graph  $\Gamma({\bf X})$ into the spheres $S_k, k=0,1,\ldots,$  centered at $H_0.$ Now, if we show that the subgraph of $\Gamma({\bf X})$ induced by each sphere $S_k$ can be coloured in $\alpha(\Delta)$ colours, then combining a colouring of the spheres with even radius using the same set of $\alpha(\Delta)$ colours and a colouring of the spheres with odd radius using another set of $\alpha(\Delta)$ colours, we will obtain a correct colouring of $\Gamma({\bf X})$ into $2\alpha(\Delta)=\epsilon(\Delta)$ colours.

In order to colour $S_k$ in $\alpha(\Delta)$ colours, it suffices to colour each cluster $\mathcal C$ of $S_k$ in the contact graph $\Gamma({\bf X})$ with this number of colours, since distinct clusters in $S_k$ do not contain adjacent vertices. It follows from Corollary \ref{cor:diameter}  that each such cluster $\mathcal C$ has diameter at most 5. Therefore, if we pick an arbitrary hyperplane $V_0:=V_0^{\mathcal C}$ in $\mathcal C,$ then all hyperplanes $V$ of $\mathcal C$ have distance at most 5 to $V_0,$ i.e., $\mathcal C\subset B_5(V_0),$ where $B_r(V_0)=\{ V: \rho(V_0,V)\le r\}$ is the ball of $\Gamma({\bf X})$ of radius $r$ centered at $V_0.$ Therefore, if we show that $B_5(V_0)$ can be coloured with $\alpha(\Delta)$ colours, then taking the restriction of this colouring to $\mathcal C,$ we will obtain a colouring of $\mathcal C$ into at most $\alpha(\Delta)$ colours. Repeating this colouring procedure for each cluster $\mathcal C$ of $S_k$ with the same set of $\alpha(\Delta)$ colours, we will obtain the required colouring of $S_k$.

Let $q(r)$ be the number of colours necessary to colour the ball $B_r(V_0)$ of radius $r$ centered at $V_0.$  The main part of our proof is to establish the following recurrence $q(r)\le q(r-1)\cdot q(r-2)\cdot(2\Delta)\cdot(2\Delta^2)+q(r-1),$ yielding a bound $\alpha(\Delta)\leq q(5).$ Suppose that the ball $B_{r-1}(V_0)$ has been coloured in $q(r-1)$ colours and let $c$ be a colouring of $B_{r-1}(V_0)$ with this number of colours obtained in the recursive way. We will show how to extend $c$ to a colouring of $B_r(V_0)$ using the required number of colours by showing how to colour the hyperplanes from $S_r(V_0)$ using at most $q(r-1)\cdot q(r-2)\cdot(2\Delta)\cdot 2\Delta^2$ extra colours.

\medskip\textbf{Choosing fathers and grandfathers:}  Suppose that the hyperplanes of $\mathbf X$ are graded according to their distance in $\Gamma(\mathbf X)$ from $H_0$.  For each grade-$r$ hyperplane $H$, with $r\geq 2$, fix once and for all a canonical path $\gamma(H)$ in $\Gamma(\mathbf X)$ joining $H_0$ to $H$.  This determines a grandfather $f^2(H)=\gamma(H)(r-2)$ for $H$, and a set of potential fathers $V$ of $H$: as before, these are the hyperplanes $V$ with $f^2(H)\coll V\coll H$.  (The case $r<2$ is dealt with separately below.)  Now, fix a root vertex in each hyperplane.  The choice of root in $f^2(H)$ determines a father $f(H)$ of $H$, as above.

\medskip\textbf{Colouring:}  For each hyperplane $V,$ consider a colouring $c'$ in at most $2\Delta$ colours (we use the same set of at most $2\Delta$ colours for each hyperplane) of  the families of footprints or imprints of the set of all hyperplanes $H$ having $V$ as their father (this colouring is provided by Proposition~\ref{lem:footprintscolours} showing that $\chi({\mathcal F}(V))\le \chi({\mathcal J}(V))\le 2\Delta$). Additionally, for each hyperplane $U$ define a colouring $c''$ of the graph $\Upsilon(U)$ in at most $2\Delta^2$ colours (we use the same set of  $2\Delta^2$ colours to colour the graph $\Upsilon(U)$ for each hyperplane $U$). This colouring is provided by the Proposition~\ref{prop:colupsilon}, which shows that $\chi(\Upsilon(U))\le 2\Delta^2$. Recall that $\Upsilon(U)$ has as its vertex set the set ${\mathcal R}(U)$ of hyperplanes $H$ with $f^2(H)=U$. For $H\in {\mathcal R}(U)$ let $c''(H)$ be the colour of $H$ in the colouring of $\Upsilon(U)$ with at most $2\Delta^2$ colours. Notice that it suffices to define the colourings $c'$ and $c''$ only on hyperplanes of grades $r-1$ and $r-2,$ respectively.

Now, for a hyperplane $H$ of grade $r,$ we assign as a colour the ordered quadruplet
\[c(H)=(c(f(H)),c(f^2(H)),c'(F(H,f(H))),c''(H))\]
Clearly, the hyperplanes of $S_r(V_0)$ will be coloured with at most $q(r-1)\cdot q(r-2)\cdot(2\Delta)\cdot(2\Delta^2)$ colours. Notice also that two contacting grade $r$ and grade $r-1$ hyperplanes will be coloured differently because we use new colours for colouring $S_r(V_0).$ It remains to show that $c$ is a correct colouring of the hyperplanes of $S_r(V_0).$  This is the content of Lemma~\ref{lem:colouringcontact}.

We conclude that $q(r)\le 4\cdot\Delta^3\cdot q(r-1)\cdot q(r-2)+q(r-1).$ Notice that $q(0)=1$ since $S_0=B_0(V_0)=\{ V\}$. On the other hand, $q(1)\le 2\Delta$ because colouring the hyperplanes of the sphere $S_1$ is equivalent to colouring the intersection graph of their imprints in $V_0$ and this can be done with at most $2\Delta$ colours by Proposition \ref{lem:footprintscolours}. Easy calculations show that $q(2)\le 8\Delta^4+2\Delta$ and that $q(3)\leq 64\Delta^8+16\Delta^5+8\Delta^4+2\Delta.$  Hence, assuming $\Delta\geq 2$, we have:
\begin{eqnarray*}
q(5)&\leq&q(4)\left(4\Delta^3q(3)+1\right)\\
&\leq&\left(4\Delta^3q(3)+1\right)\left[4\Delta^3(8\Delta^4+2\Delta)+1\right]q(3)\\
&\leq&q(3)\left(Aq(3)+B\right),
\end{eqnarray*}

\noindent where $A=2^7\Delta^{10}+2^5\Delta^7+4\Delta^3$ and $B=2^5\Delta^{7}+8\Delta^4+1$.  Combining this estimate with our assumption $\Delta\geq 2$ implies that
\begin{eqnarray*}
q(5) \leq(582608+\frac{318064260}{2^{26}})\Delta^{26},
\end{eqnarray*}
whence we obtain $$\alpha(\Delta)=q(5)\leq582613\Delta^{26}.$$

Hence each sphere $S_k$ admits a colouring using at most $\alpha(\Delta)$ colours, and thus
\[\chi(\Gamma({\bf X}))\leq 2\alpha(\Delta)=\epsilon(\Delta)\]
for $\Delta\ge 2.$  When $\Delta\leq 1$, $\mathbf X$ is already a tree and has at most one hyperplane. This concludes the proof of the first assertion of Theorem 1, with $M\leq1165226$.

\medskip\textbf{Correctness:} The following lemma establishes that $c$ is a correct colouring of the contact graph $\Gamma(\mathbf X)$.
\begin{lemma} \label{lem:colouringcontact}
If $H,H'\in S_r(V_0)$ and $H\coll H',$ then $c(H)\ne c(H').$
\end{lemma}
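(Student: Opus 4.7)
The plan is to suppose for contradiction that $c(H)=c(H')$ and extract a contradiction from the equality of each coordinate of the colour tuple in turn. The four coordinates of $c$ were designed precisely to rule out the possible configurations of fathers and grandfathers of two contacting hyperplanes of the same grade. Since the global colouring is applied one cluster of $S_k(H_0)$ at a time, it suffices to treat the case in which $H$ and $H'$ have a common grade with respect to $H_0$, so that the Weak Combing Corollary~\ref{cor:weakcombing} is available.

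First I would analyse the grandfather coordinate: the equality $c(f^2(H))=c(f^2(H'))$, combined with the inductive correctness of $c$ on $B_{r-1}(V_0)$, forces the grandfathers either to coincide or to be non-contacting. On the other hand, Corollary~\ref{cor:weakcombing} applied to the contacting pair $H\coll H'$ of equal grade forces the grandfathers either to coincide or to contact. The only possibility compatible with both conclusions is $f^2(H)=f^2(H')$; I set $U:=f^2(H)=f^2(H')$, so that $H,H'\in\mathcal R(U)$. The same style of argument applied to the father coordinate $c(f(H))=c(f(H'))$ yields that either $f(H)=f(H')$, or the two fathers are distinct and do not contact.

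In the first sub-case, writing $V:=f(H)=f(H')$ and invoking Lemma~\ref{lem:contactingfootprints}, the hypothesis $H\coll H'$ gives $F(H,V)\cap F(H',V)\neq\emptyset$; these two footprints are therefore adjacent in the intersection graph of $\mathcal F(V)$, which is properly coloured by $c'$ in view of Proposition~\ref{lem:footprintscolours}, contradicting the equality of the third coordinates. In the remaining sub-case, the pair $H,H'$ contact, share the grandfather $U$, and have distinct non-contacting fathers, which is exactly the definition of an edge of $\Upsilon(U)$; since $c''$ is a proper colouring of $\Upsilon(U)$ by Proposition~\ref{prop:colupsilon}, the fourth coordinates must differ, another contradiction. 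The only delicate point is the applicability of Corollary~\ref{cor:weakcombing}, which relies on $H$ and $H'$ having the same grade; beyond that, each coordinate of the tuple is matched precisely to one potential configuration of ancestry, so the proof reduces to a short four-way case analysis with no serious obstacle.
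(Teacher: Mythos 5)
Your proposal is correct and follows essentially the same route as the paper's proof: you organise the argument as a contradiction while the paper proceeds by a direct case analysis, but both rely on Corollary~\ref{cor:weakcombing} to reduce to a common grandfather $U$, Lemma~\ref{lem:contactingfootprints} together with Proposition~\ref{lem:footprintscolours} when the fathers coincide, and Proposition~\ref{prop:colupsilon} on $\Upsilon(U)$ when the fathers are distinct and non-contacting, with the inductive hypothesis handling the cases where the fathers or grandfathers are distinct and contact. The four-way case split is the same and matches each coordinate of the quadruple colour to one configuration, exactly as in the paper.
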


\begin{proof} By the weak combing property established in Corollary \ref{cor:weakcombing},  the grandfathers of $H$ and $H'$ either contact or coincide. If $f^2(H)\coll f^2(H'),$ then by induction $c(f^2(H))\ne c(f^2(H')),$ whence $c(H)\ne c(H')$ because the quadruplets $c(H)$ and $c(H')$ differ in the second coordinate. So, further we will assume $H$ and $H'$ have the same grandfather, say $U=f^2(H)=f^2(H')$ (i.e., $H,H'\in {\mathcal R}(U)$.) Analogously, if $H$ and $H'$ have different but contacting fathers $f(H)$ and $f(H'),$ then $c(f(H))\ne c(f(H')),$ and thus $c(H)\ne c(H')$ because the quadruplets $c(H)$ and $c(H')$ differ in the first coordinate. On the other hand, if $H$ and $H'$ have the same father $V,$ then Lemma~\ref{lem:contactingfootprints} implies $F(H,V)\cap F(H',V)\neq\emptyset$, so that $c'(F(H,V))\ne c'(F(H',V)),$ whence $c(H)\ne c(H')$ because $c(H)$ and $c(H')$ differ in the third coordinate.

Finally, suppose that $H\coll H',$ $U=f^2(H)=f^2(H')$ and  the fathers  of $H$ and $H'$ are different and do not contact. 
By definition of the adjacency in the graph $\Upsilon(U),$ the hyperplanes $H$ and $H'$ are adjacent in $\Upsilon(U),$ and therefore the colours of $H$ and $H'$ are different in the colouring $c''$ of $\Upsilon(U).$ Hence $c(H)\ne c(H')$ because $c(H)$ and $c(H')$ differ in the fourth coordinate.
\end{proof}

\subsection{Embeddings in products of trees: colouring the crossing graph $\Gamma_{\#}({\bf X})$}\label{sec:productoftrees}
We now deduce from the existence of a finite colouring of $\Gamma({\bf X})$ that $\bf X$ isometrically embeds in the product of at most $M\Delta^{26}$ trees, using Proposition~\ref{prop:isometricembedding}.  Adopting the median graph point of view, one sees that Proposition~2 of~\cite{BandeltChepoiEppstein} also
suffices to embed $\mathbf X$ in the product of finitely many trees.
Since $\xing X$ is a subgraph of $\Gamma({\bf X})$, we have a colouring $c$ of the vertices of $\xing X$ by a set $\mathcal K$ of at most $M\Delta^{26}$ colours.  The result now follows from Corollary~\ref{cor:embeddingintrees}.



\subsection{The nice labeling problem: colouring the pointed contact graph $\Gamma_{\alpha}({\bf X})$.}
Let ${\bf X}_{\alpha}$ be a 2-dimensional CAT(0) cube complex, pointed at $\alpha$, and suppose that 0-cubes in $\bf X$ have maximal degree $\Delta$ and maximal out-degree $\Delta_0$.  Let $\Gamma_{\alpha}({\bf X})$ be the pointed contact graph. In view of first assertion of the theorem, it suffices to show that $\Delta\le \Delta_0+2$ for any 2-dimensional CAT(0) cube complex $\bf X$. (In fact, $\Delta\le \Delta_0+n$ holds for any $n$-dimensional CAT(0) cube complex and the proof is a consequence of the fact that intervals in median graphs are distributive lattices \cite{BandeltHedlikova}.) Let $\alpha$ be the basepoint and suppose by way of contradiction that a vertex $v$ of $G_{\alpha}({\bf  X})$ contains three incoming neighbours $v_1,v_2,v_3$. From the definition of the basepoint order on  $G({\bf  X})$ it follows that $v_1,v_2,v_3$ are closer to $\alpha$ than the vertex $v,$ i.e., $v_1,v_2,v_3\in I({\alpha},v).$ Denote by $u_{i,j}$ the median of the triplet ${\alpha},v_i,v_j$. Since $v_i$ and $v_j$ are at distance 2, $u_{i,j}$ is adjacent to $v_i$ and $v_j.$  The vertices $u_{1,2},u_{1,3},$ and $u_{2,3}$ are pairwise distinct, otherwise $G({\bf  X})$ would contain a $K_{2,3}$, which is impossible in a median graph. Now, let $u$ be the median of the triplet $u_{1,2},u_{1,3},u_{2,3}.$ The vertex $u$ is different from $v$ and is adjacent to each vertex of this triplet. As a result, the vertices $v,v_1,v_2,v_3,u_{1,2},u_{1,3},u_{2,3},u$ define a 3-dimensional cube of $G({\bf X})$ contrary to the 2-dimensionality of $\bf X$.



\section{Proof of Theorem 2}
In this section, we construct an example establishing Theorem~\ref{theorem2} by applying to the construction in~\cite{ChepoiNiceLabeling} the ``recubulation'' construction in~\cite{HagenQuasiArb}.  Given a CAT(0) cube complex $\bf X$, the contact graph $\Gamma({\bf X})$ can be realized as the crossing graph of a larger cube complex $\recube X$ that contains $\bf X$ as an isometrically embedded subcomplex; $\recube X$ is the \emph{recubulation of $\bf X$}, whose construction is given in~\cite{HagenQuasiArb}.  In the next proposition, we review the construction of $\recube X$ and establish several useful properties.

\begin{proposition}\label{prop:recube}
Let $\bf X$ be a CAT(0) cube complex and let $\Delta$ be the maximum degree of a 0-cube in $\bf X$ (i.e. the cardinality of a largest clique in the contact graph $\Gamma({\bf X})$).  Then there exists a CAT(0) cube complex $\recube X$ and a combinatorial isometric embedding ${\bf X}\rightarrow\recube X$ such that:
\begin{enumerate}
\item The hyperplanes of $\recube X$ are in a bijection with those of $\bf X$.
\item $\Gamma({\bf X})=\xing{\recube X}$.
\item $\dimension{\recube X}=\Delta$, and each 0-cube of $\recube X$ has degree at most $\Delta^2+\Delta$.

\end{enumerate}
More generally, if $\xing X\subseteq\Gamma_{\alpha}\subseteq\Gamma({\bf X})$, then there exists a CAT(0) cube complex $\mathfrak R_{\alpha}$ of dimension at most $\omega(\Gamma_{\alpha})$ and maximal degree at most $\Delta^2+\Delta$ such that $\mathfrak R_{\alpha}$ contains an isometric copy of $\bf X$ and the crossing graph of $\mathfrak R_{\alpha}$ is equal to $\Gamma_{\alpha}$.
\end{proposition}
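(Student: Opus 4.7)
The plan is to construct $\recube{X}$ as the cube complex dual to a wallspace whose walls are the hyperplanes of $\mathbf X$, but with a relaxed Helly condition permitting the ``missing'' quarterspaces at osculating pairs. Explicitly, take as 0-cubes of $\recube{X}$ the maps $v\colon \mathcal H \to \bigsqcup_{H}\{\mathbf A(H), \mathbf B(H)\}$ with $v(H)\in\{\mathbf A(H),\mathbf B(H)\}$ for each $H$, subject to the two conditions: (i) $v(H)\cap v(H')\cap \mathbf X^{(0)}\neq \emptyset$ whenever $H$ and $H'$ do \emph{not} contact in $\mathbf X$; and (ii) $v$ agrees with the orientation $v_x$ induced by some $x\in\mathbf X^{(0)}$ outside finitely many hyperplanes. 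Join two such orientations by a 1-cube when they differ on exactly one hyperplane, and attach higher-dimensional cubes whenever a set of pairwise-flippable hyperplanes is available (the flag condition). The bijection between the hyperplanes of $\recube{X}$ and $\mathcal H$ is immediate, since flipping a single hyperplane $H$ is well-defined regardless of the ambient choices.

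To establish assertion (2), two hyperplanes $H,H'$ cross in $\recube{X}$ iff every quarterspace contains a 0-cube. If $H$ and $H'$ do not contact in $\mathbf X$, condition (i) forces the missing quarterspace to remain empty. If they cross in $\mathbf X$, the 2-cube witnessing the crossing lies in $\mathbf X$ already. If they osculate in $\mathbf X$, the relaxation of (i) provides a 0-cube in the previously-empty quarterspace, starting from any 0-cube $x\in N(H)\cap N(H')$ and flipping either or both of $v_x(H)$ and $v_x(H')$; one verifies (i) is preserved because only contacting hyperplanes are being flipped relative to $x$. Hence $\xing{\recube{X}}=\Gamma(\mathbf X)$. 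The canonical inclusion $\mathbf X^{(0)}\hookrightarrow \recube{X}^{(0)}$ given by $x\mapsto v_x$ extends to a combinatorial isometric embedding $\mathbf X\to\recube{X}$ by Proposition \ref{prop:isometricembedding}: the map is bijective on hyperplane sets, sends crossings to crossings, and preserves the separation relation between hyperplanes (condition (i) prevents any ``jumping over'' a non-contacting hyperplane by a 0-cube outside $\mathbf X$).

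For the dimension bound, a $d$-cube of $\recube{X}$ corresponds to $d$ pairwise-crossing hyperplanes in $\recube{X}$, i.e., a $d$-clique in $\Gamma(\mathbf X)$; the largest clique in $\Gamma(\mathbf X)$ has size $\omega(\Gamma(\mathbf X))=\Delta$, realized by the hyperplanes through a maximum-degree 0-cube, so $\dimension{\recube{X}}=\Delta$. The degree bound is the delicate step: each 0-cube $v$ of $\recube{X}$ can be written as $v_x$ flipped on a contact-clique $K\subseteq\mathcal H(x)$, where $\mathcal H(x)$ is the set of hyperplanes carrying a 1-cube at $x$, so $|K|\leq\Delta$. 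The 1-cubes at $v$ that flip an element of $K$ contribute $\leq\Delta$ to the degree. Any other incident 1-cube flips some $H'\notin K$; preservation of (i) after flipping forces $H'\coll H$ for each $H\in K$, and $H'$ must correspond to a hyperplane through one of the $\leq \Delta$ neighbours of $x$ in $\mathbf X$ through the hyperplanes of $K$. A pigeonhole argument on these neighbours (each contributing $\leq\Delta$ further hyperplanes) yields $\leq\Delta^2$ such $H'$, for a total of $\Delta^2+\Delta$.

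The generalized statement is obtained by running the same construction with (i) relaxed only for those pairs $\{H,H'\}$ that form an edge of $\Gamma_\alpha$ but do not cross in $\mathbf X$; then $\xing{\mathfrak R_\alpha}=\Gamma_\alpha$, the dimension becomes $\omega(\Gamma_\alpha)$, and the degree bound is unchanged (the argument only used that one is flipping along cliques of hyperplanes that are allowed to pairwise cross in the output complex). The main obstacle is the degree computation: one must track precisely which new flippable hyperplanes appear at recubulated corners and confirm that (i) persists after each flip. The hyperplane-bijection and the isometric-embedding assertions follow essentially formally from Proposition \ref{prop:isometricembedding} together with the standard wallspace-to-cube-complex machinery of \cite{HruskaWiseAxioms, ChatterjiNiblo, NicaCubulating}.
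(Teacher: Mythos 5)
Your approach differs genuinely from the paper's. You define $\recube X$ directly as the dual cube complex of a \emph{relaxed} wallspace on $\mathbf X^{(0)}$, requiring half\-space consistency only for non-contacting pairs, whereas the paper first builds an intermediate complex $\mathbf X'$ by explicitly attaching 2-cubes at each osculation, then dualizes the wallspace $\bigl((\mathbf X')^0,\{W(H)\}\bigr)$. Your route is more economical conceptually and plausibly produces the same complex, but it has a cost: the standard wallspace machinery guarantees that the dual cube complex is CAT(0); once the consistency axiom is weakened, flag-ness of links and simple-connectedness are not automatic and you do not verify them. You also assert, without proof, that every 0-cube of $\recube X$ is $v_x$ flipped on a contact-clique $K\subseteq\mathcal H(x)$, which is not obvious and in fact needs care about which $x$ to choose.

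The degree bound is where your argument breaks down concretely. You claim that if $v=v_x$ flipped on $K$ and $H'\notin K$ is flippable at $v$, then preservation of condition (i) forces $H'\coll H$ for every $H\in K$. This is false. Take $\mathbf X$ the subdivided line with 0-cubes $\mathbb Z$, hyperplanes $H_n$ at $n+\frac12$, $\Delta=2$. Set $x=0$, $K=\{H_0\}$, so $v=v_0$ flipped on $H_0$. Then $v(H_0)=\{k\geq 1\}$ and one checks that $H_2$ is flippable at $v$: the only non-contacting hyperplane whose half\-space could block the flip is $H_0$, but $\overline{v_x(H_0)}\cap\overline{v_x(H_2)}=\{k\geq 1\}\cap\{k\geq 3\}=\{k\geq 3\}\neq\emptyset$. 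Yet $H_2$ and $H_0$ are at distance $2$ and do \emph{not} contact. The obstruction you need is $\overline{v_x(H)}\cap\overline{v_x(H')}=\emptyset$, which happens only when $H$ and $H'$ face away from $x$; when $H'$ lies \emph{beyond} $H$ from $x$, the intersection is nonempty and the flip is allowed even though $H\not\coll H'$. Consequently the pigeonhole count collapses. (Additionally, when $K=\emptyset$ -- i.e.\ $v\in\mathbf X$ -- your counting degenerates entirely, and the $H'$ such as $H_{-1}$ that belong to $\mathcal H(x)\setminus K$ are not covered by the stated cases.) The paper avoids these problems by a different structure: for $x\in\mathbf X$ it constructs an explicit injection from new neighbours of $x$ to 0-cubes of $\mathbf X$ at distance $2$, and for $x\notin\mathbf X$ it shows via convexity and the Helly property (Claims 1--4) that all maximal cubes containing $x$ meet $\mathbf X$, reducing to the first case. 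To salvage your proof you would need to replace the contact criterion by the correct separation criterion and then actually produce an injection of the new flippable hyperplanes into a set of size $\Delta^2$, which is essentially what the paper's disc-diagram-free argument accomplishes.
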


\begin{proof}
The idea of recubulation is to force each pair of osculating hyperplanes (corresponding to an osculation-edge in $\Gamma_{\alpha}$) in $\bf X$ to cross by the addition of a set of 2-cubes into which those hyperplanes extend; further cubes of higher dimension are added where necessary to satisfy the link condition of~\cite{GromovHyperbolic} and make $\recube X$ a CAT(0) cube complex.  The following argument shows that, for any subgraph $\Gamma_{\alpha}\subseteq\Gamma(\mathbf X)$ that contains $\xing X$, there is a CAT(0) cube complex $\mathfrak R_{\alpha}$ with $\mathbf X\subseteq\mathfrak R_{\alpha}\subseteq\recube X$, such that $\mathfrak R_{\alpha}$ contains $\mathbf X$ as an isometrically embedded subcomplex and has dimension at most $\omega(\Gamma_{\alpha})$ and degree at most $\Delta^2+\Delta$.

\noindent{\textbf{The intermediate complex $\mathbf X'$:}}  Let $H$ and $H'$ be osculating hyperplanes in $\mathbf X$.  By definition, there exists a 0-cube $v$ and distinct 1-cubes $e,e'$ incident to $v$ such that $e$ is dual to $H$ and $e'$ to $H'$.  Attach a 2-cube $s$ to $\mathbf X$ by identifying two consecutive 1-cubes of $s$ with the $e$ and $e'$ respectively, so that $s\cap\mathbf X$ consists of the path $ee'$.  Perform this procedure for each pair $(e,e')$ of 1-cubes of $\mathbf X$ corresponding to a pair of osculating hyperplanes.  (If one is recubulating with respect to a subgraph $\Gamma_{\alpha}\subseteq\Gamma(\mathbf X)$, then one only performs this construction on pairs $(e,e')$ that realize an osculation-edge of $\Gamma_{\alpha}$.) Denote the resulting cube complex by $\mathbf X'$.

\noindent{\textbf{Constructing $\recube X$ and $\mathfrak R_{\alpha}$:}}  By construction, each hyperplane $H\in \mathcal H$ of $\mathbf X$ extends to a subspace $W(H)$ of $\mathbf X'$ that separates $(\mathbf X')^0$ into exactly two components.  Indeed, $W(H)$ consists of $H$, together with the midcube of $s$ dual to the 1-cube $e$ of $s\cap\mathbf X$ dual to $H$, for each new 2-cube $s$ that was attached at the site of an osculation of $H$ with some other hyperplane of $\mathbf X$.  In the language of~\cite{HaglundPaulin}, $W(H)$ is a \emph{wall} in $(\mathbf X)^0$ whose halfspaces $\mathfrak h(W(H))$ and $\mathfrak h^*(W(H))$ respectively contain the 0-skeleta of $\mathbf A(H)$ and $\mathbf B(H)$.  More precisely, if $H$ extends into a new 2-cube $s$ of $\mathbf X'$, then $H$ is dual in $\mathbf X$ to a unique 1-cube $e$ of $s$ and $W(H)$ intersects $s$ in a midcube $c$ of $s$.  $\mathfrak h(W(H))$ consists of $\mathbf A(H)$ together with the halfspace of each such $s$ induced by $c$ that contains the 0-cube of $e$ lying in $\mathbf A(H)$.

By construction, the assignment $H\mapsto W(H)$ is bijective, and the walls $W(H)$ and $W(H')$ cross if and only if $H\coll H'$ (or, more generally, if and only if $H$ and $H'$ are adjacent in $\Gamma_{\alpha}$).  Let $\recube X$ be the cube complex dual to the wallspace $\left((\mathbf X')^0,\{W(H):H\in{\mathcal H}\}\right)$.  By the definition of the cube complex dual to a wallspace (see e.g.~\cite{ChatterjiNiblo}), the set of hyperplanes of $\recube X$ corresponds bijectively to the set of walls, and therefore to the set $\mathcal H$ of hyperplanes of $\mathbf X$. This establishes assertion (1).

Let $v_0\in\mathbf X^0\subseteq(\mathbf X')^0$ be a 0-cube.  Recall that a 0-cube $x\in\recube X$ is a choice $x(W(H))$ of halfspace associated to each wall $W(H)$ such that $v_0\in x(W(H))$ for all but finitely many hyperplanes $H$ of $\mathbf X$ and $x(W(H))\cap x(W(H'))\neq\emptyset$ for all $H,H'$.  In particular, if $x\in\mathbf X$ is a 0-cube, then one can make a choice $\phi(x)(W(H))$ of halfspace of $(\mathbf X')^0$, for each wall $W(H)$, by declaring $\phi(x)(W(H))$ to be the halfspace of $\mathbf X'$ containing $v_0$ if and only if $x(H)$ contains $v_0$.  This yields an injective map $x\mapsto\phi(x)$ from $\mathbf X^0$ to $\recube X^0$.  Moreover, it is easily checked that $\phi(x)$ and $\phi(x')$ differ on the wall $W(H)$ if and only if $x$ and $x'$ are separated in $\mathbf X$ by the hyperplane $H$.  It follows that $\mathbf X$ embeds in $\recube X$ in such a way that each hyperplane $H$ of $\mathbf X$, viewed as a subcomplex of $\recube X$, is equal to the intersection of $\mathbf X$ with a hyperplane $\widehat H$ of $\recube X$.

\noindent{\textbf{Isometric embedding:}}  By the construction of $\recube X$, any hyperplane $H$ of $\mathbf X$ is the trace on $\mathbf X$ of a hyperplane of $\recube X$ and, vice-versa, each hyperplane of $\recube X$ is the extension to $\recube X$ of a hyperplane of $\mathbf X$. Therefore any two 0-cubes $x,y$ of $\mathbf X$  are separated in $\recube X$ and $\mathbf X$ by the same number of hyperplanes. Hence, the graph $G({\mathbf X})$ is isometrically embedded in the 1-skeleton $G({\recube X})$ of $\recube X$.

\noindent{\textbf{Comparing $\Gamma(\mathbf X)$ and $\xing{\recube X}$:}}  If $H\coll H'$, then the walls $W(H)$ and $W(H')$ cross, hence $\Gamma(\mathbf X)\subseteq\xing{\recube X}$.
Conversely, if the walls $W(H)$ and $W(H')$ cross, then either $H$ and $H'$ already cross in $\mathbf X$, or $W(H)$ and $W(H')$ cross in a 2-cube $s$ of $\mathbf X'$ with the property that $s\cap\mathbf X$ is a path $ee'$ with $e$ dual to $H$ and $e'$ to $H'$.  Hence $H\coll H'$, whence $\xing{\recube X}\subseteq\Gamma(\mathbf X)$, establishing thus assertion (2).

\noindent{\textbf{Bounds on the dimension:}}  Let $\Gamma_{\alpha}\subseteq\Gamma(\mathbf X)$ be a subgraph containing $\xing X$.  Let $\mathfrak R_{\alpha}$ be the recubulation of $\mathbf X$ corresponding to $\Gamma_{\alpha}$ and let $\recube X$ be that corresponding to $\Gamma(\mathbf X)$, i.e. hyperplanes in $\mathfrak R_{\alpha}$ cross if and only if the corresponding hyperplanes of $\mathbf X$ are adjacent in $\Gamma_{\alpha}$, and hyperplanes in $\recube X$ cross if and only if they contact in $\mathbf X$.  By construction, $\mathbf X\subseteq\mathfrak R_{\alpha}\subseteq\recube X$, so that it suffices to bound the dimension and degree of $\recube X$.  Since a maximal family of pairwise-crossing hyperplanes in $\recube X$ corresponds bijectively to a maximal family of pairwise-contacting hyperplanes in $\mathbf X$, it is clear that $\dimension(\recube X)\leq\Delta$, proving the first inequality in assertion (3).  (In fact, an identical argument shows that $\mathfrak R_{\alpha}$ has dimension bounded by the clique number $\omega(\Gamma_{\alpha})$ of $\Gamma_{\alpha}$.)

\noindent{\textbf{The intersection with $\mathbf X$ of a maximal cube of $\recube X$:}}  For each maximal cube $C$ of $\recube X$, we shall show that $C^*=C\cap\mathbf X$ is nonempty and 
$C^*$ contains a 1-cube dual to each hyperplane of $\recube X$ that crosses $C$.  Indeed, let $0<d\leq\Delta$ be the dimension of $C$ and let $\widehat H_1,\ldots,\widehat H_d$ be the hyperplanes of $\recube X$ that cross $C$.  For $1\leq i\leq d$, let $H_i=\widehat H_i\cap\mathbf X$ be the corresponding hyperplane of $\mathbf X$.  Now, $K=\bigcap_{i=1}^dN(H_i)\neq\emptyset$.  Indeed, since the hyperplanes $\widehat H_i$ pairwise-cross,  the hyperplanes $H_i$ pairwise-contact in $\mathbf X$, whence $K$ is a nonempty convex subcomplex of $\mathbf X$ by the Helly property. Suppose that there exists a hyperplane $\widehat H$ of $\recube X$ that separates $K$ from $C$.  Any hyperplane $\widehat H_i$ that crosses both $K$ and $C$ must cross $\widehat H$, and thus $\{\widehat H_i\}_{i=1}^d\cup\{\widehat H\}$ is a family of pairwise-crossing hyperplanes in $\recube X$, contradicting the fact that $C$ is a maximal cube.  Hence no hyperplane of $\recube X$
can separate $K$ from $C$.

\medskip\noindent
{\bf Claim 1:} {\it $K\subseteq C$.}

\medskip\noindent
{\bf Proof of Claim~1:}  Suppose by way of contradiction that there exists a 0-cube $k\in K-C.$ Then some hyperplane of $\recube X$ separates $k$ from $C$.  
Among the hyperplanes of $\recube X$ separating $k$ from $C,$ let $\widehat H$ be a closest one to $k$. Then $k\in N(\widehat H)$. Let $kk'$ be the 1-cube of $\recube X$ dual to $\widehat H$.
We assert that $\widehat H$ crosses any hyperplane $\widehat H_i, i=1,\ldots d,$ which crosses the cube $C$. Let $u_iv_i$ be an 1-cube of $C$ dual to $\widehat H_i$.
Since $k\in K\subseteq N(H_i),$ there exists a 1-cube $kk_i$ of $\mathbf X$ dual to $H_i$ and therefore to $\widehat H_i$. Suppose that $k$ and $u_i$ belong to the same halfspace,
say ${\mathbf A}({\widehat H_i})$, of $\recube X$ defined by $\widehat H_i$, while $k_i$ and $v_i$ belong to the complementary halfspace ${\mathbf B}({\widehat H_i})$.

The vertex-set $C^0$ of the cube $C$ is a convex subset of the median graph $G({\recube X})$ and, since the convex sets of median graphs are gated,  $C^0$ is a gated
subset of $G({\recube X}).$ Let $x$ be the gate of $k$ in $C^0$. From the choice of $\widehat H$, we conclude that $k'$ belongs in $G({\recube X})$ to the interval
$I(k,x)$.  Since, $x\in I(k,u)$ for any $u\in C,$ in particular $x\in I(k,v_i),$ necessarily $k'\in I(k,v_i)$. Analogously,
since $k\in {\mathbf A}({\widehat H_i})$ is adjacent to $k_i$ and $k_i,v_i\in {\mathbf B}({\widehat H_i}),$ $k_i$ lies on a shortest path between $k$
and $v_i$, thus $k_i\in I(k,v_i)$. Let $v$ be the median
in $G({\recube X})$ of the triplet $v_i,k_i,k'$ (recall that $\{ v\}=I(v_i,k_i)\cap I(k_i,k')\cap I(k',v_i)$). Then $v\ne k$ and $vk',vk_i$ are 1-cubes of $\recube X$.
The 0-cubes $v,k',k,k_i$ define a 4-cycle of  $G({\recube X})$
and therefore a 2-cube of $\recube X$. This implies that the 1-cube $vk'$ is dual to the hyperplane  $\widehat H$ while the 1-cube $vk_i$ is dual to the hyperplane
$\widehat H_i,$ hence $\widehat H$ and $\widehat H_i$ cross in $\recube X$. As a result, we conclude that  $\{\widehat H_i\}_{i=1}^d\cup\{\widehat H\}$ is a family of
pairwise-crossing hyperplanes of $\recube X$, contradicting the fact that $C$ is a maximal cube of $\recube X$. This contradiction shows that indeed $K\subseteq C$, thus proving that $C^*$ is nonempty. $\Box$


\medskip\noindent
{\bf Claim 2:} {\it $C^*$ contains a 1-cube dual to each hyperplane of $\recube X$ that crosses $C$.}

\medskip\noindent
{\bf Proof of Claim~2:} Consider any hyperplane $\widehat{H_i}$ crossing the cube $C$.  If $\widehat{H_i}$ crosses $K,$ then necessarily $\widehat{H_i}$ crosses $C^*$ and we are done. So, suppose that $\widehat{H_i}$ is disjoint from $K$. Since $K$ is contained in the carrier of $H_i=\widehat{H_i}\cap {\mathbf X},$ any 0-cube $k$ of $K$ belongs to a 1-cube $kk'$ of $\mathbf X$ dual to $H_i$. On the other hand, since $k\in K\subseteq C$ and $\widehat{H_i}$ crosses the cube $C,$ necessarily there exists a 1-cube $kk''$ of $C$ dual to $\widehat{H_i}$. Since the 1-cube $kk'$ is also dual to  $\widehat{H_i}$, we conclude that $k'=k'',$ i.e., $k'\in C\cap {\mathbf X}=C^*,$ whence $kk'$ is a 1-cube of $C^*$ dual to $\widehat{H_i}$. $\Box$



\medskip

\noindent{\textbf{Bounds on the maximum degree:}} Next we will show that the degree of any 0-cube $x$ of ${\recube X}$ is  bounded by $\Delta^2+\Delta$. First suppose that $x\in\mathbf X$.  There are at most $\Delta$ 0-cubes in $\mathbf X$ adjacent to $x$.  If $y\in\recube X-\mathbf X$ is a 0-cube adjacent to $x$ that was added to $\mathbf X$ during recubulation, then by construction, there is a path $[x,y,z]$ of length 2 in $\recube X$ such that $z\in\mathbf X^0$ and $P=[x,y,z]$ is a concatenation of two 1-cubes lying on the boundary  of a 2-cube $s\subset\recube X$ that was added during recubulation.  Let $Q=[x,w,z]$ be another path of length 2 of $s$.   For each $y\in\recube X-\mathbf X$ adjacent to $x$, there is thus a 0-cube $z=z(y)\in\mathbf X$ at distance 2 from $x$ such that, for some $w(y)\in\mathbf X$ adjacent to $x$, the 4-cycle $[x,w(y),z(y),y,x]$ bounds a 2-cube in $\recube X$ that does not appear in $\mathbf X$.  Moreover, each path $[x,y',z(y)]$ with $y'\in\recube X-\mathbf X$ lies on the boundary of a 2-cube $s'$ with the same dual hyperplanes as $s$, so since the hyperplanes dual to 1-cubes incident to $x$ are all distinct, $y'=y$.  Hence the assignment $y\mapsto z(y)$ is injective, and the degree of $x$ is thus bounded by the number of 0-cubes of $\mathbf X$ at distance 1 or 2 from $x$, i.e. by $\Delta^2+\Delta$.

Now suppose that $x\in \recube X-\mathbf X$ and let $\mathcal C$ be the set of all maximal cubes of $\recube X$ containing $x$.

\medskip\noindent
{\bf Claim 3:} {\it For each $C$ in $\mathcal C$, the intersection $C^*$ of $C$ and $\mathbf X$ is non-empty and convex in $\mathbf X.$}

\medskip\noindent
{\bf Proof of Claim~3:} It was shown in Claim 1 that $C^*\neq\emptyset$.  Now, since each cube $C$ of $\recube X$ is convex (namely, the set of 0-cubes of $C$ is convex in the graph $G({\recube X})$ and $G({\bf X})$ is an isometric subgraph of $G({\recube X})$, necessarily the set of vertices of $C^*$ is convex in $G({\bf X})$, hence $C^*$ is convex in $\bf X$. $\Box$

\medskip\noindent
{\bf Claim 4:} {\it For all $C_1, C_2$ in $\mathcal C$, the intersection of $C^*_1$ and $C^*_2$ is nonempty.}

\medskip\noindent
{\bf Proof of Claim~4:}  Suppose by way of contradiction that $C^*_1\cap C^*_2=\emptyset$. Since by Claim 3, $C^*_1$ and $C^*_2$ are convex in $\mathbf X$ and median graphs satisfy the Kakutani separation property (see~\cite{vandeVel_book}, Chapter~I.3), there exists a hyperplane $H$ of $\mathbf X$ separating $C^*_1$ from $C^*_2$. This hyperplane cannot cross either of the subcomplexes $C^*_1$ or $C^*_2$.  On the other hand, since $C_1$ and $C_2$ both contain $x$, the hyperplane $H$ extends to a hyperplane of $\recube X$ that crosses $C_1$ or $C_2$ and, as proved in Claim 2, $H$ is dual to a 1-cube of $C^*_1$ or $C^*_2$.  This is a contradiction. $\Box$

\smallskip
From Claims 3 and 4, and the Helly property, it follows that the intersection of all of the $C^*$ is nonempty as $C$ varies in $\mathcal C$, since $\mathcal C$ is finite.  In other words, the intersection of all maximal cubes of $\recube X$ that contain $x$ contains a 0-cube $y$ of $\mathbf X$.  Hence the degree in $\recube X$ of $x$ is bounded by the degree of $y$ in $\recube X$, which was shown before to be at most $\Delta^2 + \Delta.$ This complete the proof of the assertion (3) of Proposition \ref{prop:recube}.
\end{proof}

From Propositions~\ref{prop:recube} and \ref{prop:isometricembedding}, we now obtain:
%
%
%

\begin{corollary}\label{cor:noembeddinginproductoftrees}
For any CAT(0) cube complex $\bf X$, if there is an isometric embedding $\recube X\rightarrow\bf Y$, where $\bf Y$ is the product of $n$ trees, then $n\geq\chi(\Gamma({\bf X}))$.  In particular, if there exists a CAT(0) cube complex $\bf X$ with maximum degree $\Delta$ such that the chromatic number of $\Gamma({\bf X})$ is infinite, then there exists a CAT(0) cube complex, namely $\recube X$, such that the maximum degree of a 0-cube in $\recube X$ is at most $\Delta^2+\Delta$, and $\recube X$ does not embed isometrically in the product of finitely many trees.
\end{corollary}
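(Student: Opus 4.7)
The plan is to assemble the corollary directly from two results that have already been established earlier in the paper, namely Proposition~\ref{prop:recube} and Corollary~\ref{cor:embeddingintrees}. There is essentially no new combinatorics or geometry required; the work is in chaining the equivalences correctly, and the only substantive observation is that the recubulation procedure converts osculations into crossings without changing the hyperplane set or destroying isometric embeddability of $\mathbf X$ into the ambient complex.

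First I would recall that, by Proposition~\ref{prop:recube}(1)--(2), the hyperplanes of $\recube X$ are in natural bijection with those of $\mathbf X$, and the crossing graph $\xing{\recube X}$ coincides under this bijection with the full contact graph $\Gamma(\mathbf X)$. In particular, $\chi(\xing{\recube X})=\chi(\Gamma(\mathbf X))$. Then I would invoke Corollary~\ref{cor:embeddingintrees}, which asserts that a CAT(0) cube complex isometrically embeds in a product of $n$ trees if and only if its crossing graph admits a proper $n$-colouring. Applied to $\recube X$, the existence of an isometric embedding $\recube X\rightarrow\mathbf Y$ into a product of $n$ trees forces $n\geq\chi(\xing{\recube X})=\chi(\Gamma(\mathbf X))$, which is the first assertion.

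For the second assertion, suppose $\mathbf X$ is a CAT(0) cube complex whose 0-cubes have degree at most $\Delta$ and for which $\chi(\Gamma(\mathbf X))=\infty$. By Proposition~\ref{prop:recube}(3) the complex $\recube X$ has degree at most $\Delta^2+\Delta$ at each 0-cube. If $\recube X$ admitted an isometric embedding into the Cartesian product of finitely many, say $n$, trees, the first assertion would yield $n\geq\chi(\Gamma(\mathbf X))=\infty$, a contradiction. Thus $\recube X$ is the desired finite-degree CAT(0) cube complex that fails to embed isometrically in a product of finitely many trees.

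The only point that requires any care is making sure we apply Corollary~\ref{cor:embeddingintrees} to $\recube X$ rather than to $\mathbf X$; the corollary is stated in terms of the crossing graph of whatever complex is being embedded, so the substitution of the ``crossified'' contact graph via Proposition~\ref{prop:recube} is precisely what makes the chromatic lower bound on $n$ come out in terms of $\chi(\Gamma(\mathbf X))$ rather than $\chi(\xing{\mathbf X})$. Since this substitution is already built into Proposition~\ref{prop:recube}(2), no obstruction arises, and the proof is essentially a two-line deduction.
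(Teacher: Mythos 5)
Your proof is correct and follows essentially the same route as the paper, which derives the corollary from Proposition~\ref{prop:recube} together with Proposition~\ref{prop:isometricembedding}; you simply route the latter through the already-packaged Corollary~\ref{cor:embeddingintrees}, which is the natural intermediate step. The key observations — that recubulation preserves the hyperplane set and turns $\Gamma(\mathbf X)$ into the crossing graph of $\recube X$, and that embeddability in $n$ trees is controlled by the chromatic number of the crossing graph — are exactly the ones the paper relies on.
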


The main ingredient in the remaining part of the proof of Theorem~\ref{theorem2} is the following result from~\cite{ChepoiNiceLabeling}.

\begin{proposition}\label{prop:pointedcontact}
There exists $\Delta<\infty$ such that, for each $n\geq 0$, there exists a finite, 4-dimensional pointed CAT(0) cube complex ${\bf X}'_n$ such that each 0-cube of ${\bf X}'_n$ has degree at most $\Delta$ and the contact graph of ${\bf X}'_n$ contains a subgraph $\Gamma_{n,\alpha}$ such that $\Gamma_{\#}({\bf X}'_n)\subseteq\Gamma_{n,\alpha}$ and $\Gamma_{n,\alpha}$ has clique number at most 5 and chromatic number greater than $n$.
\end{proposition}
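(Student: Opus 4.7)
My plan is to adapt Burling's construction of axis-parallel boxes in $\reals^3$, which produces families with bounded clique number but unbounded chromatic number (\cite{Burling}, as discussed in Section 2), to the setting of CAT(0) cube complexes. I would start by fixing, for each $n$, a Burling family $\mathcal{B}_n$ of axis-parallel boxes in $\reals^3$ whose intersection graph has clique number $2$ and chromatic number exceeding $n$. The combinatorial information carried by this family -- which pairs of boxes intersect, and in which coordinate slab -- will be encoded as a finite wallspace in the sense of \cite{HruskaWiseAxioms,HaglundPaulin}, whose associated dual cube complex will be $\mathbf{X}'_n$.

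Next, I would construct the wallspace as follows. The underlying set $\mathcal S$ consists of the vertices of the axis-parallel arrangement determined by the boxes of $\mathcal B_n$, together with a marked ``external'' point $\alpha$ that is not contained in any box. For each box $B \in \mathcal B_n$ I would introduce a bounded collection of walls: the three coordinate slabs bounding $B$ along the $x$-, $y$-, and $z$-directions, plus a small fixed number of auxiliary bookkeeping walls whose role is to transform ``geometric intersection of $B$ and $B'$'' into ``osculation of the corresponding hyperplanes with respect to $\alpha$''. Because Burling's construction takes place in only three coordinate directions and has clique number $2$, no five walls in the resulting wallspace can pairwise cross, so by the standard dual construction, $\mathbf{X}'_n$ is at most 4-dimensional. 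A uniform bound $\Delta$ on the degree of $0$-cubes follows because each $0$-cube lies in only a constant number of box-slabs, as the local intersection pattern of Burling's boxes at any point is uniformly bounded.

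I would then take the basepoint to be the canonical $0$-cube of $\mathbf{X}'_n$ corresponding to the external point $\alpha$; this simultaneously orients all walls. The subgraph $\Gamma_{n,\alpha}$ would be defined to consist of all edges of $\Gamma(\mathbf{X}'_n)$ arising either from a crossing (so $\xing{X}'_n \subseteq \Gamma_{n,\alpha}$) or from those osculation-edges of the pointed contact graph $\Gamma_\alpha(\mathbf{X}'_n)$ whose auxiliary bookkeeping walls certify an actual box-intersection in $\mathcal B_n$. With these choices, the assignment $B \mapsto H_B$ that sends each Burling box to a distinguished hyperplane in $\mathbf{X}'_n$ induces a graph homomorphism from the intersection graph of $\mathcal B_n$ into $\Gamma_{n,\alpha}$, so any proper colouring of $\Gamma_{n,\alpha}$ pulls back to a proper colouring of $\mathcal B_n$, forcing $\chi(\Gamma_{n,\alpha}) > n$.

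The main obstacle will be the bookkeeping required to simultaneously enforce $\omega(\Gamma_{n,\alpha}) \le 5$ and $\Gamma_{\#}(\mathbf{X}'_n) \subseteq \Gamma_{n,\alpha}$. Cliques in the pointed contact graph arise from collections of hyperplanes whose carriers share a common $0$-cube and whose dual $1$-cubes at that cube agree on the basepoint-orientation; this is delicate because osculations involve non-local data about the position of carriers relative to $\alpha$. I would control this by designing the auxiliary walls so that each $0$-cube participates in at most three coordinate-direction walls plus a bounded handful of auxiliary walls, with the orientations forced by $\alpha$ further pruning potential common-carrier meetings. Verifying the precise bound $5$ would require a local case analysis at each $0$-cube of $\mathbf{X}'_n$, organized around the position of the cube relative to the boxes; this is essentially the content of the argument carried out in \cite{ChepoiNiceLabeling}, which Burling-style constructions make tractable because the geometric clique structure in $\reals^3$ is already known to be small.
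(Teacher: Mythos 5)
Your overall direction is correct---Burling's boxes, a wallspace encoding, a choice of basepoint $\alpha$, and the pointed contact graph are exactly the right ingredients, and you have the right citation---but the plan has a genuine gap exactly where the proof does its real work. You say you will add, for each box $B\in\mathcal B_n$, its three coordinate slabs ``plus a small fixed number of auxiliary bookkeeping walls whose role is to transform geometric intersection of $B$ and $B'$ into osculation of the corresponding hyperplanes''; but you never say what these auxiliary walls are, and this is not bookkeeping: it is the entire construction. The walls of the box arrangement in $\reals^3$ give a $3$-dimensional box complex $K$ in which a box $B$ corresponds to a \emph{convex subcomplex}, not to a hyperplane. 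There is no hyperplane $H_B$ to hand to each box, so the map $B\mapsto H_B$ you invoke in the third paragraph is undefined, and the proposed homomorphism from the intersection graph of $\mathcal B_n$ into $\Gamma_{n,\alpha}$ does not exist in this setup.

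What is missing is the ``lifting'' device of~\cite{ChepoiNiceLabeling}: one embeds the box complex $K$ in $\reals^{m+3}$ (with $m=|\mathcal B_n|$), and for each box $B_i$ one thickens $B_i$ by a unit segment $s_i$ in the $i$-th \emph{new} coordinate direction. This creates, for each box, exactly one new hyperplane $\widehat H_i$ transverse to $s_i$, and the key point is that if $B_i\cap B_j\neq\emptyset$ then $\widehat H_i$ and $\widehat H_j$ osculate and in fact contact with respect to the basepoint, while they never cross (since distinct lifts occupy distinct new coordinate directions). This is precisely what makes the dimension bound $4$ (one lift direction plus at most three box-complex directions at any $4$-cube), the degree bound $\Delta=\omega(\mathcal B_n)+6=8$ (at most six box-complex neighbours plus at most $\omega(\mathcal B_n)$ lift neighbours), and the clique bound $\omega(\mathcal B_n)+3=5$ on $\Gamma_{n,\alpha}$. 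Your justifications for these bounds---that ``no five walls can pairwise cross'' because Burling lives in three directions and has small clique number, and that ``each $0$-cube lies in only a constant number of box-slabs''---are asserted rather than proven and cannot be checked without knowing the auxiliary walls; both depend crucially on the fact that the lifted hyperplanes are pairwise non-crossing, which is a property of the lift and not of Burling's boxes. You also depart slightly from the paper in proposing to take only some osculation-edges for $\Gamma_{n,\alpha}$ rather than the full pointed contact graph $\Gamma_\alpha({\bf X}'_n)$; this is harmless for the clique bound but adds nothing, since the paper shows the full $\Gamma_\alpha$ already has clique number at most $5$.
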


\begin{remark}
The construction in~\cite{ChepoiNiceLabeling} shows that we can take $\Delta=8$.  The graph $\Gamma_{n,\alpha}$ is the pointed contact graph of ${\bf X}'_n$ pointed at a particular 0-cube $\alpha_n$.
\end{remark}

The construction of ${\bf X}'_n$ relies on an example due to Burling~\cite{Burling}.  A rigorous proof of Proposition~\ref{prop:pointedcontact}, using Burling's construction, is given in~\cite{ChepoiNiceLabeling}; here we summarize the basic notions and provide a sketch of the proof.

A \emph{(3-dimensional) box} is a closed parallelepiped in $\reals^3$ whose edges are parallel to the coordinate axes.  Given a (finite) collection $\mathcal B$ of boxes, let $\Omega(\mathcal B)$ denote the intersection graph of $\mathcal B$, and define $\omega(\mathcal B)$ to be the clique number of $\mathcal B$ and $\chi(\mathcal B)$ to be the chromatic number of $\Omega(\mathcal B)$.  As described in the survey of Gy\'{a}rf\'{a}s~\cite{Gyarfas}, Burling constructed for each $n$ a collection $\mathcal B_n$ of boxes such that $\omega(\mathcal B_n)=2$ and $\chi(\mathcal B_n)>n$ for all $n\geq 0$.

Following~\cite{ChepoiNiceLabeling}, Construction~\ref{cons:boxes} takes as its input a suitably-defined collection $\mathcal B$ of axis-parallel boxes and returns a 4-dimensional pointed CAT(0) cube complex ${\bf X}''$ whose pointed contact graph has clique number at most 5 and chromatic number greater than $n$.

\begin{construction}\label{cons:boxes}
We first define a certain cube complex $K$ arising from a box and an associated family of planes in $\reals^3$, then produce from $K$ a CAT(0) cube complex $\widetilde K$ of dimension 4 whose pointed contact graph has the desired colouring properties.  We then apply this construction in the context of Burling's example to build the cube complexes ${\bf X}'_n$.

\textbf{The box complex:}  Let $B_0$ be a box with one corner at the origin in $\reals^3$.  Any family of hyperplanes in $\reals^3$ (i.e. 2-dimensional, axis-parallel affine subspaces) that cross the interior of $B_0$ partitions $B_0$ into a family of boxes and thus defines a ``box complex'' realized by $B_0$.  A cell of $B_0$ is an ``elementary box''.  By rescaling each of the constituent elementary boxes of $B_0$ so that the sides have length 1, we obtain a CAT(0) cube complex $K$, defined by $B_0$ and the initial family of hyperplanes.  
Note that $K$ decomposes as a product, but is not the cube complex dual to the wallspace whose underlying set is $B_0$ and whose walls are the initial 2-dimensional hyperplanes.  Indeed, each maximal family of $m$ parallel 2-planes determines $m+1$ walls.  Instead, these 2-planes are part of the 2-skeleton of $K$, and each is parallel to a hyperplane.  Note that $K$ decomposes as a product of at most three 1-dimensional CAT(0) cube complexes and hence has dimension at most 3.  The dimension of $K$ is equal to the number of coordinate planes through the origin in $\reals^3$ that are parallel to at least one of the given 2-planes.

\textbf{Lifting to produce the pointed cube complex $\widetilde K$:}  Let $\mathcal B$ be a finite family of boxes.  Then there exists a box $B_0$ whose interior contains each $B\in\mathcal B$, and the family $\mathcal B$ induces a CAT(0) cubical structure on $B_0$.  Indeed, each of the six 2-planes determining $B\in\mathcal B$ becomes part of the cubical structure on $B_0$ constructed above.  For each $B_i\in\mathcal B$, let $K^i$ be the cubical subcomplex of $K$ consisting of the cubes arising as cells of $B_0$ lying in $B_i$.  Then $K^i$ is a product of (at most) three subdivided intervals and is a convex subcomplex of $K$.

$\widetilde K$ is defined as a subset of $\reals^{m+3}$, where $m=|\mathcal B|$.  First, let $B_0$ lie in the subspace $\reals^3\subset\reals^{m+3}$ consisting of points of the form $(0,0,\ldots,a,b,c)$ with $a,b,c\geq 0$.  Then for each $B_i\in\mathcal B$, define $0\leq a'_i<a''_i,\,0\leq b'_i<b''_i,\,0\leq c'_i<c''_i$ to be numbers such that $B_i$ consists of points of the form $p=(0,0,\ldots,a,b,c)$ with $a'_i\leq a\leq a''_i\,b'_i\leq b\leq b''_i,\,c'_i\leq c\leq c''_i$.  For $1\leq i\leq m$, let $s_i$ be the unit segment of the $i^{th}$ coordinate axis of $\reals^{m+3}$ and let $\widetilde B_i=s_i\times B_i$, so that $\widetilde B_i$ consists of points of the form $p=(p_1,p_2,\ldots,p_m,a,b,c)$, where $(a,b,c)\in[a'_i,a''_i]\times[b'_i,b''_i]\times[c'_i,c''_i]$ and $p_j=\delta_{ij}\cdot[0,1]$.  This gives rise to a box hypergraph $\widetilde{\mathcal B}$ in $\reals^{m+3}$ consisting of the boxes $\widetilde B_i$ with $1\leq i\leq m$.

Note that each elementary box $C$ of $K$ gives rise to a 4-cube $\widetilde C_i$ in $\reals^{m+3}$, isomorphic to $C\times s_i$, for each 3-dimensional box $B_i$ in $K$ that contains $C$.  $\widetilde C_i$ is a \emph{lifted elementary box}.  Let $\widetilde K'$ be the 4-dimensional box complex consisting of all lifted elementary boxes $\widetilde C_i$, together with all of the elementary boxes corresponding to 3-cubes of $K$.  Let $\widetilde K$ be the cube complex obtained by rescaling the edges of each elementary box so that they have length 1.  Note that $K\subset\widetilde K$.

In Lemma~1 of~\cite{ChepoiNiceLabeling}, it is shown that the 1-skeleton of $\widetilde K$ is a median graph, and therefore that $\widetilde K$ is a CAT(0) cube complex, by taking gated amalgams of the 1-skeleta of the complexes $K^i$, which are themselves median graphs.

We now let $\alpha$ be the corner of $B_0$ corresponding to the origin in $\reals^3\subset\reals^{m+3}$ and point $\widetilde K$ from $\alpha$, i.e. orient each 1-cube of $\widetilde K$ away from the halfspace of its dual hyperplane that contains $\alpha$.  Let $\Gamma_{\alpha}$ be the corresponding pointed contact graph.

\textbf{The pointed contact graph $\Gamma_{\alpha}$:}  Lemma~4 of~\cite{ChepoiNiceLabeling} states that, if $\omega=\omega(\mathcal B)$ is the clique number of the intersection graph of the family $\mathcal B$, then the clique number of the contact graph $\Gamma(\widetilde K)$ (i.e. the maximum cardinality of a 0-cube of $\widetilde K$) is at most $\omega+6$, and the maximum number of 1-cubes oriented outward from a 0-cube of $\widetilde K$, i.e. the clique number of $\Gamma_{\alpha}$, is exactly $\omega +3$.  On the other hand, since $\Gamma_{\alpha}$ contains the intersection graph of $\mathcal B$, we have that $\chi(\Gamma_{\alpha})\geq\chi(\mathcal B)$.

\textbf{Application to Burling's boxes:}  For each $n>0$, let $\mathcal B_n$ be a finite family of boxes such that $\omega(\mathcal B_n)=2$ and $\chi(\mathcal B_n)>n$.  Then since $\mathcal B_n$ is finite, there exists a box $B_{0,n}$ that contains each box $B\in\mathcal B_n$.  For each box $B\in\mathcal B_n$, there are six 2-planes crossing $B_{0,n}$ that together pass through the eight corners of $B$ and determine $B$ as a box in $\reals^3$.  The set of all such 2-planes determines a CAT(0) cube complex from $B_{0,n}$ as above, denoted $K_n$.  Let ${\bf X}'_n$ be the 4-dimensional CAT(0) cube complex constructed from $K_n$ as above.  Then ${\bf X}'_n$ has maximal degree $\Delta=\omega(\mathcal B_n)+6=8$.

Moreover, let $\alpha_n$ and $\beta_n$ be a pair of opposite corners of $B_{0,n}$.  Then $\alpha_n$ and $\beta_n$ are 0-cubes of $K_n$ and lift to distinct 0-cubes of ${\bf X}'_n$; denote these lifts also by $\alpha_n$ and $\beta_n$.  Letting ${\bf X}'_n$ be pointed from $\alpha_n$, we see that the pointed contact graph $\Gamma_{n,\alpha}$ of ${\bf X}'_n$ has clique number at most $\omega(\mathcal B_n)+3=5$ and chromatic number at least $\chi(\mathcal B_n)>n$.
\end{construction}

\begin{proof}[Proof of Theorem~\ref{theorem2}]
Let ${\bf X}'_n$ be a 4-dimensional CAT(0) cube complex given by Proposition~\ref{prop:pointedcontact}, so that the degree of 0-cubes in ${\bf X}'_n$ is at most $8$ and the clique number of the pointed contact graph $\Gamma_{n,\alpha}$ is at most 5.  Note that $\Gamma_{\#}({\bf X}'_n)\subseteq\Gamma_{n,\alpha}\subseteq\Gamma({\bf X}'_n)$.  Hence, by Proposition~\ref{prop:recube}, there exists a CAT(0) cube complex ${\bf X}_n$, obtained from ${\bf X}'_n$ by applying recubulation to the subgraph $\Gamma_{n,\alpha}$ of the contact graph, such that the hyperplanes of ${\bf X}_n$ correspond bijectively to those of ${\bf X}'_n$, and any $d$-cube in ${\bf X}_n$ corresponds to a $d$-clique in $\Gamma_{n,\alpha}$.  Hence $d\leq 5$ for each $d$-cube, i.e. $\dimension{\bf X}_n\leq 5$, and this bound is realized for some $n$.  By Proposition~\ref{prop:recube}(ii), $\Gamma_{\#}({\bf X}_n)=\Gamma_{n,\alpha}$, and hence
\[\chi(\Gamma_{\#}({\bf X}_n))=\chi(\Gamma_{n,\alpha})>n.\]
The cube complex ${\bf X}_n$ is thus finite, and any isometric embedding of ${\bf X}_n$ into the Cartesian product of trees requires at least $n+1$ trees.  Moreover, by Proposition~\ref{prop:recube}(iii) the clique number of the contact graph of ${\bf X}_n$ is at most $\Delta^2+\Delta=8^2+8=72$ for each $n$.

By Proposition~\ref{prop:recube}, there is an isometric embedding ${\bf X}'_n\rightarrow{\bf X}_n$, and hence ${\bf X}_n$ contains distinct 0-cubes $a_n$ and $b_n$ that are the images of $\alpha_n$ and $\beta_n$ respectively.  Let $\bf X$ be formed from $\bigsqcup_{n\geq 0}{\bf X}_n$ by identifying $b_n$ with $a_{n+1}$ for each $n$.  Since each 0-cube of $\bf X$ is contained in at most 2 of the ``blocks'' ${\bf X}_n$, the clique number of $\Gamma({\bf X})$ is at most $\Delta^2+\Delta$.  Indeed, each of $\beta_n$ and $\alpha_{n+1}$ has degree at most 3, and we are taking $\Delta=8$.  On the other hand, by construction,
\[\xing X=\bigsqcup_{n\geq 0}\Gamma_{\#}({\bf X}_n)\]
and hence
\[\chi(\xing X)\geq\chi(\Gamma_{\#}({\bf X}_n))>n\]
for all $n$, i.e. the chromatic number of the crossing graph of $\bf X$ is infinite.  Thus $\bf X$ is a 5-dimensional, uniformly locally finite CAT(0) cube complex that does not admit an isometric embedding in the Cartesian product of finitely many trees.
\end{proof}


Theorem~\ref{theorem1} states that a uniformly locally finite CAT(0) cube complex of dimension at most 2 embeds isometrically in the product of finitely many trees, while Theorem~\ref{theorem2} asserts the existence of a 5-dimensional uniformly locally finite CAT(0) cube complex not admitting such an embedding.  It is thus natural to ask about the situation in dimensions 3 and 4:

\medskip\noindent
{\bf Question 4.}  {\it Are 3-~or 4-dimensional CAT(0) cube complexes, whose 1-skeleta have uniformly bounded degree, embeddable  in the Cartesian product of finitely many trees?}

\medskip\noindent
If Question 4 has a positive answer in the 3-dimensional case, then to find this answer it seems quite likely that one has to be able to colour the imprints of a collection of hyperplanes on a given hyperplane $H$. This hyperplane $H$ is a 2-dimensional  CAT(0) cube complex and the imprints in $H$ are convex (gated) subcomplexes of $H.$ Therefore, as an warm-up to Question 4, one can ask for the following common generalization of Asplund-Gr\"unbaum's result for axis-parallel rectangles and of the fact that any family $\mathcal T$ of subtrees  of a tree can be coloured with $\omega({\mathcal T})$ colours:

\medskip\noindent
{\bf Question 5.}  {\it Let $\mathcal C$ be a collection of convex subcomplexes of a 2-dimensional CAT(0) cube complex. Is $\mathcal C$ $\chi$-bounded?}

\section*{Acknowledgements}
We would like to thank anonymous referees for numerous useful comments.
V.C. would like to acknowledge Cornelia Dru\c{t}u  for (implicitly) pointing to the first version of the paper \cite{HagenQuasiArb} of M.H. and thus making this collaboration possible. He also would like to acknowledge Hans-J\"urgen Bandelt  and David Eppstein for exchanges on Question 1 during the work on the papers \cite{BandeltChepoiEppstein_squaregraph,BandeltChepoiEppstein}. The work of V.C. was supported in part by the ANR grant GGAA (grant ANR-10-BLAN 0116).  M.H. wishes to thanks Michah Sageev and Dani Wise for helpful discussions on the hyperplane-colouring problem.  The work of M.H. was supported by a McGill University Schulich Graduate Fellowship.  Both authors thank  Jason Behrstock for a helpful correction.


\begin{thebibliography}{PKK{\etalchar{+}}12b}

\bibitem[ABCR94]{AsBouChaRo}
M.R. Assous, V.~Bouchitt\'{e}, C.~Charretton, and B.~Rozoy.
\newblock Finite labelling problem in event structures.
\newblock {\em Theor. Comput. Sci.}, 123:9--19, 1994.

\bibitem[AG60]{AsplundGrunbaum}
E.~Asplund and B.~Gr\"{u}nbaum.
\newblock On a coloring problem.
\newblock {\em Math. Scand.}, 8:181--188, 1960.

\bibitem[AG02]{AbramsGhrist}
A.~Abrams and R.~Ghrist.
\newblock State complexes for metamorphic robots.
\newblock {\em Intl. J. Robotics Research}, 23:140--150, 2002.

\bibitem[Ago12]{AgolVirtualHaken}
I.~Agol.
\newblock The virtual haken conjecture.
\newblock Preprint, \textsc{arXiv:1204.2810}. With an appendix by Ian Agol,
  Daniel Groves, and Jason Manning, 2012.

\bibitem[AOS12]{AOS}
F.~Ardila, M.~Owen, and S.~Sullivant.
\newblock Geodesics in {CAT}(0) cubical complexes.
\newblock {\em Adv. Appl. Math.}, 48:142--163, 2012.

\bibitem[Ava61]{Avann}
S.P. Avann.
\newblock Metric ternary distributive semi-lattices.
\newblock {\em Proc. Amer. Math. Soc.}, 12:407--414, 1961.

\bibitem[Ban84]{Bandelt_retracts}
H.-J. Bandelt.
\newblock Retracts of hypercubes.
\newblock {\em J. Graph Th.}, 8:501--510, 1984.

\bibitem[{Bar}89]{Barthelemy_cubulation}
J.-P. {Barth\'{e}lemy}.
\newblock From copair hypergraphs to median graphs with latent vertices.
\newblock {\em Discr. Math.}, 76:9--28, 1989.

\bibitem[BC93]{Barthelemy_Constantin}
J.-P. {Barth\'{e}lemy} and J.~Constantin.
\newblock Median graphs, parallelism and posets.
\newblock {\em Discr. Math.}, 111:49--63, 1993.

\bibitem[BC96]{BandeltChepoi_acyclic}
H.-J. Bandelt and V.~Chepoi.
\newblock Graphs of acyclic cubical complexes.
\newblock {\em Europ. J. Combinatorics}, 17:113--120, 1996.

\bibitem[BC08]{BandeltChepoi_survey}
H.-J. Bandelt and V.~Chepoi.
\newblock Metric graph theory and geometry: a survey.
\newblock In J.~E. Goodman, J.~Pach, and R.~Pollack, editors, {\em Surveys on
  Discrete and Computational Geometry: Twenty Years Later}, volume 453, pages
  49--86. Contemp. Math., AMS, Providence, RI, 2008.

\bibitem[BCE10a]{BandeltChepoiEppstein_squaregraph}
H.-J. Bandelt, V.~Chepoi, and D.~Eppstein.
\newblock Combinatorics and geometry of finite and infinite squaregraphs.
\newblock {\em SIAM J. Discr. Math.}, 24:1399--1440, 2010.

\bibitem[BCE10b]{BandeltChepoiEppstein}
H.-J. Bandelt, V.~Chepoi, and D.~Eppstein.
\newblock Ramified rectilinear polygons: coordinatization by dendrons.
\newblock {\em Preprint}, 2010.

\bibitem[Ber89]{Berge_book}
C.~Berge.
\newblock {\em Hypergraphs}.
\newblock North--Holland, 1989.

\bibitem[BH83]{BandeltHedlikova}
H.-J. Bandelt and J.~{Hedl\'\i kov\'a}.
\newblock Median algebras.
\newblock {\em Discr. Math.}, 45:1--30, 1983.

\bibitem[Bow11]{BowditchMedianTrees}
B.H. Bowditch.
\newblock Embedding median algebras in products of trees.
\newblock {\em Preprint, Warwick}, 2011.

\bibitem[B{\'{S}}99]{BallmanSwiatkowski}
W.~Ballman and J.~{\'{S}}{wi\c{a}tkowski}.
\newblock On groups acting on nonpositively curved cubical complexes.
\newblock {\em L'Enseignement Math.}, 45:51--81, 1999.

\bibitem[Bur65]{Burling}
J.P. Burling.
\newblock {\em On coloring problems of families of prototypes}.
\newblock PhD thesis, Univ. of Colorado, 1965.

\bibitem[BvdV89]{BavdV_embedding}
H.-J. Bandelt and M.~van~de Vel.
\newblock Embedding topological median algebras in products of dendrons.
\newblock {\em Proc. London Math. Soc.}, 58:439--453, 1989.

\bibitem[CD95a]{CharneyDavis}
R.~Charney and M.W. Davis.
\newblock Finite {$K(\pi, 1)$}s for {A}rtin groups.
\newblock In {\em Prospects in topology (Princeton, NJ, 1994)}, volume 138 of
  {\em Ann. of Math. Stud.}, pages 110--124. Princeton Univ. Press, Princeton,
  NJ, 1995.

\bibitem[CD95b]{CharneyDavis2}
R.~Charney and M.W. Davis.
\newblock The ${K}(\pi,1)$-problem for hyperplane complements associated to
  infinite reflection groups.
\newblock {\em J. Amer. Math. Soc.}, 8(3):597--627, 1995.

\bibitem[Che00]{ChepoiMedian}
V.~Chepoi.
\newblock Graphs of some {C}{A}{T}(0) complexes.
\newblock {\em Adv. Appl. Math}, 24:125--179, 2000.

\bibitem[Che12]{ChepoiNiceLabeling}
V.~Chepoi.
\newblock Nice labeling problem for event structures: a counterexample.
\newblock {\em {S}{I}{A}{M} J. Comput.}, 41:715--727, 2012.

\bibitem[CN05]{ChatterjiNiblo}
I.~Chatterji and G.~Niblo.
\newblock From wall spaces to {$\rm CAT(0)$} cube complexes.
\newblock {\em Internat. J. Algebra Comput.}, 15, 2005.

\bibitem[CRST06]{CRST}
M.~Chudnovsky, N.~Robertson, P.D. Seymour, and R.~Thomas.
\newblock The strong perfect graph theorem.
\newblock {\em Ann. Math.}, 164:51--229, 2006.

\bibitem[dBE51]{deBruijnErdos}
N.~G. de~Bruijn and P.~Erdos.
\newblock {\em Nederl. Akad. Wetensch. Proc. Ser. A}, 54:371--373, 1951.

\bibitem[DHHM97]{DrHuMo}
A.~Dress, M.~Hendy, K.~Huber, and V.~Moulton.
\newblock On the number of vertices and edges in the {B}uneman graph.
\newblock {\em Ann. Combin.}, 1:329--337, 1997.

\bibitem[EFO07]{EppsteinFalmagneOvchinnikov}
D.~Eppstein, {J.-Cl.} Falmagne, and S.~Ovchinnikov.
\newblock {\em Media Theory}.
\newblock Springer-Verlag, 2007.

\bibitem[Far05]{Farley}
D.S. Farley.
\newblock Actions of picture groups on {CAT}(0) cubical complexes.
\newblock {\em Geometriae Dedicata}, 110:221--242, 2005.

\bibitem[Fed95]{Feder}
T.~Feder.
\newblock Stable networks and product graphs.
\newblock {\em Mem. Amer. Math. Soc.}, 555, 1995.

\bibitem[GL85]{GyarfasLehel}
A.~Gy\'{a}rf\'{a}s and J.~Lehel.
\newblock Covering and colouring problems for relatives of intervals.
\newblock {\em Disc. Math.}, 55, 1985.

\bibitem[GLB03]{Gyarfas}
A.~Gy\'{a}rf\'{a}s, C.~G. Lekkerkerker, and J.~Ch. Bolland.
\newblock Combinatorics of intervals, preliminary version.
\newblock {\em Preprint}, 2003.

\bibitem[Gol80]{Golumbic_book}
M.~Golumbic.
\newblock {\em Algorithmic Graph Theory and Perfect Graphs}.
\newblock Academic Press, 1980.

\bibitem[GP07]{GhristPeterson}
R.~Ghrist and V.~Peterson.
\newblock The geometry and topology of reconfiguration.
\newblock {\em Adv. Appl. Math}, 38:302--323, 2007.

\bibitem[Gro87]{GromovHyperbolic}
M.~Gromov.
\newblock Hyperbolic groups.
\newblock In {\em Essays in group theory}, volume~8 of {\em Math. Sci. Res.
  Inst. Publ.}, pages 75--263. Springer, New York, 1987.

\bibitem[Gy{\'{a}}85]{GyarfasChromatic}
A.~Gy{\'{a}}rf\'{a}s.
\newblock On the chromatic number of multiple interval graphs and overlap
  graphs.
\newblock {\em Discr. Math.}, 55:161--166, 1985.

\bibitem[Hag]{HaglundSemisimple}
F.~Haglund.
\newblock Isometries of {CAT}(0) cube complexes are semisimple.
\newblock {\em Ar{X}iv preprint arXiv:0705.3386}.

\bibitem[Hag07]{HaglundHabilitation}
F.~Haglund.
\newblock Aspects combinatoires de la th\'{e}orie g\'{e}om\'{e}trique des
  groupes.
\newblock {\em M\'{e}moire HDR (habilitation \`{a} diriger des recherches)},
  2007.

\bibitem[Hag11]{HagenQuasiArb}
M.F. Hagen.
\newblock Weak hyperbolicity of cube complexes and quasi-arboreal groups.
\newblock {\em Ar{X}iv preprint arXiv:1101.5191v6}, 2011.

\bibitem[HP98]{HaglundPaulin}
F.~Haglund and F.~Paulin.
\newblock Simplicit\'e de groupes d'automorphismes d'espaces \`a courbure
  n\'egative.
\newblock In {\em The Epstein birthday schrift}, pages 181--248 (electronic).
  Geom. Topol., Coventry, 1998.

\bibitem[HW10]{HruskaWiseAxioms}
C.~Hruska and D.T. Wise.
\newblock Finiteness properties of cubulated groups.
\newblock {\em Preprint}, 2010.

\bibitem[IK00]{ImKl}
W.~Imrich and S.~{Klav\v{z}ar}.
\newblock {\em Product Graphs:Structure and Recognition}.
\newblock New York, 2000.

\bibitem[Isb80]{Isbell}
J.R. Isbell.
\newblock Median algebra.
\newblock {\em Trans. Amer. Math. Soc.}, 260:319--362, 1980.

\bibitem[KK97]{KostochkaKratochvil}
A.V. Kostochka and J.~Kratochv\'{\i}l.
\newblock Covering and coloring polygon-circle graphs.
\newblock {\em Discrete Math.}, 163:299--305, 1997.

\bibitem[Kos88]{Kostochka}
A.V. Kostochka.
\newblock On upper bounds for the chromatic numbers of graphs.
\newblock {\em Trudy Instituta Mathematiki}, 10:204--226, 1988.

\bibitem[McG96]{McGuinness}
S.~McGuinness.
\newblock On bounding the chromatic number of {L}-graphs.
\newblock {\em Discrete Math.}, 154:179--187, 1996.

\bibitem[MS79]{MulderSchrijver}
H.M. Mulder and A.~Schrijver.
\newblock Median graphs and {H}elly hypergraphs.
\newblock {\em Discr. Math.}, 25:41--50, 1979.

\bibitem[Mul80]{Mulder_book}
H.M. Mulder.
\newblock {\em The Interval Function of a Graph}, volume 132.
\newblock Math. Centre Tracts, Amsterdam, 1980.

\bibitem[Nib04]{NibloSplittingObstruction}
G.A. Niblo.
\newblock The singularity obstruction for group splittings.
\newblock {\em Topology Appl.}, 119:17--31, 2004.

\bibitem[Nic04]{NicaCubulating}
B.~Nica.
\newblock Cubulating spaces with walls.
\newblock {\em Algebr. Geom. Topol.}, 4:297--309 (electronic), 2004.

\bibitem[NPW85]{NiPlWi}
M.~Nielsen, G.~Plotkin, and G.~Winskel.
\newblock Petri nets, event structures and domains.
\newblock {\em Theoret. Comput. Sci}, 13:85--108, 1985.

\bibitem[NR98]{NibloRoller}
G.A. Niblo and M.A. Roller.
\newblock Groups acting on cubes and {K}azhdan's property ({T}).
\newblock {\em Proc. Amer. Math. Soc.}, 126:693--699, 1998.

\bibitem[NR03]{NibloReeves}
G.A. Niblo and L.D. Reeves.
\newblock Coxeter groups act on {${\rm CAT}(0)$} cube complexes.
\newblock {\em J. Group Theory}, 6, 2003.

\bibitem[PKK{\etalchar{+}}12a]{Pawlik_et_al}
A.~Pawlik, J.~Kozik, T.~Krawczyk, M.~{Laso\'{n}}, P.~Micek, W.~T. Trotter, and
  B.~Walczak.
\newblock Triangle-free geometric intersection graphs with large chromatic
  number.
\newblock {\em ArXiv preprint 1212.2058}, 2012.

\bibitem[PKK{\etalchar{+}}12b]{Pawlik_et_al_Erdos}
A.~Pawlik, J.~Kozik, T.~Krawczyk, M.~{Laso\'{n}}, P.~Micek, W.~T. Trotter, and
  B.~Walczak.
\newblock Triangle-free intersection graphs of line segments with large
  chromatic number.
\newblock {\em ArXiv preprint 1209.1595v3}, 2012.

\bibitem[Rol98]{Roller}
M.~Roller.
\newblock Poc sets, median algebras and group actions.
\newblock {\em Preprint, University of Southampton}, 1998.

\bibitem[RT91]{RoTh}
B.~Rozoy and P.S. Thiagarajan.
\newblock Event structures and trace monoids.
\newblock {\em Theoret. Comput. Sci.}, 91:285--313, 1991.

\bibitem[Sag95]{Sageev95}
M.~Sageev.
\newblock Ends of group pairs and non-positively curved cube complexes.
\newblock {\em Proc. London Math. Soc. (3)}, 71, 1995.

\bibitem[Sag97]{Sageev97}
M.~Sageev.
\newblock Codimension-$1$ subgroups and splittings of groups.
\newblock {\em J. Algebra}, 189, 1997.

\bibitem[San10]{Santocanale}
L.~Santocanale.
\newblock A nice labelling for tree-like event structures of degree 3.
\newblock {\em Inf. Comput.}, 208:652--665, 2010.

\bibitem[Sch78]{Scha}
T.J. Schaefer.
\newblock The complexity of satisfiability problems.
\newblock {\em Proc. 10th ACM Symp. on Theory of Computing}, pages 216--226,
  1978.

\bibitem[vdV93]{vandeVel_book}
M.~van~de Vel.
\newblock {\em Theory of Convex Structures}.
\newblock Elsevier Science Publishers, Amsterdam, 1993.

\bibitem[Wis]{WiseIsraelHierarchy}
D.T. Wise.
\newblock The structure of groups with a quasiconvex hierarchy.
\newblock 205 pp. Preprint 2009.

\bibitem[Wis04]{WiseSmallCancellation}
D.T. Wise.
\newblock Cubulating small cancellation groups.
\newblock {\em GAFA, Geom. Funct. Anal.}, 14, 2004.

\bibitem[WN95]{WiNi}
G.~Winskel and M.~Nielsen.
\newblock Models for concurrency.
\newblock In {\em Handbook of Logic in Computer Science}, volume~4 of {\em
  Math. Sci. Res. Inst. Publ.}, pages 1--148. Oxford Univ. Press, New York,
  1995.

\end{thebibliography}

\begin{thebibliography}{CH13}

\bibitem[BB96]{BallmannBuyalo}
W.~Ballmann and S.~Buyalo.
\newblock Nonpositively curved metrics on {$2$}-polyhedra.
\newblock {\em Math. Z.}, 222(1):97--134, 1996.

\bibitem[CH13]{ChepoiHagen}
Victor Chepoi and Mark~F. Hagen.
\newblock On embeddings of {CAT}(0) cube complexes into products of trees via colouring their hyperplanes.
\newblock {\em J. Combin. Theory Ser. B}, 103(4):428--467, 2013.

\bibitem[DP]{DaviesPetyt}
James Davies and Harry Petyt.
\newblock {CAT}(0) square complexes that do not embed into finite products of trees.
\newblock {\em In preparation}.

\end{thebibliography}

\newcommand{\etalchar}[1]{$^{#1}$}

\newpage

\appendix
\renewcommand{\sectionname}{}
\setcounter{figure}{0} \renewcommand{\thefigure}{A}
\begin{center}
\Large{Appendix: correction to \emph{On embeddings of CAT(0) cube complexes into products of trees via colouring their hyperplanes}}
\end{center}

\medskip

\begin{center}
\textsc{Victor Chepoi and Mark Hagen}

\smallskip
\today   
\end{center}

\newcommand{\incoming}{\stackrel{\prec}{\to}}

Theorem 1 of \cite{ChepoiHagen} is incorrect in the stated generality due to an error in \cite[Lem. 12]{ChepoiHagen}. We correct this under an extra hypothesis on $\mathbf X$ by proving:

\begin{thmi} \label{thm:main} Let ${\bf X}$ be a 2-dimensional CAT(0) cube complex whose vertices have degree bounded by $\Delta<\infty$.  Suppose that no vertex of $\bf X$ has link containing a $5$--cycle, or, equivalently, the crossing graph $\Gamma_\#(\mathbf X)$ has no $5$--cycle. Then:  
\begin{enumerate}
    \item The contact graph $\Gamma(\mathbf X)$, and hence $\Gamma_{\#}(\mathbf X)$, has chromatic number at most $\epsilon(\Delta):=M\Delta^{26}$, where $M<\infty$ is a constant independent of $\mathbf X$. 

    \item Hence the $1$--skeleton of $\mathbf X$ admits an isometric embedding into the Cartesian product of at most $\epsilon(\Delta)$ trees.

    \item Any event structure of (out)degree $\Delta_0$, whose domain is a CAT(0) square complex with no forbidden diagram, admits a nice labelling with at most $\epsilon(\Delta_0+2)$ labels.
\end{enumerate}
\end{thmi}

The only difference between Theorem \ref{thm:main} and \cite[Thm. 1]{ChepoiHagen} is the addition of the hypothesis about $5$--cycles. Vertex links are permitted to have $4$--cycles, and cycles of any size larger than $5$, so no extant result that we know of implies Theorem \ref{thm:main}.  In particular, Theorem \ref{thm:main} does not impose any constraint on the size of planar grids in $\mathbf X$.

We are very grateful to James Davies and Harry Petyt for showing us examples that led us to find the error in our original argument; these will be in their forthcoming paper \cite{DaviesPetyt}.  We also thank them for a discussion about Theorem \ref{thm:main} and for reading this correction.

\section{Summary of corrections}\label{sec:issue}
The error in \cite{ChepoiHagen} is in the proof of \cite[Lem. 12]{ChepoiHagen}, which is false as stated; Figure \ref{fig:counterexample} shows a counterexample. We correct the lemma below, using the extra hypothesis.

\begin{figure}[h]\label{fig:counterexample}
\begin{overpic}[width=0.75\textwidth]{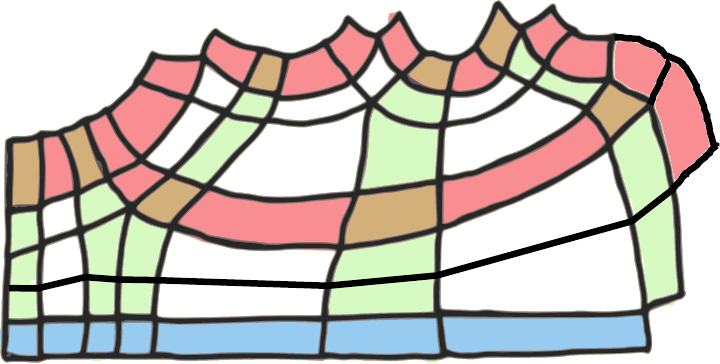}
\put(18,34){$H_1$}
\put(31,46){$H_2$}
\put(56,48){$H_3$}
\put(82,47){$H_4$}
\put(33,19){$H_0$}
\put(33,2){$U$}
\end{overpic}
\caption{The paths $H_1\incoming H_2\incoming H_3\incoming H_4$ and $H_1\incoming H_0\incoming H_4$ form a $5$--cycle in the graph $\Upsilon_0(U)$.  The root $b^*$ in $U$ is positioned on the far left of the picture.  The fathers contact $U$ in the following order, from left to right: $f(H_1),f(H_0),f(H_2),f(H_3),f(H_4)$.  Each $H_i$ and $f(H_i)$ has its carrier shaded.}
\end{figure}

As explained in the proof of Theorem \ref{thm:main} below, the only difference between the proof of \cite[Thm. 1]{ChepoiHagen} given in that paper, and the corrected argument given here, concerns statements in \cite[Sec. 6.4]{ChepoiHagen}.  The changes are:
\begin{itemize}
    \item Proposition \ref{prop:bipartite} replaces Proposition 11 and Lemma 12 of \cite{ChepoiHagen}.
    \item Lemma 8 of \cite{ChepoiHagen} is correct as written, but in the present argument can be replaced by the weaker Lemma \ref{lem:no-father-separation} below.
    \item Lemma 9 of \cite{ChepoiHagen} also holds as written, but is now subsumed into Proposition \ref{prop:bipartite}.
\end{itemize}

The other results in \cite{ChepoiHagen} are unaffected.  In particular, Lemmas 10 and 11 of \cite{ChepoiHagen} are easy observations requiring no correction.

\section{Proof of the corrected theorem}\label{sec:proof}
Throughout, $\mathbf X$ is a $2$--dimensional CAT(0) cube complex whose vertices have maximum degree $\Delta<\infty$. The notation and definitions used here are exactly as in \cite{ChepoiHagen}.  In particular, the contact graph $\Gamma(\bf X)$ has subgraphs $\Upsilon(U),\Upsilon_i(U),\ i\in\{0,1,2\}$ defined as in \cite{ChepoiHagen}.

First we prove the equivalence of the hypotheses on $5$--cycles in Theorem \ref{thm:main}:

\begin{lem}[$5$--cycles in crossing graphs]\label{lem:5-cycle-implies-5-cycle}
The crossing graph $\Gamma_\#(\mathbf X)$ contains a $5$--cycle if and only if there is a $0$--cube $v\in\mathbf X$ such that $\link_{\mathbf X}(v)$ contains a $5$--cycle.
\end{lem}

\begin{proof}
Suppose that $x\in\mathbf X$ is a $0$--cube with incident $1$--cubes $e_0,\ldots,e_4$ such that $e_i,e_{i+1}$ span a square $s_i$ for all $i\in\integers/5\integers$.  Let $H_i$ be the hyperplane dual to $e_i$.  Then, for each $i$, the hyperplanes $H_i,H_{i+1}$ define an edge of $\Gamma_\#(\mathbf X)$ because they intersect in the centre of $s_i$.

Conversely, suppose that $H_0,\ldots,H_4$ are distinct hyperplanes such that $H_i$ crosses $H_{i+1}$ for all $i\in\integers/5\integers$.  Since $\dimension\mathbf X\leq 2$, the hyperplanes $H_i,H_j$ cannot cross unless $j=i\pm 1$.  Hence, for each $i$, there is a halfspace $H_i^+$ of $\mathbf X$ bounded by $H_i$ and not containing $H_{i+2}\cup H_{i-2}$.  Let $\alpha_i$ be the unique geodesic in the tree $N(H_i)\cap H_i^+$ that has minimum possible length among geodesics joining $N(H_i)\cap N(H_{i-1})$ to $N(H_i)\cap N(H_{i+1})$.  Then $\alpha_i$ terminates at the initial point of $\alpha_{i+1}$, so there is an immersed cycle $\alpha_0\alpha_1\alpha_2\alpha_3\alpha_4$ in $\mathbf X$.  Let $D\to\mathbf X$ be a minimal area disc diagram bounded by this cycle.  Boundary vertices in $D$ have degree $3$ in $D$, except for the five vertices that are endpoints of the paths $\alpha_i$, which are corners of squares $s_0,\ldots,s_4$ of $D$ such that each $s_j$ has exactly two edges on the boundary path of $D$. The combinatorial Gauss-Bonnet theorem \cite{BallmannBuyalo} provides a vertex of $D$ whose link in $D$ is a $5$--cycle.  It thus suffices to show that the map $D\to\mathbf X$ is injective on vertex links.  This holds because crossing dual curves map to distinct hyperplanes \cite[Prop. 3]{ChepoiHagen} and osculating dual curves in $D$ cross a common $\alpha_i$, and hence map to distinct hyperplanes since $\alpha_i$ is a geodesic.
\end{proof}

Next we prove Theorem \ref{thm:main}.

\begin{proof}[Proof of Theorem \ref{thm:main}]
The proof proceeds exactly as in \cite[Sec. 7]{ChepoiHagen}, with one difference.  In \cite{ChepoiHagen}, we colour $\Upsilon(U)$ in $2\Delta^2$ colours, which is the content of Proposition 12 of \emph{loc. cit.}, whose proof is just an application of Propositions 9, 10, and 11.  Propositions 9 and 10 apply with no change; Proposition 11 says that $\Upsilon_0(U)$ is bipartite, and is the only place where the incorrect Lemma 12 is applied.  Under the present hypotheses, we can substitute Proposition \ref{prop:bipartite} for \cite[Prop. 11]{ChepoiHagen}.  Once $\Upsilon_0(U)$ is bipartite, the rest of the argument goes through verbatim, and we conclude that $\chi(\Gamma(\mathbf X))\leq \epsilon(\Delta)$ provided $\Gamma_{\#}(\mathbf X)$ has no $5$--cycles, or, equivalently (by Lemma \ref{lem:5-cycle-implies-5-cycle}), $\mathbf X$ has no vertex whose link contains a $5$--cycle.
\end{proof}

Therefore it remains to prove:

\begin{prop}\label{prop:bipartite}
Suppose that $\Gamma_{\#}(\mathbf X)$ has no $5$--cycle.  Then $\Upsilon_0(U)$ is bipartite.
\end{prop}

Before the proof, we recall the setting of \cite[Sec. 6]{ChepoiHagen}: $U$ is a fixed hyperplane and $\mathcal R(U)$ is the set of hyperplanes $H$ such that the grandfather $f^2(H)=U$.  Recall that $\Upsilon(U)$ is the subgraph of the contact graph $\Gamma(\mathbf X)$ whose vertices are hyperplanes in $\mathcal R(U)$, with $H,H'$ joined by an edge of $\Upsilon(U)$ if and only if all of the following hold:
\begin{enumerate}
    \item $H\coll H'$, and
    \item $f(H)\neq f(H')$, and
    \item $f(H)$ does not contact $f(H')$.
\end{enumerate}

Hyperplanes $H,H'$ that are adjacent in $\Upsilon(U)$ are adjacent in $\Upsilon_0(U)$ if and only if the following conditions hold, up to relabelling $H$ and $H'$ (these are from \cite[Sec. 6.4]{ChepoiHagen}):
\begin{enumerate}
    \item The \emph{separating osculators} (see \cite[Sec. 6.2]{ChepoiHagen}) $S(H)$ and $S(H')$ are distinct;
    \item $H'\prec H$ and $S(H)\neq H'$.
    \item $H'$ is not a \emph{father osculator} of $H$, which means that either $f(H)$ does not osculate with $H'$, or $f(H)=S(H)$.
\end{enumerate}
The partial order $\prec$ on $\mathcal R(U)$ is as defined in \cite[Sec. 5]{ChepoiHagen}.

\begin{defn}\label{defn:precnot}
We write $H'\incoming H$ to mean that $H'$ is adjacent to $H$ in $\Upsilon_0(U)$ and $H'\prec H$.  In this case, $H'$ is an \emph{incoming neighbour} of $H$ in the sense of \cite[Defn. 8]{ChepoiHagen}.
\end{defn}

\begin{lem}\label{lem:no-separation}
If $H'\incoming H$, then neither $H$ nor $H'$ separates the other from $U$.
\end{lem}

\begin{proof}
Since $H'\prec H$, Lemma 10 of \cite{ChepoiHagen} implies $H$ cannot separate $H'$ from $U$.  If $H'$ separates $H$ from $U$, then $H$ and $H'$ osculate.  Moreover, $H'$ cannot contact $U$, since $f^2(H')=U$, so by Lemma 7 of \cite{ChepoiHagen}, $H'=S(H)$.  This implies that $H'H$ is an edge of $\Upsilon_1(U)$, contradicting that $H'H$ is an edge of $\Upsilon_0(U)$.
\end{proof}

The next lemma handles the special case of \cite[Lem. 8]{ChepoiHagen} needed below.

\begin{lem}\label{lem:no-father-separation}
If $H'\incoming H$, then $f(H')\neq S(H')$.  Moreover, if there exists $H''$ such that $H\incoming H''$, then $S(H')$ crosses $H$.
\end{lem}

\begin{proof}
Suppose to the contrary that $f(H')=S(H')=:V'$, so $V'$ separates $H'$ from $U$ and osculates with $H'$.  If $V'$ contacts $H$, then $V'$ is a potential father of $H$ whose root in $U$ precedes that of $f(H)$, contradicting that the fathers of $H$ and $H'$ are distinct and $H'\prec H$.  Hence $V'$ cannot contact $H$.  Since $N(H')\cap N(H)\neq \emptyset$, and $V'$ separates $H'$ from $U$, it therefore follows that $V'$ separates $H$ from $U$.  Since $f(H)$ contacts $H$ and $U$, we conclude that $V'$ crosses $f(H)$, which is a contradiction since $f(H')=V'$ but adjacency of $H,H'$ in $\Upsilon(U)$ requires that $f(H),f(H')$ do not contact.  This shows that $f(H')\neq S(H')$. 

Suppose $H\incoming H''$ for some $H''$. Then $f(H)\neq S(H)$ by the first assertion.  We claim that $S(H')$ crosses $H$.  If not, then since $H\coll H'$, the hyperplane $S(H')$ separates $H\cup H'$ from $U$. Now we use that $H'$ is not a father-osculator of $H$.  We distinguish  two cases.

First suppose that $H'$ does not contact $f(H)$.  Then some hyperplane $W$ separates $f(H)$ from $H'$ and therefore $W$ crosses $H$.  If $W$ does not cross $f(H')$, then it separates $H'\cup f(H')$ from $f(H)$, and hence crosses $U$ between $f(H')$ and $f(H)$.  Hence $W$ is a better potential father for $H$ than $f(H)$, a contradiction with the definition of $f(H)$.  Thus $W$ separates $H'$ from $U$ and crosses $f(H')$.  The hyperplanes $W,f(H'),S(H')$ are distinct since we are assuming $S(H')$ does not cross $H$.  Also, these three hyperplanes cannot all cross, so since $S(H')$ crosses $f(H')$ by \cite[Lem. 7]{ChepoiHagen}, $S(H')$ and $W$ must be disjoint.  Now, since $S(H')$ separates $H$ from $U$, it must cross $f(H)$.  Since $W$ separates $H'$ from $f(H)$, and does not cross $S(H')$, we see that $W$ separates $H'$ from $S(H')$, contradicting that $S(H')\coll H'$, by definition.  We conclude that $S(H')$ crosses $H$ as required.

Otherwise, if  $H'$ and $f(H)$ contact, then, since $H'$ is not a father-osculator of $H$ and $f(H)\neq S(H)$, necessarily $f(H)$ crosses $H'$. Hence $f(H)$ also crosses $S(H')$.  We first show that $H$ and $S(H')$ contact.  Indeed, otherwise some hyperplane $W$ separates $H$ from $S(H')$.  Since $S(H')$ and $H$ both contact $H'$, the hyperplanes $W$ and $H'$ must cross.  Since $f(H)$ crosses $S(H')$, and $W$ cannot separate $f(H)$ from $H$, the hyperplane $W$ must cross $f(H)$.  So, $W,H',f(H)$ pairwise cross, contradicting $2$--dimensionality.  Hence $S(H')$ must contact $H$, since 
they cannot coincide by Lemma \ref{lem:no-separation}.  If they cross, we are done, so assume that $S(H')$ and $H$ osculate.   Now, $S(H')$ separates $H$ and $H'$ from $U$. Since $H'H$ is an edge in $\Upsilon_0(U)$, we have $S(H)\neq S(H')$.  Hence, by \cite[Lem. 7]{ChepoiHagen}, $S(H)$ and $S(H')$ cross.  Now, $f(H)$ crosses $S(H')$, as noted above, and it crosses or coincides with any hyperplane separating $U$ from $H$.  Thus $f(H)=S(H)$, a contradiction, so $S(H')$ must cross $H$, as claimed.
\end{proof}

\begin{proof}[Proof of Proposition \ref{prop:bipartite}]
Suppose that $H',H,H''\in\mathcal R(U)$ satisfy $H'\incoming H\incoming H''$ (in  \cite[Sec. 6]{ChepoiHagen} such a hyperplane $H$ is called \emph{normal}).  Then \cite[Prop. 8]{ChepoiHagen} provides distinct hyperplanes $W'$ and $W$ that both cross $U$, such that $W'$ crosses $H'$, $W$ crosses $H$, and $W'$ does not contact $H$ and $W$ does not contact $H''$.  By Lemma \ref{lem:no-father-separation}, $S(H')$ crosses $H$, and since it separates $H'$ from $U$, it must cross $W'$.  The hyperplanes $W',S(H'),H,W,U$ are distinct and therefore form a $5$--cycle in $\Gamma_\#(\mathbf X)$, contradicting our hypothesis. Consequently, the graph $\Upsilon_0(U)$ has no directed path with three distinct vertices and no normal hyperplanes.

Let $C=(H_0,H_1,\ldots,H_n)$ be a cycle in $\Upsilon_0(U)$. If $n=2$, then, up to relabelling, $H_1\incoming H_0\incoming H_2$, so $H_0$ is normal, a contradiction.  If $n>3$, then \cite[Lem. 11]{ChepoiHagen} and the absence of normal hyperplanes imply that $C$ has even length.  Hence $\Upsilon_0(U)$ is bipartite.
\end{proof}

\end{document}